\definecolor{OliveGreen}{rgb}{0,0.6,0}
\newlist{CI}{enumerate}{1}
\setlist[CI]{label=(C\arabic*)}
\pgfplotsset{compat=1.15}
\renewcommand{\arraystretch}{1.3}
\numberwithin{equation}{section}
\theoremstyle{plain}
\newtheorem*{theorem*}{Theorem}
\newtheorem{theorem}{Theorem}
\numberwithin{theorem}{section}
\newtheorem{proposition}[theorem]{Proposition}
\newtheorem{lemma}[theorem]{Lemma}
\newtheorem{corollary}[theorem]{Corollary}
\newtheorem{conjecture}[theorem]{Conjecture}
\theoremstyle{definition}
\newtheorem{definition}[theorem]{Definition}
\newtheorem{remark}[theorem]{Remark}
\newtheorem{example}[theorem]{Example}
\newcommand{\R}{\mathbb{R}}
\newcommand{\sC}{\mathcal{C}}
\newcommand{\sD}{\mathcal{D}}
\newcommand{\sS}{\mathcal{S}}
\newcommand{\sQ}{\mathcal{Q}}
\newcommand{\sX}{\mathcal{X}}
\newcommand{\sF}{\mathcal{F}}
\newcommand{\sP}{\mathcal{P}}
\newcommand{\sT}{\mathcal{T}}
\newcommand{\sI}{\mathcal{I}}
\newcommand{\indep}{\rotatebox[origin=c]{90}{$\models$}}
\newcommand{\theproperty}{running intersection property}
\DeclareMathOperator{\dist}{dist}
\DeclareMathOperator{\conv}{conv}
\DeclareMathOperator{\codim}{codim}
\DeclareMathOperator{\rank}{rank}
\DeclareMathOperator{\supp}{supp}
\newcommand{\red}[1]{{\color{red}#1}}
\newcommand{\blue}[1]{{\color{blue}#1}}
\newcommand\blfootnote[1]{%
  \begingroup
  \renewcommand\thefootnote{}\footnote{#1}%
  \addtocounter{footnote}{-1}%
  \endgroup
}
\date{}
\title{Log-concave density estimation\\in undirected graphical models}
\author[1]{Kaie Kubjas}
\author[1]{Olga Kuznetsova}
\author[2]{Elina Robeva}
\author[2]{Pardis Semnani}
\author[1]{Luca Sodomaco}
\affil[1]{Department of Mathematics and Systems Analysis, Aalto University, Finland}
\affil[2]{Department of Mathematics, University of British Columbia, Canada}
\begin{document}

\maketitle

\begin{abstract}
We study the problem of maximum likelihood estimation of densities that are log-concave and lie in the graphical model corresponding to a given undirected graph $G$. We show that the maximum likelihood estimate (MLE) is the product of the exponentials of several tent functions, one for each maximal clique of $G$. While the set of log-concave densities in a graphical model is infinite-dimensional, our results imply that the MLE can be found by solving a finite-dimensional convex optimization problem. We provide an implementation and a few examples.
Furthermore, we show that the MLE exists and is unique with probability 1 as long as the number of sample points is larger than the size of the largest clique of $G$ when $G$ is chordal.
We show that the MLE is consistent when the graph $G$ is a disjoint union of cliques.
Finally, we discuss the conditions under which a log-concave density in the graphical model of $G$ has a log-concave factorization according to $G$.
\end{abstract}

\blfootnote{{\em Key words and phrases.} Log-concave density estimation, Maximum likelihood estimation, Graphical models, Chordal graphs, Convex decomposition of functions.}
\blfootnote{{\em 2020 Mathematics Subject Classification.} 62G05, 62H22, 62H12, 26B25.}
\blfootnote{{\em E-mail addresses.} {\tt kaie.kubjas@aalto.fi}, {\tt olga.kuznetsova@aalto.fi}, {\tt erobeva@math.ubc.ca},\\{\tt psemnani@math.ubc.ca}, {\tt luca.sodomaco@aalto.fi}}

\section{Introduction}

In this paper we study the problem of maximum likelihood estimation of a log-concave density that lies in a given undirected graphical model. Models of such densities give a non-parametric generalization of the well-studied Gaussian graphical models~\cite{lauritzen1996graphical}. We show that the maximum likelihood estimate admits a nice form given by tent functions, and we compute it via a finite-dimensional convex program. Furthermore, we prove existence and uniqueness of the maximum likelihood estimate for chordal graphs.

Shape-constrained density estimation is a type of non-parametric density estimation.
Classes of shape-constrained densities that have previously been studied include  non-increasing~\cite{grenander_theory_1956}, convex~\cite{groeneboom2001estimation}, $k$-monotone~\cite{balabdaoui2007estimation}, $s$-concave~\cite{doss2016global} and quasi-concave densities~\cite{koenker2010quasi}. Log-concave density estimation was first studied in~\cite{walther2002detecting} in dimension 1. The class of log-concave distributions includes many important parametric distributions, has attractive statistical properties and their MLE is known to exist and can be computed with established optimization algorithms without the need to choose a smoothing parameter. As a result, log-concavity has since gained popularity in applications~\cite{bagnoli2005log}. A concise description of the log-concave maximum likelihood estimator in higher dimensions was given in~\cite{cule2010maximum}.
  
In this paper, we consider log-concave density estimation in undirected graphical models.
A graphical model is a statistical model where a graph is used to encode conditional independence relations between a few random variables. Graphical models have origins in various areas of science: statistical mechanics, genetics, graph theory, economics and social sciences. Their attractiveness is due to a number of factors including ease of visualisation, modularity, and compatibility with data structures of modern computers~\cite[Chapter 1.1]{lauritzen1996graphical}.
Gaussian graphical models are a prominent class of models that has received attention both from a theoretical and applications point of view. We here provide a non-parametric generalization of this class.
We refer the reader to~\cite{lauritzen1996graphical} for the theory of graphical models and~\cite{maathuis2018handbook} for the latest developments in the field.

Let $G=(V,E)$ be an undirected graph with vertex set $V=[d]\coloneqq\{1,\ldots,d\}$ and edge set $E$ which contains 2-element subsets of $V$. A subset $C\subseteq V$ such that $\{i,j\}\in E$ for all distinct $i,j\in C$ is called a {\em clique} of $G$.
We denote the set of maximal cliques of $G$ by $\sC(G)$.

To each vertex $i\in V$ we associate a random variable $X_i$. Let the random vector $X = (X_1,\ldots, X_d)\in\R^d$ have a distribution function that admits a probability density $f_0$ on $\R^d$ with respect to the Lebesgue measure $\mu$.
The probability density $f_0$ of $X$ is said to \emph{factorize according to $G$} if a.e. 
\begin{equation}\label{eq:factorization}
    f_0(x)=\frac{1}{Z}\prod_{C \in \sC(G)}\psi_C(x_C)\,,\quad x_C=(x_i\colon i\in C),
\end{equation}
where $\psi_C\ge 0$. 
The factors $\psi_C$ are called the \emph{clique potentials} and $Z$ is a normalizing constant. The clique potentials are not unique and are not required to integrate to 1.
Given an i.i.d. sample $\sX=\{X^{(1)},\ldots,X^{(n)}\}\subseteq\R^d$ from a distribution with unknown density $f_0$, our task is to find the best estimate $f_0$ in the class of densities that factorize according to the graph $G$. To do this we maximize the {\em log-likelihood function}
\begin{equation}\label{eq: log-likelihood}
\ell(f,\sX) \coloneqq \sum_{i=1}^n w_i \log f(X^{(i)})\,.
\end{equation}
Here the weights $w_1,\ldots,w_n$ are positive and add up to 1. Often they are taken to equal $1/n$, but having general weights allows us to give different levels of importance to the different samples. If no assumptions other than the factorization~\eqref{eq:factorization} are imposed on the unknown density, then the likelihood function is unbounded, and a maximum likelihood estimate does not exist~\cite{maathuis2018handbook}.

Rather than imposing a parametric assumption on $\sF_G$, such as Gaussianity, here we impose a less restrictive non-parametric assumption. 

\begin{definition}\label{def: log-concave factorization}
A probability density $f$ on $\R^d$ has a {\em log-concave factorization according to a graph $G = (V, E)$} with $|V| = d$ if we can write
\begin{equation*}\label{eq: factorization}
    f(x) = \frac{1}{Z}\prod_{C\in\sC(G)}\psi_C(x_C)\,,  
\end{equation*}
where each of the clique potentials $\psi_C$ is nonnegative and log-concave, i.e., its logarithm $\log\psi_C$ is concave. We denote by $\sF_G$ the set of upper semi-continuous densities on $\R^d$ that admit a log-concave factorization according to the graph $G$. 
\end{definition}

The upper semi-continuity condition helps us avoid considering densities that differ on a set of Lebesgue measure zero.
While every function that has a log-concave factorization according to $G$ is log-concave and factorizes according to $G$, the converse does not always hold. We devote Section~\ref{sec:decomposition-of-convex-functions} to this question.

Our aim now is to maximize the log-likelihood of the observed sample $\sX$ over all densities $f$ in the family $\sF_G$:
\begin{equation}\label{eq: opt_problem}
\text{maximize}_{f\in\sF_G}\,\ell(f,\sX)\,.
\end{equation}

\textbf{Results.} The {\em maximum likelihood estimate (MLE)} is defined as a maximizer $\hat{f}$ of the optimization problem~\eqref{eq: opt_problem}. The support of the MLE $\hat{f}$ always contains the convex hull of the sample $\sX$, but, depending on the graph, it can be larger. We give a complete characterization of it in Proposition~\ref{prop: support}. While the optimization problem~\eqref{eq: opt_problem} is over the family $\sF_G$ which is infinite-dimensional, it turns out that the MLE $\hat{f}$ has a special form in terms of tent functions (piecewise linear concave functions), so we do not need to optimize over all of $\sF_G$. 
We show in Theorem~\ref{thm: shape of MLE tent} that the MLE is the exponential of the sum of several tent functions, one for each maximal clique $C\in\sC(G)$, and can be computed via the finite-dimensional problem~\eqref{eq:new_opt_problem}. Each of these tent functions only uses variables from one of the maximal cliques $C\in\sC(G)$, and has tent poles located at the projection of the sample $\sX$ onto the coordinates from $C$. The sum of these tent functions is also a tent function, but its tent pole locations are not a priori clear from the data $\sX$.
As a consequence of Proposition~\ref{prop: subdivision_intersection}, we show how to compute these locations, and we use this result in the implementation of the solution to the optimization problem~\eqref{eq:new_opt_problem}, which we describe in detail in Section~\ref{sec:optimization}.

We show in Theorem~\ref{thm: existence and uniqueness MLE} that, given an undirected chordal graph $G$, the MLE $\hat{f}$ exists and is unique with probability 1 as long as the number of sample points $n$ is greater than the size of the largest clique of $G$. We conjecture that this result can be extended to any undirected graph.   
When the graph $G$ is a disjoint union of cliques, we show in Theorem~\ref{thm:consistency-for-connected-components} that the MLE is also consistent.

\smallskip
\textbf{Related work.} Maximum likelihood estimation has been studied for other families of graphical models. For example, one could impose that the interaction terms $\psi_{C}(x_C)$ belong to tensor product spaces with additional smoothness properties~\cite{lin2000tensor}. Another approach is to restrict the feasible set to the set of densities of the form 
\begin{equation}\label{eq: factorization simplified}
    f(x) = \frac{1}{Z}\prod_{\{i,j\}\in E}\psi_{i,j}(x_i,x_j)\prod_{i\in V}\psi_{i}(x_i)\,,  
\end{equation}
that is, omit higher-order interaction terms. When the underlying graph is acyclic, equation~\eqref{eq: factorization simplified} simplifies to the estimation of bivariate and univariate marginals, which is done using kernel density estimation~\cite{bach2003beyond,  liu2011forest}. In another direction of work, parametric assumptions on conditional densities~\cite{chen2015selection} or joint densities~\cite{liu2012high} are added to move the problem into a semi-parametric setting.
Furthermore, the estimation of the normalizing constant $Z$ can be avoided by using the Fisher-Hyv\"arinen score instead of Kullback–Leibler divergence~\cite{hyvarinen2005estimation}.

A prominent example of parametric log-concave graphical models is the case of Gaussian graphical model estimation~\cite[Chapter 9]{maathuis2018handbook}. In particular, the sample size required for the existence of the MLE has been studied in~\cite{buhl1993existence, MR3014306}. Here, chordality plays an important role, a theme that will be echoed in Section~\ref{sec:existence}.

When the underlying graph $G$ is complete, the problem~\eqref{eq: opt_problem} is equivalent to classical log-concave maximum likelihood estimation. The existence and uniqueness of this estimator in higher dimensions was shown in~\cite{cule2010maximum} and studied in~\cite{robeva2019geometry} from a geometric point of view. We refer the reader to~\cite{samworth2018recent} for a review of the theoretical properties of log-concave densities and their relevance in statistical problems,~\cite{saumard2014log} for analytical properties, and~\cite{walther2009inference} for applications in modeling and inference. While the class of log-concave densities is too large from a statistical point of view \cite{kim2016global}, subclasses of log-concave densities have been considered in the past. For example, independent component analysis for log-concave densities was studied in~\cite{samworth2012independent}, log-concave densities arising from a piecewise linear function with a fixed number of linear pieces were studied in~\cite{kim2018adaptation}, and log-concave densities that are also totally positive were studied in~\cite{robeva2021maximum}.

\smallskip
\textbf{Organization.} 
The rest of the paper is organized as follows. We begin with Section~\ref{sec:support}, in which we describe the support of the MLE. In Section~\ref{sec:form} we show that the MLE is a product of exponentials of tent functions, and can therefore be computed via a finite-dimensional optimization problem. In Section~\ref{sec:existence} we show existence and uniqueness of the MLE for chordal graphs. In Section~\ref{sec:location-of-tentpoles} we show how to characterize the sum of the tent functions that comprise the MLE. In Section~\ref{sec:optimization} we describe an algorithm that computes the MLE, and provide examples. In Section~\ref{sec: disjoint union} we discuss the particular case when the graph is given by a set of disjoint cliques, which includes the setting when the set of edges is empty and the variables $X_1,\ldots, X_d$ are independent. In Section~\ref{sec:decomposition-of-convex-functions} we provide some conditions on the graph $G$ and on the density under which being log-concave and factorizing according to $G$ results in 
having a log-concave factorization according to $G$.

\smallskip
\textbf{Notation.} Given an integer $k\ge 1$, we denote by $[k]$ the set $\{1,\ldots,k\}$.
If not specified, we always consider graphs $G=(V,E)$ where $V=[d]$ for some integer $d\ge 1$. We call $\sC(G)$ the set of maximal cliques of $G$. Given a nonempty subset $S\subseteq[d]$, we denote by $\pi_S$ the projection of $\R^d$ onto the coordinates in $S$, and by $\R^S$ the ambient space of such projection. When $S=V$ we simply write $\R^d$ instead of $\R^V$. Furthermore, for any vector $x\in\R^d$, we use the shorthand $x_S$ to indicate the projection $\pi_S(x)$. More explicitly, if $S=\{s_1,\ldots,s_m\}\subseteq[d]$, we use the shorthand $\pi_{s_1\cdots s_m}$ instead of $\pi_{\{s_1,\ldots,s_m\}}$.

We usually write $f$ for a density on $\R^V=\R^d$. Given subsets $A,B\subseteq V$, we call $f_A$ and $f_{A|B}$ the corresponding marginal and conditional densities on $\R^A$. We always denote by $\sX$ an i.i.d. sample of $n$ points $\{X^{(1)},\ldots, X^{(n)}\}$ obtained from a probability density $f$ on $\R^d$.  Given a subset $S\subseteq[d]$, we call $\sX_S$ its projection $\pi_S(\sX)=\{X_S^{(1)},\ldots, X_S^{(n)}\}$.

\section{Support of the MLE}\label{sec:support}

In this section we assume the existence of the MLE $\hat{f}$ that solves the problem \eqref{eq: opt_problem}.
We show that the support of $\hat{f}$, i.e. the set where $ \hat{f} > 0$, is a convex polytope. Note that the support of $\hat{f}$ has to be convex since $\hat{f}$ is log-concave.
If we drop the factorization assumption of Definition \ref{def: log-concave factorization} and require only log-concavity of the MLE as in~\cite{cule2010maximum}, then the MLE is supported on the convex hull $\conv(\sX)$ of the sample $\sX$. In our setting, we show that the support of $\hat{f}$ over the family $\sF_G$ is a convex polytope that contains $\conv(\sX)$ and depends on the sample $\sX$ and the graph $G$.

\begin{proposition}\label{prop: support}
Let $G = (V, E)$ be an undirected graph, and let $\sC(G)$ be the set of maximal cliques of $G$. Let $\sX=\{X^{(1)},\ldots, X^{(n)}\}$ be an i.i.d. sample from a probability density $f_0$ on $\R^d$. 
Then, if the MLE $\hat{f}$ over $\sF_G$ exists, it is supported on
\begin{equation}\label{eq: support of MLE for undirected}
\sS_{G,\sX} \coloneqq \bigcap_{C\in\sC(G)} \pi_C^{-1}\left(\pi_C(\conv(\sX))\right)\,.
\end{equation}
\end{proposition}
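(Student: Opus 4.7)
The plan is to fix any log-concave factorization $\hat f(x)=\frac{1}{Z}\prod_{C\in\sC(G)}\psi_C(x_C)$ of the MLE and to prove $\supp(\hat f)=\sS_{G,\sX}$ via the identity $\supp(\hat f)=\bigcap_{C\in\sC(G)}\pi_C^{-1}(\supp(\psi_C))$. It then suffices to show the two factor-wise inclusions $\supp(\psi_C)\subseteq\pi_C(\conv(\sX))$ and $\pi_C(\conv(\sX))\subseteq\supp(\psi_C)$ for each $C\in\sC(G)$.

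For the first inclusion (support contained in $\sS_{G,\sX}$), I would fix $C^*\in\sC(G)$, set $P\coloneqq\pi_{C^*}(\conv(\sX))$, and replace $\psi_{C^*}$ with $\tilde\psi_{C^*}\coloneqq\psi_{C^*}\cdot\mathbf{1}_{P}$. This is still nonnegative and log-concave because $P$ is convex, so the modified density $\tilde f\coloneqq\tilde Z^{-1}\tilde\psi_{C^*}\prod_{C\ne C^*}\psi_C$ equals $\hat f\cdot\mathbf{1}_{\pi_{C^*}^{-1}(P)}\cdot Z/\tilde Z$ pointwise and lies in $\sF_G$. Since every $X^{(i)}_{C^*}$ lies in $P$, one has $\tilde f(X^{(i)})=\hat f(X^{(i)})\,Z/\tilde Z$ for all $i$; and $\tilde Z\le Z$ because $\tilde\psi_{C^*}\le\psi_{C^*}$ pointwise. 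Optimality of $\hat f$ therefore forces $\tilde Z=Z$, so the nonnegative integrand $\prod_{C\ne C^*}\psi_C(x_C)\cdot(\psi_{C^*}-\tilde\psi_{C^*})(x_{C^*})$ vanishes almost everywhere. On $\supp(\hat f)$ the first factor is strictly positive, which forces $\psi_{C^*}(x_{C^*})=\tilde\psi_{C^*}(x_{C^*})$ and hence $x_{C^*}\in P$. Intersecting over $C^*\in\sC(G)$ yields $\supp(\hat f)\subseteq\sS_{G,\sX}$.

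For the reverse inclusion, optimality gives $\hat f(X^{(i)})>0$ for every $i$ (otherwise $\ell(\hat f,\sX)=-\infty$, contradicting maximality as $\sF_G$ contains densities strictly positive on $\conv(\sX)$, e.g.\ a uniform density on a large box, trivially factored). Thus $\psi_C(X^{(i)}_C)>0$ for every maximal clique $C$ and every $i$, so $X^{(i)}_C\in\supp(\psi_C)$. Log-concavity of $\psi_C$ makes $\supp(\psi_C)$ convex, and projection commutes with taking convex hulls, so $\pi_C(\conv(\sX))=\conv\{X^{(i)}_C:i\in[n]\}\subseteq\supp(\psi_C)$. Pulling back under $\pi_C$ and intersecting over $C\in\sC(G)$ delivers $\sS_{G,\sX}\subseteq\supp(\hat f)$.

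The main delicate point is the verification that the perturbed density $\tilde f$ in the first step is a legitimate element of $\sF_G$. One must confirm $\tilde Z>0$ (so that $\tilde f$ is a genuine density) and that $\tilde f$ is upper semi-continuous. Positivity of $\tilde Z$ holds because $\hat f$ is strictly positive on a neighborhood of any sample point, which is contained in the cylinder $\pi_{C^*}^{-1}(P)$; upper semi-continuity is preserved because $\tilde f$ is the pointwise product of the u.s.c.\ function $\hat f$ with the indicator of the closed polyhedron $\pi_{C^*}^{-1}(P)$. Beyond these technical checks, the argument is a single-clique shrinkage perturbation in the spirit of the support characterization for the unconstrained log-concave MLE in \cite{cule2010maximum}.
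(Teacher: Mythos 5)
Your proof is correct and follows essentially the same route as the paper: the reverse inclusion is argued exactly as in the paper (positivity at the sample points plus convexity of the supports of the log-concave potentials), and your clique-by-clique truncation of $\psi_{C^*}$ to $\pi_{C^*}(\conv(\sX))$ is just a factored version of the paper's single step of restricting $\hat f$ to $\sS_{G,\sX}$ and rescaling to contradict optimality. The only quibble is your justification that $\tilde Z>0$: a sample point need not have a neighborhood on which $\hat f$ is positive, nor one contained in the cylinder $\pi_{C^*}^{-1}(P)$; the correct reason is the same implicit non-degeneracy the paper relies on in its own rescaling step (existence of the MLE forces $\supp(\hat f)$, hence $\supp(\hat f)\cap\pi_{C^*}^{-1}(P)\supseteq\conv(\sX)$, to carry positive mass).
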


\begin{proof}
Let $\supp(\hat{f})$ be the support of $\hat{f}$. First, we show that $\sS_{G,\sX}\subseteq\supp(\hat{f})$.
Since $\supp(\hat{f})$ is convex and $\hat{f}(X^{(i)})\neq 0$ for all $i\in[n]$ (or else the log-likelihood $\ell(\hat{f},\sX)$ is $-\infty$), then $\conv(\sX)\subseteq\supp(\hat{f})$.
Recall that $\hat{f}$ lies in the graphical model corresponding to $G$, hence it factors as
\begin{equation}\label{eq: factorization estimator}
\hat{f}(x) = \prod_{C\in\sC(G)} \psi_C(x_C)\,.
\end{equation}
Consider a point $x\in \conv(\sX)$. Then $\hat{f}(x)$ is positive, and by \eqref{eq: factorization estimator} every factor $\psi_C(x_C)$ for all $C\in\sC(G)$ is positive as well.
Therefore, the function $\psi_C$ is positive on $\pi_C(\conv(\sX))$, for $C\in\sC(G)$.
Thus, $\hat{f}$ is positive on the intersection $\sS_{G,\sX}$, in particular $\sS_{G,\sX}\subseteq\supp(\hat{f})$.

Now suppose that $\sS_{G,\sX}\subsetneq\supp(\hat{f})$.
Consider the restriction ${\hat{f}}|_{\sS_{G,\sX}}$. It attains the same log-likelihood value $\ell(\hat{f},\sX)$ as $\hat{f}$, it is log-concave, and it has a log-concave factorization according to the graph $G$ (where the factor corresponding to $C\in\sC(G)$ is restricted to $\pi_C(\conv(\sX))$). Furthermore, ${\hat{f}}|_{\sS_{G,\sX}}$ might have integral strictly less than 1, in which case, we have to rescale it by a constant larger than 1.
This yields an even higher likelihood value, contradicting the fact that $\hat{f}$ is the MLE.
Therefore $\sS_{G,\sX}=\supp(\hat{f})$.
\end{proof}

\begin{remark}
Proposition~\ref{prop: support} holds more generally if we replace $\sF_G$ by the set of upper semi-continuous log-concave probability densities that factor according to $G$ without the requirement that the clique potentials $\phi_C$ are log-concave.
\end{remark}

\begin{example}\label{ex: shape of S_n}
Consider the graph $G$ in Figure \ref{fig:simple chordal graph}. The maximal cliques of $G$ are the two edges in $G$. Suppose we have the sample $\sX=\{(0,1,2),(1,5,4),(2,3,5)\}$. The region $\sS_{G,\sX}$ and its projections $\pi_{12}(\sS_{G,\sX})$ and $\pi_{23}(\sS_{G,\sX})$ are represented in Figure \ref{fig: Sn-xyz}. The set $\pi_{12}^{-1}(\pi_{12}(\conv(\sX)))$ is the prism based on the light blue triangle on the $(x_1,x_2)$-plane with vertices $\{(0,1),(1,5),(2,3)\}$. Analogously $\pi_{23}^{-1}(\pi_{23}(\conv(\sX)))$ is the prism based on the yellow  triangle on the $(x_2,x_3)$-plane with vertices $\{(1,2),(5,4),(3,5)\}$. The polytope $\sS_{G,\sX}$ is the region of intersection between the two prisms. It has five vertices: the three points of $\sX$ and the points $P=(2,3,3)$ and $Q=(1/2,3,5)$. 
\begin{figure}[ht]
    \begin{subfigure}[t]{.5\textwidth}
    \centering
    \vspace{-100pt}
    \includegraphics[scale=0.5]{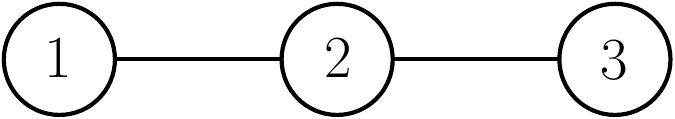}
    \caption{}
    \label{fig:simple chordal graph}
    \end{subfigure}
    \hspace{-50pt}
    \begin{subfigure}[t]{.5\textwidth}
    \centering
    \begin{overpic}[width=0.7\textwidth, tics=10]{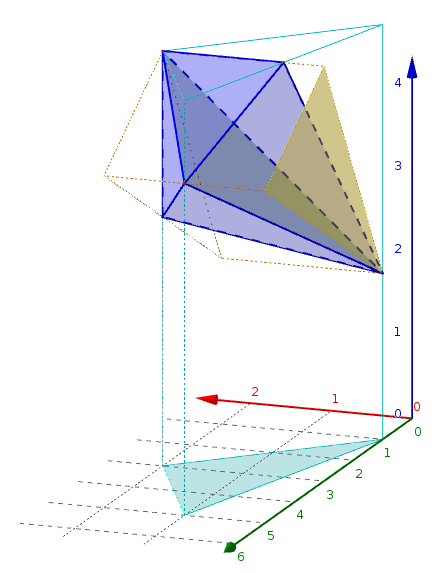}
    \put (21,92) {\red{\small{$X^{(3)}$}}}
    \put (31,63) {\red{\small{$X^{(2)}$}}}
    \put (64,48) {\red{\small{$X^{(1)}$}}}
    \put (22,60) {\small{$P$}}
    \put (49,91) {\small{$Q$}}
    \put (30,30) {\scriptsize{\red{$x_1$}}}
    \put (36,2) {\scriptsize{\textcolor{OliveGreen}{$x_2$}}}
    \put (70,92) {\scriptsize{\blue{$x_3$}}}
    \end{overpic}
    \caption{}
    \label{fig: Sn-xyz}
    \end{subfigure}%
    \caption{A simple chordal graph $G$ and the polytope $\sS_{G,\sX}$ of Example \ref{ex: shape of S_n}.}
    \label{fig: S_n}
\end{figure}
\end{example}

The next result computes the minimum number of sample points needed for the support $\sS_{G,\sX}$ of $\hat{f}$ to be full-dimensional. Its proof is located in Appendix \ref{appendix: proofs section 2,4,5}.

\begin{lemma}\label{lemma: support of undirected is full dimensional}
Let $G$ be an undirected graph. Let $\sX$ be an i.i.d. sample from a probability density $f_0$ on $\R^d$ that has full-dimensional support. 
If $n=|\sX| \ge \max_{C\in \sC(G)}|C|+1$, then the support $\sS_{G,\sX}$ defined in \eqref{eq: support of MLE for undirected} of the MLE $\hat{f}$ is a.s. full-dimensional. 
\end{lemma}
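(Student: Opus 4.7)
The plan is to exhibit an explicit point lying in the interior of $\sS_{G,\sX}$, namely the sample centroid $\bar{X} \coloneqq \frac{1}{n}\sum_{i=1}^n X^{(i)}$. Since each $\pi_C$ is a coordinate projection, hence an open surjection, one has $\mathrm{int}_{\R^d}(\pi_C^{-1}(A)) = \pi_C^{-1}(\mathrm{int}_{\R^C}(A))$ for any $A\subseteq\R^C$. Together with the fact that a finite intersection of open sets is open, it suffices to show that for every maximal clique $C\in\sC(G)$, the point $\pi_C(\bar X)$ lies in the interior of $\pi_C(\conv(\sX))$ inside $\R^C$.

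First I would show that $\pi_C(\conv(\sX))=\conv(\pi_C(\sX))$ has nonempty interior in $\R^C$ almost surely. Since $f_0$ has full-dimensional support, its marginal $f_{0,C}$ has full-dimensional support on $\R^C$: an open ball inside $\supp(f_0)$ projects to an open ball in $\R^C$ on which the fiber integral defining the marginal is positive. The projected sample $\pi_C(\sX)$ therefore consists of $n \ge |C|+1$ i.i.d. draws from an absolutely continuous distribution whose support has nonempty interior in $\R^C$. A routine induction then shows that $X_C^{(1)},\ldots,X_C^{(|C|+1)}$ are affinely independent almost surely: at each step the affine span of the previously drawn points is a proper affine subspace of $\R^C$, hence has Lebesgue measure zero, so the next independent draw misses it with probability one. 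Consequently $\conv(\pi_C(\sX))$ is full-dimensional in $\R^C$ with probability one.

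To finish, I would observe that $\pi_C(\bar X) = \frac{1}{n}\sum_{i=1}^n \pi_C(X^{(i)})$ is a strict convex combination of the elements of $\pi_C(\sX)$, with every weight equal to $1/n > 0$. A strict convex combination of finitely many points always lies in the relative interior of their convex hull, and by the previous step this relative interior coincides with the topological interior of $\pi_C(\conv(\sX))$ in $\R^C$. Combining these observations yields $\bar X \in \mathrm{int}_{\R^d}(\sS_{G,\sX})$, so $\sS_{G,\sX}$ is full-dimensional. The only delicate ingredient is the almost-sure affine independence of the projected samples, which is a routine measure-theoretic fact; the remainder is essentially bookkeeping about how linear maps and interiors interact with convex hulls.
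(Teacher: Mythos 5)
Your argument is correct and follows essentially the same route as the paper's: both exhibit a point (you use the sample centroid, the paper an arbitrary point of the relative interior of $\conv(\sX)$) whose projection onto each maximal clique $C$ lies in the interior of the a.s.\ full-dimensional polytope $\pi_C(\conv(\sX))$, and then lift this to an interior point of $\sS_{G,\sX}$; you merely make explicit the affine-independence argument that the paper asserts. The one caveat is that your aside about $\supp(f_0)$ containing an open ball is neither guaranteed by ``full-dimensional support'' alone nor needed---the induction only uses that the marginal onto $\R^C$ is absolutely continuous, which already follows from $f_0$ being a density on $\R^d$---but this does not affect the validity of the proof.
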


Using the factorization~\eqref{eq:factorization} of densities in the model $\sF_G$ allowed us to determine precisely what the support $\sS_{G, \sX}$ of the MLE $\hat{f}$ is. In Section~\ref{sec:existence}, it will also be useful to consider the conditional independence statements satisfied by the random vectors whose densities lie in $\sF_G$.

Recall that, by the Hammersley-Clifford Theorem~\cite[Theorem 3.9]{lauritzen1996graphical}, any density $f\in\sF_G$ satisfies the {\em global Markov property} for the graph $G$. In other words, for any three disjoint subsets of vertices $A, B, S\subseteq V$ such that $S$ separates $A$ and $B$ in the graph $G$ (i.e., every path between a vertex in $A$ and a vertex in $B$ has to go through a vertex in $S$), we must have that $X_A\indep X_B|X_S$ (in short, $A\indep B|S$) for the random vector $X$ whose density is $f$.

Now suppose that given a sample $\sX$, the density $\hat{f}$ is the MLE over $\sF_G$. Thus the convex hull $\conv(\sX)$ is contained in the support of $\hat{f}$. But what other points are in the support of $\hat{f}$? Suppose that $X_A\indep X_B|X_C$ is a global Markov statement for $G$ such that $\{A,B,S\}$ is a partition of $V$. Then, for two points $(x_A, x_B, x_S), (y_A,y_B,x_S)\in\conv(\sX)$, we have that
\begin{align*}
0 &< \hat{f}(x_A,x_B,x_S) \hat{f}(y_A,y_B,x_S) \\
&= \hat{f}(x_A|x_S) \hat{f}(x_B|x_S) \hat{f}(y_A|x_S)\hat{f}(y_B|x_S) \hat{f}(x_S)^2 \\
&= \hat{f}(x_A,y_B,x_S) \hat{f}(x_A,y_B,x_S)\,.
\end{align*}
Therefore, the points $(x_A,y_B,x_S)$ and $(y_A,x_B,x_S)$ lie in the support of $\hat{f}$ as well. Starting from $\conv(\sX)$ (which we know lies in the support of $\hat{f}$), we can try to add more points until we obtain the whole support of $\hat{f}$. This construction is useful for bounding the values of $\hat{f}$ on its support if we know its minimum and maximum over $\conv(\sX)$. We use such bounds in the proof of our existence and uniqueness Theorem~\ref{thm: existence and uniqueness MLE}.

\begin{definition}\label{def: sets Di}
Let $G$ be an undirected graph with $d$ vertices. Let $\sI_G$ be the set of global Markov statements $A\indep B|S$ for $G$ such that the collection $\{A,B,S\}$ is a partition of $V$.
Define $\sD^{(0)}_{G}\coloneqq\conv(\sX)\subseteq\R^d$. For $i\ge 1$, we iteratively define
\begin{equation}\label{eq: def sets D^i}
\sD^{(i)}_{G} \coloneqq \conv\bigcup_{A \indep B|S\in\sI_G}\{(x_A,y_B,x_S),(y_A,x_B,x_S)\mid(x_A,x_B,x_S),(y_A,y_B,x_S)\in\sD^{(i-1)}_{G}\}\,.
\end{equation}
\end{definition}

The following proposition shows that the sequence $(\sD_G^{(i)})_i$ always has a set-theoretic limit, and the limit is contained in the support $\sS_{G,\sX}$. The proof is located in Appendix \ref{appendix: proofs section 2,4,5}.

\begin{proposition}\label{prop: limit D_G i exists}
Let $G$ be an undirected graph with $d$ vertices. The set-theoretic limit $\lim_{i\to\infty}\sD_G^{(i)}$ exists and is a bounded convex set of $\R^d$. In particular $\lim_{i\to\infty}\sD_G^{(i)}=\bigcup_{i\ge 0}\sD_G^{(i)}\subseteq\sS_{G, \sX}$.
\end{proposition}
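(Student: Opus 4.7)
The plan is to establish three things in sequence: (i) the chain $\{\sD_G^{(i)}\}_{i\ge 0}$ is increasing, (ii) $\sS_{G,\sX}$ is itself convex and bounded, and (iii) each $\sD_G^{(i)}$ is contained in $\sS_{G,\sX}$. Given these, the monotone union $\bigcup_{i\ge 0}\sD_G^{(i)}$ coincides with the set-theoretic limit of the sequence, is convex as an increasing union of convex sets, and is bounded by containment in $\sS_{G,\sX}$.

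Monotonicity $\sD_G^{(i-1)}\subseteq\sD_G^{(i)}$ follows by taking $(x_A,x_B,x_S)=(y_A,y_B,x_S)=z$ in~\eqref{eq: def sets D^i}: every $z\in\sD_G^{(i-1)}$ then appears as a ``swapped'' point $(x_A,y_B,x_S)$ in the set whose convex hull defines $\sD_G^{(i)}$. Convexity and boundedness of $\sS_{G,\sX}$ are straightforward: it is an intersection of convex cylinders $\pi_C^{-1}(\pi_C(\conv(\sX)))$, hence convex; and since every vertex $v\in V$ belongs to some maximal clique $C_v$, the $v$-th coordinate of any point of $\sS_{G,\sX}$ is bounded by the $v$-th coordinates attained on $\conv(\sX)$.

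The heart of the proof is the inductive step $\sD_G^{(i)}\subseteq\sS_{G,\sX}$. The base case $\sD_G^{(0)}=\conv(\sX)\subseteq\sS_{G,\sX}$ is immediate since each cylinder $\pi_C^{-1}(\pi_C(\conv(\sX)))$ trivially contains $\conv(\sX)$. For the inductive step, since $\sS_{G,\sX}$ is convex, it suffices to show that whenever $(x_A,x_B,x_S),(y_A,y_B,x_S)\in\sS_{G,\sX}$ and $A\indep B\mid S\in\sI_G$, the swapped points $(x_A,y_B,x_S)$ and $(y_A,x_B,x_S)$ also lie in $\sS_{G,\sX}$. The crucial combinatorial input is that, because $S$ separates $A$ from $B$ in $G$, there is no edge between $A$ and $B$, and therefore every maximal clique $C\in\sC(G)$ satisfies $C\subseteq A\cup S$ or $C\subseteq B\cup S$. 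In the first case $\pi_C(x_A,y_B,x_S)=\pi_C(x_A,x_B,x_S)\in\pi_C(\conv(\sX))$ using $(x_A,x_B,x_S)\in\sS_{G,\sX}$; in the second case the same holds via $(y_A,y_B,x_S)$. Hence $(x_A,y_B,x_S)\in\pi_C^{-1}(\pi_C(\conv(\sX)))$ for every $C$, so it lies in $\sS_{G,\sX}$; the argument for $(y_A,x_B,x_S)$ is identical.

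The main obstacle is precisely this combinatorial observation tying the global Markov separation $A\indep B\mid S$ to the clique structure defining $\sS_{G,\sX}$; everything else is a routine monotone-limit argument. With the clique dichotomy in hand, the conclusion that $\bigcup_{i\ge 0}\sD_G^{(i)}\subseteq \sS_{G,\sX}$ is a bounded convex set equal to the set-theoretic limit follows directly.
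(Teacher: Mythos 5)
Your proof is correct, and the overall skeleton (nondecreasing sequence of convex sets, all contained in the bounded convex set $\sS_{G,\sX}$, hence the set-theoretic limit exists and equals the union) matches the paper's. Where you genuinely diverge is in the justification of the key containment $\sD_G^{(i)}\subseteq\sS_{G,\sX}$. The paper disposes of this in one line by saying that the MLE $\hat f$ is positive on every $\sD_G^{(i)}$ by construction and then invoking Proposition~\ref{prop: support}; this leans on the probabilistic computation preceding Definition~\ref{def: sets Di} and, implicitly, on the existence of the MLE, which is only established later (and only for chordal graphs). You instead give a purely combinatorial induction: since $\{A,B,S\}$ partitions $V$ and $S$ separates $A$ from $B$, no maximal clique meets both $A$ and $B$, so every $C\in\sC(G)$ lies in $A\cup S$ or in $B\cup S$, and each swapped point projects onto $C$ exactly as one of the two original points does; convexity of $\sS_{G,\sX}$ then absorbs the convex hull in~\eqref{eq: def sets D^i}. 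This is essentially the same mechanism as the paper's Proposition~\ref{prop: equivalent def of X A ind B given S}, but deployed directly here. Your route is self-contained and avoids any appeal to the MLE, which is arguably cleaner for a statement that is purely about the sets $\sD_G^{(i)}$; the paper's route is shorter given the machinery it has already set up.
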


It is natural to ask whether the sequence $(\sD_G^{(i)})_i$ stabilizes after finitely many steps, and if the limit coincides with the support $\sS_{G, \sX}$. This is true for {\em chordal graphs}, which are introduced in Definition~\ref{def: chordal graph}.

\begin{proposition}\label{prop: achieve support}
Let $G$ be a chordal graph. Consider the polytope $\sS_{G,\sX}$ introduced in \eqref{eq: support of MLE for undirected} and the sets $\sD^{(i)}_{G}$ of Definition \ref{def: sets Di}. 
Then $\lim_{i\to\infty}\sD_G^{(i)}=\sD^{(t-1)}_{G}=\sS_{G,\sX}$, where $t=|\sC(G)|$.
\end{proposition}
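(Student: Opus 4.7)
Plan: By Proposition~\ref{prop: limit D_G i exists} we already know $\bigcup_{i \geq 0} \sD_G^{(i)} \subseteq \sS_{G,\sX}$, and the sequence $(\sD_G^{(i)})_i$ is nondecreasing (pairing any point with itself in any Markov operation recovers the point, so $\sD_G^{(i-1)} \subseteq \sD_G^{(i)}$). Hence once we establish the reverse inclusion $\sS_{G,\sX} \subseteq \sD_G^{(t-1)}$, the chain $\lim_{i\to\infty}\sD_G^{(i)} = \sD_G^{(t-1)} = \sS_{G,\sX}$ follows: for every $i \geq t-1$ the sequence is sandwiched between $\sD_G^{(t-1)}$ and $\sS_{G,\sX}$ and so stabilizes.

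For the reverse inclusion, the approach is to exploit the junction-tree structure afforded by chordality. Let $T$ be a clique tree of $G$; it has $t$ nodes and exactly $t-1$ edges, one per minimal separator. Root $T$ at an arbitrary maximal clique $C_1$ and enumerate the remaining cliques $C_2,\dots,C_t$ by breadth-first search, so that each $C_{k+1}$ has a unique parent $C_{p(k+1)} \in \{C_1,\dots,C_k\}$ in $T$. Given a target $x \in \sS_{G,\sX}$, the definition of $\sS_{G,\sX}$ supplies for each $k \in [t]$ a point $y^{(k)} \in \conv(\sX)$ with $\pi_{C_k}(y^{(k)}) = \pi_{C_k}(x)$. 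Set $z^{(0)} := y^{(1)} \in \sD_G^{(0)}$ and iteratively build $z^{(k+1)} \in \sD_G^{(k+1)}$ for $k = 0,1,\dots,t-2$ as follows. Cutting $T$ along the edge $\{C_{p(k+1)}, C_{k+1}\}$ splits it into two subtrees whose vertex-unions $V_1 \ni C_{p(k+1)}$ and $V_2 \ni C_{k+1}$ satisfy $V_1 \cup V_2 = V$ and $V_1 \cap V_2 = S_{k+1} := C_{p(k+1)} \cap C_{k+1}$; the junction-tree property guarantees that $S_{k+1}$ separates $V_1\setminus S_{k+1}$ from $V_2\setminus S_{k+1}$ in $G$, so $(V_1\setminus S_{k+1}) \indep (V_2\setminus S_{k+1}) \mid S_{k+1}$ is a global Markov statement belonging to $\sI_G$. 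Apply this Markov operation to the pair $z^{(k)}, y^{(k+1)} \in \sD_G^{(k)}$, letting $z^{(k+1)}$ inherit its $(V_1\setminus S_{k+1})$-coordinates from $z^{(k)}$ and its $(V_2\setminus S_{k+1})$-coordinates from $y^{(k+1)}$.

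The key bookkeeping is the invariant $\pi_{H_{k+1}}(z^{(k+1)}) = \pi_{H_{k+1}}(x)$, where $H_j := C_1 \cup \cdots \cup C_j$, proved by induction on $k$. The matching of $S_{k+1}$-coordinates needed to apply the Markov operation is ensured by $S_{k+1} \subseteq C_{p(k+1)} \subseteq H_k$, combined with the inductive hypothesis $\pi_{H_k}(z^{(k)}) = \pi_{H_k}(x)$ and the defining property of $y^{(k+1)}$; preservation of the already-matched $H_k$-coordinates then follows from $H_k \subseteq V_1$, while the fresh $C_{k+1}$-coordinates come from $y^{(k+1)}$ because $C_{k+1}\setminus S_{k+1}\subseteq V_2\setminus S_{k+1}$. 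After $t-1$ iterations $H_t = V$, hence $z^{(t-1)} = x$ and $x \in \sD_G^{(t-1)}$. The main obstacle I anticipate is picking the correct Markov statement at each step: a more naive choice based on the already-processed vertex set (such as $(H_k\setminus S_{k+1}) \indep (V\setminus H_k) \mid S_{k+1}$) can fail when some not-yet-processed clique other than $C_{k+1}$ shares vertices with $H_k$ outside $S_{k+1}$, as small examples show. The junction-tree cut along an edge of $T$ sidesteps this uniformly, because both sides of the cut are complete subtrees and the separation is built into the clique tree property.
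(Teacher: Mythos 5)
Your argument is correct and follows essentially the same route as the paper's proof: both root a junction tree, use the fact that cutting a tree edge yields a global Markov statement whose separator is the corresponding clique intersection (Lemma~\ref{lem:CIJT}), pick for each clique a point of $\conv(\sX)$ projecting to $x_{C_k}$, and incorporate one maximal clique per Markov operation, reaching $x$ after $t-1$ steps. The only differences are organizational --- you run a single top-down induction along a BFS order of the cliques, whereas the paper runs a two-level (height-then-children) induction that merges subtrees bottom-up --- plus a harmless off-by-one in your indices: at step $k=0$ the edge $\{C_{p(k+1)},C_{k+1}\}$ would be the nonexistent edge above the root, so the step producing $z^{(k+1)}$ should cut the edge above $C_{k+2}$ and use $y^{(k+2)}$ (equivalently, re-run the loop over $k=1,\dots,t-1$), which does not change the count of operations or the conclusion $x\in\sD_G^{(t-1)}$.
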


The proof of this Proposition uses junction trees and is located in Appendix~\ref{appendix: proofs section 2,4,5}. The rough idea behind it is as follows. First, we know that $\sD_{G}^{(i)}\subseteq\sS_{G, \sX}$ for any $i$. Now, for any point $a\in\sS_{G, \sX}$, we first note that for any clique $C\in\sC(G)$ there exists a point $a^{(C)}\in \conv(\sX) = \sD^{(0)}_{G}$ such that the projections of $a$ and $a^{(C)}$ on the coordinates from $C$ agree, i.e., $a_C = a^{(C)}_C$. Using a junction tree $\sT$ for $G$, we ``peel off'' leaves from $\sT$ to show that we can find points in $\sD_{G}^{(i)}$ that agree with $a$ along more and more coordinates as we increase $i$. Finally, we see that the point $a$ itself lies in $\sD_{G}^{(t-1)}$, where $t=|\sC(G)|$. Thus, $\sD_{G}^{(t-1)} = \sS_{G, \sX}$.

If the graph $G$ is not chordal, we can consider a {\em chordal cover} $\tilde{G}$ of it (see Appendix~\ref{app:background}), and show that $\sD_G^{(i)}\subseteq \sS_{G, \sX}$ is a full-dimensional subset when $i$ is large enough.

\begin{corollary}\label{corol: enough_samples}
Let $G$ be an undirected graph and let $\tilde{G}$ be a chordal cover of $G$.
If $|\sX|=n\ge\max_{C\in \sC(\tilde{G})}|C|+1$, then $\sD^{(t-1)}_{\tilde{G}}$ (which is contained in $\sS_{G, \sX}$) is a full dimensional subset in $\R^d$ almost surely, where $t=|\sC(\tilde{G})|$.
\end{corollary}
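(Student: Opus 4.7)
The plan is to combine Proposition~\ref{prop: achieve support} and Lemma~\ref{lemma: support of undirected is full dimensional}, applied not to $G$ but to the chordal cover $\tilde{G}$. Since $\tilde{G}$ is chordal by definition, Proposition~\ref{prop: achieve support} gives $\sD^{(t-1)}_{\tilde{G}} = \sS_{\tilde{G}, \sX}$, where $t = |\sC(\tilde{G})|$. The sample-size hypothesis $n \ge \max_{C\in\sC(\tilde{G})}|C|+1$ is exactly the condition needed to invoke Lemma~\ref{lemma: support of undirected is full dimensional} for the graph $\tilde{G}$, which then yields that $\sS_{\tilde{G}, \sX}$ is full-dimensional almost surely. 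Chaining these two identifications produces the main assertion: $\sD^{(t-1)}_{\tilde{G}}$ is a.s.\ full-dimensional in $\R^d$.

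It remains to verify the parenthetical inclusion $\sD^{(t-1)}_{\tilde{G}} \subseteq \sS_{G, \sX}$. By the previous paragraph this reduces to showing $\sS_{\tilde{G}, \sX} \subseteq \sS_{G, \sX}$. Since $\tilde{G}$ is a chordal cover of $G$, we have $E(G) \subseteq E(\tilde{G})$, so each maximal clique $C \in \sC(G)$ is a (not necessarily maximal) clique of $\tilde{G}$, hence is contained in some $C' \in \sC(\tilde{G})$. Using that $\pi_C = \pi_C \circ \pi_{C'}$ whenever $C \subseteq C'$, one checks that $\pi_{C'}^{-1}(\pi_{C'}(\conv(\sX))) \subseteq \pi_C^{-1}(\pi_C(\conv(\sX)))$; intersecting the left-hand sides over $C' \in \sC(\tilde{G})$ gives a set contained in $\sS_{G, \sX}$, which is the desired inclusion.

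No substantive obstacle arises here: the corollary is a direct assembly of two earlier results together with a clean projection/containment step between cliques of $G$ and of $\tilde{G}$. The only implicit condition worth flagging is that Lemma~\ref{lemma: support of undirected is full dimensional} assumes $f_0$ has full-dimensional support, which should be taken as part of the standing assumption on the sample $\sX$.
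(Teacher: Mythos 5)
Your proposal is correct and follows essentially the same route as the paper: invoke Lemma~\ref{lemma: support of undirected is full dimensional} for $\tilde{G}$ to get that $\sS_{\tilde{G},\sX}$ is a.s.\ full-dimensional, note $\sS_{\tilde{G},\sX}\subseteq\sS_{G,\sX}$ because $\tilde{G}$ covers $G$, and identify $\sS_{\tilde{G},\sX}=\sD^{(t-1)}_{\tilde{G}}$ via Proposition~\ref{prop: achieve support}. Your explicit verification of the clique-containment step and your flagging of the full-dimensional-support hypothesis on $f_0$ are both details the paper leaves implicit, and both are handled correctly.
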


The next proposition, whose proof is located in Appendix~\ref{appendix: proofs section 2,4,5}, provides an alternative characterization of the sets $\sD_G^{(i)}$. This allowed us to write an algorithm that computes the sets $\sD_G^{(i)}$ given a graph $G$ and a sample $\sX$. The algorithm has been implemented in \texttt{Macaulay2} \cite{GS} and its code is available at \cite{githubrepo}. 

\begin{proposition}\label{prop: equivalent def of X A ind B given S}
Let $D$ be a subset of $\R^d$ and $\{A,B,S\}$ be a partition of $[d]$. Consider the subset
\[
\sP_{A,B,S}(D)\coloneqq\{(x_A,y_B,x_S),(y_A,x_B,x_S)\mid(x_A,x_B,x_S),(y_A,y_B,x_S)\in D\}\,.
\]
Then
\[
\sP_{A,B,S}(D) = \pi_{A\cup S}^{-1}(\pi_{A\cup S}(D))\cap \pi_{B\cup S}^{-1}(\pi_{B\cup S}(D))\,.
\]
\end{proposition}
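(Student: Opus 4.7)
The plan is to verify the set equality by unwinding the defining existential statements on each side so that they both become the same three-witness condition on $D$, and then check the two inclusions directly. The heart of the matter is just a bookkeeping argument: which coordinates of a candidate point $z = (z_A, z_B, z_S)$ are matched by which witness in $D$.

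First I would rewrite the left-hand side in the symmetric form: a point $z \in \mathbb{R}^d$ lies in $\sP_{A,B,S}(D)$ if and only if there exist $p, q \in D$ with $p_S = q_S = z_S$ such that $p_A = z_A$ and $q_B = z_B$. Indeed, a generic element of $\sP_{A,B,S}(D)$ is written as $(x_A, y_B, x_S)$ with $(x_A, x_B, x_S), (y_A, y_B, x_S) \in D$, and we can take $p = (x_A, x_B, x_S)$, $q = (y_A, y_B, x_S)$; the other form $(y_A, x_B, x_S)$ gives the same condition by swapping the roles of $p$ and $q$.

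Next I would unwind the right-hand side. Membership of $z$ in $\pi_{A\cup S}^{-1}(\pi_{A\cup S}(D))$ is, by definition, the existence of $p \in D$ with $p_A = z_A$ and $p_S = z_S$. Similarly, $z \in \pi_{B\cup S}^{-1}(\pi_{B\cup S}(D))$ means there exists $q \in D$ with $q_B = z_B$ and $q_S = z_S$. Intersecting these conditions yields precisely the same three-witness statement as on the left.

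For the forward inclusion, given $z = (x_A, y_B, x_S) \in \sP_{A,B,S}(D)$ with witnesses $(x_A, x_B, x_S), (y_A, y_B, x_S) \in D$, the first witness certifies $z \in \pi_{A \cup S}^{-1}(\pi_{A\cup S}(D))$ (agreeing with $z$ on $A\cup S$), and the second certifies $z \in \pi_{B \cup S}^{-1}(\pi_{B\cup S}(D))$. For the reverse inclusion, given $z$ in the intersection, pick witnesses $p, q \in D$ with $p_{A\cup S} = z_{A\cup S}$ and $q_{B\cup S} = z_{B\cup S}$; writing $p = (z_A, p_B, z_S)$ and $q = (q_A, z_B, z_S)$, the two points $(z_A, p_B, z_S), (q_A, z_B, z_S) \in D$ share their $S$-coordinate $z_S$ and produce $z = (z_A, z_B, z_S)$ as an element of $\sP_{A,B,S}(D)$. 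Since $\{A,B,S\}$ is a partition of $[d]$, no additional compatibility conditions arise, and the proof is complete. There is no substantive obstacle beyond keeping the coordinate labels straight.
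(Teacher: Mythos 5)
Your proof is correct and follows essentially the same route as the paper's: both directions are verified by tracing the witnesses in $D$, and your reverse inclusion is nearly identical to the paper's. The only difference is cosmetic --- for the forward inclusion the paper decomposes $D$ into pairs with matching $S$-coordinates and manipulates unions and intersections of preimages, whereas you simply observe that each of the two witnesses directly certifies membership in one of the two preimages; your version is a streamlined form of the same bookkeeping.
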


\begin{example}\label{ex: convergent sequence polytopes 4-cycle}
Computing the limit $\lim_{i\to\infty}\sD_G^{(i)}$ for non-chordal graphs $G$ is still an open problem. Consider the $4$-cycle $G=([4],\{\{1,2\},\{2,3\},\{3,4\},\{4,1\}\})$. In particular $G$ is not chordal. Let $\sX=\{(7, 2, 8, 0), (3, 7, 9, 3), (7, 9, 8, 4), (8, 0, 1, 8)\}$. Using our algorithm to compute the sets $\sD_G^{(i)}$, we obtain that $\lim_{i\to\infty}\sD_G^{(i)}=\sD_G^{(7)}=\sS_{G,\sX}$. This polytope is the convex hull of the $18$ points whose coordinates are displayed in Table \ref{table: vertices support MLE 4-cycle} in Appendix \ref{appendix: proofs section 2,4,5}.
\end{example}

Following up on Example \ref{ex: convergent sequence polytopes 4-cycle}, we observed that when considering the same graph $G$ but different samples $\sX$, the minimum index $s$ such that $\lim_{i\to\infty}\sD_G^{(i)}=\sD_G^{(s)}$ may change, and may also be arbitrarily large. In some cases we had to interrupt the computation of the sets $\sD_G^{(i)}$, nevertheless we have experimental evidence that the sequence of volumes $(\mu(\sD_G^{(i)}))_i$ converges to $\mu(\sS_{G,\sX})$. This motivates the following conjecture.

\begin{conjecture}\label{conj: limit equal to support}
Let $G$ be an undirected graph with $d$ vertices. Let $\sX\subseteq\R^d$ be an i.i.d. sample from a non-degenerate probability density $f_0$ on $\R^d$. Consider the support $\sS_{G,\sX}$ of the MLE $\hat{f}$ over $\sF_G$ and the sets $\sD^{(i)}_{G}$ of Definition \ref{def: sets Di}. Then $\lim_{i\to\infty}\sD_G^{(i)}=\sS_{G,\sX}$.
\end{conjecture}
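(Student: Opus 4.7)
The plan is to establish the reverse inclusion $\sS_{G,\sX}\subseteq\lim_i\sD_G^{(i)}$, since $\lim_i\sD_G^{(i)}\subseteq\sS_{G,\sX}$ is already known from Proposition~\ref{prop: limit D_G i exists}. Writing $\sD_G^{(\infty)}\coloneqq\bigcup_{i\ge 0}\sD_G^{(i)}$, this set is convex as an increasing union of convex sets and, by Proposition~\ref{prop: equivalent def of X A ind B given S}, it is stable under every splicing operator $\sP_{A,B,S}$ associated to a partition-Markov statement $A\indep B|S\in\sI_G$. The task reduces to showing that $\sD_G^{(\infty)}$ contains every point of $\sS_{G,\sX}$.

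Fix $a\in\sS_{G,\sX}$. By the definition of $\sS_{G,\sX}$, for each maximal clique $C\in\sC(G)$ there is a witness $a^{(C)}\in\conv(\sX)\subseteq\sD_G^{(\infty)}$ with $a^{(C)}_C=a_C$. The first step is to stitch these witnesses into $a$ by successive splices. For chordal $G$ the junction tree gives a finite peeling order, as in Proposition~\ref{prop: achieve support}. For general $G$ there is no junction tree, so I would attempt a direct approximation argument: reduce to the case where the non-chordality of $G$ is concentrated in a single induced chordless cycle $C_k$, using that the partition-Markov statements in $\sI_G$ localized to such a cycle come precisely from pairs of non-adjacent vertices of the cycle (as in Example~\ref{ex: convergent sequence polytopes 4-cycle}). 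For the cycle subproblem, the key auxiliary claim would be that iterated splicing combined with the convex hull step in the definition of $\sD_G^{(i+1)}$ makes $\sD_G^{(\infty)}$ dense in $\sS_{G,\sX}$; together with convexity of $\sD_G^{(\infty)}$ and closedness of the polytope $\sS_{G,\sX}$, density would imply that $\sD_G^{(\infty)}$ contains the relative interior of $\sS_{G,\sX}$, and a separate boundary analysis would finish the proof.

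The main obstacle is the chordless-cycle step: unlike the chordal case there is no tree structure to serialize the stitching, and each splice is constrained by a separator of size at least two whose coordinates must be matched exactly, so no finite-round peeling algorithm is available. Any proof must quantitatively control how splicing closes the gap between $\sD_G^{(i)}$ and $\sS_{G,\sX}$; the authors' numerical evidence that $\mu(\sD_G^{(i)})\to\mu(\sS_{G,\sX})$ suggests the required control exists, but extracting a combinatorial rate of convergence from the cycle structure of $G$ is where the real difficulty concentrates. A secondary challenge is to upgrade the prospective measure-theoretic convergence into genuine set-theoretic equality, which requires ruling out pathological boundary behavior in the limit.
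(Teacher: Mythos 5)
This statement is Conjecture~\ref{conj: limit equal to support}, which the paper explicitly leaves open: the authors prove only the easy inclusion $\lim_i\sD_G^{(i)}\subseteq\sS_{G,\sX}$ (Proposition~\ref{prop: limit D_G i exists}) and the chordal case (Proposition~\ref{prop: achieve support}), and for non-chordal $G$ they offer only experimental evidence that $\mu(\sD_G^{(i)})\to\mu(\sS_{G,\sX})$. Your proposal correctly assembles these known ingredients, but it does not prove the conjecture, and you essentially say so yourself. The entire mathematical content of the statement is concentrated in the step you label as ``the key auxiliary claim would be that iterated splicing\dots makes $\sD_G^{(\infty)}$ dense in $\sS_{G,\sX}$'': that is precisely the open problem, and no argument for it is given. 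Until that claim is established, nothing has been proved beyond what Propositions~\ref{prop: limit D_G i exists} and~\ref{prop: achieve support} already contain.

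Two further steps in your outline are also gaps rather than routine verifications. First, the proposed reduction ``to the case where the non-chordality of $G$ is concentrated in a single induced chordless cycle'' is not justified; a general non-chordal graph can have many overlapping chordless cycles whose separators interact, and the partition-Markov statements in $\sI_G$ do not localize to individual cycles in any obvious way (note that $\sI_G$ only contains statements where $\{A,B,S\}$ partitions \emph{all} of $V$). Second, even granting density, the conclusion would only be that the convex set $\sD_G^{(\infty)}=\bigcup_i\sD_G^{(i)}$ contains the relative interior of $\sS_{G,\sX}$; an increasing union of polytopes need not be closed, so the asserted set-theoretic equality with the closed polytope $\sS_{G,\sX}$ requires the boundary analysis you defer, and it is not clear that it can be carried out (the paper's own Example~\ref{ex: convergent sequence polytopes 4-cycle} shows stabilization at a finite stage in one instance, but also reports cases where the index at which stabilization might occur appears to be arbitrarily large). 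In short, the proposal is an honest research plan, not a proof, and the conjecture remains open.
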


\section{Form and computation of the MLE}\label{sec:form}

Assuming the MLE exists, in this section we show that the MLE has a specific form in terms of {\em tent functions} (see Definition~\ref{def: tent function} and Theorem~\ref{thm: shape of MLE tent}$(i)$). While the optimization problem~\eqref{eq: opt_problem} is a priori infinite-dimensional, the tent function formulation implies that we can instead use a finite-dimensional problem for finding the MLE (Theorem~\ref{thm: shape of MLE tent}$(ii)$).

\begin{definition}\label{def: tent function}
Consider a subset $X=\{x^{(1)},\ldots,x^{(n)}\}$ in $\R^d$ and a vector $y=(y_1,\ldots,y_n)$ in $\R^n$.
A function $h_{X,y}\colon\R^d\to\R\cup\{\infty\}$ is called a {\em tent function} with tent poles located at $x^{(i)}$ and heights $y_i$ if $h_{X,y}$ is the least concave function satisfying $h_{X,y}(x^{(i)})\ge y_i$ for all $i\in[n]$.
In particular $h_{X,y}$ is piecewise linear, it is greater than $-\infty$ on $\conv(X)$ and equals $-\infty$ otherwise.
\end{definition}

We now state the main result of this section.

\begin{theorem}\label{thm: shape of MLE tent}
Let $G$ be an undirected graph, and let $\sC(G)$ be its set of maximal cliques. Let $\sX=\{X^{(1)},\ldots, X^{(n)}\}$ be an i.i.d. sample from a probability density  $f_0$  that has a log-concave factorization according to $G$.
Assume that the MLE $\hat f$ over $\sF_G$ exists. Then
\begin{itemize}
\item[$(i)$] the MLE $\hat{f}$ has the form
\begin{equation}\label{eq: factorization f hat}
\hat{f}(x) = \frac{1}{Z}\prod_{C\in\sC(G)}
\exp(h_{\mathcal X_C, y_C}(x_C))\,,
\end{equation}
where the tent function $h_{\sX_C, y_C}$ is supported on the projection $\sX_C$ of $\sX$ with heights $y_C = (y^{(1)}_C,\ldots, y^{(n)}_C)$.
\item[$(ii)$] The set of unknown heights $(y_C)_{C\in\sC(G)}$ is the solution of the optimization problem
\begin{equation}\label{eq:new_opt_problem}
\max_{(y_C)_{C\in\sC(G)}}\tau((y_C)_{C\in\sC(G)})\,,
\end{equation}
where the objective function $\tau\colon\R^{n|\sC(G)|}\to\R$ is defined by
\[
\tau((y_C)_{C\in\sC(G)})\coloneqq\sum_{i=1}^n\sum_{C\in\sC(G)}w_i y_C^{(i)} - \int_{\R^d}\prod_{C\in\sC(G)}\exp(h_{\sX_C, y_C})\,d\mu\,.
\]
Furthermore, if $(\hat{y}_C)_{C\in\sC(G)}$ is the output of \eqref{eq:new_opt_problem} and $\hat{g}(x)\coloneqq\prod_{C\in\sC(G)}\exp(h_{\sX_C,\hat{y}_C}(x))$, then $\hat{g}$ is a density.
\end{itemize}
\end{theorem}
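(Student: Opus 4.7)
The two parts are linked: part $(i)$ pins down the structural form of any MLE, and part $(ii)$ recasts the search for the MLE as a finite-dimensional concave program in the heights.

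\textbf{Part $(i)$, tent-function replacement.} I start from the log-concave factorization $\hat f(x)=\tfrac{1}{Z}\prod_C\psi_C(x_C)$ guaranteed by $\hat f\in\sF_G$, set $\phi_C\coloneqq\log\psi_C$ (concave), and define heights $y_C^{(i)}\coloneqq\phi_C(X_C^{(i)})$. By the minimality in Definition~\ref{def: tent function}, $h_{\sX_C,y_C}\le\phi_C$ pointwise on $\conv(\sX_C)$ with equality at every pole $X_C^{(i)}$. Hence $\tilde g(x)\coloneqq\prod_C\exp(h_{\sX_C,y_C}(x_C))$ satisfies $\tilde g\le Z\hat f$ everywhere and $\tilde g(X^{(i)})=Z\hat f(X^{(i)})$ for all $i$. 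Integrating, $\tilde Z\coloneqq\int\tilde g\,d\mu\le Z$, so the renormalized $\tilde f\coloneqq\tilde g/\tilde Z$ satisfies $\ell(\tilde f,\sX)=\ell(\hat f,\sX)+\log(Z/\tilde Z)\ge\ell(\hat f,\sX)$. Since $\hat f$ is the MLE, this forces $\tilde Z=Z$, and then the pointwise inequality $\tilde g\le Z\hat f$ with equal integrals gives $\tilde g=Z\hat f$ almost everywhere. Both sides are upper semi-continuous (tent functions are concave, so their exponentials are u.s.c.; $\hat f$ is u.s.c. by definition of $\sF_G$), so equality is pointwise, yielding~\eqref{eq: factorization f hat}.

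\textbf{Part $(ii)$, scaling identity.} For parameters $y=(y_C)_{C\in\sC(G)}$, write $\hat g_y\coloneqq\prod_C\exp(h_{\sX_C,y_C})$, $Z(y)\coloneqq\int\hat g_y\,d\mu$, and $f_y\coloneqq\hat g_y/Z(y)\in\sF_G$. A direct expansion yields
\[
\ell(f_y,\sX)-\tau(y)=\sum_{i=1}^n w_i\sum_{C\in\sC(G)}\bigl(h_{\sX_C,y_C}(X_C^{(i)})-y_C^{(i)}\bigr)+\bigl(Z(y)-\log Z(y)\bigr),
\]
and both summands are nonnegative — the first because tent functions meet or exceed their specified heights at the poles, the second because $t-\log t\ge 1$ for $t>0$. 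Consequently $\tau(y)\le\ell(f_y,\sX)-1\le\ell(\hat f,\sX)-1$, with joint equality iff (a) every tent value equals its height at every pole and (b) $Z(y)=1$. From part $(i)$, the MLE is realized by some $y^{\ast}$ meeting both conditions: (a) follows from the sandwich $h_{\sX_C,y^{\ast}_C}\le\log\psi_C$ together with the pole inequality $h_{\sX_C,y^{\ast}_C}(X_C^{(i)})\ge y^{\ast (i)}_C=\log\psi_C(X_C^{(i)})$, forcing equality; (b) is achieved by distributing $-\log Z$ among the cliques' heights. Thus $y^{\ast}$ maximizes $\tau$, and conversely any maximizer $\hat y$ must also satisfy (a)–(b), so that $\hat g=\hat g_{\hat y}$ is already a probability density.

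\textbf{Anticipated obstacle.} The delicate points are the a.e.-to-pointwise upgrade in part $(i)$, which genuinely relies on the upper semi-continuity convention built into $\sF_G$ (without it the identification would hold only modulo null sets), and in part $(ii)$ the verification that the representative $y^{\ast}$ drawn from part $(i)$ really achieves the pole-equality condition (a) simultaneously for every $i$ and $C$. The two-sided sandwich between $h_{\sX_C,y^{\ast}_C}$ and $\log\psi_C$ — one direction from the minimality definition of the tent function, the other from $h$ meeting or exceeding its specified heights at the poles — is what makes this work.
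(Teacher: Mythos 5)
Your proof is correct, and while it rests on the same two core ideas as the paper's (the tent function $h_{\sX_C,y_C}$ is the least concave minorant matching $\log\psi_C$ at the poles, and the calibration inequality $t-\log t\ge 1$), the execution differs in ways worth noting. For part $(i)$ the paper argues clique-by-clique and by contradiction: it replaces a single potential $\widehat\psi_C$ by $\exp(h_{\sX_C,y_C})$, shows the resulting integral drops below $1$, and renormalizes to beat $\hat f$. You instead replace all potentials simultaneously and run the renormalization forward, letting the MLE property force $\tilde Z=Z$ and hence $\tilde g=Z\hat f$ a.e.; this is a direct argument and avoids the paper's slightly awkward ``differs on a set of positive measure'' bookkeeping. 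One caveat: your a.e.-to-pointwise upgrade needs more than bare upper semi-continuity (two u.s.c. functions can differ on a null set); what actually saves you is that both $\log\tilde g$ and $\log(Z\hat f)$ are closed concave, so a.e. agreement gives agreement on the interior of the support by continuity and on the boundary by the closure convention built into $\sF_G$. For part $(ii)$ your identity $\ell(f_y,\sX)-\tau(y)=\sum_i w_i\sum_C\bigl(h_{\sX_C,y_C}(X_C^{(i)})-y_C^{(i)}\bigr)+\bigl(Z(y)-\log Z(y)\bigr)$ is a genuine improvement in completeness: the paper only proves that a maximizer of $\tau$ has $Z=1$ (via the same perturbation $z_{C_0}=\hat y_{C_0}-\log(T)\mathbf 1$ and the function $T-\log T-1$) and then asserts the equivalence with maximizing the likelihood, whereas your identity simultaneously yields the uniform bound $\tau(y)\le\ell(\hat f,\sX)-1$, exhibits a maximizer via part $(i)$, and characterizes all maximizers as giving densities that are MLEs — exactly the ``furthermore'' clause.
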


\begin{proof}
$(i)$ Consider the factorization \eqref{eq: factorization f hat}
into log-concave potentials $\widehat{\psi}_C$ for all $C\in\sC(G)$. Suppose that, for some clique $C$, there exists a full-dimensional set of points of $\R^{C}$ where the potential $\widehat{\psi}_C$ does not agree with the exponential of a tent function supported on $\sX_C$.
Then, consider the heights $y_C = (y^{(1)}_C,\ldots, y^{(n)}_C)$, where $y^{(i)}_C = \widehat{\psi}_C(X^{(i)}_C)$, and the tent function $h_{\sX_C,y_C}\colon \R^{C}\to\R$ supported on $\sX_C$ with heights $y_C$. Consider the new function
$$
\hat{f}_1(x) \coloneqq \frac{1}{Z}\exp(h_{\sX_C,y_C}(x_C))\cdot\prod_{C'\in\sC(G), C'\neq C} \widehat{\psi}_{C'}(x_{C'})\,,
$$
where we have replaced $\hat\psi_C(x_C)$ by $\exp(h_{\sX_C,y_C}(x_C))$.

Since by assumption $\exp(h_{\sX_C,y_C})$ differs from $\widehat{\psi}_C$ on a set of positive measure, then $\exp(h_{\sX_C,y_C}(x_C))$ is strictly smaller than $\widehat{\psi}_C(x_C)$ on a set of positive measure.
In turn $\hat{f}_1(x)$ is strictly smaller than $\hat{f}(x)$ on a set of positive measure, hence $Z_1\coloneqq\int_{\R^d}\hat{f}_1\,d\mu<\int_{\R^d}\hat{f}\,d\mu = 1$.
Now consider the function
$$
\frac{1}{Z_1}\hat{f}_1(x) = \frac{1}{Z_1\,Z}\exp(h_{\sX_C,y_C}(x_C))\prod_{C'\in\sC(G), C'\neq C} \widehat{\psi}_{C'}(x_{C'})\,,
$$
which is a density on $\R^d$ and factorizes according to the graph $G$.
Its log-likelihood equals
\begin{align*}
\ell\left(\frac{1}{Z_1}\hat{f}_1,\sX\right) &= \sum_{i=1}^n w_i\left[-\log(Z_1)-\log(Z) + h_{\sX_C, y_C}(X^{(i)}_C) + \sum_{C'\neq C} \hat \psi_{C'}(X^{(i)}_{C'})\right]\\
&= \sum_{i=1}^n w_i\left[-\log(Z_1)-\log(Z) + \widehat{\psi}_C(X^{(i)}_C) + \sum_{C'\neq C} \hat \psi_{C'}(X^{(i)}_{C'})\right]\\
&= -\log(Z_1)\sum_{i=1}^n w_i + \ell(\hat{f},\sX) = -\log(Z_1) + \ell(\hat{f},\sX) > \ell(\hat{f},\sX)\,,
\end{align*}
where the first equality follows by construction of $h_{\sX_C,y_C}$ and the last inequality follows since $Z_1 < 1$, and, therefore, $-\log(Z_1)>0$. 
Thus the density $\hat{f}_1/Z_1$ lies in our model, but $\ell(\hat{f}_1/Z_1,\sX)>\ell(\hat{f},\sX)$, contradicting the hypothesis on $\hat{f}$.
Therefore, each function $\widehat{\psi}_C$ equals the exponential of a tent function $\exp(h_{\sX_C, y_C})$ supported on $\sX_C$.

\medskip
$(ii)$ Let $(\hat{y}_C)_{C\in\sC(G)}$ be the output of the problem~\eqref{eq:new_opt_problem}. We show that
\[
\int_{\R^d}\prod_{C\in\sC(G)}\exp(h_{\sX_C,\hat{y}_C}(x_C))d\mu(x)\eqqcolon T = 1\,.
\]
Suppose $T\neq 1$. Pick a clique $C_0\in\sC(G)$, and consider the new heights $\{z_C\mid C\in\sC(G)\}$, where $z_C = \hat{y}_C$ if $C\neq C_0$, and $z_{C_0} = \hat{y}_{C_0} - \log(T)\mathbf 1$.
Then
\[
\int_{\R^d}\prod_{C\in\sC(G)}\exp(h_{\sX_C,z_C}(x_C))d\mu(x) = \frac{1}{T}\int_{\R^d}\prod_{C\in\sC(G)}\exp(h_{\sX_C,\hat{y}_C}(x_C))d\mu(x)= 1\,,
\]
and, in turn, 
\[
\sum_{C\in\sC(G)}\sum_{i=1}^n w_i z_C^{(i)} = \sum_{C\neq C_0}\sum_{i=1}^n w_i \hat{y}_C^{(i)}+\sum_{i=1}^n w_i(\hat{y}_{C_0}^{(i)}-\log(T)) = \sum_{C\in\sC(G)}\sum_{i=1}^n w_i \hat{y}_C^{(i)} - \log(T)\,.
\]
Therefore, the new objective function in~\eqref{eq:new_opt_problem} with heights $\{z_C\}_{C\in\sC(G)}$ is
\begin{equation}\label{eq: new obj function}
\sum_{C\in\sC(G)}\sum_{i=1}^n w_i z_C^{(i)} - \int_{\R^d}\prod_{C\in\sC(G)}\exp(h_{\sX_C,z_C}(x_C))d\mu(x) = \sum_{C\in\sC(G)}\sum_{i=1}^n w_i \hat{y}_C^{(i)} - \log(T) - 1\,.
\end{equation}
The difference between the objective function value in \eqref{eq: new obj function} and the original one in \eqref{eq:new_opt_problem} is $\Delta(T)=T-\log(T)-1$. This difference has to be non-positive since the heights $(\hat y_C)_{C\in\sC(G}$ maximize the objective function.
But note that the function $\Delta(T)$ is always nonnegative on its domain $(0,+\infty)$ and vanishes precisely at $T=1$. Hence $T=1$, since otherwise the new heights $(z_C)_{C\in\sC(G)}$ would give a higher objective value.

Therefore, the maximizers $(\hat{y}_C)_{C\in\sC(G)}$ of~\eqref{eq:new_opt_problem} always yield the integral to equal 1.
Therefore, they also maximize the log likelihood function subject to the integral of the density being equal to 1, and, thus, the heights $(\hat{y}_C)_{C\in\sC(G)}$ are optimal, and give us the MLE $\hat f = \exp(\sum_{C\in\sC(G)} h_{\sX_C, y_C}(x_C))$.
\end{proof}

\begin{remark}
Theorem \ref{thm: shape of MLE tent}$(i)$ states that the MLE $\hat f$ is the exponential of a sum of tent functions (which is a tent function itself) so that each of these tent functions only uses variables from a given clique $C\in\sC(G)$. 
In Section~\ref{sec:location-of-tentpoles} we will describe the tent poles of $\log\hat{f}$.
\end{remark}

We end this section by noting that if the graph has more than one connected component, then, the MLE is the product of the MLEs for the different components.

\begin{proposition} \label{prop:mle-for-connected-components}
Let $G=(V,E)$ be an undirected graph. Suppose that $G=\bigsqcup_{i=1}^k G_i$, where each $G_j=(V_j, E_j)$ is a connected component of $G$, and let $\sX$ be an i.i.d. sample from a density in the family $\sF_G$. Then, the MLE $\hat{f}$ equals $\prod_{j=1}^k\hat{f}_{V_j}$, where each $\hat{f}_{V_j}$ is the MLE of a density in $\sF_{G_j}$ with respect to the projection $\sX_{V_j}$ of $\sX$.
\end{proposition}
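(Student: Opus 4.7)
The plan is to exploit the fact that a disjoint union of graphs produces disjoint clique structures, so both the feasible set $\sF_G$ and the log-likelihood decompose cleanly into independent pieces across components, reducing the joint optimization to $k$ separate ones.

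First I would observe that $\sC(G) = \bigsqcup_{j=1}^k \sC(G_j)$, and moreover, for any $C\in\sC(G_j)$ we have $C\subseteq V_j$, so cliques coming from distinct components involve disjoint sets of variables. Consequently any $f\in\sF_G$ with log-concave factorization $f(x) = \frac{1}{Z}\prod_{C\in\sC(G)}\psi_C(x_C)$ can be regrouped as
\[
f(x) \;=\; \frac{1}{Z}\prod_{j=1}^k g_j(x_{V_j}), \qquad g_j(x_{V_j}) \coloneqq \prod_{C\in\sC(G_j)}\psi_C(x_C),
\]
where each $g_j$ is a nonnegative upper semi-continuous function on $\R^{V_j}$ with a log-concave factorization according to $G_j$. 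Setting $Z_j \coloneqq \int_{\R^{V_j}} g_j\,d\mu$ (which is finite and positive because $f$ is a density and the variables separate, so Fubini gives $Z = \prod_j Z_j$), the functions $f_j \coloneqq g_j/Z_j$ are densities in $\sF_{G_j}$ and $f = \prod_{j=1}^k f_j$. Conversely, any product $\prod_j f_j$ with $f_j\in\sF_{G_j}$ belongs to $\sF_G$, so
\[
\sF_G \;=\; \Bigl\{\textstyle\prod_{j=1}^k f_j \,\Bigm|\, f_j\in\sF_{G_j}\text{ for each }j\Bigr\}.
\]

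Next I would plug this parametrization into the objective. Since $X^{(i)}_{V_j}$ appears only in $f_j$, the log-likelihood splits additively:
\[
\ell(f,\sX) \;=\; \sum_{i=1}^n w_i \sum_{j=1}^k \log f_j(X^{(i)}_{V_j}) \;=\; \sum_{j=1}^k \ell\bigl(f_j,\sX_{V_j}\bigr).
\]
Because the constraint $f\in\sF_G$ decouples into independent constraints $f_j\in\sF_{G_j}$ and the objective is a sum where the $j$-th summand depends only on $f_j$, the joint maximization problem~\eqref{eq: opt_problem} separates into the $k$ independent problems $\max_{f_j\in\sF_{G_j}}\ell(f_j,\sX_{V_j})$. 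Any maximizer $\hat f$ must therefore have the form $\prod_j \hat f_{V_j}$ with each $\hat f_{V_j}$ an MLE over $\sF_{G_j}$ relative to $\sX_{V_j}$, and conversely any such product is a maximizer.

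There is no real obstacle here; the only point requiring a line of care is the separation of $Z = \prod_j Z_j$, which follows from Tonelli's theorem applied to the nonnegative integrand $\prod_j g_j(x_{V_j})$ on the product space $\R^d = \prod_j \R^{V_j}$. Everything else — that upper semi-continuity and log-concavity of each factor $\psi_C$ are preserved under regrouping, and that the normalizing constants distribute across components — is immediate from the disjointness of the vertex sets $V_j$.
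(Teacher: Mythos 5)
Your proof is correct and follows essentially the same route as the paper's: factor the density across components, observe that the log-likelihood then splits additively as $\sum_j \ell(f_j,\sX_{V_j})$ with decoupled constraints $f_j\in\sF_{G_j}$, and conclude that the joint optimization separates. The only cosmetic difference is that you regroup the clique potentials and invoke Tonelli to split the normalizing constant, whereas the paper phrases the decomposition in terms of the marginal densities $f_{V_j}$ and a rescaling; your version is, if anything, slightly more careful about why each factor genuinely lies in $\sF_{G_j}$.
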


\begin{proof}
The connected components of $G$ yield a partition $\{V_1,\dots,V_k\}$ of the vertex set $V=[d]$, where each $V_i$ is the set of vertices of $G_i$. Since the connected components of $G$ are pairwise disconnected, every $f\in\sF_G$ satisfies the identity $f(x)=\prod_{j=1}^k f_{V_j}(x_{V_j})$, where $f_{V_j}$ is the marginalization of $f$ over $V_j$.
Therefore, the log-likelihood $\ell(f,\sX)$ simplifies as
\[
\ell(f,\sX) = \sum_{i=1}^n w_i\log\left(\prod_{j=1}^k f_{V_j}(X_{V_j}^{(i)})\right) = \sum_{j=1}^k\sum_{i=1}^n w_i \log f_{V_j}(X_{V_j}^{(i)}) = \sum_{j=1}^k\ell(f_{V_j},\sX_{V_j})\,.
\]
Furthermore, whenever $f_{V_j}\in \sF_{G_j}$ for all $j$, the product $f=\prod_{j=1}^k f_{V_j}$ is a density, and, thus, lies in $\sF_G$, because
\[
\int_{\R^{d}}\prod_{j=1}^k f_{V_j}(x_{V_j})d\mu(x)=\prod_{j=1}^k\int_{\R^{V_j}}f_{V_j}(x_{V_j})d\mu(x_{V_j})=1\,.
\]
Furthermore, if $f\in\sF_G$, then $f = \prod_{j=1}^k f_{V_j}(x_{V_j})$, and we can rescale $f_{V_1}, \ldots, f_{V_k}$ to make sure that each of them is a density so that $f_{V_j}\in\sF_{G_j}$ for all $j$.
In addition, if $\log f=\log \prod_{j=1}^k f_{V_j} = \sum_{j=1}^k \log f_{V_j}$ is concave, then each of the $\log f_{V_j}$ has to be concave because $\{V_1,\dots,V_k\}$ is a partition of $V$. 
Summing up, the original optimization problem~\eqref{eq: opt_problem} in this case is equivalent to
\begin{align*}
    \mathrm{maximize}_{f_{V_j}\in\sF_{G_j},\,j\in[k]}\,\sum_{j=1}^k\ell(f_{V_j},\sX_{V_j})\,,
\end{align*}
which is equivalent to separately solving the optimization problems
\begin{align}
    \mathrm{maximize}_{f_{V_j}\in\sF_{G_j}}\,\ell(f_{V_j},\sX_{V_j})\quad\forall\,j\in[k]\,.
\end{align}
Therefore, the MLE $\hat{f}$ equals to the product of the MLEs $\hat{f}_{V_j}$ for the different connected components $G_j$, for $j\in[k]$.
\end{proof}

We now consider the case when the graph $G$ is a disjoint union $\bigsqcup_{i=1}^k G_i$ of complete graphs $G_i$, that correspond to the connected components of $G$.
Denote the vertex set of each $G_i$ by $V_i$. Since the collections of variables $\{V_1,\dots,V_k\}$ are mutually independent in the model, every density $f\in\sF_G$ can be written as $f=\prod_{i=1}^k f_i$ where $f_i\in\sF_{G_i}$ for all $i\in[k]$.

Corollary~\ref{cor:independence-model} follows from Proposition~\ref{prop:mle-for-connected-components} and shows that, in this case, the optimisation problem~\eqref{eq: opt_problem} over $G$ can be viewed as a sum of optimisation problems on each clique $G_i$ independently.

\begin{corollary} \label{cor:independence-model}
The optimal log-concave density for the disjoint union of cliques is the product of optimal log-concave densities for each of the cliques separately. In particular, the optimal log-concave density in the independence model is the product of optimal log-concave densities for each of the variables separately. 
\end{corollary}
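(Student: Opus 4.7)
The plan is to invoke Proposition~\ref{prop:mle-for-connected-components} directly, after identifying what $\sF_{G_i}$ looks like for each complete graph $G_i$. When $G_i$ is complete with vertex set $V_i$, its unique maximal clique is $V_i$ itself, so $\sC(G_i) = \{V_i\}$ and the factorization in Definition~\ref{def: log-concave factorization} collapses to a single factor $f_i = \frac{1}{Z}\psi_{V_i}$. Since the only requirement on $\psi_{V_i}$ is nonnegativity and log-concavity, the class $\sF_{G_i}$ coincides exactly with the family of upper semi-continuous log-concave densities on $\R^{V_i}$ (i.e., the classical log-concave model of \cite{cule2010maximum}).

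Next I would apply Proposition~\ref{prop:mle-for-connected-components} to the decomposition $G = \bigsqcup_{i=1}^k G_i$ to conclude that the MLE satisfies $\hat{f} = \prod_{i=1}^k \hat{f}_{V_i}$, where each $\hat{f}_{V_i}$ is an optimizer of the log-likelihood over $\sF_{G_i}$ based on the projected sample $\sX_{V_i}$. Combining with the previous paragraph, each $\hat{f}_{V_i}$ is precisely the classical log-concave MLE with respect to $\sX_{V_i}$, proving the first assertion.

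For the second assertion, the independence model corresponds to $G = ([d], \emptyset)$, so every connected component $G_i$ is a single vertex and each $V_i$ is a singleton. Applying the first assertion, the MLE factorizes as a product of univariate log-concave MLEs, one per coordinate. Hence both claims follow with essentially no further work; the only subtlety, and the item worth stating explicitly, is the identification of $\sF_{G_i}$ with the unrestricted log-concave model when $G_i$ is complete, which is the step that makes Proposition~\ref{prop:mle-for-connected-components} deliver the corollary.
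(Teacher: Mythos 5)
Your argument is correct and follows the same route as the paper: the corollary is obtained by applying Proposition~\ref{prop:mle-for-connected-components} to the decomposition into complete connected components, with the observation that $\sF_{G_i}$ for a complete graph $G_i$ is the unrestricted log-concave model on $\R^{V_i}$. Your explicit identification of $\sF_{G_i}$ with the classical log-concave class is a helpful detail the paper leaves implicit.
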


\section{Existence and uniqueness of the MLE}\label{sec:existence}

The main result of this section is Theorem~\ref{thm: existence and uniqueness MLE}, which states the existence and uniqueness of the MLE over the family $\sF_G$ for an undirected chordal graph $G$. The notion of chordal graph is recalled in Definition~\ref{def: chordal graph}.
The minimum number of sample points required for existence and uniqueness of the MLE depends on the size of the largest clique of $G$.

\begin{theorem}\label{thm: existence and uniqueness MLE}
Let $G$ be an undirected chordal graph. Let $\sC(G)$ be the set of maximal cliques of $G$.
Let $\sX=\{X^{(1)},\ldots, X^{(n)}\}$ be an i.i.d. sample from a probability density $f_0$ on $\R^d$, and assume $|\sX|=n\ge\max_{C\in\sC(G)}|C|+1$.
Then, the MLE over $\sF_{G}$ exists and is unique with probability 1.
\end{theorem}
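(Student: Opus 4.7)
The plan is to reduce the infinite-dimensional problem to the finite-dimensional convex program of Theorem~\ref{thm: shape of MLE tent} and then treat existence and uniqueness by separate arguments, with chordality entering only in the existence step.

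First, I would verify that the objective $\tau$ in~\eqref{eq:new_opt_problem} is concave in $y=(y_C)_{C\in\sC(G)}$. For each fixed $x\in\R^d$, the tent value $h_{\sX_C,y_C}(x_C)$ equals the concave envelope of $\{(X_C^{(i)},y_C^{(i)})\}_i$ at $x_C$, and by duality is the supremum of the linear functionals $y_C\mapsto\sum_i\lambda_i y_C^{(i)}$ over $\lambda$ in the unit simplex with $\sum_i\lambda_i X_C^{(i)}=x_C$. Thus $h_{\sX_C,y_C}(x_C)$ is convex in $y_C$; summing over $C$, composing with $\exp$, and integrating each preserve convexity, so the integral term of $\tau$ is convex and $\tau$ is concave. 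The shifts $y_C\mapsto y_C+c_C\mathbf{1}$ with $\sum_C c_C=0$ leave $\hat f=\prod_C\exp(h_{\sX_C,y_C})$ unchanged, so uniqueness of the MLE means uniqueness of the $\tau$-maximizer modulo this $(|\sC(G)|-1)$-dimensional gauge.

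Second, for existence I would prove coercivity of $\tau$ modulo this gauge. Normalizing so that the density integrates to one (forced at the optimum by Theorem~\ref{thm: shape of MLE tent}$(ii)$) removes one gauge direction, and the remaining ones can be fixed by prescribing ratios between clique potentials. The sample-size hypothesis together with Lemma~\ref{lemma: support of undirected is full dimensional} ensures that $\sS_{G,\sX}$ is almost surely full-dimensional, and Proposition~\ref{prop: achieve support} (where chordality is essential) provides the explicit description $\sS_{G,\sX}=\sD_G^{(t-1)}$, built from $\conv(\sX)$ by iterated convex combinations along separators of a junction tree. With this description in hand I would argue: if some $y_C^{(i)}\to-\infty$, the linear term of $\tau$ dominates negatively; if some $y_C^{(i)}\to+\infty$ transverse to the gauge, then $\prod_C\exp(h_{\sX_C,y_C})$ exceeds a large exponential on a set of positive volume inside the junction-tree decomposition of $\sS_{G,\sX}$, so the convex integral term dominates positively. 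Coercivity together with concavity and continuity then yields a maximizer.

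Third, for uniqueness of $\hat f$ I would use a geometric-mean argument that does not invoke chordality. Suppose $\hat f_1,\hat f_2\in\sF_G$ are both MLEs with log-concave factorizations $\hat f_k=\prod_C\psi_C^{(k)}$. Define
\[g(x)\coloneqq\prod_{C\in\sC(G)}\sqrt{\psi_C^{(1)}(x_C)\,\psi_C^{(2)}(x_C)}=\sqrt{\hat f_1(x)\hat f_2(x)}.\]
Each factor is log-concave since its log equals $\tfrac{1}{2}(\log\psi_C^{(1)}+\log\psi_C^{(2)})$, so $g$ has a log-concave factorization according to $G$. Cauchy--Schwarz gives $Z\coloneqq\int g\,d\mu\le\sqrt{\int\hat f_1\,d\mu\cdot\int\hat f_2\,d\mu}=1$, with equality if and only if $\hat f_1=\hat f_2$ almost everywhere. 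Then $g/Z\in\sF_G$ attains log-likelihood $\tfrac{1}{2}(\ell(\hat f_1,\sX)+\ell(\hat f_2,\sX))-\log Z$, which strictly exceeds the MLE value whenever $Z<1$, contradicting maximality unless $\hat f_1=\hat f_2$.

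The main obstacle is the coercivity step: producing a sharp enough lower bound on the integral term when heights diverge in possibly different directions on different cliques. The chordal structure, through the junction-tree description of $\sS_{G,\sX}$ from Proposition~\ref{prop: achieve support}, is what makes this lower bound tractable, and it is the reason the sample-size requirement is only $\max_{C\in\sC(G)}|C|+1$ rather than $d+1$.
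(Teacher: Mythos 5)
Your uniqueness argument is exactly the paper's: form $g=\sqrt{\hat f_1\hat f_2}$, note that each factor $\sqrt{\psi_C^{(1)}\psi_C^{(2)}}$ is log-concave so $g/\!\int g\,d\mu\in\sF_G$, and use Cauchy--Schwarz to force $\hat f_1=\hat f_2$ a.e. That half is complete. Your reduction to the finite-dimensional program and the concavity of $\tau$ are also fine.

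The existence half has a genuine gap, and it sits exactly where you flag ``the main obstacle.'' First, the gauge is strictly larger than the $(|\sC(G)|-1)$-dimensional space of constant shifts $y_C\mapsto y_C+c_C\mathbf 1$ with $\sum_C c_C=0$: for any two cliques $C,C'$ with nonempty separator and any affine function $\ell$ of $x_{C\cap C'}$, replacing $y_C^{(i)}$ by $y_C^{(i)}+\ell(X_{C\cap C'}^{(i)})$ and $y_{C'}^{(i)}$ by $y_{C'}^{(i)}-\ell(X_{C\cap C'}^{(i)})$ leaves both the sum $h_{\sX_C,y_C}+h_{\sX_{C'},y_{C'}}$ and the linear term of $\tau$ unchanged (the map $h\mapsto h+\ell$ sends least concave majorants to least concave majorants), so $\tau$ is constant along these additional unbounded directions; more general concave transfers between potentials are possible at the level of $\sF_G$ itself. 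A coercivity argument that quotients out only the constant shifts therefore fails. Second, even with the constancy space correctly identified, your assertion that divergence transverse to it makes the integral term blow up is precisely the hard content of the theorem and is not supplied. The paper's route is to prove this in the form of Lemma~\ref{lem:bounded_components}: any bounded concave $\phi=\sum_C\phi_C$ supported on $\sS_{G,\sX}$ admits a decomposition with each summand bounded by $\rho K$ for a constant $\rho$ depending only on $G$ and $\sX$; combined with Lemmas~\ref{lem: bound log-likelihood} and~\ref{lem: lower_bound_on_function}, which bound $\log f$ above and below on $\sD_G^{(t-1)}=\sS_{G,\sX}$ for any density whose likelihood beats the uniform one (this is where chordality and Proposition~\ref{prop: achieve support} enter), this restricts the search to the compact cube $[-\rho K,\rho K]^{n|\sC(G)|}$, on which the continuous $\tau$ attains its maximum. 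You would need an analogue of Lemma~\ref{lem:bounded_components}, or an exact description of the constancy space of $\tau$ together with a quantitative lower bound on the integral off that space, to close your coercivity step; as written, the argument does not go through.
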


\begin{remark}
This result is reminiscent of the equivalent result obtained in~\cite{uhler2011geometry} regarding the minimum number of sample points in a Gaussian graphical model.
\end{remark}

The following three lemmas are key in the proof of Theorem~\ref{thm: existence and uniqueness MLE}. The first one of them shows that given a number $R\in\R$, for a density $f\in\sF_G$ to have log-likelihood at least $R$, it has to be bounded above.

\begin{lemma}\label{lem: bound log-likelihood}
Let $\tilde{G}$ be a chordal cover of the  undirected graph $G$.
Consider a sample $\sX$ with $|\sX|=n\ge\max_{C\in \sC(\tilde{G})}\{|C|\}+1$.
For every $R\in\R$ there exists a constant $M_R$ such that, for every log-concave density $f \in \sF_G$, if there is $x \in \sS_{G,\sX}$ such that $\log f(x)>M_R$, then $\ell(f,\sX)\le R$.
\end{lemma}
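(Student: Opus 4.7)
The intuition is that a log-concave density $f$ with $\int f = 1$ and a very large peak value $M = \log f(x_0)$ at some $x_0 \in \sS_{G,\sX}$ must be highly concentrated around $x_0$, so that $f$ is forced to be small at generic sample points, making $\ell(f,\sX)$ very negative.

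First, I would assume without loss of generality that $f(X^{(i)}) > 0$ for all $i$, since otherwise $\ell(f,\sX) = -\infty \le R$ holds trivially. Repeating the argument in the proof of Proposition~\ref{prop: support}, this ensures that $\sS_{G,\sX} \subseteq \supp f$. Under the hypothesis $n \ge \max_{C\in\sC(\tilde G)}|C|+1$, Corollary~\ref{corol: enough_samples} guarantees that $\sS_{G,\sX}$ contains the full-dimensional subset $\sD_{\tilde G}^{(t-1)}$ of positive Lebesgue measure $v_0 > 0$ almost surely, where $t = |\sC(\tilde G)|$.

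The key quantitative estimate comes from log-concavity: for any $\lambda \in (0,1)$ and any $z \in \sS_{G,\sX}$, $f(\lambda x_0 + (1-\lambda) z) \ge e^{M\lambda} f(z)^{1-\lambda}$. Performing the substitution $y = \lambda x_0 + (1-\lambda) z$ in the inequality $\int_{\R^d} f\,d\mu \le 1$ (with $d\mu(y) = (1-\lambda)^d\, d\mu(z)$) yields
\begin{equation*}
\int_{\sS_{G,\sX}} f(z)^{1-\lambda}\, d\mu(z) \;\le\; (1-\lambda)^{-d}\, e^{-M\lambda},
\end{equation*}
which decays exponentially in $M$ for any fixed $\lambda$.

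Finally, I would translate this integral estimate into a pointwise upper bound at $\bar X_w := \sum_i w_i X^{(i)}$. By concavity of $\log f$, $\ell(f,\sX) \le \log f(\bar X_w)$, so a bound of the form $\log f(\bar X_w) \le C(\sX) - cM$ with some $c > 0$ would allow one to choose $M_R$ such that $\log f(x_0) > M_R$ implies $\ell(f,\sX) \le R$. The main obstacle is precisely this last passage: deriving a pointwise decay bound on the log-concave function $f^{1-\lambda}$ at the fixed point $\bar X_w$ from an integral bound of vanishing order $e^{-M\lambda}$ over the full-dimensional region $\sD_{\tilde G}^{(t-1)}$ of volume at least $v_0$. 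I expect this step to rely on further exploitation of log-concavity---for instance, iterating the midpoint inequality $\log f(\bar X_w) \le 2\log f\bigl(\tfrac{1}{2}(\bar X_w + x_0)\bigr) - M$ and applying the integral bound along a shrinking neighborhood---together with the specific geometry of $\sD_{\tilde G}^{(t-1)}$ to control how much measure the superlevel set $\{f \ge e^R\}$ can occupy within it.
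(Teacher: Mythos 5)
There is a genuine gap, and in fact the final reduction you propose is aimed at a false intermediate statement. You plan to conclude via $\ell(f,\sX)\le\log f(\bar X_w)$ and a bound of the form $\log f(\bar X_w)\le C(\sX)-cM$ with $c>0$. No such bound can hold: the density $f\propto e^{-N\|x-\bar X_w\|_1}=\prod_j e^{-N|x_j-(\bar X_w)_j|}$ lies in $\sF_G$ for every graph $G$, peaks exactly at $\bar X_w$, and has $\log f(\bar X_w)=M\to+\infty$ as $N\to\infty$ (the lemma still holds for it, but only because the \emph{other} evaluations $\log f(X^{(i)})$ collapse, not because the value at the barycenter does). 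Relatedly, your integral estimate $\int_{\sS_{G,\sX}} f^{1-\lambda}\,d\mu\le(1-\lambda)^{-d}e^{-M\lambda}$, while correct, cannot by itself yield a pointwise bound at any prescribed point: a log-concave function with arbitrarily small integral can be arbitrarily large at a single given point. So the "main obstacle" you flag is not a technical step to be filled in along the route you chose; the route itself dead-ends.

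The ingredient you are missing is the one the paper's proof is built around: a mechanism linking the values of $f$ at the sample points to a full-dimensional set. Note that under the hypothesis $n\ge\max_C|C|+1$ the hull $\conv(\sX)$ may well be lower-dimensional (e.g.\ $n=2$ points in $\R^d$ for the empty graph), so one cannot argue as in the complete-graph case. The paper sets $m=\min_i\log f(X^{(i)})$, $M=\max\log f$, and uses the global Markov statements defining the sets $\sD_G^{(i)}$ to show inductively that $\log f\ge 2^i m-(2^i-1)M$ on $\sD_G^{(i)}$; by Corollary~\ref{corol: enough_samples} the set $\sD^{(t-1)}_{\tilde G}\subseteq\sD_G^{(t-1)}$ has positive measure. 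Shrinking $\sD_G^{(t-1)}$ toward the maximizer $x^*$ and using $\int f\le 1$ then forces $m'\coloneqq 2^{t-1}m-(2^{t-1}-1)M\le M-e^{(M-1)/d}\mu(\sD_G^{(t-1)})^{1/d}$, hence $m\to-\infty$ superlinearly in $M$, and $\ell(f,\sX)\le(1-w_j)M+w_j m\to-\infty$. Your proposal contains neither the propagation of the lower bound $m$ beyond $\conv(\sX)$ nor any lower bound on $f$ over a positive-measure set to play against the integral constraint, and without these the argument cannot be completed.
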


The next lemma shows that if a function $f\in\sF_G$ has likelihood at least $L\in\R$, where $L$ is the log-likelihood of the uniform density, then, $\log f$ has to be bounded below on $\sS_{\tilde G, \sX}$.

\begin{lemma}\label{lem: lower_bound_on_function}
Consider the assumptions of Lemma \ref{lem: bound log-likelihood}.
Suppose that $|\sC(\tilde G)|=t$.
Let $L$ be the log-likelihood of the uniform density on $\sS_{G,\sX}$ and let $M_L$ be the corresponding constant in Lemma~\ref{lem: bound log-likelihood}.
Consider a density $f \in \sF_G$ and let $M = \max_{x\in\R^d}\log f(x)$. If $\ell(f,\sX)>L$ and $M\le M_L$, then 
\begin{equation}\label{eq: minimum log f S_n}
\min_{\sD^{(t-1)}_{G}}\log f\ge m_L \coloneqq \frac{2^{t-1}}{\max_i w_i}(L-M_L)+M_L\,.
\end{equation}
\end{lemma}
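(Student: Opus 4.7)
The strategy is to propagate a lower bound on $\log f$ outward from the sample: first to $\sD^{(0)}_G=\conv(\sX)$ using log-concavity, and then iteratively to $\sD^{(1)}_G,\dots,\sD^{(t-1)}_G$ via the conditional-independence identity underlying Definition~\ref{def: sets Di}. To anchor the bound on the sample, I combine $\ell(f,\sX)>L$ with the upper bound $\log f(X^{(i)})\le M\le M_L$. Isolating, in the sum $L<\sum_i w_i\log f(X^{(i)})$, the index $j^\ast$ at which $i\mapsto\log f(X^{(i)})$ is smallest and bounding every other term by $M_L$ gives $L<w_{j^\ast}\log f(X^{(j^\ast)})+(1-w_{j^\ast})M_L$; solving and accounting for the worst choice of $w_{j^\ast}$ produces the anchor
\[
m_0\coloneqq\min_i\log f(X^{(i)})\ge M_L+\frac{L-M_L}{\max_i w_i}.
\]
Moreover, since each $f\in\sF_G$ factors as $\prod_C\psi_C$ with log-concave $\psi_C$ and $\psi_C(X^{(i)}_C)>0$ for every $i$, log-concavity forces $\psi_C>0$ on $\pi_C(\conv(\sX))$; hence $f>0$ and $\log f$ is finite and concave on $\sS_{G,\sX}$. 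Because a concave function attains its minimum on a polytope at a vertex, $\min_{\sD^{(0)}_G}\log f=m_0$.

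For the inductive step, I exploit the multiplicative identity arising from the global Markov property (as derived in the discussion preceding Definition~\ref{def: sets Di}): for each $(A\indep B\mid S)\in\sI_G$,
\[
\log f(x_A,x_B,x_S)+\log f(y_A,y_B,x_S)=\log f(x_A,y_B,x_S)+\log f(y_A,x_B,x_S).
\]
If $(x_A,x_B,x_S),(y_A,y_B,x_S)\in\sD^{(i-1)}_G$, then both left-hand terms are at least $m_{i-1}\coloneqq\min_{\sD^{(i-1)}_G}\log f$, and bounding the ``mirror'' cross point by $\log f\le M_L$ gives $\log f(x_A,y_B,x_S)\ge 2m_{i-1}-M_L$, and analogously for $(y_A,x_B,x_S)$. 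Since $\sD^{(i)}_G$ is the convex hull of all such cross points and $\log f$ is concave on $\sD^{(i)}_G\subseteq\sS_{G,\sX}$ (by Proposition~\ref{prop: limit D_G i exists}), its minimum is attained at a vertex, hence at a cross point. Therefore $m_i\ge 2m_{i-1}-M_L$.

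Finally, the recursion rewrites as $M_L-m_i\le 2(M_L-m_{i-1})$, so by iteration $M_L-m_{t-1}\le 2^{t-1}(M_L-m_0)$. Substituting the Step-1 anchor,
\[
m_{t-1}\ge M_L-2^{t-1}(M_L-m_0)\ge M_L+\frac{2^{t-1}(L-M_L)}{\max_i w_i}=m_L,
\]
as required. The delicate part of the argument is the inductive step, where one must simultaneously invoke the conditional-independence identity (which depends on the clique factorization of $f\in\sF_G$ and the global Markov property) and the convex-analysis fact that a concave function on a polytope attains its minimum at a vertex, so that control at the cross points suffices. The factor $2^{t-1}$ is the unavoidable cost of unrolling the recursion, and mirrors the exponential number of layers needed in Proposition~\ref{prop: achieve support} to reach the full support $\sS_{G,\sX}$.
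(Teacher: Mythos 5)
Your proof is correct and follows essentially the same route as the paper: anchor $\min_i\log f(X^{(i)})$ via $\ell(f,\sX)>L$ and $\log f\le M_L$, then propagate the bound through $\sD^{(0)}_G,\dots,\sD^{(t-1)}_G$ using the cross-point identity $\log f(x_A,y_B,x_S)=\log f(x_A,x_B,x_S)+\log f(y_A,y_B,x_S)-\log f(y_A,x_B,x_S)$ together with concavity to pass to convex hulls, yielding the recursion $m_i\ge 2m_{i-1}-M_L$ (the paper states it with $M$ and bounds $M\le M_L$ only at the end, which is equivalent). Your explicit justification that the infimum of a concave function over a convex hull is controlled by its values on the generating points is a welcome clarification of a step the paper leaves implicit.
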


The last lemma we need shows that if a concave function supported on $\sS_{G, \sX}$ has a concave decomposition $\sum_{C\in\sC(G)} \phi_C(x_C)$ according to a graph $G$ and is bounded, then each of the summands $\phi_C$ can also be chosen to be bounded.
\begin{lemma}\label{lem:bounded_components}
Let $G$ be a graph and $\sX$ a  sample of points in $\R^d$ with
$|\sX|=n\ge\max_{C\in \sC(G)}\{|C|\}+1$.
Let $\phi\colon\R^d\to\R$ be a concave function with support $\sS_{G,\sX}$.
Suppose that $\phi(x)=\sum_{C\in\sC(G)}\phi_C(x_C)$ for all $x\in\R^d$, where each function $\phi_C\colon\pi_C(\sS_{G,\sX})\to\R$ is concave. If $\mathrm{Im}(\phi)\subseteq[-K,K]$, then there exists a decomposition $\phi(x)=\sum_{C\in\sC(G)}\tilde{\phi}_C(x_C)$ such that each function $\tilde{\phi}_C$ is concave and $\mathrm{Im}(\tilde{\phi}_C)\subseteq[-\rho K, \rho K]$ for some $\rho\ge 0$, which depends only on the graph $G$ and the set  of sample points $\sX$.
\end{lemma}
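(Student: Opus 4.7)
The plan is to construct $(\tilde\phi_C)_{C\in\sC(G)}$ by iteratively ``peeling off'' cliques via partial suprema, a technique that works cleanly when $G$ is chordal and then extends to general $G$ via a chordal cover. By Lemma~\ref{lemma: support of undirected is full dimensional}, the sample-size hypothesis forces $\sS_{G,\sX}$ and each $\pi_C(\sS_{G,\sX})$ to be full-dimensional bounded polytopes almost surely. Thus every concave, finite-valued function $\phi_C$ is automatically bounded on its compact domain: its maximum is attained by compactness, and its minimum is controlled below by the vertex inequality $\phi_C(\sum_i \alpha_i v_i) \geq \min_i \phi_C(v_i)$ evaluated at the vertices of $\pi_C(\sS_{G,\sX})$. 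The content of the lemma is to upgrade such an a priori finite bound to one of the form $\rho K$.

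To illustrate the idea, consider first the path $G = 1$--$2$--$3$ with maximal cliques $C_1 = \{1,2\}$ and $C_2 = \{2,3\}$. I would set
\[
\tilde\phi_{C_1}(x_1, x_2) := \sup\bigl\{\phi(x_1, x_2, x_3) : (x_1, x_2, x_3) \in \sS_{G,\sX}\bigr\},\qquad \tilde\phi_{C_2}(x_2, x_3) := \phi(x_1, x_2, x_3) - \tilde\phi_{C_1}(x_1, x_2).
\]
Partial suprema of concave functions over convex slices are concave, so $\tilde\phi_{C_1}$ is concave on $\pi_{C_1}(\sS_{G,\sX})$ and clearly $\tilde\phi_{C_1} \in [-K, K]$. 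Using the original factorization $\phi = \phi_{C_1} + \phi_{C_2}$, the residual simplifies to $\tilde\phi_{C_2}(x_2, x_3) = \phi_{C_2}(x_2, x_3) - \sup_{x_3'} \phi_{C_2}(x_2, x_3')$, which is independent of $x_1$, concave in $(x_2, x_3)$, and satisfies $\tilde\phi_{C_2} \in [-2K, 0]$. Thus $\rho = 2$ suffices for paths.

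For a general chordal graph $G$, I would iterate this peeling along a junction tree (Appendix~\ref{app:background}): order the maximal cliques $C_1, \ldots, C_t$ so that each $C_k$ is a leaf when it is processed, and set
\[
\tilde\phi_{C_k}(x_{C_k}) := \sup\Bigl\{\phi(x) - \textstyle\sum_{j<k} \tilde\phi_{C_j}(x_{C_j}) : x \in \sS_{G,\sX},\ x_{C_k} \text{ fixed}\Bigr\} \quad \text{for } k < t,
\]
with $\tilde\phi_{C_t}$ defined as the final residual $\phi - \sum_{j<t} \tilde\phi_{C_j}$. The running-intersection property of the junction tree ensures that at each step the residual depends only on the coordinates not yet peeled, so the construction is well-defined; an induction generalizing the path argument (where each step at most doubles the oscillation) yields $\|\tilde\phi_{C_k}\|_\infty \leq 2^{k-1} K$, hence $\rho \leq 2^{t-1}$ with $t = |\sC(G)|$.

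The main obstacle is the non-chordal case, where the absence of a junction tree means the peeled residual may fail to factor through a single clique at each step. One remedy is to fix a chordal cover $\tilde G$ of $G$ (Appendix~\ref{app:background}) and lump the concave decomposition $\phi = \sum_{C \in \sC(G)} \phi_C$ into a concave decomposition according to $\tilde G$ by assigning each $C \in \sC(G)$ to some enveloping $\tilde C \in \sC(\tilde G)$ with $C \subseteq \tilde C$. Applying the chordal construction to the lumped decomposition gives bounded pieces indexed by $\sC(\tilde G)$, which must then be carefully redistributed back to pieces indexed by $\sC(G)$ while preserving concavity---a step requiring further analysis of how each $\phi_C$ sits inside its enveloping $\tilde C$. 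The resulting $\rho$ will depend on $|\sC(\tilde G)|$ and the maximum clique size of $\tilde G$, which are combinatorial invariants of $G$ and $\sX$.
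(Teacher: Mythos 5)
The central step of your construction fails: the residual obtained by subtracting a partial supremum from a concave function is in general \emph{not} concave. In your path example, $\tilde{\phi}_{C_2}(x_2,x_3)=\phi_{C_2}(x_2,x_3)-\sup_{x_3'}\phi_{C_2}(x_2,x_3')$ is a difference of two concave functions, and such a difference need not be concave. Concretely, take $G$ the path $1$--$2$--$3$, let $\sX$ be the eight vertices of the box $[-1,1]^2\times[-2,2]$ (so $\sS_{G,\sX}$ is that box and $n=8\ge 3$), and set $\phi_{C_1}\equiv 0$, $\phi_{C_2}(x_2,x_3)=\min(1-x_2^2,\,x_3)$, which is concave as a minimum of concave functions; thus $\phi=\phi_{C_2}$ satisfies all hypotheses with $K=2$. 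Then $\sup_{x_3'\in[-2,2]}\phi_{C_2}(x_2,x_3')=1-x_2^2$, and the residual is $\min(0,\,x_3-1+x_2^2)$, which equals $0$ at $(x_2,x_3)=(\pm 1,0)$ but $-1$ at their midpoint $(0,0)$: not concave. So already the two-clique base case of your induction is false, and the claimed bound $\rho\le 2^{t-1}$ has no proof. (The numerical bounds $\tilde{\phi}_{C_1}\in[-K,K]$ and $\tilde{\phi}_{C_2}\in[-2K,0]$ you state are correct; it is only the concavity of the residual that breaks, and that is the whole point of the lemma.)

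There is a second gap, which you acknowledge but do not close: the lemma is stated for an arbitrary graph $G$, and your reduction via a chordal cover $\tilde{G}$ would at best produce bounded concave pieces indexed by $\sC(\tilde{G})$; there is no argument that these can be redistributed into concave bounded pieces indexed by $\sC(G)$, since a concave function on $\R^{\tilde{C}}$ need not split as a sum of concave functions on the sub-cliques $C\subseteq\tilde{C}$ that were lumped into it. For comparison, the paper's proof uses neither chordality nor junction trees for this lemma: it writes $\phi=\phi_{C_1}+\Phi_{D_1}$ with $\Phi_{D_1}=\sum_{i\ge 2}\phi_{C_i}$, first adds and subtracts a linear function in the shared variables so that the fiberwise minima of $\phi_{C_1}$ agree over the vertices of a full-dimensional simplex in $\pi_{C_1\cap D_1}(\sS_{G,\sX})$, then bounds the oscillations of $\phi_{C_1}$ and $\Phi_{D_1}$ over a neighborhood of that simplex by $4K$ and $6K$, extends these bounds to all of $\pi_{C_1}(\sS_{G,\sX})$ using concavity and a circumradius-to-inradius ratio of the support, recenters by constants, and iterates; the resulting $\rho$ is a product of such geometric factors rather than $2^{t-1}$. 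If you want to salvage a sup-based peeling, you would need to replace the naive residual by a construction that provably preserves concavity, which is exactly the difficulty the paper's argument is designed to circumvent.
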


The proofs of Lemmas \ref{lem: bound log-likelihood}, \ref{lem: lower_bound_on_function} and \ref{lem:bounded_components} can be found in Appendix \ref{appendix: proofs section 2,4,5}. In particular, we apply Lemma \ref{lem:bounded_components} to $\phi=\log f$ where $f\in\sF_G$.
This means that, if we can bound $\log f(x)$ on $\sS_{G,\sX}$, then we can also bound the logarithm of the clique potential $\log\psi_C$ for all $C \in \sC(G)$.

\begin{proof}[Proof of Theorem~\ref{thm: existence and uniqueness MLE}]
Let $L$ be the log-likelihood of the uniform density on $\sS_{G,\sX}$, let $M_L$ and $m_L$ be the corresponding constants in Lemmas~\ref{lem: bound log-likelihood} and~\ref{lem: lower_bound_on_function}.
Define $K\coloneqq\max \left\{|M_L|,|m_L|\right\}$. It follows from Lemmas~\ref{lem: bound log-likelihood} and~\ref{lem: lower_bound_on_function} that for the maximum likelihood estimation problem we can restrict to densities $f\in \sF_G$ such that $\log f(x) \in [-K,K]$ for all $x\in\sS_{G,\sX}$.
By Lemma~\ref{lem:bounded_components}, if $\log f(x) \in [-K,K]$ on $\sS_{G,\sX}$, then each potential $\psi_C$ can be chosen such that $\log\psi_C(x) \in [-\rho K, \rho K]$ for a fixed $\rho \in\R_+$. Moreover, we can assume by Theorem \ref{thm: shape of MLE tent}$(i)$ that
\begin{equation}\label{eq: assumption tent function}
\log\psi_C = h_{\sX_C,y_C}\quad\forall\,C\in\sC(G)\,,
\end{equation}
where $h_{\sX_C,y_C}$ is a tent function supported on $\sX_C$ with heights $y_C=(y_C^{(1)},\ldots,y_C^{(n)})$, in particular $y_C^{(i)}=\log\psi_C(X_C^{(i)})$ for all $i\in[n]$ and all $C\in\sC(G)$. 

Thus, by Theorem~\ref{thm: shape of MLE tent}$(ii)$, our goal is to maximize the objective function $\tau\colon\R^{n|\sC(G)|}\to\R$  introduced in \eqref{eq: new obj function}: 
\begin{equation}\label{eq: def composition map}
\tau((y_C)_{C \in \sC(G)}) \coloneqq \sum_{i=1}^n\sum_{C\in\sC(G)}w_i y_C^{(i)} - \int_{\R^d}\prod_{C\in\sC(G)}\exp(h_{\sX_C, y_C})d\mu\,.
\end{equation}
The function $\tau$ has a finite-dimensional domain and is continuous on the compact set $[-\rho K, \rho K]^{n|\sC(G)|}$. In particular, it admits a global maximizer over $[-\rho K, \rho K]^{n|\sC(G)|}$. This proves the existence of the MLE.

\noindent In order to show uniqueness of the MLE, consider two optimal densities on $\sF_G$
\[
f_1(x)=\prod_{C\in\sC(G)}\psi_{1,C}(x_C)\,,\quad f_2(x)=\prod_{C\in\sC(G)}\psi_{2,C}(x_C)\,,
\]
which means that they both maximize the log-likelihood $\ell(\cdot,\sX)$.
Consider the function
\[
g(x) \coloneqq \frac{\sqrt{f_1(x)f_2(x)}}{\int_{\sS_{G,\sX}}\sqrt{f_1f_2}\,d\mu} = \frac{1}{\int_{\sS_{G,\sX}}\sqrt{f_1f_2}\,d\mu}\prod_{C\in\sC(G)}\sqrt{\psi_{1,C}(x_C)\psi_{2,C}(x_C)}\,.
\]
It is a density with support $\sS_{G,\sX}$ and admits a log-concave decomposition, hence $g\in\sF_G$ and 
\begin{align*}
\ell(g,\sX) &= \sum_{i=1}^n w_i\log g(X^{(i)})\\
&= \sum_{i=1}^n w_i\left[\frac{1}{2}\left(\log f_1(X^{(i)})+\log f_2(X^{(i)})\right)-\log\left(\int_{\sS_{G,\sX}}\sqrt{f_1f_2}\,d\mu\right)\right]\\
&= \frac{1}{2}(\ell(f_1,\sX)+\ell(f_2,\sX))-\log\left(\int_{\sS_{G,\sX}}\sqrt{f_1f_2}\,d\mu\right)\\
&= \ell(f_1,\sX)-\log\left(\int_{\sS_{G,\sX}}\sqrt{f_1f_2}\,d\mu\right)\,.
\end{align*}
By the Cauchy-Schwarz inequality $\int_{\sS_{G,\sX}}\sqrt{f_1f_2}\,d\mu\le 1$ with equality if and only if $f_1=f_2$ almost everywhere.
Thus $\ell(g,\sX)\ge\ell(f_1,\sX)$ and the equality holds if and only if $f_1=f_2$ almost everywhere.
Since they are continuous on $\sS_{G,\sX}$, by~\cite[Theorem 10.2]{rockafellar1970convex} $f_1=f_2$. 
\end{proof}

\section{Location of tent poles} \label{sec:location-of-tentpoles}

In the case of log-concave density estimation (not in a graphical model), the MLE equals the exponential of a tent function with tent poles at the data sample~\cite{cule2010maximum}. In our setting, even if the MLE $\hat{f}$ is a tent function, then it can have tent pole locations that are neither the vertices of $\sS_{G,\sX}$ nor the data sample $\sX$.

\begin{example} \label{example:tent_poles}
Consider the graph $G=(V,E)$ with $V=[2]$ and $E=\emptyset$. The MLE $\hat{f}$ for $G$ is the product of the MLEs for each of the two vertices by Proposition~\ref{prop:mle-for-connected-components} and Corollary~\ref{cor:independence-model}. Suppose we have the sample $\sX=\{(0,0),(1,1),(2,2)\}$ with weights $w=(1/10,8/10,1/10)$.  For both vertices, the MLE is the exponential of a tent function with tent pole locations at  $\{0,1,2\}$. Hence $\hat{f}$ is the exponential of a tent function with tent pole locations at $\{0,1,2\} \times \{0,1,2\}$.
\end{example}

We refer to Definition \ref{def: subdivision} for the notion of (regular) subdivision of a polytope. The following result allows us to compute the subdivision generated by a tent function $h$, and, therefore, know the regions on which $h$ is linear. That is how we evaluate $\int\exp h\,d\mu$ and its derivatives (in terms of the heights $(y_C)_{C\in\sC(G)}$). The proofs of Proposition \ref{prop: subdivision_intersection} and Corollary \ref{cor:faces_of_the_subdivision} are located in Appendix \ref{appendix: proofs section 2,4,5}.

\begin{proposition}\label{prop: subdivision_intersection}
Consider $G$ and $\sX$ as before.
Let $h=\sum_{C \in \sC(G)} h_{\sX_C,y_C}$ be a tent function supported on $\sS_{G,\sX}$. Let $\sigma$ denote the regular subdivision of $\sS_{G,\sX}$ induced by $h$, and for every $C \in \sC(G)$, let $\sigma_C$ denote the subdivision of $\pi_C(\sS_{G,\sX})$ induced by $h_{\sX_C,y_C}$. Then
\begin{equation} \label{equation:intersection-of-tent-functions-on-cliques}
\sigma=\left\{\bigcap_{C \in \sC(G)}\pi_C^{-1}(Q_C) \mid \text{$Q_C\in\sigma_C$ $\forall C \in \sC(G)$}\right\}\,,
\end{equation}
i.e., each cell of $\sigma$ is an intersection over $C\in\mathcal C(G)$ of the inverse projections of a cell of $\sigma_C$.
\end{proposition}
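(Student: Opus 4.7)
The plan is to prove (\ref{equation:intersection-of-tent-functions-on-cliques}) by showing the right-hand collection --- call it $\sigma'$, with cells $R_Q \coloneqq \bigcap_{C \in \sC(G)} \pi_C^{-1}(Q_C)$ indexed by tuples $Q = (Q_C)_{C \in \sC(G)}$ with $Q_C \in \sigma_C$ --- is itself a polyhedral subdivision of $\sS_{G,\sX}$ on whose cells $h$ is affine, while conversely each cell of $\sigma$ sits inside some $R_Q$. Since $\sigma$ is by definition the coarsest polyhedral subdivision of $\sS_{G,\sX}$ on which $h$ is piecewise affine, these two facts together force $\sigma = \sigma'$.

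For the forward direction, I would fix $Q$ and observe that each non-empty $R_Q$ is a polytope (intersection of finitely many closed halfspaces, pulled back via the $\pi_C$ from the defining inequalities of each $Q_C$) contained in $\sS_{G,\sX}$. By the defining property of $\sigma_C$, the tent function $h_{\sX_C, y_C}$ is affine on $Q_C$, so its pullback $h_{\sX_C, y_C} \circ \pi_C$ is affine on $\pi_C^{-1}(Q_C)$; summing over $C \in \sC(G)$ shows $h$ is affine on $R_Q$. I would then verify that $\sigma'$ is a polyhedral subdivision: the $R_Q$'s cover $\sS_{G,\sX}$ because any $x \in \sS_{G,\sX}$ has $\pi_C(x) \in \pi_C(\sS_{G,\sX})$ lying in some $Q_C \in \sigma_C$, and any two cells meet along a face, since $R_Q \cap R_{Q'} = \bigcap_C \pi_C^{-1}(Q_C \cap Q'_C)$ and each $Q_C \cap Q'_C$ is a common face of $Q_C$ and $Q'_C$ in $\sigma_C$, whose preimage under $\pi_C$ cuts out a common face of $\pi_C^{-1}(Q_C)$ and $\pi_C^{-1}(Q'_C)$.

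The reverse inclusion rests on the following lemma, which I expect to be the main technical hurdle: \emph{if $f_1, \dots, f_k$ are concave functions on a convex set whose sum is affine, then each $f_i$ is itself affine}. This follows by summing the individual concavity inequalities $f_i((1-t)x + ty) \ge (1-t) f_i(x) + t f_i(y)$ and noting that affineness of the sum forces the inequality to be an equality in every summand. Applied to a fixed maximal cell $P \in \sigma$, the restrictions $h_{\sX_C, y_C} \circ \pi_C|_P$ are concave and sum to the affine function $h|_P$, so each is affine on $P$; this forces $h_{\sX_C, y_C}$ to be affine on the convex set $\pi_C(P)$, placing $\pi_C(P)$ inside a single cell $Q_C \in \sigma_C$ and hence $P$ inside the corresponding $R_Q$. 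Combining the two inclusions, each $P \in \sigma$ satisfies $P \subseteq R_Q \subseteq P'$ for some $P' \in \sigma$, forcing $P = P' = R_Q$ by maximality. Hence the cells of $\sigma$ are precisely the non-empty $R_Q$'s, which is the content of (\ref{equation:intersection-of-tent-functions-on-cliques}).
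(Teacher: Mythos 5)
Your argument is correct, and while the first half (showing that the collection of sets $\bigcap_C \pi_C^{-1}(Q_C)$ covers $\sS_{G,\sX}$ and that any two such sets meet in a common face) coincides with the paper's, your second half proceeds by a genuinely different route. The paper writes each $h_{\sX_C,y_C}$ explicitly as a minimum of affine functions as in~\eqref{eqn:tent-funtion-as-max}, expands the sum as a minimum over tuples of indices as in~\eqref{eqn:sums-of-tent-funtions-as-max}, and matches interiors of maximal cells by tracking exactly where these minima are achieved uniquely. You instead invoke the convexity lemma that concave functions summing to an affine function on a convex set must each be affine, apply it on a maximal cell $P$ of $\sigma$ to conclude that each $h_{\sX_C,y_C}\circ\pi_C$ is affine on $P$, and then sandwich $P\subseteq R_Q\subseteq P'$ to force equality. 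Your route is more conceptual and avoids any bookkeeping with the affine pieces; the paper's route is more explicit and directly exhibits which linear functional is active on each cell, which is what the authors reuse for the integral computations in Section~\ref{sec:optimization}. Two small points to make fully rigorous: when you pass from ``$h_{\sX_C,y_C}$ is affine on $\pi_C(P)$'' to ``$\pi_C(P)$ lies in a single cell of $\sigma_C$'', you should note that $\pi_C(P)$ is full-dimensional in $\R^C$ (since $P$ is $d$-dimensional), so it sits inside a single maximal domain of linearity; and your final step shows every maximal cell of $\sigma$ is some $R_Q$, so you should add the converse observation that every full-dimensional $R_Q$ contains an interior point lying in some maximal cell $P=R_{Q'}$ of $\sigma$, whence $R_Q=R_{Q'}$ by the face-intersection property of $\sigma'$. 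Both are routine and do not affect the validity of the approach.
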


\begin{corollary}\label{cor:faces_of_the_subdivision}
Assume that $\dim(\sS_{G,\sX})=d$. Let $h=\sum_{C\in\sC(G)} h_{\sX_C,y_C}$.
Given $z \in \sS_{G,\sX}$, let $k_z$ be the codimension of the minimal face of $\sigma$  containing $z$, and, for every $C\in\sC(G)$, let $k_{z,C}$ be the codimension of the minimal faces of $\sigma_C$  containing $z_C$.
Then $k_z\le\sum_{C\in\sC(G)} k_{z,C}$.
\end{corollary}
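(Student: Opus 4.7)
The plan is to identify the minimal face of $\sigma$ containing $z$ explicitly via Proposition~\ref{prop: subdivision_intersection}, and then bound its codimension by passing to affine hulls. Write $Q_C^z$ for the minimal face of $\sigma_C$ containing $z_C$, so $Q_C^z$ has codimension $k_{z,C}$ in $\R^C$ (using that $\pi_C(\sS_{G,\sX})$ is full-dimensional in $\R^C$, which follows from $\dim\sS_{G,\sX}=d$).

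First, I would use Proposition~\ref{prop: subdivision_intersection} to show that the minimal face of $\sigma$ containing $z$ is $F_z\coloneqq\bigcap_{C\in\sC(G)}\pi_C^{-1}(Q_C^z)$. By that proposition every cell of $\sigma$ has the form $\bigcap_{C\in\sC(G)}\pi_C^{-1}(Q_C)$ with $Q_C\in\sigma_C$, and such a cell contains $z$ iff $z_C\in Q_C$ for every $C$; the unique inclusion-minimal such choice is $Q_C=Q_C^z$. One then checks that $z\in\operatorname{relint}(F_z)$: for any $y\in F_z$, since $z_C\in\operatorname{relint}(Q_C^z)$ and $y_C\in Q_C^z$, the point $z_C+\varepsilon(z_C-y_C)$ lies in $Q_C^z$ for small enough $\varepsilon>0$, so $z+\varepsilon(z-y)\in F_z$. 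Thus $F_z$ is the minimal cell of $\sigma$ containing $z$.

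Next, I would bound $\dim F_z$ from below. Set $A\coloneqq\bigcap_{C\in\sC(G)}\pi_C^{-1}(\operatorname{aff}(Q_C^z))$, an intersection of affine subspaces of $\R^d$. Since $\pi_C\colon\R^d\to\R^C$ is a surjective linear map, $\pi_C^{-1}(\operatorname{aff}(Q_C^z))$ has codimension $k_{z,C}$ in $\R^d$; because the codimension of an intersection of affine subspaces is at most the sum of the individual codimensions, $\codim_{\R^d}(A)\le\sum_{C\in\sC(G)} k_{z,C}$. Since relative interiors are relatively open, there is a neighbourhood $U$ of $z$ on which $\pi_C^{-1}(Q_C^z)$ coincides with $\pi_C^{-1}(\operatorname{aff}(Q_C^z))$ for every $C$, so $F_z\cap U=A\cap U$ and $\dim F_z=\dim A$. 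Therefore $k_z=\codim_{\R^d}(F_z)=\codim_{\R^d}(A)\le\sum_{C\in\sC(G)} k_{z,C}$.

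The main obstacle is not the codimension bound itself, but the identification step: Proposition~\ref{prop: subdivision_intersection} tells us which polyhedra appear as cells of $\sigma$, but one still needs to argue that $z$ lies in the relative interior of $\bigcap_{C\in\sC(G)}\pi_C^{-1}(Q_C^z)$ rather than of some proper face. This is the small relative-interior check sketched above, and once it is in place the remainder is a standard linear-algebraic estimate on codimensions of intersections of affine subspaces.
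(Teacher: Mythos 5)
Your proof is correct and follows the same route as the paper's: both invoke Proposition~\ref{prop: subdivision_intersection} to express the minimal face of $\sigma$ containing $z$ as an intersection $\bigcap_{C\in\sC(G)}\pi_C^{-1}(\cdot)$ of faces of the subdivisions $\sigma_C$, and then bound the codimension of that intersection by the sum of the individual codimensions. Your version is in fact slightly more careful than the paper's: the paper asserts $\codim\bigl(\bigcap_C\pi_C^{-1}(S_C)\bigr)\le\sum_C\codim(\pi_C^{-1}(S_C))$ directly for the polyhedral pieces, whereas your passage to affine hulls combined with the relative-interior check at $z$ is precisely what rules out an extra dimension drop in the intersection, so the identification step you flag as the "main obstacle" is doing real work.
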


\begin{conjecture}
Assume that $\dim(\sS_{G,\sX})=d$. Let $h=\sum_{C\in\sC(G)} h_{\sX_C,y_C}$.
Given $z \in \sS_{G,\sX}$, let $k_z$ and $l_z$ be the codimensions of the minimal faces of $\sigma$ and $\sS_{G,\sX}$ containing $z$, respectively.
Similarly, for every $C\in\sC(G)$ let $k_{z,C}$ and $l_{z,C}$ be the codimensions of the minimal faces of $\sigma_C$ and $\pi_C(\sS_{G,\sX})$ containing $z_C$, respectively. If $\sS_{G,\sX}$ and all tent functions $h_{\sX_C,y_C}$ are sufficiently generic, then
\begin{equation}\label{eq: difference k_z l_z}
k_z - l_z = \sum_{C\in\sC(G)} k_{z,C} - \sum_{C\in\sC(G)} l_{z,C}\,.
\end{equation}
\end{conjecture}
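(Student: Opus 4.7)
The plan is to split the conjecture into two separate transversality identities, $l_z = \sum_{C \in \sC(G)} l_{z,C}$ and $k_z = \sum_{C \in \sC(G)} k_{z,C}$, from which the conclusion follows by subtraction.

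I would first exploit the representation $\sS_{G,\sX} = \bigcap_{C \in \sC(G)} \pi_C^{-1}(Q_C)$ with $Q_C = \pi_C(\sS_{G,\sX}) = \pi_C(\conv(\sX))$, and observe that each prism $\pi_C^{-1}(Q_C) = Q_C \times \R^{V\setminus C}$ has faces of the form $\pi_C^{-1}(G_C)$ for faces $G_C$ of $Q_C$. Therefore the minimal face of $\sS_{G,\sX}$ containing $z$ is the intersection of the minimal faces of the individual prisms, i.e., $P = \bigcap_C \pi_C^{-1}(P_C)$, where $P_C$ is the minimal face of $Q_C$ containing $z_C$. Denoting by $A_0, A_{C,0}$ the linear translates of the affine hulls of $P, P_C$ to the origin, this gives $A_0 \subseteq \bigcap_C \pi_C^{-1}(A_{C,0})$, and subadditivity of codimension yields $l_z = \codim_{\R^d}(A_0) \le \sum_{C} \codim_{\R^d}(\pi_C^{-1}(A_{C,0})) = \sum_{C} l_{z,C}$, with equality precisely when the subspaces $\pi_C^{-1}(A_{C,0})$ intersect transversally in $\R^d$.

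Analogously, Proposition \ref{prop: subdivision_intersection} gives $F = \bigcap_C \pi_C^{-1}(F_C)$ for the minimal cell of $\sigma$ containing $z$. Letting $B_0, B_{C,0}$ be the linear translates of the affine hulls of $F, F_C$, this yields $B_0 \subseteq \bigcap_C \pi_C^{-1}(B_{C,0})$, so $k_z = \codim_{\R^d}(B_0) \le \sum_{C} k_{z,C}$ by subadditivity, with equality under transversality of the subspaces $\pi_C^{-1}(B_{C,0})$ in $\R^d$. Subtracting the two identities gives the desired equation $k_z - l_z = \sum_{C \in \sC(G)}(k_{z,C} - l_{z,C})$.

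The hard part is justifying both transversality assumptions from the stated genericity of $\sS_{G,\sX}$ and the tent functions $h_{\sX_C, y_C}$. Fixing the combinatorial types of the subdivisions $\sigma, \sigma_C$ and of the face lattices of $\sS_{G,\sX}$ and each $Q_C$, the transversality conditions reduce to concrete linear-algebraic statements about intersections of the preimages $\pi_C^{-1}(A_{C,0})$ and $\pi_C^{-1}(B_{C,0})$. The subspaces $A_{C,0}$ are determined by the sample positions $\sX$, and each $B_{C,0}$ depends only on $y_C$ together with the combinatorial type of $\sigma_C$ near $z_C$; crucially, the heights $y_C$ for distinct cliques $C$ constitute independent blocks of parameters. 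I would view the admissible data $(\sX, (y_C)_{C \in \sC(G)})$ as a point in the realization space of the fixed combinatorial types, and argue that the locus where either transversality fails is cut out by a proper algebraic subvariety, via a dimension count that exploits these independent perturbations. On the complement of this subvariety --- which is the "sufficiently generic" locus --- both intersection identities hold, and the conjectured equality follows.
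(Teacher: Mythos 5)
This statement is left open in the paper (only the inequality $k_z\le\sum_C k_{z,C}$ of Corollary~\ref{cor:faces_of_the_subdivision} is proved), so there is no official proof to compare against; but your proposal has a genuine gap that the paper's own Example~\ref{example:illustration-of-the-conjecture-about-faces} already exposes. You reduce the conjecture to the two separate identities $l_z=\sum_C l_{z,C}$ and $k_z=\sum_C k_{z,C}$, each to be obtained from transversality of the subspaces $\pi_C^{-1}(A_{C,0})$, resp.\ $\pi_C^{-1}(B_{C,0})$. Both identities are false in exactly the situation the conjecture is designed to handle, and no genericity of $\sX$ or of the heights $(y_C)$ can repair them. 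In Example~\ref{example:illustration-of-the-conjecture-about-faces}, $z=(1/2,0,1/2)$ gives $l_z=1$ but $l_{z,12}+l_{z,23}=2$, and $k_z=3$ but $k_{z,12}+k_{z,23}=4$; the conjectured equality holds only because the two defects cancel. The failure is combinatorially forced, not a measure-zero accident: when two cliques $C,C'$ share coordinates, the active constraints of $\pi_C(\sS_{G,\sX})$ and $\pi_{C'}(\sS_{G,\sX})$ at $z_C$ and $z_{C'}$ can involve only the shared coordinates, and then $\pi_C^{-1}(A_{C,0})$ and $\pi_{C'}^{-1}(A_{C',0})$ contain a common hyperplane no matter how the data are perturbed. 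Concretely, for the path graph on $\{1,2,3\}$ and any sample, a sample point $X^{(i)}$ that is a vertex of $\conv(\sX)$ projects to vertices of both $\pi_{12}(\conv(\sX))$ and $\pi_{23}(\conv(\sX))$, so $\sum_C l_{z,C}=4$ while $l_z\le 3$; the two preimage lines $\pi_{12}^{-1}(X^{(i)}_{12})$ and $\pi_{23}^{-1}(X^{(i)}_{23})$ are forced to meet because they are preimages of projections of the same point. The same mechanism defeats $k_z=\sum_C k_{z,C}$ at such points, since $X^{(i)}_C$ is a tent pole of $h_{\sX_C,y_C}$ for every $C$ and every choice of heights.

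The upshot is that your "sufficiently generic" locus is empty whenever $z$ ranges over the boundary of $\sS_{G,\sX}$ and the cliques overlap: absolute transversality in $\R^d$ is not the right notion here. What the conjecture asserts is a \emph{relative} transversality, namely that the only source of excess intersection among the cells $\pi_C^{-1}(F_C)$ of $\sigma$ is the excess already present among the faces $\pi_C^{-1}(P_C)$ of $\sS_{G,\sX}$. A viable strategy would have to work inside the minimal face $P$ of $\sS_{G,\sX}$ containing $z$ (or inside the quotient of $\R^d$ by the span of the forced common directions) and argue that, for generic independent heights $(y_C)$, the traces of the $B_{C,0}$ on the affine hull of $P$ meet transversally there. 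Your first identity should likewise be replaced by an exact bookkeeping of the shared active constraints rather than asserted as an equality. As written, the subtraction step subtracts two false statements, so the argument does not establish the conjecture.
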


\begin{example} \label{example:illustration-of-the-conjecture-about-faces}
Consider again the graph $G$ of Example \ref{ex: shape of S_n}.
Suppose now that $\sS_{G,\sX}=[0,1]^3$, therefore $\pi_{12}(\sS_{G,\sX})= [0,1]^2$ and $\pi_{23}(\sS_{G,\sX})=[0,1]^2$. We consider subdivisions of $\pi_{12}(\sS_{G,\sX})$ and $\pi_{23}(\sS_{G,\sX})$ having two maximal cells as shown in the middle and on the right of Figure~\ref{fig:subdivision}. The subdivision $\sigma=\pi_{12}^{-1}(\sigma_{12}) \cap \pi_{23}^{-1}(\sigma_{23})$ has four maximal cells as shown on the left of Figure~\ref{fig:subdivision}. 
Consider the point $z=(1/2,0,1/2)$.
Then $k_z=3$ as $z$ is a vertex of $\sigma$, $k_{z,12}=k_{z,23}=2$ because $(z_1,z_2)$ and $(z_2,z_3)$ are vertices of $\sigma_{12}$ and $\sigma_{23}$, respectively.
Hence the inequality $k_z<k_{z,12}+k_{z,23}$ is strict. However, if we take into account boundaries, i.e. $l_z=1$ and $l_{12,z}=l_{23,z}=1$, then we get the equality $k_{12,z}+k_{23,z}-l_{12,z}+l_{23,z}=2+2-1-1=3-1=k_z-l_z$.
\begin{figure}[ht]
    \centering
    \includegraphics[width=0.6\textwidth]{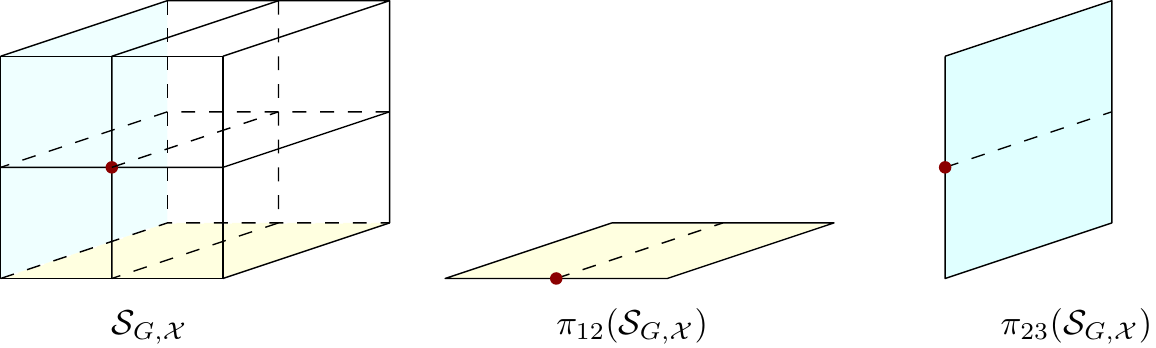}
    \caption{Illustration of Example~\ref{example:illustration-of-the-conjecture-about-faces}. We consider the graph with $V=\{1,2,3\}$ and $E=\{12,23\}$. The left figure shows $\sS_{G,\sX}$ and the two right figures show the projections $\pi_{12}(\sS_{G,\sX})$ and $\pi_{23}(\sS_{G,\sX})$.}
    \label{fig:subdivision}
\end{figure}
\end{example}

\begin{corollary} \label{corollary:tent_poles}
Under the same assumptions as in Corollary~\ref{cor:faces_of_the_subdivision}, if $z \in \sS_{G,\sX}$ is a tent pole location for $h$, then $\sum_C k_{z,C} \ge d$.
\end{corollary}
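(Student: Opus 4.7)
The plan is to show that any tent pole location $z$ must be a vertex (i.e.\ a $0$-dimensional cell) of the regular subdivision $\sigma$ of $\sS_{G,\sX}$ induced by $h$, and then apply Corollary~\ref{cor:faces_of_the_subdivision} directly.

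First, I would unpack what ``tent pole location'' means for the tent function $h = \sum_{C \in \sC(G)} h_{\sX_C, y_C}$. Since $h$ is itself a tent function (a sum of tent functions is again a piecewise linear concave function), it is linear on each maximal cell of the regular subdivision $\sigma$ it induces on $\sS_{G,\sX}$. A tent pole location is by definition a point where the graph of $h$ has an extreme vertex; equivalently, a point $z$ where $h$ fails to be linear in any neighborhood within $\sS_{G,\sX}$ and which is not a convex combination of other tent poles. Such a point is exactly a $0$-dimensional cell of $\sigma$, i.e.\ a vertex of $\sigma$.

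Next, since we assume $\dim(\sS_{G,\sX}) = d$ and $z$ is a vertex of $\sigma$, the minimal face of $\sigma$ containing $z$ is the singleton $\{z\}$, which has dimension $0$ and therefore codimension $d$ in $\R^d$. Hence $k_z = d$.

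Finally, I would invoke Corollary~\ref{cor:faces_of_the_subdivision}, which gives the inequality $k_z \le \sum_{C\in\sC(G)} k_{z,C}$. Combining this with $k_z = d$ yields $\sum_{C\in\sC(G)} k_{z,C} \ge d$, as desired. The only nontrivial step is the first one (identifying a tent pole with a $0$-dimensional cell of $\sigma$), but this is essentially a matter of unpacking definitions; everything else is immediate from the previously established results.
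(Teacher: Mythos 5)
Your argument is correct and is exactly the paper's proof: identify a tent pole location with a vertex of the subdivision $\sigma$, so that $k_z=d$, and then apply the inequality $k_z\le\sum_{C}k_{z,C}$ from Corollary~\ref{cor:faces_of_the_subdivision}. You have simply spelled out the definitional step that the paper states in one sentence.
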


\begin{example}
In Example~\ref{example:tent_poles},  $\dim(\pi_1(\sS_{G,\sX}))=\dim(\pi_2(\sS_{G,\sX}))=1$. Hence the codimension of a face in $\sigma_1$ or $\sigma_2$ can be at most 1. For $\sum_C k_C \ge d=2$ to hold, we must have $k_1=k_2=1$. This implies that if $z$ is a vertex in the subdivision $\sigma$, then $z_1$ and $z_2$ are vertices in the subdivisions $\sigma_1$ and $\sigma_2$. In this example, $z$ is a vertex of $\sigma$ if and only if $z_1$ and $z_2$ are vertices of $\sigma_1$ and $\sigma_2$ respectively. An analogous statement is true for any independence model: a point $z \in \sS_{G,\sX}$ is a vertex of $\sigma$ if and only if each $z_C$ is a vertex of $\sigma_C$. In particular, the set of vertices for $\sigma$ is the Cartesian product over $C \in \sC(G)$ of vertices for $\sigma_C$.
\end{example}

\begin{remark}
For a point $z_C$ to belong to a face of codimension $k_C$, the minimum in~\eqref{eqn:tent-funtion-as-max} has to be achieved by at least $k_C+1$ arguments. Hence if $l_C$ denotes the number of arguments in~\eqref{eqn:tent-funtion-as-max} that achieve the minimum at $z_C$, then if $z$ is a tent pole, $\sum_C (l_C-1) \ge d$.
\end{remark}

\section{Optimization algorithm}\label{sec:optimization}

We implement an algorithm for computing the optimal log-concave density in an undirected graphical model based on the optimization formulation~\eqref{eq:new_opt_problem}. Our implementation in \texttt{R} is available at \cite{githubrepo}. The objective function $\tau\colon\R^{n|\sC(G)|}\to\R$ from~\eqref{eq:new_opt_problem}  is not differentiable by a similar argument as in~\cite[Section 3.2]{cule2010maximum}. Let us consider the tent function $h_{\sX_C,y_C}$ for the clique $C$. If a tent pole is touching but not supporting a tent, then decreasing the height of this tent pole does not change the tent function, while increasing the height of this tent pole changes the tent function. This implies that the objective function $\tau$ is not differentiable at this point. 

The set of tuples $(y_C)_{C \in \sC(G)}$ for which the objective function $\tau$ is not differentiable is a set of Lebesgue measure zero in $\R^{n|\sC(G)|}$, however, these points cannot be neglected in our optimization algorithm. To apply an optimization method using gradients, at the points where $\tau$ is not differentiable, we compute a subgradient of $\tau$.
To find the optimal solution to~\eqref{eq:new_opt_problem}, we use the \texttt{BFGS} method in the \texttt{optim} package in \texttt{R}, which is a quasi-Newton method that uses function and (sub)gradient values. 

When computing the function and (sub)gradient values, we need to keep track of the heights $(y_C)_{C \in \sC(G)}$ on cliques and the tent function $h$ on $\sS_{G,\sX}$ together with its tent poles and induced triangulation, so that we can apply the formula in~\cite[Lemma 2.4]{robeva2019geometry} for computing the integral in~\eqref{eq:new_opt_problem}. To do so, we apply Proposition~\ref{prop: subdivision_intersection}, which allows us to compute the tent poles of $h$ from the tent poles of $h_{\sX_C,y_C}$. When considering a point $(y_C)_{C \in \sC(G)}$ where $\tau$ is not differentiable, then these heights induce subdivisions of point configurations $\sX_C$, for $C \in \sC(G)$, that can be further triangulated. Different triangulations result in different subgradients. In practice, the choice of triangulation is made by the \texttt{geometry} package in \texttt{R} that we use for computing triangulations induced by a tent function. A future research direction is to explore where the implementation can be improved using ideas from~\cite{chen2021new}.

We illustrate our optimization algorithm on two examples.

\begin{example}\label{ex: MLE indModel}
Let $G=(V,E)$ with $V=[2]$ and $E=\emptyset$, which corresponds to the independence model on two random variables. As discussed in Corollary~\ref{cor:independence-model}, we expect that our optimization method should give the same tent function as when we apply \texttt{LogConcDEAD} on each vertex separately and then add the resulting tent functions.

We consider the dataset $\sX$ of Table \ref{table: sample 50 points indModel} in Appendix \ref{app: sec optimization}. It consists of $50$ points in $\R^2$ sampled from a standard normal distribution.
The tent functions found by our method and \texttt{LogConcDEAD} are depicted in Figure~\ref{fig:optimization-1-2}. With respect to the uniform weight vector $w$, the optimal value found by our method is $3.755142$ and by \texttt{LogConcDEAD} is $3.754209$.
On the one hand, our method produces a solution with $728$ tent poles. 
On the other hand \texttt{LogConcDEAD} produces a solution with $408$ tent poles. Some of these tent poles are not effective, meaning that one can select a subset of tent poles that give the same tent function. 
Figure~\ref{fig:optimization-1-2} suggests that the corresponding tent functions have few effective tent poles and linear regions. Determining only the effective tent poles is in general challenging and is studied in~\cite{grosdos2020exact}. Figure \ref{fig: densities} compares the true standard Gaussian density with the optimal densities obtained from our method with $E=\emptyset$ and \texttt{LogConcDEAD} with $E=\{\{1,2\}\}$.

\begin{figure}[ht]
\centering
\begin{subfigure}{.31\textwidth}
  \centering
  \includegraphics[width=\linewidth]{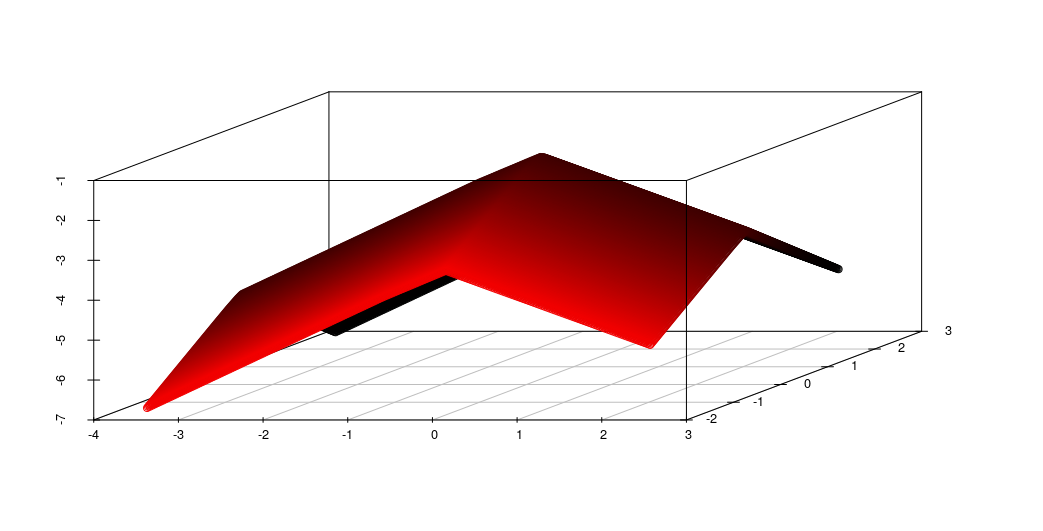}
  \caption{Tent function constructed by our method for $E=\emptyset$.}
  \label{fig:optimization-1-2-our}
\end{subfigure}
\hspace{6pt}
\begin{subfigure}{.31\textwidth}
  \centering
  \includegraphics[width=\linewidth]{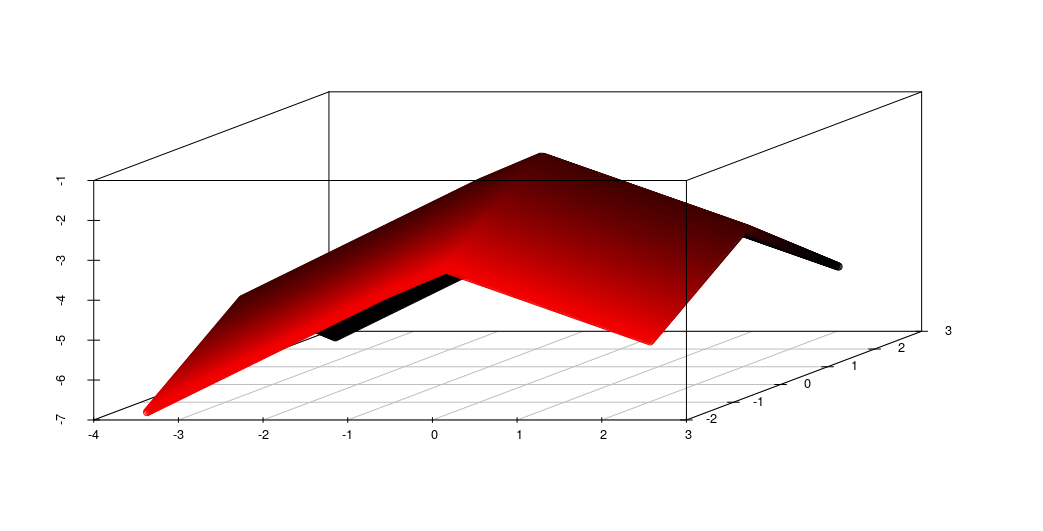}
  \caption{Tent function constructed by \texttt{LogConcDEAD} for $E=\emptyset$.}
  \label{fig:optimization-1-2-LogConcDEAD}
\end{subfigure}
\hspace{6pt}
\begin{subfigure}{.31\textwidth}
  \centering
  \includegraphics[width=\linewidth]{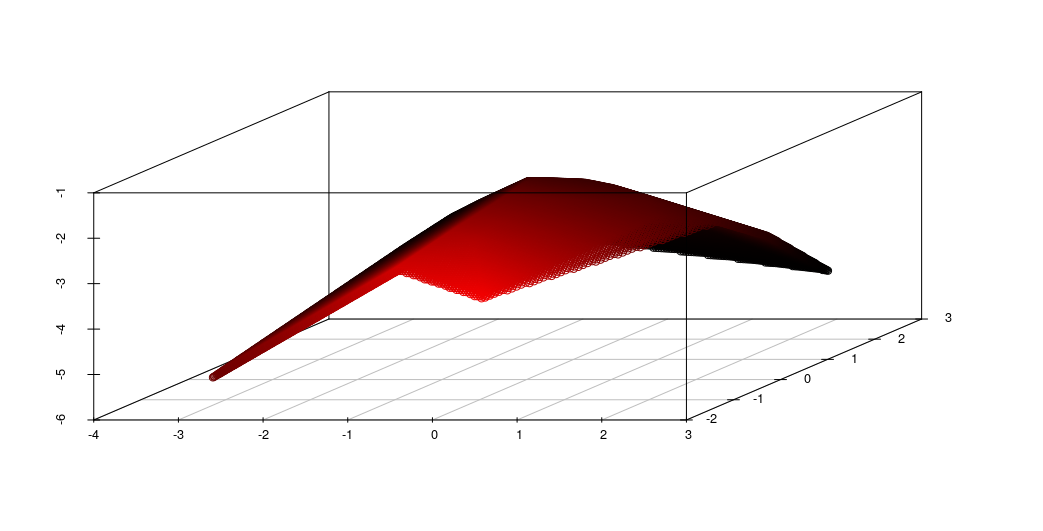}
  \caption{Tent function constructed by \texttt{LogConcDEAD} for $E=\{\{1,2\}\}$.}
  \label{fig:optimization-12-LogConcDEAD}
\end{subfigure}
\caption{Comparison of our method and \texttt{LogConcDEAD} for finding the optimal tent function for the independence model on two random variables.
Although the tent functions in~\ref{fig:optimization-1-2-our} and~\ref{fig:optimization-1-2-LogConcDEAD} look the same, they are obtained using different methods.}
\label{fig:optimization-1-2}
\end{figure}

\begin{figure}[ht]
\centering
\begin{subfigure}{.31\textwidth}
  \centering
  \includegraphics[width=\linewidth]{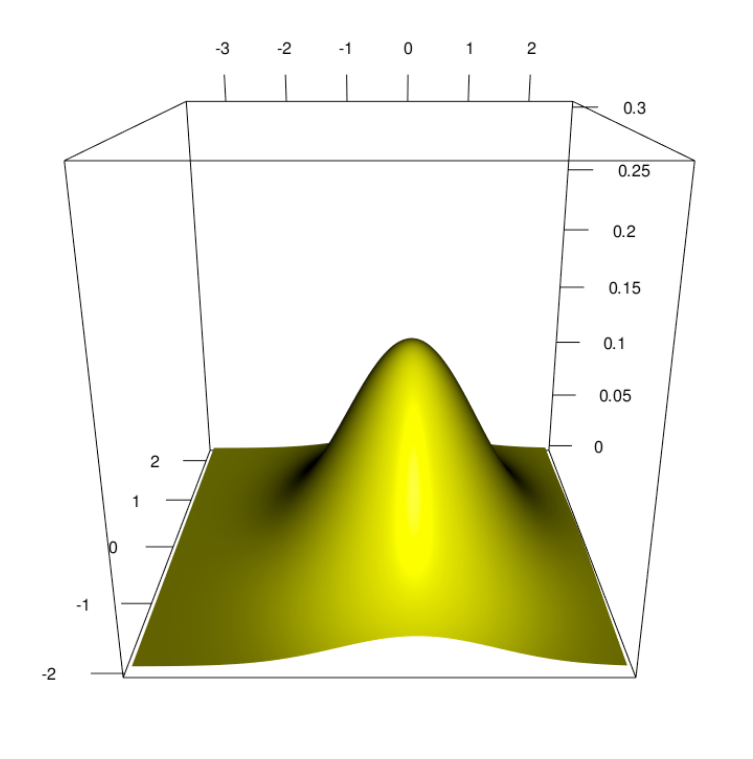}
  \caption{2-dimensional standard normal distribution.}
  \label{fig: true density}
\end{subfigure}
\hspace{6pt}
\begin{subfigure}{.31\textwidth}
  \centering
  \includegraphics[width=\linewidth]{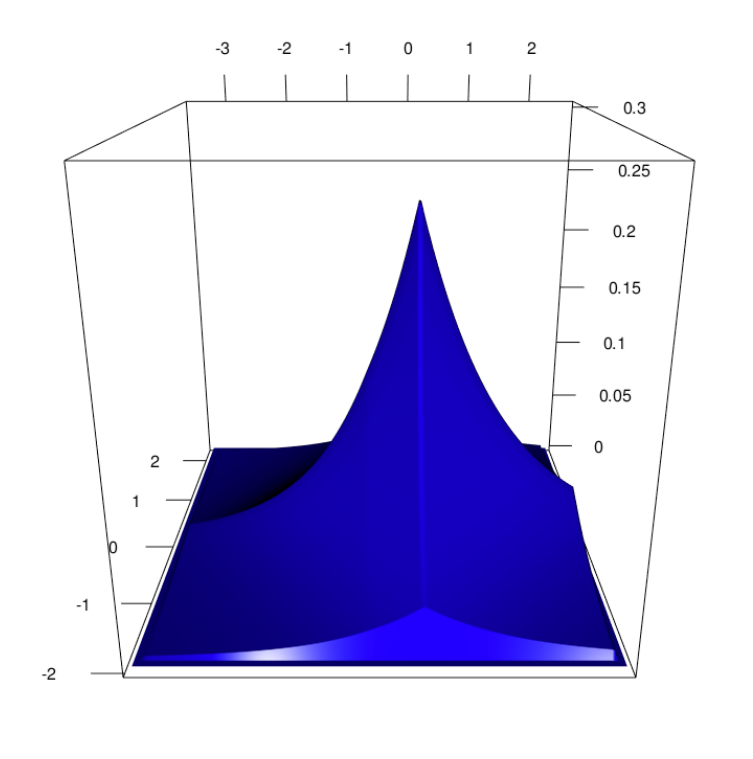}
  \caption{Density constructed by our method with $E=\emptyset$.}
  \label{fig: density our}
\end{subfigure}
\hspace{6pt}
\begin{subfigure}{.31\textwidth}
  \centering
  \includegraphics[width=\linewidth]{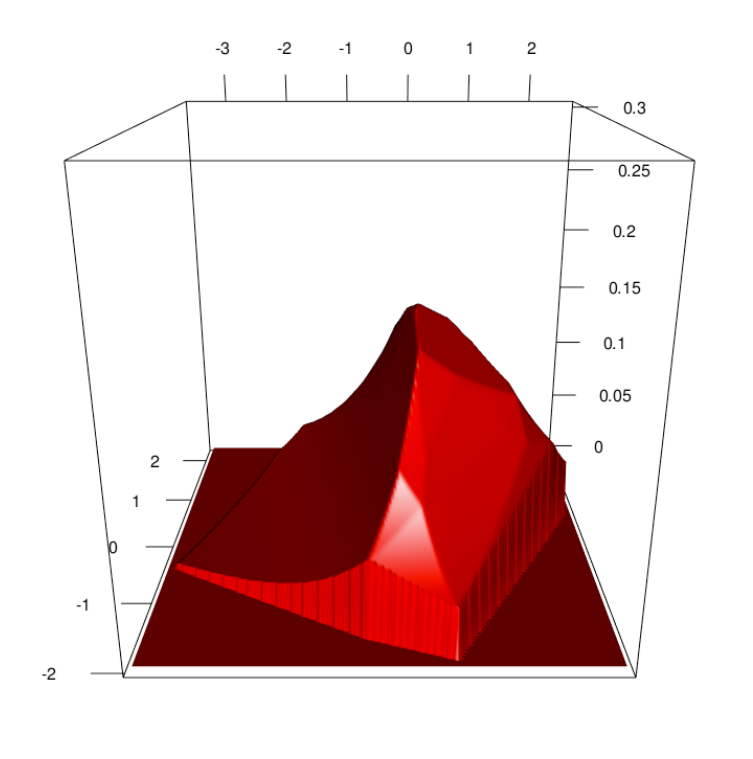}
  \caption{Density constructed by \texttt{LogConcDEAD} with $E=\{\{1,2\}\}$.}
  \label{fig: density LogConcDEAD}
\end{subfigure}
\caption{Comparison of the optimal densities constructed by our method for the independence model on two random variables and \texttt{LogConcDEAD} for classical log-concave estimation. The dataset consists of 50 points sampled from the 2-dimensional standard normal distribution as in Example~\ref{ex: MLE indModel}.}
\label{fig: densities}
\end{figure}

Finally, for sample sizes $n\in\{10,20,\ldots,100, 200, \ldots, 500\}$, we compute  the $l_2$-distance between the log-concave graphical MLE on two variables with $E=\emptyset$ and the true 2-dimensional Gaussian distribution. In the first two rows of Table \ref{table: mean variance distances} in Appendix \ref{app: sec optimization}, we present the mean and standard deviation of the $l_2$-distance for each $n$ over 10 trials.
A similar study is carried out for $E=\{\{1,2\}\}$ using \texttt{LogConcDEAD}. The values are collected in the third and fourth row of Table \ref{table: mean variance distances}. In Figure \ref{fig: mean variance distances}, we provide also an error bar plot of the $\ell_2$-distances for $n\in\{10,20,\ldots,100\}$. The red (respectively, blue) points and bars represent the mean values and the standard deviations of the distances between each log-concave graphical (respectively, log-concave without constraints) MLE and the true Gaussian distribution.

\begin{figure}[ht]
\centering
\includegraphics[width =0.5\linewidth]{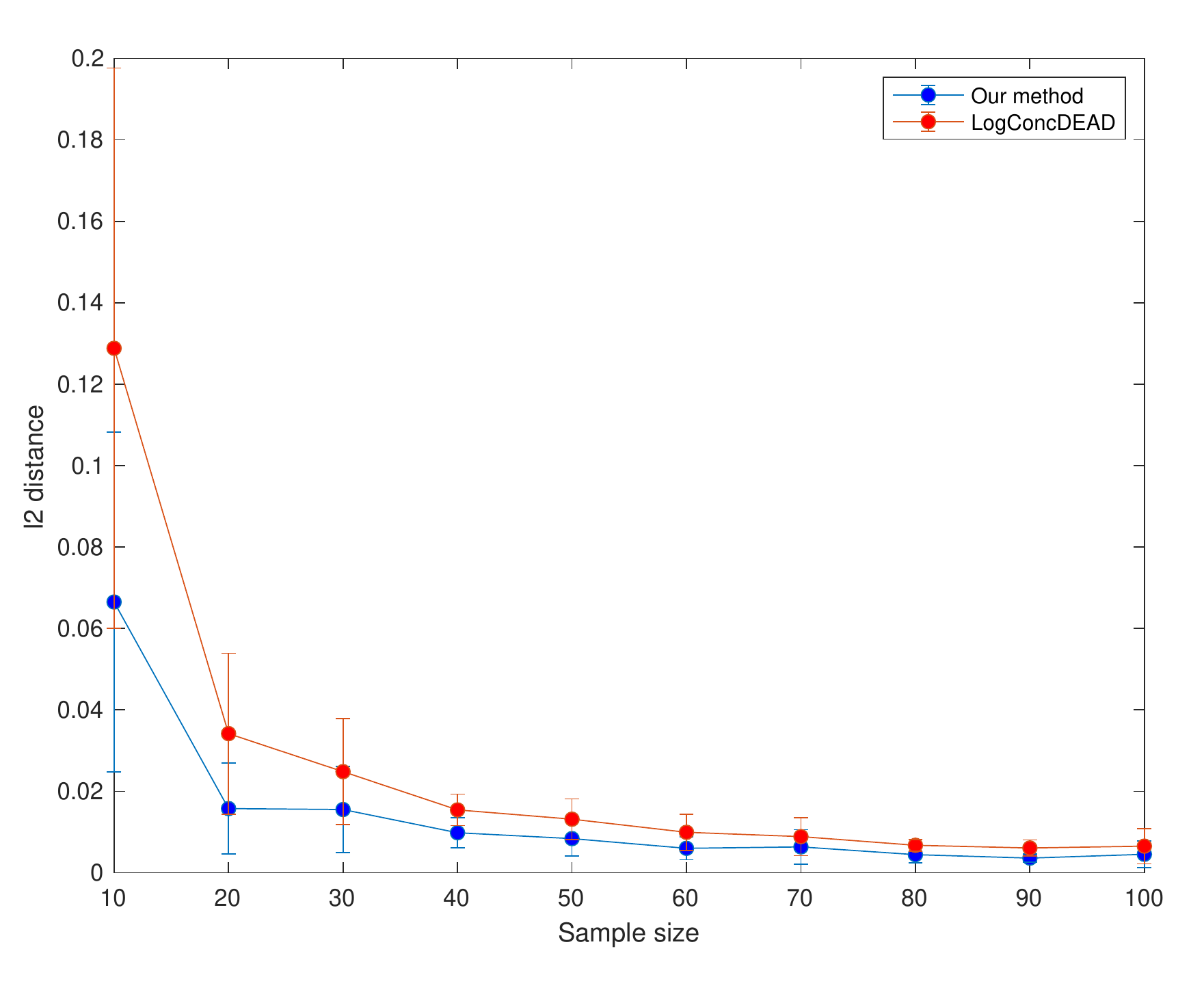}
\caption{The error bar plot compares the performances between our method and \texttt{LogConcDEAD} for each $n \in\{10,20,\ldots,100\}$ over 10 trials.}
\label{fig: mean variance distances}
\end{figure}

\end{example}

\begin{example}\label{ex: optimal-tent-function2}
We consider the graph $G=([3],\{\{1,2\},\{2,3\}\})$ and the dataset $\sX$ of Table \ref{table: sample 60 points graph 12-23} in Appendix \ref{app: sec optimization}, which consists of $60$ points in $\R^3$ sampled from a normal distribution with mean $0$ and standard deviation $5$.
The corresponding polytope $\sS_{G,\sX}$ has 24 vertices, listed in Table \ref{table: vertices support MLE graph 12-23} in Appendix \ref{app: sec optimization}.
Only three sample points in $\sX$ are also vertices of $\sS_{G,\sX}$.
Let $w$ be the uniform weight vector. Our optimization algorithm outputs that the optimal tent function has $2361$ tent poles.
The optimal value found by our method is $9.60738$.
The optimal tent function $h$ is the sum of tent functions on the cliques, i.e., $h=h_{\sX_{12},y_{12}}+h_{\sX_{23},y_{23}}$. The sets $\sX_{12}$ and $\sX_{23}$ and tent functions $h_{\sX_{12},y_{12}}$ and $h_{\sX_{23},y_{23}}$ are depicted in Figure~\ref{fig:optimal-tent-function2}.

\begin{figure}[ht]
\centering
\begin{subfigure}{.5\textwidth}
  \centering
  \includegraphics[width=\linewidth]{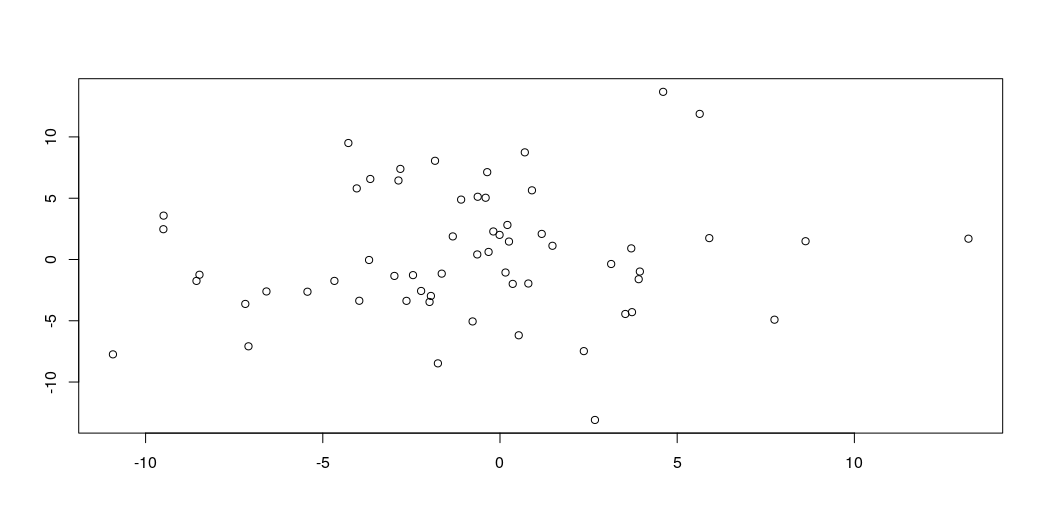}
  \caption{$\sX_{12}$}
  \label{fig:proj12-points}
\end{subfigure}%
\hfill
\begin{subfigure}{.5\textwidth}
  \centering
  \includegraphics[width=\linewidth]{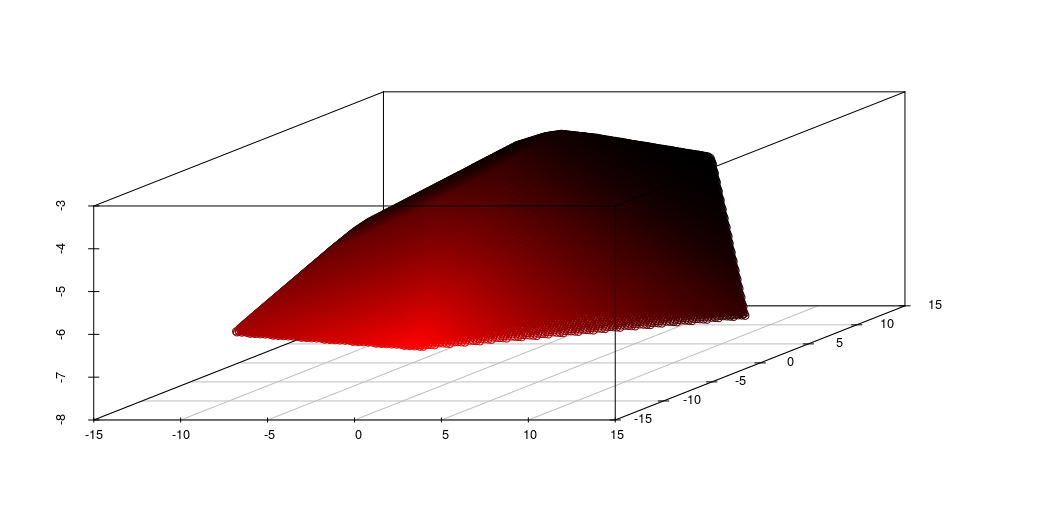}
  \caption{$h_{\sX_{12},y_{12}}$}
  \label{fig:proj12-tent-function}
\end{subfigure}
\begin{subfigure}{.5\textwidth}
  \centering
  \includegraphics[width=\linewidth]{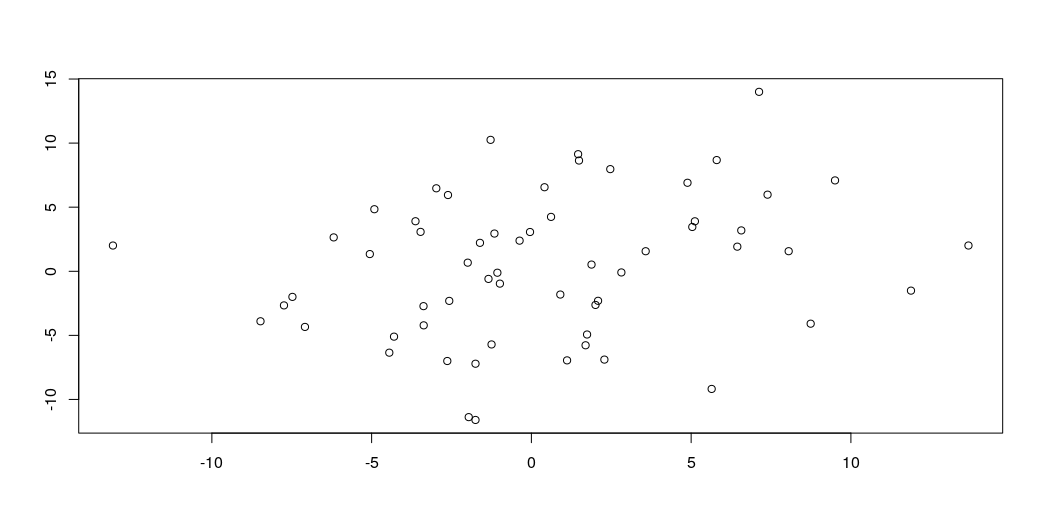}
  \caption{$\sX_{23}$}
  \label{fig:proj23-points}
\end{subfigure}%
\hfill
\begin{subfigure}{.5\textwidth}
  \centering
  \includegraphics[width=\linewidth]{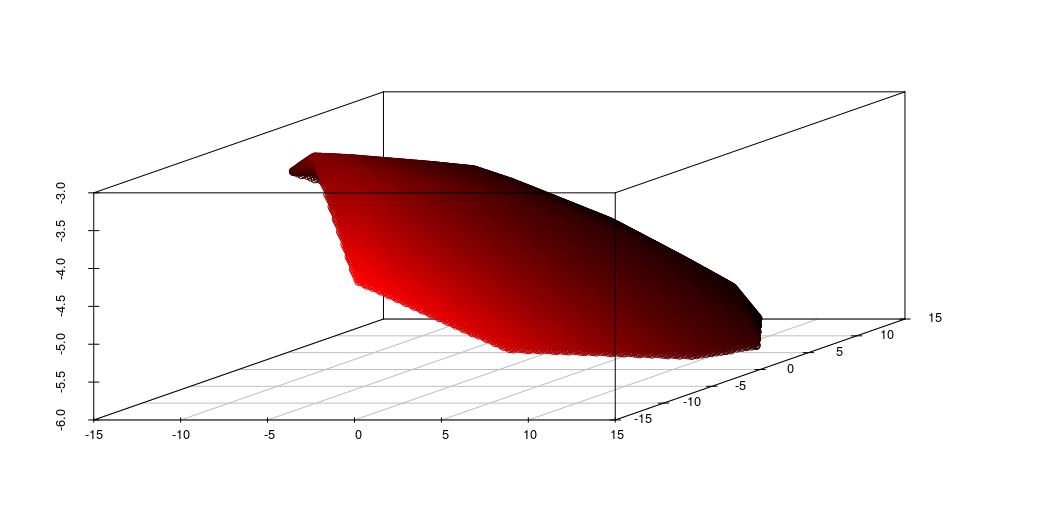}
  \caption{$h_{\sX_{23},y_{23}}$}
  \label{fig:proj23-tent-function}
\end{subfigure}
\caption{Decomposition of the optimal tent function over the cliques in Example~\ref{ex: optimal-tent-function2}.}
\label{fig:optimal-tent-function2}
\end{figure}
\end{example}

\section{Consistency for a disjoint union of cliques}\label{sec: disjoint union}

In this section we show that, when the graph is a disjoint union of cliques, then the maximum likelihood estimator is consistent. 
We denote every estimated density with $\hat{f}_n$ to stress its dependence on the cardinality $n$ of the given sample $\sX$.

\begin{theorem} \label{thm:consistency-for-connected-components}
Let $G=(V,E)$ be an undirected graph with $V=[d]$. Suppose that $G=\bigsqcup_{i=1}^k G_i$, where each $G_i$ is a complete graph that corresponds to a connected component of $G$. Denote the vertex set of each $G_i$ by $V_i$.
Let $f_0$ be any density on $\R^d$ such that
\begin{equation}\label{eq: integrability conds}
\begin{gathered}
\int_{\R^{V_i}}\|x_{V_i}\| (f_0)_{V_i} d\mu_{V_i}<\infty\,,\quad\int_{\R^{V_i}}(f_0)_{V_i}\log_{+}(f_0)_{V_i}d\mu_{V_i}<\infty\,,\quad\forall\,i\in[k]\,,\\
\int_{\R^d}f_0\log_{+}f_0\,d\mu<\infty\,,\quad\text{and}\ \mathrm{int}(\supp(f_0)) \neq \emptyset\,.
\end{gathered}
\end{equation}
\begin{enumerate}
    \item\label{item: KL} There exists a log-concave density $f^* \in \sF_G$, unique almost everywhere, such that
    \begin{equation}\label{eq: infimum product}
    d_\mathrm{KL}(f_0,f^*)=\inf_{f\in\sF_G}d_\mathrm{KL}(f_0,f)\,.
    \end{equation}
    \item\label{item: strong convergence} Suppose that $(\hat{f}_n)$ is a sequence of estimated densities in $\sF_G$. Let $a_0\in\R_{>0}$ and $b_0\in\R$ satisfy $f^*(x)\le e^{-a_0\|x\|+b_0}$ for all $x\in\R^d$. Then for all $a<a_0$
    \begin{equation}\label{eq: strong convergence tv}
    \int_{\R^d} e^{a\|x\|}|\hat{f}_n(x)-f^*(x)|d\mu(x)\stackrel{a.s.}{\to} 0
    \end{equation}
    and, whenever $f^*$ is continuous,
    \begin{equation}\label{eq: strong convergence sup}
    \sup_{x \in \R^d} e^{a\|x\|}|\hat{f}_n(x)-f^*(x)| \stackrel{a.s.}{\to} 0\,.
    \end{equation}
\end{enumerate}
\end{theorem}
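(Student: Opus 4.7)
My plan exploits that $G=\bigsqcup_{i=1}^k G_i$ is a disjoint union of complete graphs, so by Proposition~\ref{prop:mle-for-connected-components} and Corollary~\ref{cor:independence-model} every $f\in\sF_G$ factors uniquely as $f=\prod_{i=1}^k f_{V_i}$ with each $f_{V_i}$ a log-concave density on $\R^{V_i}$; in particular $\hat{f}_n=\prod_i\hat{f}_{V_i,n}$, where each $\hat{f}_{V_i,n}$ is the classical (unconstrained) log-concave MLE from the projected sample $\sX_{V_i}$. The assumptions~\eqref{eq: integrability conds}, when restricted to $(f_0)_{V_i}$, coincide with the standing hypotheses of~\cite{cule2010maximum}, so that the classical log-concave MLE theory applies to each block.

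\textbf{Part~\ref{item: KL}.} I would invoke the decomposition
\[
d_\mathrm{KL}(f_0,f)=\int f_0\log\frac{f_0}{\prod_i(f_0)_{V_i}}\,d\mu+\sum_{i=1}^k d_\mathrm{KL}\bigl((f_0)_{V_i},f_{V_i}\bigr),
\]
whose first (mutual-information) term is a finite constant in $f$ under~\eqref{eq: integrability conds}. Minimizing $d_\mathrm{KL}(f_0,f)$ over $\sF_G$ thus reduces to minimizing each summand independently over log-concave densities on $\R^{V_i}$; by~\cite{cule2010maximum} each of these problems has a unique (a.e.) log-concave minimizer $f^*_{V_i}$, and $f^*=\prod_i f^*_{V_i}\in\sF_G$ is the unique minimizer in~\eqref{eq: infimum product}.

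\textbf{Part~\ref{item: strong convergence}.} First I would transfer the bound $f^*(x)\le e^{-a_0\|x\|+b_0}$ to each marginal: fixing $(x_{V_j}^0)_{j\ne i}$ in $\mathrm{int}(\supp(f^*))$ and using $\|x\|\ge\|x_{V_i}\|$ together with the product factorization yields $f^*_{V_i}(x_{V_i})\le K_i\,e^{-a_0\|x_{V_i}\|}$ for some $K_i>0$. Hence any $a<a_0$ lies strictly below the exponential decay rate of each $f^*_{V_i}$, and the consistency theorem of~\cite{cule2010maximum} applied blockwise yields $\int e^{a\|x_{V_i}\|}|\hat{f}_{V_i,n}-f^*_{V_i}|\,d\mu_{V_i}\stackrel{a.s.}{\to}0$ and, when $f^*_{V_i}$ is continuous, the corresponding sup-norm limit (continuity of $f^*$ passes to each $f^*_{V_i}$ through the factorization by fixing the remaining coordinates). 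To combine the blockwise limits I would use the telescoping identity
\[
\hat{f}_n-f^*=\sum_{i=1}^k\Bigl(\prod_{j<i}\hat{f}_{V_j,n}\Bigr)\bigl(\hat{f}_{V_i,n}-f^*_{V_i}\bigr)\Bigl(\prod_{j>i}f^*_{V_j}\Bigr)
\]
together with the separation-of-variables bound $e^{a\|x\|}\le\prod_i e^{a\|x_{V_i}\|}$, which follows from $\|x\|^2=\sum_i\|x_{V_i}\|^2$. Fubini then splits every telescope term into a product of one-block integrals (or suprema): the middle factor tends to zero a.s.\ by the previous step, the factors involving $f^*_{V_j}$ are finite by the marginal decay bound, and the factors involving $\hat{f}_{V_j,n}$ are bounded a.s.\ since they converge to those same finite limits. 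Summing yields~\eqref{eq: strong convergence tv}; the sup-norm statement~\eqref{eq: strong convergence sup} follows identically, with suprema replacing integrals.

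\textbf{Main obstacle.} Part~\ref{item: KL} is essentially bookkeeping. The delicate step is in Part~\ref{item: strong convergence}, where one must propagate the single global decay exponent $a_0$ to each marginal $f^*_{V_i}$ at the same rate (so that~\cite{cule2010maximum} can be invoked with one common $a<a_0$ on every block) and then verify that every factor arising in the telescoping sum is either almost-surely bounded or convergent under the $e^{a\|x\|}$ weight. Once these are in place, the entire argument reduces cleanly to the classical unconstrained log-concave consistency theorem applied to each block.
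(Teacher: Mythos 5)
Your proposal is correct. Part~\ref{item: KL} is essentially identical to the paper's argument: the same additive decomposition of $d_\mathrm{KL}(f_0,\cdot)$ into a constant depending only on $f_0$ plus the blockwise divergences, followed by a blockwise application of the Cule--Samworth minimizer result (the relevant statements are in the ``theoretical properties'' companion paper \cite{cule2010theoretical}, Theorem~4, rather than \cite{cule2010maximum}). For Part~\ref{item: strong convergence} you take a genuinely different route. The paper's proof only establishes the \emph{unweighted} $L^1$ convergence $\int|\hat f_n-f^*|\,d\mu\to 0$ via a product telescoping (using that each $\hat f_{n,i}$ and $f_i^*$ integrates to $1$, so the cross factors disappear exactly), deduces convergence in distribution, and then invokes \cite[Proposition~2]{cule2010theoretical} applied once to the full $d$-dimensional log-concave sequence $(\hat f_n)$ to obtain both exponentially weighted conclusions simultaneously; this sidesteps any need to control the weight $e^{a\|x\|}$ across blocks. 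You instead propagate the decay exponent $a_0$ to each marginal, apply the weighted consistency theorem blockwise, and recombine with a weighted telescoping sum plus Fubini. Your version works and is more self-contained at the recombination step, at the cost of extra bookkeeping; one small caveat is that the bound $e^{a\|x\|}\le\prod_i e^{a\|x_{V_i}\|}$ requires $a\ge 0$, so you should reduce to $a\in(0,a_0)$ first (harmless, since the statement for smaller or negative $a$ follows from the statement for any fixed positive $a'<a_0$ because $e^{a\|x\|}\le e^{a'\|x\|}$ pointwise).
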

\noindent The assumptions~\eqref{eq: integrability conds} are satisfied by all $f_0 \in \sF_G$ for the same reasons as in~\cite[Theorem 4]{cule2010theoretical}.
\begin{proof}
\ref{item: KL}. Consider a density $f\in\sF_G$, in particular $f=\prod_{i=1}^k f_i$ for some densities $f_i\in\sF_{G_i}$. We show that
\begin{equation}\label{eq: identity KL divergence product}
d_\mathrm{KL}(f_0,f) = c(f_0)+\sum_{i=1}^k d_\mathrm{KL}((f_0)_{V_i},f_i)
\end{equation}
for some constant $c(f_0)$ depending only on the fixed density $f_0$. Indeed
\begin{align}
\begin{split}
d_\mathrm{KL}(f_0,f) &= \int_{\R^d}f_0(\log f_0-\log f)d\mu = \int_{\R^d}f_0\log f_0\,d\mu-\sum_{i=1}^k\int_{\R^d}f_0\log f_i\,d\mu\\
&= \int_{\R^d}f_0\log f_0\,d\mu-\sum_{i=1}^k\int_{\R^{V_i}}(f_0)_{V_i}\log f_i\,d\mu_{V_i}\\
&=\int_{\R^d}f_0\log f_0\,d\mu-\sum_{i=1}^k\int_{\R^{V_i}}(f_0)_{V_i}\log (f_0)_{V_i}d\mu_{V_i}\\
&\quad +\sum_{i=1}^k\int_{\R^{V_i}}(f_0)_{V_i}(\log(f_0)_{V_i}-\log f_i)d\mu_{V_i}\\
&= c(f_0)+\sum_{i=1}^k d_\mathrm{KL}((f_0)_{V_i},f_i)\,.
\end{split}
\end{align}
Applying~\cite[Theorem 4]{cule2010theoretical}  to every marginal density $(f_0)_{V_i}$, we know that for each $i\in[k]$ there exists a density $f_i^*\in\sF_{G_i}$, unique almost everywhere, such that
\[
d_\mathrm{KL}((f_0)_{V_i},f_i^*)=\inf_{f_i\in\sF_{G_i}}d_\mathrm{KL}((f_0)_{V_i},f_i)\,.
\]
Using \eqref{eq: identity KL divergence product}, we have that
\begin{equation}\label{eq: lower bound infimum KL}
d_\mathrm{KL}(f_0,f) \ge c(f_0)+\sum_{i=1}^k d_\mathrm{KL}((f_0)_{V_i},f_i^*) = d_\mathrm{KL}\left(f_0,\prod_{i=1}^k f_i^*\right)\,,
\end{equation}
where in the last equality we applied \eqref{eq: identity KL divergence product} with $f=f^*\coloneqq\prod_{i=1}^k f_i^*$. Note that $f^*$ is a density on $\R^d$ because 
\begin{equation}\label{eq: density f^*}
    \int_{\R^d}f^*d\mu=\int_{\R^d}\prod_{i=1}^k f_i^*\,d\mu=\prod_{i=1}^k\int_{\R^{V_i}}f_i^*\,d\mu_{V_i}=1\,.
\end{equation}
Summing up, the inequality \eqref{eq: lower bound infimum KL} implies that
\[
\inf_{f\in\sF_G}d_\mathrm{KL}(f_0,f) \ge d_\mathrm{KL}(f_0,f^*)\,.
\]
Furthermore by definition $d_\mathrm{KL}(f_0,f^*) \ge \inf_{f\in\sF_G}d_\mathrm{KL}(f_0,f)$, hence the equality \eqref{eq: infimum product} is satisfied. The uniqueness of $f^*$ a.e. is immediate.

\ref{item: strong convergence}. The log-concave densities $f_i^*$ on $\R^{V_i}$ are such that the sequence of estimated densities $ (\hat{f}_{n,i})$ of $\sX_{V_i}=\{X^{(1)}_{V_i}, \ldots, X^{(n)}_{V_i}\}$ satisfy $\int_{\R^{V_i}}|\hat{f}_{n,i}-f_i^*|d\mu_{V_i} \stackrel{a.s.}{\to} 0$ by the base case.
We show by induction on the number of components $k$ that the estimates $\hat{f}_n=\prod_{i=1}^{k}\hat{f}_{n,i}$ form a minimizing sequence. Let $\hat{g}_n\coloneqq\prod_{i=1}^{k-1}\hat{f}_{n,i}$ and $g^*\coloneqq\prod_{i=1}^{k-1}f_i^*$. Then
\begin{align}
\begin{split}
\int_{\R^d}|\hat{f}_n-f^*|d\mu &= \int_{\R^d}|\hat{g}_n\hat{f}_{n,k} - g^*f_k^*|d\mu\\
&\le \int_{\R^d} \hat{f}_{n,k}|\hat{g}_n-g^*|+g^*|\hat{f}_{n,k}-f_k^*|d\mu\\
&= \int_{\R^{V\setminus V_k}}|\hat{g}_n-g^*|d\mu_{V\setminus V_k} + \int_{\R^{V_k}}|\hat{f}_{n,k}-f_k^*|d\mu_{V_k} \stackrel{a.s.}{\to} 0\,,
\end{split}
\end{align}
which implies that the sequence of measures given by $(\hat{f}_n)$ converges in distribution to the measure given by $f^*$.
From~\cite[Lemma 1]{cule2010theoretical}
we know that $a_0$ and $b_0$ exist, so we can apply~\cite[Proposition 2]{cule2010theoretical} to complete the proof.   
\end{proof}

\section{Convex decompositions of convex functions} \label{sec:decomposition-of-convex-functions}

Recall the model we have been studying is the family $\sF_G$ of densities $f$ that admit a log-concave factorization $f(x)=\prod_{C\in\sC(G)}\psi_C(x_C)$.
The initial idea of this project was to instead study the family $\tilde{\sF}_G$ of log-concave densities $f$ that factorize as $f(x) = \prod_{C\in\sC(G)}\psi_C(x_C)$.
The difference between $\sF_G$ and $\tilde{\sF}_G$ is that in the former, the clique potentials $\psi_C$ themselves are required to be log-concave, while in the latter only their product $f$ is required to be log-concave.
While it is clear that $\sF_G \subseteq\tilde{\sF}_G$, the reverse inclusion is much harder to tackle. In fact, we know for certain that if $G$ is {\em not chordal}, then $\sF_G$ is strictly contained in $\tilde{\sF}_G$. For every non-chordal graph $G=(V,E)$ with $V=[d]$ and set of maximal cliques $\sC(G)$, there exists a log-concave density $f$ in $\R^d$ that does not admit a factorization $f(x)=\frac{1}{Z}\prod_{C\in\sC(G)}\psi_C(x_C)$ with nonnegative log-concave potentials $\psi_C$.

\begin{example}\label{ex: no log-concave decomposition}
A canonical example is given in~\cite[Section 12.1, 382--383]{vandenberghe2015chordal} where $G$ is a $4$-cycle and
\[
f(x_1,x_2,x_3,x_4)=\frac{1}{Z}\exp[-\lambda(x_1^2+\dots+x_4^2)-2(x_1x_2+x_2x_3+x_3x_4-x_1x_4)]\,,
\]
where $Z=Z(\lambda)$ is a normalizing constant.
It turns out that, if $\sqrt{2}\le \lambda <2$, then $f$ is log-concave but does not admit a log-concave decomposition.
Furthermore, one may use the argument in ~\cite[Section 9.2, 342--344]{vandenberghe2015chordal} to extend this example to any $n$-cycle for $n\ge 4$.
\end{example}

\begin{remark}
The fact that $\sF_G\subsetneq\tilde{\sF}_G$ does not exclude the possibility that the MLE over $\tilde{\sF}_G$ belongs to $\sF_G$, a question that could be addressed in future work.
\end{remark}

\textbf{Related work.} There are several results regarding the cases where a function can be written as the sum of convex functions in some sense, each of which studies the problem from a different perspective.
In~\cite{pales2003approximately}, P{\'a}les considers  $(\varepsilon,\delta)$-convex functions, which are real valued functions $f$ defined on a real interval $I$ such that for all $x,y\in I$ and $t\in [0,1]$,
\[
f(tx+(1-t)y) \le tf(x) + (1-t) f(y) + \varepsilon t(1-t) |x-y| + \delta\,.
\]
\cite[Corollary 4]{pales2003approximately} proves that $f$ is an $(\varepsilon,\delta)$-convex function for some positive $\varepsilon$ and $\delta$ if and only if it can be written as a sum of a convex function, a function with bounded supremum norm, and a function with bounded Lipschitz modulus.

In \cite{rockafellar1970convex}, various properties have been proved for the sum of convex functions. For instance, \cite[Theorem 9.3]{rockafellar1970convex} states that the sum of closed proper convex functions is a closed proper convex function, \cite[Theorem 16.4]{rockafellar1970convex} shows that the operations of addition and infimal convolution of convex functions are dual to each other, \cite[Theorem 19.4]{rockafellar1970convex} proves that the sum of two polyhedral convex functions is polyhedral, and \cite[Theorem 19.4]{rockafellar1970convex} refines the formula in \cite[Theorem 16.4]{rockafellar1970convex} in the case where the convexity of some of the summands is polyhedral. Furthermore, \cite[Theorem 23.8]{rockafellar1970convex} concerns the subdifferential of the sum of some proper convex functions.

In \cite[Chapter 12]{vandenberghe2015chordal}, Vandenberghe and Andersen define a function $f\colon\R^d\to \R$ as {\em partially separable} if it can be expressed as 
\[
f(x)= \sum_{k=1}^\ell f_k(x_{\beta_k(1)},\ldots,x_{\beta_k(r_k)})\,,
\]
where the sets $\{\beta_k(1),\ldots,\beta_k(r_k)\}$ are (small) index sets in $[d]$. Partial separability, and an extension called {\em group partial separability}, are applied to different fields such as nonlinear optimization modeling software, quasi-Newton and trust-region algorithms, and sparse semidefinite relaxations of polynomial optimization problems.

\smallskip
\textbf{Results.} While we have not been able to show that the families $\tilde{\sF}_G$ and $\sF_G$ are equal when $G$ is a chordal graph, in the rest of this section we give several sufficient conditions under which a function $f\in\tilde{\sF}_G$ also lies in $\sF_G$.

We begin our discussion with Proposition \ref{prop: convex decomposition iff theta_i}, which gives an equivalent condition for an element of $\tilde{\sF}_G$ lying in $\sF_G$ when $G$ is chordal. For such a graph $G$, Corollary \ref{corollary: Gaussian log-concave factorization} then proves that any Gaussian density function in the graphical model of $G$ also lies in $\sF_G$.

The next result in this section is Theorem \ref{thm: sufficient condition for convex decomposition on chordal graphs}, which proposes a different sufficient condition for a function $f\in\tilde{\sF}_G$ also lying in $\sF_G$ when $G$ is a chordal graph.

Finally, Theorem \ref{thm: convex decomposition} concerns the case where $G$ is chosen from a proper subset of chordal graphs. This result considers the log-concave densities $f$ that factorize according to the graph $G$ with twice continuously differentiable clique potentials, and gives a sufficient condition for $f$ having a log-concave factorization according to $G$ with twice continuously differentiable clique potentials.
Figure \ref{fig: section 8 map} in Appendix \ref{app: section decomposition convex} illustrates a road map of Section \ref{sec:decomposition-of-convex-functions}.

\smallskip
\textbf{Notation.}
Consider a twice differentiable function $f\colon\R^d\to\R$. Let $A$ and $B$ be two subsets of $[d]$.
We denote the Hessian matrix of $f$ with respect to $x_A$ and $x_B$ by ${H_f}|_{A,B}$, i.e.
\[
{H_f}|_{A,B} \coloneqq \left(\frac{\partial^2 f}{\partial x_i \partial x_j} \right)_{i\in A,\,j\in B}\,.
\]
In particular, when $A=B$ we use the shorthand ${H_f}|_{A}\coloneqq {H_f}|_{A,A}$\,.

Let $G$ be a graph with set of maximal cliques $\sC(G)=\{C_1,\ldots,C_t\}$. Given indices $p\le q$ in $[t]$, we use the shorthand ${C}_{[p,q]}$ to denote $\bigcup_{j=p}^q C_j$.

\smallskip
Proposition \ref{prop: chordal if and only if the running intersection property} gives an equivalent definition of chordal graphs, which concerns an ordering of the maximal cliques in the graphs with a specific property.

\begin{definition}\label{running intersection property}
Let $G=(V,E)$ be a graph with $t$ maximal cliques. We say that an ordering $C_1,\ldots,C_t$ of the maximal cliques of $G$ satisfies the {\em \theproperty} if for all $i\in[t-1]$, there exists $j\in\{i+1,\ldots,t\}$ such that $C_i \cap C_{[i+1,t]}\subseteq C_j$.
\end{definition}

The running intersection property introduced in Definition \ref{running intersection property} is slightly different from \cite{blair1993clique}. Proposition \ref{prop: chordal if and only if the running intersection property} is immediate by applying \cite[Theorem 3.4]{blair1993clique} to all the connected components of a given graph $G$.

\begin{proposition}\label{prop: chordal if and only if the running intersection property}
Let $G$ be a graph with $|\sC(G)|=t$. Then $G$ is chordal if and only if there exists an ordering $C_1,\ldots,C_t$ of $\sC(G)$ that satisfies the \theproperty.
\end{proposition}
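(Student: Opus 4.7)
The plan is to reduce the statement to the connected case, where it coincides with the classical result of Blair and Peyton \cite[Theorem 3.4]{blair1993clique}. Write $G = \bigsqcup_{\ell=1}^k G^{(\ell)}$ as the disjoint union of its connected components, and recall that every maximal clique of $G$ is a maximal clique of exactly one component $G^{(\ell)}$, because any two adjacent vertices lie in the same component. Thus $\sC(G) = \bigsqcup_\ell \sC(G^{(\ell)})$, and whenever a clique $C$ comes from $G^{(\ell)}$ and $C'$ from $G^{(\ell')}$ with $\ell \ne \ell'$, we have $C \cap C' = \emptyset$. This observation is the only ingredient needed to promote the connected result to the general one.

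For the forward direction, suppose $G$ is chordal. Each induced subgraph $G^{(\ell)}$ is then chordal, so \cite[Theorem 3.4]{blair1993clique} produces an ordering of $\sC(G^{(\ell)})$ satisfying the running intersection property within $G^{(\ell)}$. Concatenate these orderings (in any order of the components) to form an ordering $C_1,\ldots,C_t$ of $\sC(G)$. To verify the running intersection property for this global ordering, fix $i \in [t-1]$ and let $G^{(\ell)}$ be the component containing $C_i$. Since $C_i$ is disjoint from every clique outside $G^{(\ell)}$, the intersection $C_i \cap C_{[i+1,t]}$ equals $C_i \cap \bigcup\{C_k : k>i,\ C_k \in \sC(G^{(\ell)})\}$. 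If there is at least one remaining clique of $G^{(\ell)}$ after $C_i$, the property within $G^{(\ell)}$ supplies a witness $C_j$; otherwise this intersection is empty and the condition is satisfied by any $j > i$ (such $j$ exists because $i < t$).

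For the reverse direction, suppose we have an ordering $C_1,\ldots,C_t$ of $\sC(G)$ with the running intersection property. I would restrict this ordering to each $\sC(G^{(\ell)})$ and verify that the induced sub-ordering also satisfies the running intersection property, so that \cite[Theorem 3.4]{blair1993clique} forces each $G^{(\ell)}$, and hence $G$, to be chordal. Concretely, if $C_i \in \sC(G^{(\ell)})$ is not the last clique of its sub-ordering, then $C_i \cap C_{[i+1,t]}$ equals the same intersection restricted to the cliques of $\sC(G^{(\ell)})$ after $C_i$, again by disjointness across components. If this intersection is nonempty, any witness $C_j$ provided by the global ordering must share a vertex with $C_i$ and therefore lie in $G^{(\ell)}$, making it a valid witness inside the sub-ordering as well; if the intersection is empty, any subsequent clique of $G^{(\ell)}$ works.

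The main obstacle is really just bookkeeping: one must be careful that the witness $C_j$ guaranteed by the global running intersection property remains in the correct connected component when the sub-ordering for $G^{(\ell)}$ is extracted, and conversely that concatenating component orderings never introduces a failure at the boundary between components. Beyond this, no new idea is needed on top of the Blair--Peyton theorem for connected chordal graphs.
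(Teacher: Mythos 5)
Your proposal is correct and follows exactly the route the paper takes: the paper declares the proposition ``immediate by applying \cite[Theorem 3.4]{blair1993clique} to all the connected components of $G$,'' and your argument simply spells out the component-by-component bookkeeping (concatenating orderings and restricting them) that the paper leaves implicit.
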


\begin{example}
Figure \ref{fig:chordal graph 1} shows a chordal graph with six maximal cliques. The following ordering of these maximal cliques satisfies the \theproperty:
\[
\begin{gathered}
C_1 = \{1,2,4\}\,,\ C_2 = \{2,3,5\}\,,\ C_3 = \{4,6,7\}\,,\\
C_4 = \{5,7,8\}\,,\ C_5 = \{2,4,5\}\,,\ C_6 = \{4,5,7\}\,.
\end{gathered}
\]
\begin{figure}[ht]
\centering
    \begin{subfigure}[t]{.5\textwidth}
    \centering
    \includegraphics[scale=0.35]{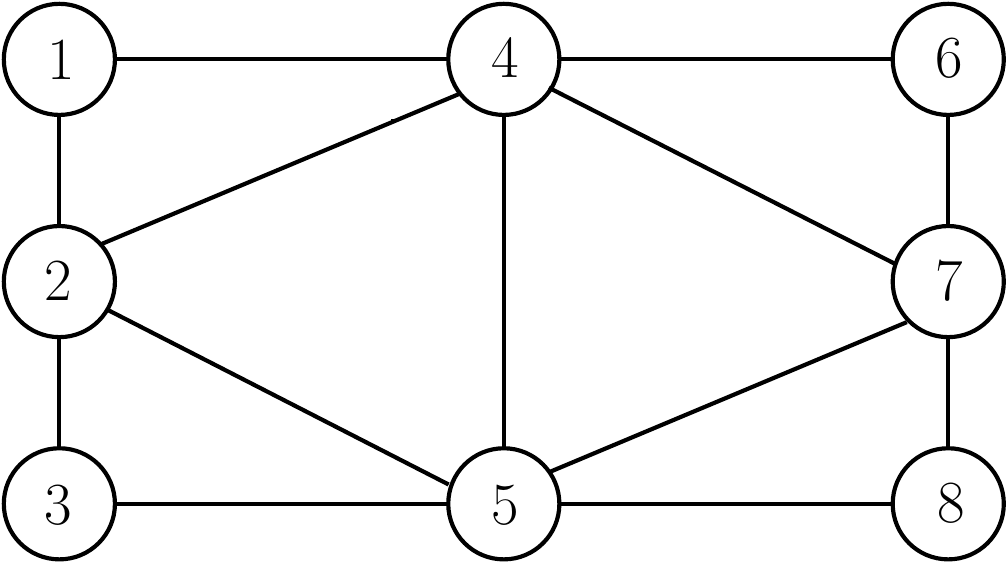}
    \caption{}
    \label{fig:chordal graph 1}
    \end{subfigure}%
    \begin{subfigure}[t]{.5\textwidth}
    \centering
    \includegraphics[scale=0.35]{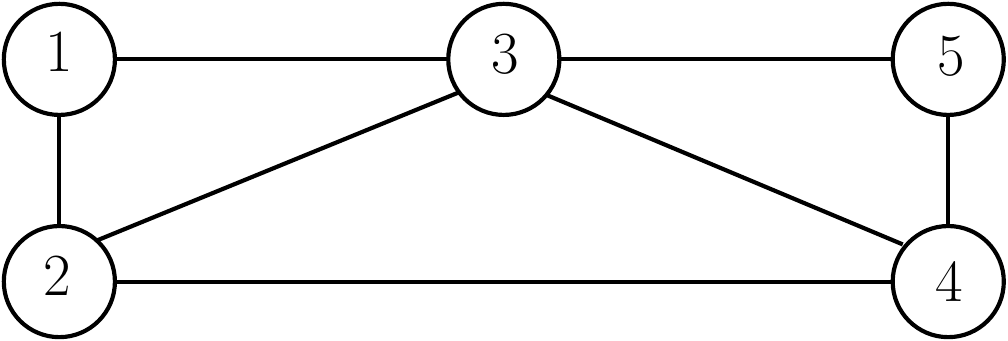}
    \caption{}
    \label{fig:chordal graph 2}
    \end{subfigure}
    \caption{Instances of chordal graphs.}
\end{figure}
\end{example}

\begin{proposition}\label{prop: convex decomposition iff theta_i}
Let $G$ be a chordal graph with $d$ vertices and maximal cliques $C_1,\ldots,C_t$ satisfying the \theproperty. Consider a convex function $f\colon\R^d\to\R$ of the form $f=\sum_{i=1}^t g_i$ for some functions $g_i\colon\R^{C_i}\to\R$ for all $i\in[t]$.
The following are equivalent:
\begin{itemize}
    \item[$(i)$] There exist convex functions $\hat{g}_i\colon\R^{C_i}\to\R$ for all $i\in[t]$ such that $f=\sum_{i=1}^t\hat{g}_i$.
    \item[$(ii)$] There exist functions $\theta_i \colon\R^{C_i \cap C_{[i+1,t]}}\to \R$ for all $i\in [t-1]$ such that the functions
    \begin{equation}
    g_i - \theta_i + \sum_{j\in A_i} \theta_j,\quad\forall\,i\in[t-1]
    \end{equation}
    and the function $g_t + \sum_{j\in A_t} \theta_j$ are convex, where
    \begin{equation}
    A_i\coloneqq \left\{j\in[i-1]\ \bigg|\ \min_{k \in \{j+1, \ldots, t\}} \{C_j \cap C_{[j+1,t]}\subseteq C_k\}=i\right\}\quad\forall\,i\in[t]\,.
    \end{equation}
\end{itemize}
\end{proposition}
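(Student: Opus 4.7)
The plan is to prove the two implications separately. The direction (ii)$\Rightarrow$(i) is a direct construction, while (i)$\Rightarrow$(ii) proceeds by induction on $t=|\sC(G)|$, peeling off the first clique using the \theproperty\ at each step.

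For (ii)$\Rightarrow$(i), I would set $\hat{g}_i\coloneqq g_i-\theta_i+\sum_{j\in A_i}\theta_j$ for $i\in[t-1]$ and $\hat{g}_t\coloneqq g_t+\sum_{j\in A_t}\theta_j$. Each $\hat{g}_i$ is convex by hypothesis and is a function on $\R^{C_i}$ since $C_j\cap C_{[j+1,t]}\subseteq C_i$ for every $j\in A_i$. The key combinatorial point is that the \theproperty\ ensures, for every $j\in[t-1]$, the existence of $i(j)\coloneqq\min\{k>j\mid C_j\cap C_{[j+1,t]}\subseteq C_k\}$, and this $i(j)$ is the unique index with $j\in A_{i(j)}$. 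Hence in $\sum_i\hat{g}_i$ each $\theta_j$ appears once with coefficient $+1$ (from the term $i=i(j)$) and once with coefficient $-1$ (from the term $i=j$), so the $\theta$-contributions cancel and $\sum_{i=1}^t\hat{g}_i=\sum_{i=1}^t g_i=f$.

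For the harder direction (i)$\Rightarrow$(ii), the base case $t=1$ is trivial. Suppose the result holds for chordal graphs with at most $t-1$ maximal cliques, and let $f=\sum_{i=1}^t\hat{g}_i$ be the given convex decomposition. Set $S_1\coloneqq C_1\cap C_{[2,t]}$ and define $\theta_1\coloneqq g_1-\hat{g}_1$. Although $\theta_1$ is a priori a function on $\R^{C_1}$, the identity $\sum_i g_i=\sum_i\hat{g}_i$ gives $\theta_1=-\sum_{i=2}^t(g_i-\hat{g}_i)$, which depends only on $x_{C_{[2,t]}}$; therefore $\theta_1$ depends only on $x_{C_1\cap C_{[2,t]}}=x_{S_1}$. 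Let $j^*\coloneqq\min\{k\ge 2\mid S_1\subseteq C_k\}$, which exists by the \theproperty, and consider the reduced system on the cliques $C_2,\dots,C_t$ (which again form a chordal graph with the inherited ordering) with potentials $g_i'\coloneqq g_i$ for $i\neq j^*$ and $g_{j^*}'\coloneqq g_{j^*}+\theta_1$; this last expression is a valid function on $\R^{C_{j^*}}$ since $S_1\subseteq C_{j^*}$. Then $\sum_{i=2}^t g_i'=f-\hat{g}_1=\sum_{i=2}^t\hat{g}_i$, so the reduced system satisfies (i) and the inductive hypothesis yields $\theta_2',\dots,\theta_{t-1}'$ on the appropriate separators.

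The remainder is a bookkeeping verification: set $\theta_i\coloneqq\theta_i'$ for $i\ge 2$, and observe that the reduced index sets satisfy $A_i'=A_i\setminus\{1\}$, with $1\in A_{j^*}$ and $1\notin A_i$ for $i\neq j^*$. Consequently, $g_1-\theta_1+\sum_{j\in A_1}\theta_j=g_1-\theta_1=\hat{g}_1$ is convex; for each $i\notin\{1,j^*\}$ with $i<t$, the expression $g_i-\theta_i+\sum_{j\in A_i}\theta_j$ coincides with $g_i'-\theta_i'+\sum_{j\in A_i'}\theta_j'$ (convex by induction); for $i=j^*$ the extra $\theta_1$ in $\sum_{j\in A_{j^*}}\theta_j$ is exactly absorbed into $g_{j^*}'=g_{j^*}+\theta_1$, again matching the convex function from the inductive step; and the $i=t$ case is handled identically, with the two subcases $j^*=t$ and $j^*\neq t$ both reducing to $g_t'+\sum_{j\in A_t'}\theta_j'$. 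The main technical obstacle is the coordinate-dependence argument identifying $\theta_1$ as a function on $\R^{S_1}$, together with the accounting reconciling $A_i$ with $A_i'$ after removal of the first clique; once these are in place, the rest is a direct check.
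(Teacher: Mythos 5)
Your proof is correct and follows essentially the same strategy as the paper's: the (ii)$\Rightarrow$(i) direction is the identical telescoping construction, and for (i)$\Rightarrow$(ii) both arguments peel off $C_1$, identify $\theta_1=g_1-\hat{g}_1$ as a function of the separator $C_1\cap C_{[2,t]}$, absorb it into $g_{j^*}$, and iterate. The only cosmetic differences are that you organize the iteration as an induction on $t$ (which requires the easy but unproved observation that $C_2,\dots,C_t$ are the maximal cliques of a chordal graph whose inherited ordering still satisfies the \theproperty), whereas the paper runs a single forward induction on the clique index with an explicit invariant, and that the paper establishes the coordinate-dependence of $\theta_1$ by evaluating at a fixed $a\in\R^{C_1\setminus C_{[2,t]}}$ rather than by your two-representation argument.
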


\begin{lemma}\label{lem: sep matrix}
Let $M\in \R^{d\times d}$ be a symmetric positive semidefinite matrix and $\sP=(A,B,C)$ any partition of $[d]$.
Assume that $M$ has the block structure 
\begin{equation}\label{eq: block structure of a psd matrix}
M=
\begin{bmatrix}
M_{A,A} & M_{A,B} & 0 \\
M_{A,B}^T & M_{B,B} & M_{B,C} \\
0 & M_{B,C}^T & M_{C,C}
\end{bmatrix}\,.
\end{equation}
Then for any symmetric matrix $N\in \R^{B\times B}$, there exists a matrix $S_{M,N,\sP}\in \R^{B \times B}$ such that
\begin{align} \label{eq: Gaussian separation matrices}
\begin{bmatrix}
M_{A,A} & M_{A,B} \\
M_{A,B}^T & N-S_{M,N,\sP}
\end{bmatrix}\succcurlyeq 0
\quad\text{and}\quad
\begin{bmatrix}
M_{B,B}-N+S_{M,N,\sP} & M_{B,C} \\
M_{B,C}^T & M_{C,C}
\end{bmatrix}\succcurlyeq 0\,.
\end{align}
\end{lemma}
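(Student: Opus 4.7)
The plan is to translate both positive semidefinite conditions in \eqref{eq: Gaussian separation matrices} into Schur complement inequalities on $S := S_{M,N,\sP}$, and then exhibit an $S$ that meets them both. Introduce the shorthand $P := M_{A,B}^T M_{A,A}^+ M_{A,B}$ and $Q := M_{B,C} M_{C,C}^+ M_{B,C}^T$. The generalized Schur complement criterion says that the first block matrix in \eqref{eq: Gaussian separation matrices} is PSD iff $M_{A,A} \succcurlyeq 0$, the range condition $\operatorname{range}(M_{A,B}) \subseteq \operatorname{range}(M_{A,A})$ holds, and $N - S \succcurlyeq P$. The analogous statement for the second matrix requires $M_{B,B} - N + S \succcurlyeq Q$. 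Both range conditions are automatic: the $(A\cup B)$- and $(B\cup C)$-principal submatrices of $M$ are themselves PSD, and any PSD $2\times 2$ block matrix satisfies the range inclusion of its off-diagonal block into its $(1,1)$ block. What remains is thus to find a symmetric $S$ with
\[
N - P \;\succcurlyeq\; S \;\succcurlyeq\; N - M_{B,B} + Q\,.
\]

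The existence of such $S$ is equivalent to the operator inequality $M_{B,B} \succcurlyeq P + Q$, and this is where the block structure \eqref{eq: block structure of a psd matrix} with $M_{A,C}=0$ is used. Consider
\[
g(x_B) \;:=\; \inf_{x_A,\,x_C}\,(x_A,x_B,x_C)^T M\,(x_A,x_B,x_C)\,.
\]
Since $M_{A,C}=0$, the cross term $x_A^T M_{A,C} x_C$ vanishes and the quadratic decouples into a sum of a quadratic in $x_A$, a quadratic in $x_C$, and the term $x_B^T M_{B,B} x_B$; the two infima can therefore be computed independently. Because $M \succcurlyeq 0$, each infimum is finite for every $x_B$, which forces the vectors $M_{A,B}x_B$ and $M_{B,C}^T x_B$ to lie in $\operatorname{range}(M_{A,A})$ and $\operatorname{range}(M_{C,C})$ respectively, and a direct completion-of-squares calculation yields $g(x_B) = x_B^T(M_{B,B}-P-Q)x_B$. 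Since $g \ge 0$, we obtain $M_{B,B} \succcurlyeq P + Q$.

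Given this key inequality, the lemma follows by a concrete choice: set
\[
S_{M,N,\sP} \;:=\; N - P - \tfrac{1}{2}\bigl(M_{B,B} - P - Q\bigr)\,.
\]
Then $N - S = P + \tfrac{1}{2}(M_{B,B}-P-Q) \succcurlyeq P$ and $M_{B,B} - N + S = Q + \tfrac{1}{2}(M_{B,B}-P-Q) \succcurlyeq Q$, so both Schur complement inequalities hold. The matrix $S_{M,N,\sP}$ is symmetric because $N$, $P$, $Q$ and $M_{B,B}$ are, and it lies in $\R^{B\times B}$ by construction.

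The main obstacle I anticipate is the careful handling of the pseudoinverses when $M_{A,A}$ or $M_{C,C}$ is singular. I expect to manage this uniformly through the variational argument above, which makes the needed range conditions automatic consequences of the finiteness of $g(x_B)$ rather than separately imposed hypotheses; as a fall-back, one can first prove the lemma under the nonsingular assumption $M_{A,A},M_{C,C}\succ 0$ and then recover the general case via the perturbation $M_{A,A}+\varepsilon I$, $M_{C,C}+\varepsilon I$ followed by $\varepsilon \downarrow 0$, using continuity of the spectrum.
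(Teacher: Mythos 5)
Your proof is correct and structurally the same as the paper's: both arguments reduce the lemma to the single inequality $M_{B,B}\succcurlyeq P+Q$ with $P=M_{A,B}^TM_{A,A}^\dagger M_{A,B}$ and $Q=M_{B,C}M_{C,C}^\dagger M_{B,C}^T$, and then pick $S_{M,N,\sP}$ in the resulting interval. The only differences are cosmetic. First, the paper obtains $M_{B,B}-P-Q\succcurlyeq 0$ by applying the generalized Schur complement criterion (Proposition~\ref{prop: generalized Schur}) twice in succession, whereas you derive it from scratch by minimizing the quadratic form $x^TMx$ over $x_A$ and $x_C$ — a valid, more self-contained route that also makes the range conditions fall out automatically rather than being quoted from the criterion. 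Second, the paper takes the endpoint $S_{M,N,\sP}=N-M_{B,B}+Q$ (which makes the second Schur complement exactly zero), while you take the midpoint $S_{M,N,\sP}=N-P-\tfrac12(M_{B,B}-P-Q)$; either choice works.
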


\begin{lemma}\label{lem: assumption twice differentiable}
Let $G=(V, E)$ be a graph with $d$ vertices and maximal cliques $C_1,\ldots,C_t$. Consider a function $f\colon\R^d\to\R$ defined as $f(x)=(x-\mu)^T K (x-\mu)$, where $\mu\in \R^d$ and $K\in \R^{d \times d}$ is a symmetric positive semidefinite matrix. If $f(x)=\sum_{i=1}^t g_i(x_{C_i})$ for some functions $g_i\colon\R^{C_i}\to\R$ for all $x\in \R^d$, then the functions $g_i$ can be chosen to be twice differentiable.
\end{lemma}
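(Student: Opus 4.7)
The plan is to discard the given decomposition entirely and build a new one out of the matrix $K$ itself. Since $f$ is quadratic, if I can write $K = \sum_{i=1}^t K_i$ with each $K_i$ a symmetric matrix whose support lies in $C_i \times C_i$, then setting $\tilde g_i(y) \coloneqq (y-\mu_{C_i})^T (K_i)_{C_i,C_i}(y-\mu_{C_i})$ for $y \in \R^{C_i}$ gives $t$ quadratic (hence $C^\infty$) functions with $\sum_i \tilde g_i(x_{C_i}) = (x-\mu)^T K (x-\mu) = f(x)$. So the task reduces to producing such a clique-supported symmetric decomposition of $K$.

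The only real step is to show that $K_{jk} = 0$ whenever $j \ne k$ and $\{j,k\}$ is not contained in any maximal clique. Fix such a pair $j \ne k$ and any vector $z \in \R^{V \setminus \{j,k\}}$ for the remaining coordinates. For every $i$ we have $\{j,k\} \not\subseteq C_i$, so once the other variables are frozen at $z$, the function $g_i(x_{C_i})$ depends on at most one of $x_j, x_k$. Consequently
\[
Q(s,t) \coloneqq f(s,t,z) = A(s) + B(t)
\]
for some $A,B : \R \to \R$, and therefore the mixed finite difference $Q(s,t)-Q(s,0)-Q(0,t)+Q(0,0)$ vanishes identically in $(s,t)$. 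On the other hand, $Q$ is a quadratic polynomial in $(s,t)$ whose coefficient of $st$ equals $2K_{jk}$ by symmetry of $K$, and direct computation gives $Q(s,t)-Q(s,0)-Q(0,t)+Q(0,0) = 2K_{jk}\,s\,t$. Comparing the two yields $K_{jk} = 0$. Using the mixed finite difference in place of a mixed partial derivative is the key move here, since it bypasses any differentiability assumption on the given $g_i$; this is the only part of the argument that is not pure bookkeeping.

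With sparsity in hand, every nonzero entry $K_{jk}$ with $j \ne k$ satisfies $\{j,k\} \in E$, and hence $\{j,k\}$ is contained in at least one maximal clique (and every diagonal index $j$ is trivially contained in some maximal clique). I will choose, for each pair $(j,k)$ with $j \le k$, one maximal clique $C_{\phi(j,k)}$ containing both indices, and then define the matrices
\[
(K_i)_{jk} = (K_i)_{kj} \coloneqq \begin{cases} K_{jk} & \text{if } \phi(\min(j,k),\max(j,k)) = i, \\ 0 & \text{otherwise.}\end{cases}
\]
Each $K_i$ is symmetric and supported in $C_i \times C_i$, and $K = \sum_i K_i$ by construction, so the $\tilde g_i$ defined in the first paragraph do the job. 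The main (and only) obstacle is the sparsity argument of the middle paragraph, which I anticipate being straightforward once one switches from derivatives to the finite-difference identity; the rest is a routine clique-by-clique assignment of entries.
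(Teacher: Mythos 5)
Your proposal is correct and follows essentially the same strategy as the paper's proof: first show that $K_{jk}=0$ whenever $\{j,k\}$ is not contained in any maximal clique, then split $K$ into symmetric clique-supported pieces $K_i$ and take the corresponding quadratic forms as the new $g_i$. The only (cosmetic) difference is in the sparsity step, where you use the mixed finite difference $Q(s,t)-Q(s,0)-Q(0,t)+Q(0,0)$ to isolate the cross term $2K_{jk}st$, whereas the paper computes $\partial f/\partial x_\ell$ as a limit of difference quotients depending only on the cliques containing $\ell$; both arguments are valid.
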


\begin{corollary} \label{corollary: Gaussian convex decomposition}
Let $G=(V, E)$ be a chordal graph with $d$ vertices and maximal cliques $C_1,\ldots,C_t$ satisfying the \theproperty. Also, assume that $f\colon\R^d\to\R$ is defined as $f(x)=(x-\mu)^T K (x-\mu)$, where $\mu\in \R^d$ and $K\in \R^{d \times d}$ is a symmetric positive semidefinite matrix. If there exist functions $g_i\colon\R^{C_i}\to\R$ for all $i\in[t]$ such that $f(x)=\sum_{i=1}^t g_i(x_{C_i})$ for all $x\in \R^d$, then there exist convex functions $\hat{g}_i\colon\R^{C_i}\to\R$ for all $i\in [t]$ such that $f(x)=\sum_{i=1}^t\hat{g}_i(x_{C_i})$ for all $x\in \R^d$.
\end{corollary}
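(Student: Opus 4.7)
The plan is to reduce the problem to a Hessian-level matrix decomposition and establish it by induction on the number of maximal cliques $t$, peeling off the clique $C_1$ at each step via Lemma~\ref{lem: sep matrix}. First, Lemma~\ref{lem: assumption twice differentiable} lets us replace the given $g_i$ by twice-differentiable, hence quadratic, potentials. Differentiating the identity $f=\sum_i g_i(x_{C_i})$ twice with respect to any pair $(x_a,x_b)$ with $a,b$ in no common maximal clique then forces $K_{ab}=0$, so the support of $K$ is contained in the edges and diagonal of $G$. It therefore suffices to show that any symmetric positive semidefinite $K$ with support inside the edges and diagonal of a chordal graph $G$ admits a decomposition
\begin{equation*}
K = \sum_{i=1}^{t}\hat K_i,
\end{equation*}
where each $\hat K_i\in\R^{d\times d}$ is positive semidefinite and supported on $C_i\times C_i$. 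Given this, setting $\hat g_i(x_{C_i})\coloneqq(x_{C_i}-\mu_{C_i})^T\hat K_i|_{C_i,C_i}(x_{C_i}-\mu_{C_i})$ produces convex quadratic potentials summing to $f$.

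The base case $t=1$ is trivial since $V=C_1$ and $\hat K_1=K$ works. For the inductive step I will exploit the running intersection property to peel off $C_1$. Setting $A_1=C_1\setminus C_{[2,t]}$, $B_1=C_1\cap C_{[2,t]}$, and $Z=V\setminus C_1$ gives the partition $V=A_1\sqcup B_1\sqcup Z$, and the running intersection property guarantees $B_1\subseteq C_j$ for some $j\geq 2$. Because every $a\in A_1$ belongs only to $C_1$ and every $z\in Z$ lies outside $C_1$, the sparsity of $K$ forces the $A_1$--$Z$ block to vanish, so $K$ has exactly the block form required by Lemma~\ref{lem: sep matrix}. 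I will then apply that lemma with $M=2K$, partition $(A_1,B_1,Z)$, and $N=0$, obtaining a symmetric matrix $S$ with which I define $\hat K_1$ by placing $\bigl[\begin{smallmatrix}K_{A_1,A_1}&K_{A_1,B_1}\\ K_{A_1,B_1}^T&-S/2\end{smallmatrix}\bigr]$ in the $C_1\times C_1$ block and zeros elsewhere. The first conclusion of the lemma makes $\hat K_1$ PSD, and the second makes $K'\coloneqq K-\hat K_1$ PSD with support inside $(V\setminus A_1)\times(V\setminus A_1)$.

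To close the induction, I will check that the support of $K'|_{V\setminus A_1}$ sits inside the edges and diagonal of the induced subgraph $G'=G[V\setminus A_1]$: outside the $B_1\times B_1$ block the entries of $K'$ agree with those of $K$, and within $B_1\times B_1$ all pairs form edges since $B_1$ is a clique. Because $A_1$ consists of simplicial vertices of $G$, the subgraph $G'$ is chordal, its maximal cliques are precisely $C_2,\ldots,C_t$, and this ordering still satisfies the running intersection property for $G'$. The inductive hypothesis applied to $K'|_{V\setminus A_1}$ and $G'$ then supplies $\hat K_2,\ldots,\hat K_t$ with the required properties, and concatenating with $\hat K_1$ completes the decomposition of $K$. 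The main obstacle is really hidden inside Lemma~\ref{lem: sep matrix}, which performs the geometric splitting of a PSD matrix with a zero corner block into two PSD components overlapping on the separator $B_1$; once it is invoked, the remaining work is the bookkeeping above to verify that $K'$ preserves both positive semidefiniteness and the chordal sparsity pattern, so that the inductive step can be iterated cleanly down to a single clique.
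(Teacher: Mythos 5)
Your proposal is correct, but it takes a genuinely different route from the paper's. The paper first reduces to quadratic potentials via Lemma~\ref{lem: assumption twice differentiable} (as you do), but then works through Proposition~\ref{prop: convex decomposition iff theta_i}: it constructs quadratic correction functions $\theta_i$ on the separators $C_i\cap C_{[i+1,t]}$ by applying Lemma~\ref{lem: sep matrix} at step $i$ with $M$ equal to the Hessian of a partial sum $\sum_{j\ge i}(g_j+\sum_{k}\theta_k)$ and $N$ equal to the Hessian of the current potential restricted to the separator, and then invokes the implication $(ii)\Rightarrow(i)$ of that proposition to assemble the convex $\hat g_i$. You bypass Proposition~\ref{prop: convex decomposition iff theta_i} entirely and instead reduce to the purely matrix-theoretic statement that a PSD matrix with chordal sparsity pattern is a sum of PSD matrices supported on the maximal cliques (the classical Griewank--Toint / Agler--Helton--McCullough--Rodman decomposition), proved by induction on $t$ with a single application of Lemma~\ref{lem: sep matrix} per step, taking $N=0$ and the partition $(C_1\setminus C_{[2,t]},\,C_1\cap C_{[2,t]},\,V\setminus C_1)$ --- which is in fact the same partition the paper uses at its first step. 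Your route is more self-contained and makes the underlying structure transparent; its cost is the bookkeeping you correctly identify: one must check that $K_{az}=0$ for $a\in C_1\setminus C_{[2,t]}$ and $z\notin C_1$ (true, since such $a$ lies only in $C_1$), that the remainder $K'$ retains the sparsity pattern of $G[C_{[2,t]}]$ (true, since only the $B_1\times B_1$ block changes and $B_1$ is a clique), and that $G[C_{[2,t]}]$ is chordal with maximal cliques exactly $C_2,\ldots,C_t$ in an order still satisfying the running intersection property (true, using $B_1\subseteq C_j$ for some $j\ge 2$ to rule out new maximal cliques inside $B_1$). The paper avoids this subgraph bookkeeping by keeping all Hessians on the full vertex set, at the price of the more elaborate choices of $M$ and $N$ and the detour through Proposition~\ref{prop: convex decomposition iff theta_i}.
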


\begin{corollary} \label{corollary: Gaussian log-concave factorization}
Let $G$ be a chordal graph. If a Gaussian distribution is in the graphical model corresponding to $G$, then there exists a factorization of this distribution according to $G$ such that the potential functions are log-concave.
\end{corollary}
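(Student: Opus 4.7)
The plan is to derive this corollary as an almost direct consequence of Corollary~\ref{corollary: Gaussian convex decomposition}. Let $f$ be a Gaussian density on $\R^d$ with mean $\mu$ and precision matrix $K=\Sigma^{-1}\succcurlyeq 0$, so that
\[
f(x) = \frac{1}{Z}\exp\!\left(-\tfrac{1}{2}(x-\mu)^T K(x-\mu)\right).
\]
A classical fact from the theory of Gaussian graphical models (see, e.g., \cite{lauritzen1996graphical}) says that $f$ belongs to the graphical model of $G$ if and only if $K_{ij}=0$ for every $\{i,j\}\notin E$, and this is the only input about the graphical structure that I would use.

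First I would use the sparsity of $K$ to write $q(x):=(x-\mu)^T K(x-\mu)$ as a sum of functions supported on the maximal cliques of $G$. Expanding gives $q(x)=\sum_{i,j}K_{ij}(x_i-\mu_i)(x_j-\mu_j)$, and every nonzero summand has index set $\{i,j\}$ which is either a single vertex (if $i=j$) or an edge of $G$. In either case $\{i,j\}$ is contained in some maximal clique $C\in\sC(G)$, so after assigning each nonzero term to one such clique one obtains a decomposition $q(x)=\sum_{C\in\sC(G)}g_C(x_C)$ with $g_C\colon\R^C\to\R$ (not yet known to be convex individually).

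Since $G$ is chordal and $q$ is convex (because $K\succcurlyeq 0$), Corollary~\ref{corollary: Gaussian convex decomposition} then applies and yields convex functions $\hat g_C\colon\R^C\to\R$ with $q(x)=\sum_{C\in\sC(G)}\hat g_C(x_C)$. Setting $\psi_C(x_C):=\exp\!\left(-\tfrac{1}{2}\hat g_C(x_C)\right)$ gives nonnegative log-concave clique potentials, and
\[
f(x) = \frac{1}{Z}\prod_{C\in\sC(G)}\psi_C(x_C)
\]
is the desired log-concave factorization according to $G$. The real work is packaged into Corollary~\ref{corollary: Gaussian convex decomposition} (which itself rests on Proposition~\ref{prop: convex decomposition iff theta_i} and Lemma~\ref{lem: sep matrix}); the only thing to verify here is that a Gaussian density in the graphical model supplies a quadratic form of exactly the type that corollary can handle, which is immediate from the sparsity of $K$. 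I therefore do not expect any serious obstacle beyond this clean bookkeeping step.
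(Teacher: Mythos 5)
Your proposal is correct and follows essentially the same route as the paper: the paper's own proof also reduces to the quadratic-form decomposition over cliques (its Lemma~\ref{lem: assumption twice differentiable} establishes exactly the sparsity/clique-assignment step you carry out by hand) and then invokes the chordal convex-decomposition machinery of Lemma~\ref{lem: sep matrix} and Proposition~\ref{prop: convex decomposition iff theta_i}, which is precisely what Corollary~\ref{corollary: Gaussian convex decomposition} packages. The only cosmetic omission is that you should note that chordality supplies an ordering of the maximal cliques with the running intersection property (Proposition~\ref{prop: chordal if and only if the running intersection property}), which is a hypothesis of that corollary.
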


\begin{example}\label{ex: convex decomposition}
The graph in Figure \ref{fig:chordal graph 2} is a chordal graph with maximal cliques $C_1=\{1,2,3\}$, $C_2=\{2,3,4\}$, and $C_3=\{3,4,5\}$. Note that $C_1 \cap C_{[2,3]}\subseteq C_2$. 
Consider the function $f\colon\R^5 \to \R$ defined by $f=g_1+g_2+g_3$, where
\begin{align}
\begin{split}
g_1(x_1,x_2,x_3) &\coloneqq 5x_1^2-18 x_1 x_2+6 x_1 x_3+19 x_2 ^2-12x_2 x_3+3 x_3^2\\
g_2(x_2,x_3,x_4) &\coloneqq 4x_4^2+ 2 x_2 x_4 -4 x_3 x_4\\
g_3(x_3,x_4,x_5) &\coloneqq 4x_5^2  + 2 x_3 x_5  - 6 x_4 x_5\,.
\end{split}
\end{align}
Then $f(x_1,x_2,x_3,x_4,x_5) = (x_1,x_2,x_3,x_4,x_5) K (x_1,x_2,x_3,x_4,x_5)^T$, where
\begin{align*}
K = 
\begin{bmatrix}
5 & -9 & 3 & 0 & 0 \\
-9 & 19 & -6 & 1 & 0 \\
3 & -6 & 3 & -2 & 1 \\
0 & 1 & -2 & 4 & -3 \\
0 & 0 & 1 & -3 & 4
\end{bmatrix}\,,
\end{align*}
which is a positive definite matrix. So, $f$ is a convex function. Consider the functions 
\begin{align*}
\hat{g}_1  (x_1,x_2,x_3) &\coloneqq 
5x_1^2-18 x_1 x_2+6 x_1 x_3+\frac{81}{5} x_2 ^2-\frac{54}{5}x_2 x_3+\frac{9}{5} x_3^2\\
\hat{g}_2  (x_2,x_3,x_4) &\coloneqq
 \frac{14}{5} x_2^2  - \frac{6}{5} x_2 x_3 + 2 x_2 x_4+ \frac{9}{70} x_3^2 -\frac{3}{7} x_3 x_4 + \frac{5}{14} x_4^2\\
\hat{g}_3(x_3,x_4,x_5) &\coloneqq
 \frac{15}{14} x_3^2  - \frac{25}{7} x_3 x_4 + 2 x_3 x_5  + \frac {51}{14} x_4^2 - 6 x_4 x_5  + 4x_5^2 \end{align*}
Then the functions $\hat{g}_1$, $\hat{g}_2$, and $\hat{g}_3$
are convex given that each of their Hessian matrices is positive semidefinite. Also, $f(x) =\hat{g}_1  (x_1,x_2,x_3) + \hat{g}_2  (x_2,x_3,x_4) + \hat{g}_3(x_3,x_4,x_5)$ for all $x\in\R^5$.
\end{example}

We proceed to state a sufficient condition for the existence of a convex decomposition for the convex functions which can be decomposed according to the maximal cliques of a chordal graph.
\begin{theorem} \label{thm: sufficient condition for convex decomposition on chordal graphs}
Let $G$ be a chordal graph with $d$ vertices and maximal cliques $C_1,\ldots,C_t$ satisfying the \theproperty.
Consider a convex function $f\colon\R^d\to\R$ of the form $f(x)=\sum_{i=1}^t g_i(x_{C_i})$ for some functions $g_i\colon\R^{C_i}\to\R$ for all $i\in[t]$. 
If for all $i\in[t-1]$, there exist vectors $a^{(i)}\in\R^{C_i\setminus C_{[i+1,t]}}$ and $b^{(i)}\in \R^{C_{[i+1,t]}\setminus C_i}$ such that the function
\[
f(a^{(1)},\ldots,a^{(i-1)},x_{C_i\setminus C_{[i+1,t]}}, x_{C_i\cap C_{[i+1,t]}}, b^{(i)}) - f(a^{(1)}, \ldots, a^{(i-1)}, a^{(i)}, x_{C_i\cap C_{[i+1,t]}}, b^{(i)})
\]
is convex on $\R^{C_i}$, then there exist convex functions $\hat{g}_i\colon\R^{C_i}\to \R$ for all $i\in[t]$ such that $f=\sum_{i=1}^t\hat{g}_i$.
\end{theorem}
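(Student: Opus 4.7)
The plan is to proceed by induction on the number $t$ of maximal cliques, peeling off the first clique $C_1$ at each step. The base case $t=1$ is immediate because the only decomposition is $f = g_1$, which is convex by hypothesis.

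For the inductive step, set $D_1 := C_1 \setminus C_{[2,t]}$ and $S_1 := C_1 \cap C_{[2,t]}$. The running intersection property combined with the maximality of $C_1$ forces $D_1 \neq \emptyset$: otherwise $C_1 = S_1 \subseteq C_{k_1}$ for some $k_1 \geq 2$, contradicting the fact that $C_1$ and $C_{k_1}$ are distinct maximal cliques. The first key observation is that every $g_j$ with $j \geq 2$ depends only on coordinates outside $D_1$, so the $\sum_{j \geq 2}g_j$ contributions cancel in the $i=1$ difference, leaving
\[
f(x_{D_1}, x_{S_1}, b^{(1)}) - f(a^{(1)}, x_{S_1}, b^{(1)}) = g_1(x_{C_1}) - g_1(a^{(1)}, x_{S_1}).
\]
Thus the hypothesis at $i=1$ says exactly that $\hat{g}_1(x_{C_1}) := g_1(x_{C_1}) - g_1(a^{(1)}, x_{S_1})$ is convex on $\R^{C_1}$.

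Next I would set up the reduced problem on $G \setminus D_1$. By the running intersection property there exists $k_1 \geq 2$ with $S_1 \subseteq C_{k_1}$; define $g'_{k_1}(x_{C_{k_1}}) := g_{k_1}(x_{C_{k_1}}) + g_1(a^{(1)}, x_{S_1})$ and $g'_i := g_i$ for the remaining indices $i \in \{2,\ldots,t\}$, and put $f' := f - \hat{g}_1 = \sum_{i=2}^t g'_i(x_{C_i})$. A direct check shows $f'$ coincides with the restriction $x_{[d]\setminus D_1} \mapsto f(a^{(1)}, x_{[d]\setminus D_1})$, so $f'$ is convex as the restriction of a convex function to an affine subspace. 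The subgraph $G \setminus D_1$ is chordal with maximal cliques $C_2, \ldots, C_t$ (any maximal clique of $G \setminus D_1$ extends to a maximal clique of $G$ in the obvious way), and the tail of the original RIP ordering remains an RIP ordering.

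The crucial step is to verify that the hypothesis for $f'$ at each $i \in \{2, \ldots, t-1\}$ is inherited from the hypothesis for $f$: because $a^{(1)}$ is substituted identically into both terms of the $i$-th difference for $f$, that difference equals the corresponding difference for $f'$ with the same substitution vectors $a^{(i)}$ and $b^{(i)}$. The inductive hypothesis then yields convex $\hat{g}'_i \colon \R^{C_i} \to \R$ for $i \in \{2, \ldots, t\}$ such that $f' = \sum_{i=2}^t \hat{g}'_i(x_{C_i})$, and setting $\hat{g}_i := \hat{g}'_i$ for $i \geq 2$ completes the decomposition $f = \sum_{i=1}^t \hat{g}_i$. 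The main obstacle is the bookkeeping needed to confirm that the reduced graph, decomposition, and hypothesis all fit together; once this is in place the convexity of each piece follows essentially for free, with the only ``real'' convexity input being the hypothesis at $i=1$ combined with inductive use of the hypotheses at later indices.
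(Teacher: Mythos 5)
Your proposal is correct and is essentially the paper's proof in a different packaging: your peeling-off of $C_1$, with $\hat{g}_1 = g_1(x_{C_1}) - g_1(a^{(1)}, x_{S_1})$ equal to the hypothesized convex difference and the correction $g_1(a^{(1)}, x_{S_1})$ absorbed into a clique containing $S_1$ via the running intersection property, is exactly the paper's inductive construction of the functions $g^{(j)}_i$, with your induction on $t$ over the restricted instance $f(a^{(1)},\cdot)$ replacing the paper's explicit induction on $j$ with the invariant $\sum_{i=j+1}^t g^{(j)}_i = f(a^{(1)},\ldots,a^{(j)},\cdot)$. The key verification you flag — that the hypotheses at indices $i\ge 2$ are inherited by the reduced problem because $f'$ coincides with $f(a^{(1)},\cdot)$ — is precisely what that invariant encodes, so the two arguments match step for step.
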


\begin{proof}
We claim that for all $j\in [t-1]$ and $i\in[t]$, there exist functions $g^{(j)}_i\colon\R^{C_i}\to \R$ such that 
\begin{enumerate}
    \item the functions $g^{(j)}_i$ are convex,
    \item for all $x_{C_{[j+1,t]}}\in\R^{C_{[j+1,t]}}$, $\sum_{i=j+1}^t g^{(j)}_i (x_{C_i}) = f(a^{(1)},\ldots,a^{(j)},x_{C_{[j+1,t]}})$, and
    \item for all $x\in \R^d$, $f(x)=\sum_{i=1}^t g^{(j)}_i(x_{C_i})$.
\end{enumerate}
The statement follows by applying the claim for $j=t-1$ and setting $\hat{g}_i\coloneqq g^{(t-1)}_i$ for all $i\in[t]$.

Let $j=1$. By the running intersection property, there exists $j_0\in\{2,\ldots,t\}$ such that $C_1 \cap C_{[2,t]} \subseteq C_{j_0}$. Define
\begin{align*}
g^{(1)}_i(x_{C_i}) \coloneqq
\begin{cases}
f(x_{C_1}, b^{(1)}) - f(a^{(1)}, x_{C_1 \cap C_{[2,t]}}, b^{(1)}) & \text{if $i=1$}\\
g_{j_0}(x_{C_{j_0}}) + g_1(a^{(1)}, x_{C_1\cap C_{[2,t]}}) & \text{if $i=j_0$}\\
g_j(x_{C_i}) & \text{if $i\in [t]\setminus\{1,j_0\}$}\,.
\end{cases}
\end{align*}
Then 
$g^{(1)}_1$ is convex by assumption. Also, for all $x_{C_{[2,t]}}\in \R^{C_{[2,t]}}$,
\begin{align*}
\sum_{i=2}^t g^{(1)}_i(x_{C_i}) = \sum_{i=2}^t g_i(x_{C_i}) + g_1(a^{(1)}, x_{C_1\cap C_{[2,t]}}) = f(a^{(1)}, x_{C_{[2,t]}})\,.
\end{align*}
Additionally, for all $x\in \R^d$,
\begin{align*}
\sum_{i=1}^t g^{(1)}_i(x_{C_i}) &= f(x_{C_1}, b^{(1)}) - f(a^{(1)}, x_{C_1 \cap C_{[2,t]}}, b^{(1)})\\
&\quad +g_2(x_{C_2}) + \cdots + g_{j_0-1}(x_{C_{j_0-1}}) + g_{j_0+1}(x_{C_{j_0+1}}) + \cdots + g_t(x_{C_t})\\
&\quad +g_{j_0}(x_{C_{j_0}}) + g_1(a^{(1)}, x_{C_1\cap C_{[2,t]}})\\
&= g_1(x_{C_1}) + g_2(x_{C_1 \cap C_2},b^{(1)}_{C_2\setminus C_1}) + \cdots + g_t(x_{C_1 \cap C_t},b^{(1)}_{C_t\setminus C_1})\\
&\quad -g_1(a^{(1)}, x_{C_1 \cap C_{[2,t]}}) - g_2(x_{C_1 \cap C_2},b^{(1)}_{C_2\setminus C_1}) - \cdots - g_t(x_{C_1 \cap C_t},b^{(1)}_{C_t\setminus C_1})\\
&\quad +g_2(x_{C_2}) +  \cdots + g_t(x_{C_t}) + g_1(a^{(1)}, x_{C_1\cap C_{[2,t]}}) = f(x)\,.
\end{align*}

Now assume that the claim holds for $j=k$, where $k\in [t-2]$.
Let $j=k+1$. There exists $j_0\in \{k+2,\ldots,t\}$ such that $C_{k+1} \cap C_{[k+2,t]} \subseteq C_{j_0}$. Set
\begin{align*}
g^{(k+1)}_i(x_{C_i}) &\coloneqq
\begin{cases}
f(a^{(1)}, \ldots, a^{(k)}, x_{C_{k+1}}, b^{(k+1)})\\
- f(a^{(1)},\ldots,a^{(k+1)}, x_{C_{k+1} \cap C_{[k+2,t]}}, b^{(k+1)})&\text{if $i=k+1$}\\
g^{(k)}_{j_0}(x_{C_{j_0}}) + g^{(k)}_{k+1}(a^{(k+1)}, x_{C_{k+1}\cap C_{[k+2,t]}})&\text{if $i=j_0$}\\
g^{(k)}_i(x_{C_i})&\text{if $i\in [t]\setminus \{k+1,j_0\}$}\,.
\end{cases}
\end{align*}
Then each of the functions $g^{(k+1)}_1,\ldots,g^{(k+1)}_{k+1}$ is convex using the assumption as well as the induction hypothesis. Also, for all $x_{C_{[k+2,t]}}\in \R^{C_{[k+2,t]}}$,
\begin{align*}
g^{(k+1)}_{k+2}(x_{C_{k+2}}) + \cdots + g^{(k+1)}_n (x_{C_n}) &= g^{(k)}_{k+2}(x_{C_{k+2}}) + \cdots + g^{(k)}_n (x_{C_n})\\
&\quad +g^{(k)}_{k+1}(a^{(k+1)}, x_{C_{k+1}\cap C_{[k+2,t]}})\\
&= f(a^{(1)},\ldots,a^{(k)},a^{(k+1)}, x_{C_{[k+2,t]}})\,,
\end{align*}
where in the last equality we applied the induction hypothesis. Finally, for all $x\in \R^d$,
\begin{align*}
\sum_{i=1}^t g^{(k+1)}_i(x_{C_i})  &= f(a^{(1)},\ldots,a^{(k)},x_{C_{k+1}}, b^{(k+1)})\\
&\quad -f(a^{(1)},\ldots,a^{(k)},a^{(k+1)}, x_{C_{k+1} \cap C_{[k+2,t]}}, b^{(k+1)})\\
&\quad +\sum_{i=1}^k g^{(k)}_{i}(x_{C_i}) + g^{(k)}_{k+1}(a^{(k+1)}, x_{C_{k+1}\cap C_{[k+2,t]}}) + \sum_{i=k+2}^t g^{(k)}_{i}(x_{C_i})\\
&= g^{(k)}_{k+1}(x_{C_{k+1}}) + \sum_{i=k+2}^t g^{(k)}_{i}(x_{C_{k+1} \cap C_i},b^{(k+1)}_{C_i\setminus C_{k+1}})\\
&\quad -g^{(k)}_1(a^{(k+1)}, x_{C_{k+1} \cap C_{[k+2,t]}})-\sum_{i=k+2}^t g^{(k)}_{i}(x_{C_{k+1} \cap C_i},b^{(k+1)}_{C_i\setminus C_{k+1}})\\
&\quad +\sum_{i=1}^k g^{(k)}_i(x_{C_i}) + g^{(k)}_{k+1}(a^{(k+1)}, x_{C_{k+1}\cap C_{[k+2,t]}})+\sum_{i=k+2}^t g^{(k)}_{i} (x_{C_i})\\
&= f(x)\,.
\end{align*}
This proves the claim and completes the proof.
\end{proof}

\begin{remark}
The sufficient condition mentioned in Theorem \ref{thm: sufficient condition for convex decomposition on chordal graphs} is not satisfied by all functions which can be written as a sum of convex functions according to the maximal cliques of a chordal graph.
For instance, consider the chordal graph in Figure \ref{fig:simple chordal graph}, which has maximal cliques $C_1=\{1,2\}$ and $C_2=\{2,3\}$.
Let $f(x)=g_1(x_1,x_2)+g_2(x_2,x_3)$, where $g_1(x_1,x_2) = (x_1+x_2)^2$ and $g_2(x_2,x_3)=x_3^2$. Although both of the functions $g_1$ and $g_2$ are convex, there does not exist $a^{(1)}\in \R$ and $b^{(1)}\in \R$ such that the difference
\begin{equation}\label{eq: difference f example}
f(x_1,x_2,b^{(1)})- f(a^{(1)},x_2,b^{(1)}) = (x_1+x_2)^2- (a^{(1)} +x_2)^2
\end{equation}
is convex on $\R^2$. Indeed, for all $a^{(1)}\in \R$ and $b^{(1)}\in \R$ the Hessian matrix of the function at the right-hand side of \eqref{eq: difference f example} has eigenvalues $1\pm\sqrt{5}$, in particular it is not positive semidefinite.
\end{remark}

In the rest of the section, we consider graphs which satisfy a more restrictive property than the \theproperty, and study the convex functions expressed as a sum of twice continuously differentiable functions according to the maximal cliques of one of these graph.
\begin{proposition}\label{prop2}
Let $A$, $B$ and $C$ be three mutually disjoint subsets of $[d]$, and $X = Y \times Z$, where $Y\subseteq \R^{A}$ is a compact set and $Z\subseteq \R^{B\cup C}$ is any set. Let $f\colon\R^{A\cup B\cup C}\to \R$, $g\colon\R^{A\cup B}\to \R$, and $h\colon\R^{B\cup C}\to \R$ be three functions such that
\begin{enumerate}
    \item $f(x_A, x_B, x_C)=g(x_A, x_B)+h(x_B,x_C)$ for all $(x_A, x_B, x_C)\in X$,
    \item $f$ is convex on $X$,
    \item $g\in \sC^2(\pi_{A\cup B}(X))$ and $h\in \sC^2(\pi_{B\cup C}(X))$,
    \item ${H_f}|_{A}$ is invertible on $X$, and
    \item $|B|=1$.
\end{enumerate}
Then
\begin{itemize}
    \item[$(i)$] there exists a function $\gamma\in \sC^2(\pi_B(X))$ such that the functions $\hat{g}=g+\gamma\in C^2\left(\pi_{A\cup B}(X)\right)$ and $\hat{h}=h-\gamma\in C^2\left(\pi_{B\cup C}(X)\right)$ are convex on $X$.
    \item[$(ii)$] if $D\subseteq C$ and ${H_f}|_{A\cup B\cup D}$ is invertible on $X$, then ${H_{\hat{h}}}|_{B\cup D}$ is invertible on $X$.
\end{itemize}
\end{proposition}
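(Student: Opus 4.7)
The key simplification is that $|B|=1$, so the candidate correction $\gamma$ is a function of a single variable $t:=x_B$, and its Hessian contributes just the scalar $\gamma''(t)$ to the $(B,B)$-entry of $H_{g+\gamma}$. My strategy is to choose $\gamma''(t)$ so that, writing $\hat g=g+\gamma$ and $\hat h=h-\gamma$, both $H_{\hat g}$ and $H_{\hat h}$ are positive semidefinite on $X$, and then recover $\gamma\in\sC^2$ by double integration. A preliminary remark: $H_f|_A=H_g|_A$ because $h$ does not depend on $x_A$, so by (4) $H_g|_A$ is invertible, and since $H_f\succeq 0$ forces $H_g|_A\succeq 0$, we conclude $H_g|_A$ is positive definite on $X$.

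A Schur-complement computation on the $A\cup B$ block of $H_g$ shows that $H_{\hat g}\succeq 0$ is equivalent to $\gamma''(t)\geq \alpha(x_A,t)$ for every $x_A\in Y$, where
\[
\alpha(x_A,t):=H_g|_{B,A}(x_A,t)\,\bigl(H_g|_A(x_A,t)\bigr)^{-1}H_g|_{A,B}(x_A,t)-H_g|_B(x_A,t).
\]
Because $g\in\sC^2$ and $H_g|_A$ is invertible, $\alpha$ is continuous on $Y\times\pi_B(X)$; compactness of $Y$ then makes $\tilde\alpha(t):=\max_{x_A\in Y}\alpha(x_A,t)$ well-defined and continuous on $\pi_B(X)$. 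Setting $\gamma''(t):=\tilde\alpha(t)$ (extended continuously to $\R$ if needed) and integrating twice yields a $\sC^2$ function $\gamma$ with $H_{\hat g}\succeq 0$ built in.

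For $\hat h$, convexity of $f$ together with positive definiteness of $H_f|_A$ gives the positive semidefinite Schur complement
\[
H_f/H_f|_A=\begin{pmatrix} H_h|_B-\alpha(x_A,t) & H_h|_{B,C} \\ H_h|_{C,B} & H_h|_C \end{pmatrix}\succeq 0
\]
at every point of $X$. Choosing $x_A^\ast(t)\in Y$ with $\alpha(x_A^\ast(t),t)=\tilde\alpha(t)=\gamma''(t)$ (which exists by compactness), this displayed matrix coincides with $H_{\hat h}(t,x_C)$, proving (i). For (ii), the block-determinant identity
\[
\det H_f|_{A\cup B\cup D}=\det H_g|_A\cdot\det\bigl((H_f/H_f|_A)|_{B\cup D}\bigr)
\]
combined with invertibility of $H_f|_{A\cup B\cup D}$ forces the $(B\cup D)$-block of $H_f/H_f|_A$ to be invertible at every point of $X$. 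Since that block is already positive semidefinite, it is positive definite; evaluating at $x_A^\ast(t)$ identifies it with $H_{\hat h}|_{B\cup D}(t,x_C)$, giving the invertibility claim.

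The main obstacle is the simultaneous reconciliation of constraints: the single scalar $\gamma''(t)$ must be large enough to dominate the whole family $\{\alpha(x_A,t):x_A\in Y\}$ (for $\hat g$) yet small enough that the remaining $\hat h$-Hessian stays positive semidefinite for every admissible $x_C$ (for $\hat h$). The hypothesis $|B|=1$ together with compactness of $Y$ is precisely what makes the pointwise-supremum strategy work: only one scalar must be matched at each $t$, and the max over the compact set $Y$ yields a continuous function that can be integrated into a genuine $\sC^2$ function $\gamma$.
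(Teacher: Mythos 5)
Your proof is correct and follows essentially the same route as the paper's: the same Schur-complement reduction (your $\alpha$ is the paper's $\varphi_g$), the same supremum over the compact set $Y$ to define $\gamma''$, the same evaluation at a maximizer $x_A^*(t)$ to identify the Schur complement of $H_f|_A$ with $H_{\hat h}$, and the same block-determinant identity for part $(ii)$. The only difference is in level of detail (the paper proves continuity of the supremum via open preimages, which you assert), not in the argument itself.
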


\begin{theorem}\label{thm: convex decomposition}
Let $G$ be a graph with $d$ vertices and set of maximal cliques $\sC(G)=\{C_1,\ldots,C_t\}$. Consider a convex function $f\colon\R^d\to\R$ of the form $f(x)=\sum_{i=1}^t g_i(x_{C_i})$ for some functions $g_i\in \sC^2(\R^{C_i})$ for all $i\in[t]$.
Assume that there exist a reordering of $\sC(G)$, compact sets $X_i\subseteq \R^{C_i\setminus{C}_{[i+1,t]}}$ for all $i\in[t-1]$, and some set $X_t\subseteq\R^{C_t}$ such that
\begin{enumerate}
\item for all $i\le t-1$, each maximal clique $C_i$ shares at most one vertex with all the maximal cliques with larger indices, i.e. $|C_i\cap{C}_{[i+1,t]}|\le 1$, and
\item the Hessian ${H_{f}}|_{{C}_{[1,t-1]}}$ is invertible on $X = X_1\times\cdots\times X_t$.
\end{enumerate} 
Then there exist convex functions $\hat{g}_i\in \sC^2(\pi_{C_i}(X))$ for all $i\in[t]$ such that $f(x)=\sum_{i=1}^t\hat{g}_i(x_{C_i})$ for all $x\in X$.
\end{theorem}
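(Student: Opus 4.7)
The plan is to argue by induction on $t=|\sC(G)|$, at each step peeling off $C_1$ via Proposition~\ref{prop2}. The base case $t=1$ is trivial: take $\hat g_1\coloneqq f$. For the inductive step, I write $f(x)=g_1(x_{C_1})+h(x_{C_{[2,t]}})$ with $h=\sum_{i=2}^t g_i$, and partition the indices as $A\coloneqq C_1\setminus C_{[2,t]}$, $B\coloneqq C_1\cap C_{[2,t]}$, $C\coloneqq C_{[2,t]}\setminus C_1$, so that $|B|\le 1$. If $B=\emptyset$, the two summands depend on disjoint variable blocks and convexity of $f$ forces each of $g_1,h$ to be convex on the relevant slice of $X$, so I set $\hat g_1\coloneqq g_1$ and reduce directly to the induction on $h$. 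If $B=\{v\}$, I apply Proposition~\ref{prop2} with $Y=X_1$ and $Z=X_2\times\cdots\times X_t$ to obtain a $\sC^2$ function $\gamma$ of $x_v$ such that $\hat g_1\coloneqq g_1+\gamma\in\sC^2(\pi_{C_1}(X))$ and $\hat h\coloneqq h-\gamma\in\sC^2(\pi_{C_{[2,t]}}(X))$ are convex on $X$. The only non-immediate hypothesis of Proposition~\ref{prop2}, the invertibility of ${H_f}|_{A}$ on $X$, follows because convexity of $f$ forces ${H_f}|_{C_{[1,t-1]}}$ to be positive semidefinite, so the given invertibility upgrades it to positive definite, and every principal submatrix of a positive definite matrix is invertible.

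To apply the inductive hypothesis to $\hat h$ on the induced subgraph $G'$ on $C_{[2,t]}$, whose maximal cliques are $C_2,\ldots,C_t$ in the inherited order, with compact sets $X_2,\ldots,X_t$, I will verify three conditions. $(a)$ The $\sC^2$ decomposition $\hat h=\sum_{i=2}^t\tilde g_i$ is obtained by absorbing $-\gamma$ into a factor $\tilde g_{j_0}$ for any $j_0\in\{2,\ldots,t\}$ with $v\in C_{j_0}$, and setting $\tilde g_i\coloneqq g_i$ for $i\ne j_0$. $(b)$ The intersection bound $|C_i\cap C_{[i+1,t]}|\le 1$ for $i\ge 2$ is directly inherited. $(c)$ For the invertibility of ${H_{\hat h}}|_{C_{[2,t-1]}}$ on $X_2\times\cdots\times X_t$, I split on whether $v\in C_{[2,t-1]}$: in that case, taking $D\coloneqq C_{[2,t-1]}\setminus B\subseteq C$ gives $A\cup B\cup D=C_{[1,t-1]}$, so Proposition~\ref{prop2}$(ii)$ yields invertibility of ${H_{\hat h}}|_{B\cup D}={H_{\hat h}}|_{C_{[2,t-1]}}$; in the opposite case, we have $C_1\cap C_{[2,t-1]}=\emptyset$, and since $\gamma$ depends only on $x_v$, the second derivatives of $\hat h$ in pairs of indices from $C_{[2,t-1]}$ agree with those of $f$, so ${H_{\hat h}}|_{C_{[2,t-1]}}={H_f}|_{C_{[2,t-1]}}$ is invertible as a principal submatrix of the positive definite ${H_f}|_{C_{[1,t-1]}}$. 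The induction then produces convex $\hat g_2,\ldots,\hat g_t\in\sC^2$ with $\hat h=\sum_{i=2}^t\hat g_i$, yielding $f=\sum_{i=1}^t\hat g_i$ on $X$.

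I expect the delicate point to be the invertibility argument in step~$(c)$, particularly the subcase $v\notin C_{[2,t-1]}$: a naive use of Proposition~\ref{prop2}$(ii)$ would only deliver invertibility of ${H_{\hat h}}|_{\{v\}\cup C_{[2,t-1]}}$, a strictly larger matrix, from which one cannot in general descend to a principal submatrix. The descent works here specifically because convexity of $f$ combined with the hypothesis ensures positive definiteness of ${H_f}|_{C_{[1,t-1]}}$, a property that is inherited by arbitrary principal submatrices.
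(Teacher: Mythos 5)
Your proof is correct and follows essentially the same route as the paper's: an induction that peels off one clique at a time via Proposition~\ref{prop2}, using the positive definiteness of ${H_f}|_{C_{[1,t-1]}}$ to supply the invertibility hypothesis of that proposition, transferring $\gamma$ to another clique containing the shared vertex, and propagating invertibility of the relevant Hessian block via Proposition~\ref{prop2}$(ii)$. If anything you are more careful than the paper in step~$(c)$: the subcases $B=\emptyset$ and $v\notin C_{[2,t-1]}$, where Proposition~\ref{prop2}$(ii)$ does not directly yield invertibility of ${H_{\hat h}}|_{C_{[2,t-1]}}$ and one must instead observe that this block equals ${H_f}|_{C_{[2,t-1]}}$, a principal submatrix of a positive definite matrix, are glossed over in the paper's invocation of Proposition~\ref{prop2}$(ii)$.
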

\begin{proof}
The proof goes by induction. In particular, we claim that for all $j\in[t-1]$, there exist functions $\hat{g}^{(j)}_i\in \sC^2(\pi_{C_i}(X))$ for all $i\in[t]$ such that
\begin{enumerate}
    \item $\hat{g}^{(j)}_1,\ldots,\hat{g}^{(j)}_j$ and $\hat{f}_j = \sum_{i=j+1}^t \hat{g}^{(j)}_i$ are convex,
    \item $f(x)=\sum_{i=1}^t \hat{g}^{(j)}_i(x_{C_i})$ for all $x\in X$, and
    \item if $j\le t-2$, the Hessian ${H_{\hat{f}_j}}|_{{C}_{[j+1,t-1]}}$ is invertible on $X$.
\end{enumerate}
\noindent Let $j=1$. Given the assumptions, for all $x\in X$,
\begin{equation}
f(x) = g_1(x_{C_1\setminus{C}_{[2,t]}}, x_{C_1\cap{C}_{[2,t]}}) + h(x_{C_1\cap{C}_{[2,t]}}, x_{{C}_{[2,t]}\setminus C_1})\,,
\end{equation}
where $h(x_{C_1 \cap {C}_{[2,t]}}, x_{{C}_{[2,t]}\setminus C_1}) \coloneqq\sum_{i=2}^t g_i(x_{C_i})\in \sC^2(\pi_{{C}_{[2,t]}}(X))$.
Also, since ${H_{f}}|_{{C}_{[1,t-1]}}$ is invertible and positive semidefinite on $X$, it is positive definite on $X$. So, ${H_{f}}|_{C_1\setminus{C}_{[2,t]}}$ is also positive definite, and thus, invertible on $X$.
Hence, by Proposition \ref{prop2}$(i)$, there exists a function $\gamma_0\in \sC^2(\pi_{C_1 \cap{C}_{[2,t]}}(X))$ such that $g_1+\gamma_0$ and $h-\gamma_0$ are convex. Let $i_0\in\{2,\ldots,t\}$ be an index such that $C_1\cap{C}_{[2,t]}\subseteq C_{i_0}$. Then it suffices to define
\begin{equation}\label{eq: def hat g 1 i}
    \hat{g}^{(1)}_1 \coloneqq g_1+\gamma_0\,,\quad \hat{g}^{(1)}_{i_0}\coloneqq g_{i_0}-\gamma_0\,,\quad \hat{g}^{(1)}_{i} \coloneqq g_i \text{ for $i\in[t]\setminus\{1,i_0\}$}\,.
\end{equation}
Setting $\hat{f}_1 = \sum_{i=2}^t \hat{g}^{(1)}_i$, by Proposition \ref{prop2}$(ii)$ the Hessian ${H_{\hat{f}_1}}|_{{C}_{[2,t-1]}}$ is invertible on $X$.

Now assume that the claim holds for $j=k$, where $k\in[t-2]$.
In particular, there exist functions $\hat{g}^{(k)}_i\in \sC^2(\pi_{C_i}(X))$ for all $i\in[t]$ such that
\begin{enumerate}
    \item $\hat{g}^{(k)}_1,\ldots,\hat{g}^{(k)}_k$ and $\hat{f}_k = \sum_{i=k+1}^t \hat{g}^{(k)}_i$ are convex,
    \item $f(x)=\sum_{i=1}^t\hat{g}^{(k)}_i(x_{C_i})$ for all $x\in X$, and
    \item the Hessian ${H_{\hat{f}_k}}|_{{C}_{[k+1,t-1]}}$ is invertible on $X$.
\end{enumerate}
So, for all $x_{{C}_{[k+1,t]}}\in\pi_{{C}_{[k+1,t]}}(X)$,
\begin{equation}
\hat{f}_k(x_{{C}_{[k+1,t]}}) = \hat{g}^{(k)}_{k+1}(x_{C_{k+1}\setminus{C}_{[k+2,t]}}, x_{C_{k+1} \cap {C}_{[k+2,t]}}) + h(x_{C_{k+1} \cap {C}_{[k+2,t]}}, x_{{C}_{[k+2,t]}\setminus C_{k+1}})\,,
\end{equation}
where
\[
h(x_{C_{k+1} \cap {C}_{[k+2,t]}}, x_{{C}_{[k+2,t]}\setminus C_{k+1}}) \coloneqq\sum_{i=k+2}^t \hat{g}^{(k)}_i(x_{C_i})\in \sC^2(\pi_{{C}_{[k+2,t]}}(X))\,.
\]
Also, since ${H_{\hat{f}_k}}|_{{C}_{[k+1,t-1]}}$ is invertible and positive semidefinite on $X$, ${H_{\hat{f}_k}}|_{{C}_{[k+1,t-1]}}$ is positive definite on $X$.
So ${H_{\hat{f}_k}}|_{C_{k+1}\setminus{C}_{[k+2,t]}}$ is positive definite as well, in particular it is invertible on $X$. Hence, by Proposition \ref{prop2}$(i)$, there exists a function $\gamma_k\in \sC^2(\pi_{C_{k+1} \cap {C}_{[k+2,t]}}(X))$ such that $\hat{g}^{(k)}_{k+1}+\gamma_k$ and $h-\gamma_k$ are convex. Let $i_0\in\{k+2,\ldots,t\}$ such that $C_{k+1} \cap{C}_{[k+2,t]} \subseteq C_{i_0}$. Then  we define
\begin{equation}\label{eq: def hat g k+1 i}
\hat{g}^{(k+1)}_{k+1} \coloneqq \hat{g}^{(k)}_{k+1} +\gamma_k\,,\quad \hat{g}^{(k+1)}_{i_0} \coloneqq \hat{g}^{(k)}_{i_0}-\gamma_k\,,\quad\hat{g}^{(k+1)}_{i}\coloneqq\hat{g}^{(k)}_i\ \text{if $i\in[t]\setminus\{k+1,i_0\}$}\,.
\end{equation}
Moreover, by Proposition \ref{prop2}$(ii)$, ${H_{\hat{f}_{k+1}}}|_{{C}_{[k+2,t-1]}}$ is invertible on $X$ if $k\le t-3$. So, the claim is correct, and the proof is concluded by applying it to $j=t-1$.
\end{proof}

\begin{example}
The graphs in Figure \ref{fig:convex decomposition graphs} are instances of graphs whose maximal cliques can be ordered such that each maximal clique has at most one vertex in common with all the maximal cliques with larger indices. In such graphs, each cycle must lie in one of the cliques. 
\begin{figure}[ht]
\centering
    \begin{subfigure}[t]{.5\textwidth}
    \centering
     \includegraphics[scale=0.4]{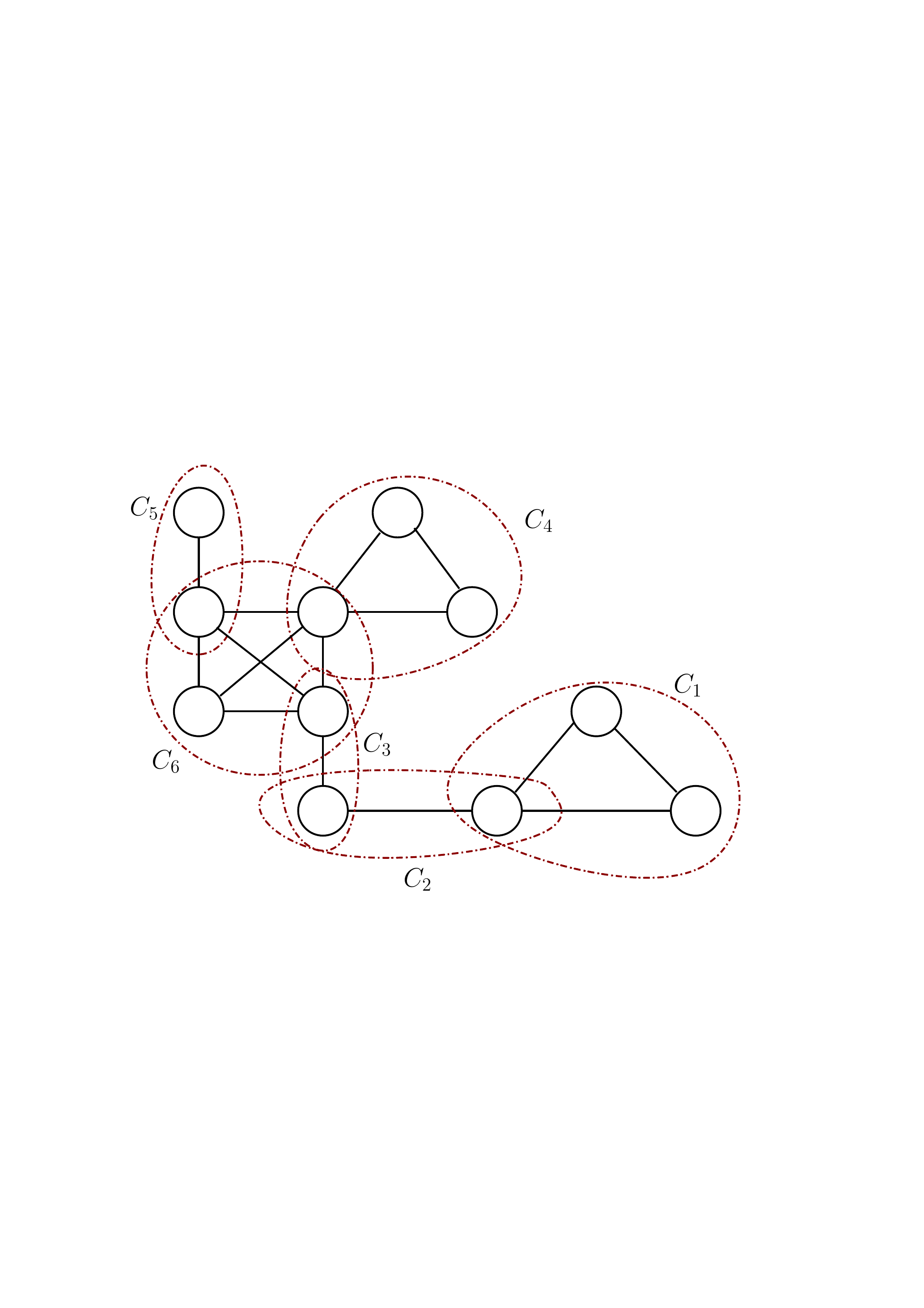}
     \caption{}
    \end{subfigure}%
    \begin{subfigure}[t]{.5\textwidth}
    \centering
    \includegraphics[scale=0.4]{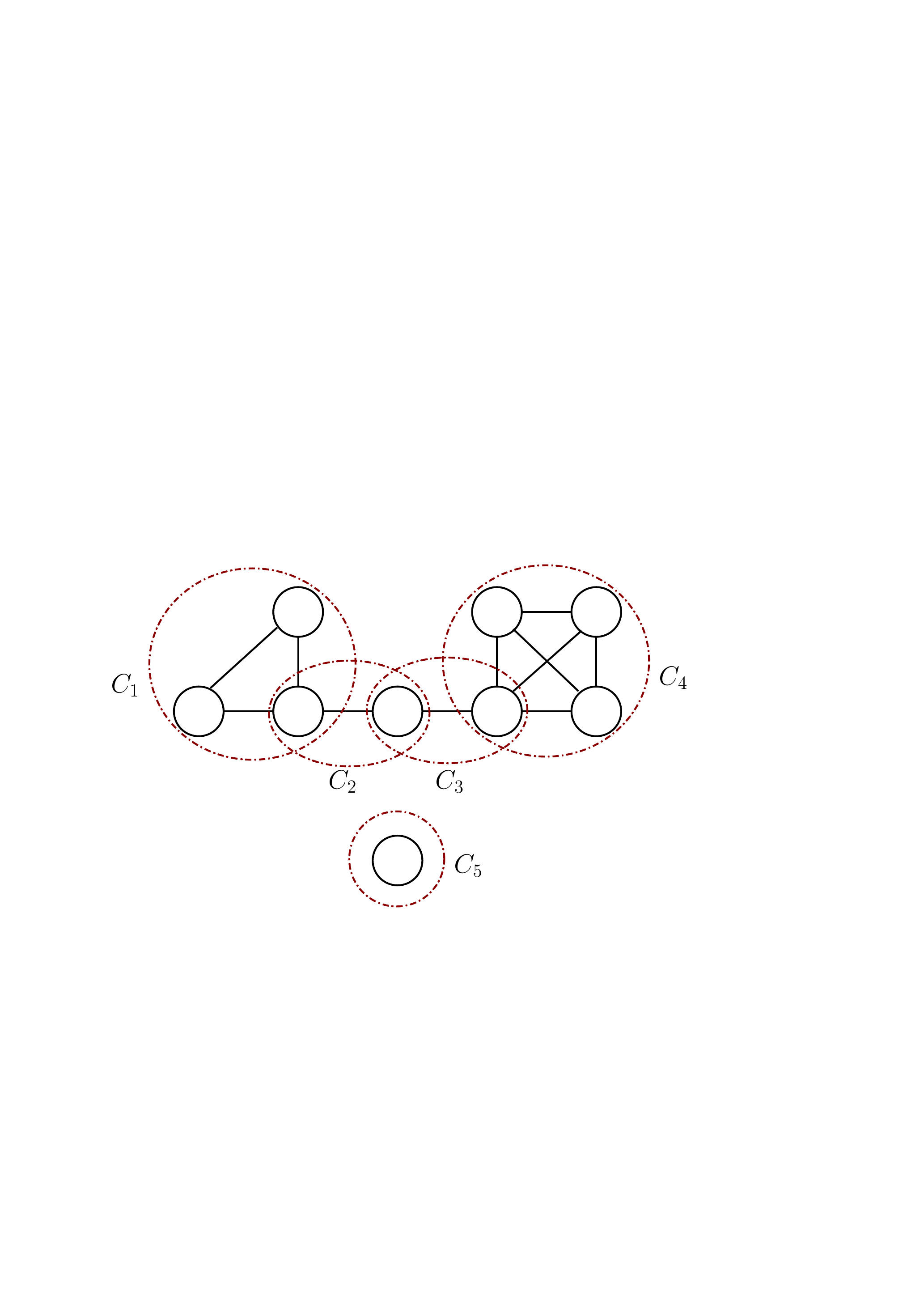}
    \caption{}
    \end{subfigure}
    \caption{Instances of graphs whose maximal cliques can be ordered such that each maximal clique has at most one vertex in common with maximal cliques with higher indices.}
    \label{fig:convex decomposition graphs}
\end{figure}
\end{example}

\section*{Acknowledgements}

We acknowledge the computational resources provided by the Aalto Science-IT project. Kaie Kubjas, Olga Kuznetsova and Luca Sodomaco were partially supported by the Academy of Finland Grant No. 323416.
Elina Robeva and Pardis Semnani were supported by NSERC Discovery Grant DGECR-2020-00338.

\bibliographystyle{alpha}
\bibliography{bibliography}

\appendix

\section{Notation and definitions from convex geometry}\label{app:background}

\begin{definition}\label{def: n-cycle, chordal}
Consider and undirected graph $G=(V,E)$, with vertex set $V$ and edge set $E$.
Given a subset $K\subseteq V$, the {\em induced subgraph} of $G$ with respect to $K$ is the graph with vertices in $K$ and edges in $\{\{i,j\}\mid\text{$i,j\in K$ and $\{i,j\}\in E$}\}$.
A sequence of vertices $v_1,\ldots,v_n$ of $G$ is an {\em $n$-cycle} if $v_1=v_n$, there are no other repeated vertices in the sequence, and for all $i\in[n-1]$ the edge $(v_i,v_{i+1})$ belongs to $E$.
A cycle in a graph $G$ is {\em induced} if the induced subgraph on the
vertex set of the cycle consists of exactly the edges in the cycle.
\end{definition}

\begin{definition}\label{def: chordal graph}
A graph $G$ is {\em chordal} (or {\em decomposable}) if  there are no induced $n$-cycles
with $n\ge 4$. Given a graph $G = (V, E)$, a graph $\tilde{G} = (V, \tilde{E})$ is a {\em chordal cover of $G$} if $\tilde{G}$ is chordal and $\tilde{E}\supset E$.
\end{definition}

\begin{definition}\label{def: subdivision}
A {\em polytope} of $\R^d$ is the convex hull $P=\conv(Z)$ of a set of finitely many points $Z\subseteq\R^d$. If $S$ is a convex set in $\R^d$, then a {\em supporting half-space} to $S$ is a closed half-space which contains $S$ and has a point of $S$ in its boundary. A {\em supporting hyperplane} $H$ to $S$ is a hyperplane which is the boundary of a supporting half-space to $S$.
If $Z$ is a finite set of points in $\R^d$ such that $P=\conv(Z)$ is a $d$-dimensional polytope in $\R^d$, then a {\em face} of $P$ is a set of the form $P\cap H$, where $H$ is a supporting hyperplane to $P$. In particular, the {\em vertex set} of $P$ is the set of zero-dimensional faces of $P$, namely the {\em vertices} of $P$. A {\em subdivision} of $P$ is a finite set $\sigma=\{Q_1,\ldots,Q_t\}$ of $d$-dimensional polytopes called {\em cells} such that
\begin{enumerate}
    \item $P=Q_1\cup\cdots\cup Q_t$
    \item the intersection of any two distinct cells is a face of both of them.
\end{enumerate}
A subdivision $\sigma$ of $P$ is {\em regular} if there exists a tent function $h_{X,y}$ supported on $P$ such that the cells of $\sigma$ correspond to the regions of linearity of $h_{X,y}$. Equivalently, we say that the regular subdivision $\sigma$ is {\em induced} by $h_{X,y}$.
\end{definition}

A crucial step towards the proof of Theorem \ref{thm: existence and uniqueness MLE} is Proposition \ref{prop: achieve support}, where we apply an equivalent definition of chordal graphs which uses {\em junction trees}. We refer to \cite{blair1993clique} for more details on clique trees. See also \cite{lauritzen1996graphical} for a more general approach with hypergraphs.
By Proposition~\ref{prop:mle-for-connected-components}, without loss of generality, we may assume that $G$ is a connected graph.
If $G$ is disconnected, we apply the next definitions and properties to each connected component of $G$.

\begin{definition}\label{def: junction tree}
Let $G$ be a connected graph and consider its set of maximal cliques $\sC(G)$.
A {\em junction tree} (or {\em clique tree}) of $G$ is any tree $\sT$ with node set corresponding to the elements of $\sC(G)$ that satisfies the following {\em clique-intersection} property: for every pair of distinct cliques $C_i$ and $C_j$ in $\sC(G)$, the set $C_i\cap C_j$ is contained in every clique on the unique path connecting $C_i$ and $C_j$ in $\sT$.
\end{definition}

An example of a graph and its clique tree is depicted in Figure~\ref{fig: example clique tree}.
\begin{figure}[ht]
    \centering
    \begin{subfigure}[t]{.45\textwidth}
    \centering
    \begin{overpic}[width=\textwidth, tics=10]{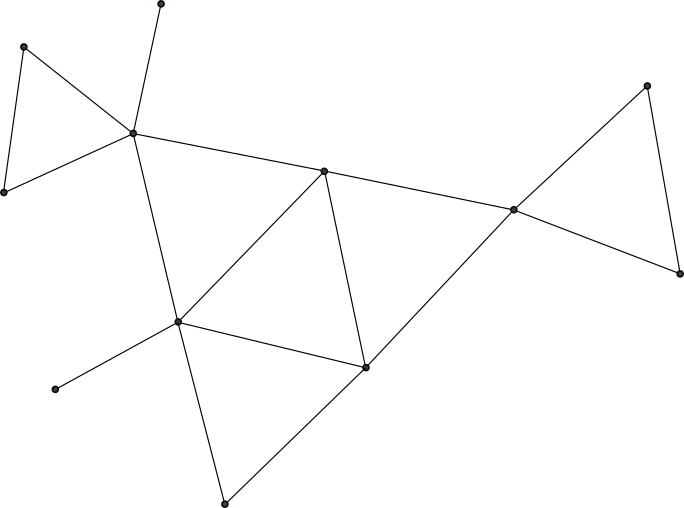}
    \put (56,36) {$C_1$}
    \put (87,45) {$C_2$}
    \put (40,31) {$C_3$}
    \put (28,41) {$C_4$}
    \put (23,62) {$C_5$}
    \put (5,55) {$C_6$}
    \put (12,25) {$C_7$}
    \put (35,14) {$C_8$}
    \end{overpic}
    \caption{}
    \label{fig: chordalgraph}
    \end{subfigure}
    \hspace{10pt}
    \begin{subfigure}[t]{.45\textwidth}
    \centering
    \begin{overpic}[width=\textwidth, tics=10]{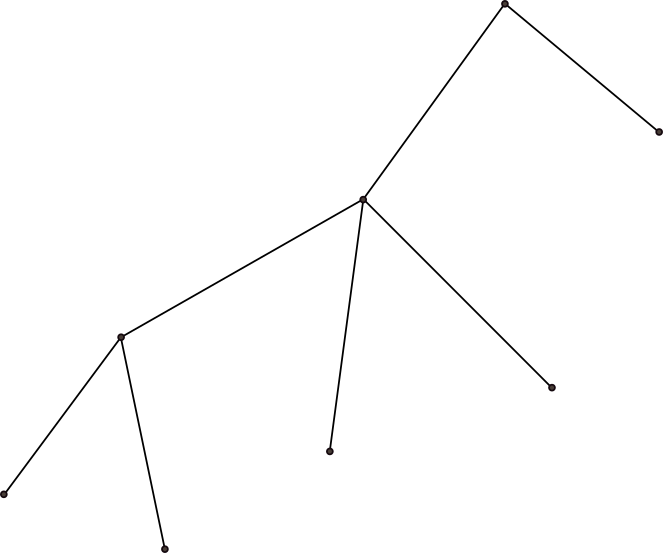}
    \put (70,84) {$v_1$}
    \put (93,60) {$v_2$}
    \put (49,55) {$v_3$}
    \put (21,30) {$v_4$}
    \put (2,5) {$v_5$}
    \put (27,2) {$v_6$}
    \put (48,10) {$v_7$}
    \put (78,21) {$v_8$}
    \end{overpic}
    \caption{}
    \label{fig: cliquetree}
    \end{subfigure}
    \caption{An example of clique tree of an undirected graph $G$ with $8$ maximal cliques.}
    \label{fig: example clique tree}
\end{figure}

\begin{theorem}{\cite[Theorem 3.1]{blair1993clique}}\label{thm: chordal iff has junction tree}
A connected graph $G$ is chordal if and only if it has a junction tree.
\end{theorem}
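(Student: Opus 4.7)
The plan is to prove the two implications separately, both exploiting classical structural properties of chordal graphs and subtrees of trees.

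For the forward direction, I would induct on $|V|$. The induction step uses the classical fact that every chordal graph has a simplicial vertex $v$, namely a vertex whose neighborhood $N(v)$ induces a clique. Setting $K = N(v) \cup \{v\}$, apply the inductive hypothesis to the chordal graph $G - v$ to obtain a junction tree $T'$. If $N(v)$ is itself a maximal clique of $G - v$, then $\sC(G)$ is obtained from $\sC(G - v)$ by replacing $N(v)$ with $K$, so simply relabel the node $N(v)$ in $T'$ as $K$. Otherwise $N(v)$ is strictly contained in some maximal clique $K_0$ of $G - v$, and $\sC(G) = \sC(G - v) \cup \{K\}$; in this case attach $K$ to $T'$ as a new leaf adjacent to $K_0$. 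In either case, the clique-intersection property is direct to verify: for any clique $C \neq K$ in the new tree, $K \cap C = N(v) \cap C$, and this set lies in every clique on the path from $K$ to $C$, either by the clique-intersection property of $T'$ (in the first case) or by combining $N(v) \cap C \subseteq K_0$ with the property of $T'$ (in the second).

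For the backward direction, suppose $G$ has a junction tree $T$ but contains an induced cycle $v_1, v_2, \ldots, v_n, v_1$ with $n \ge 4$. For each vertex $u$, let $T_u$ denote the subgraph of $T$ spanned by those maximal cliques that contain $u$; the clique-intersection property guarantees $T_u$ is a subtree of $T$. Since each edge $v_i v_{i+1}$ lies in some maximal clique, $T_{v_i} \cap T_{v_{i+1}} \neq \emptyset$ for all $i$ (indices mod $n$), whereas for non-adjacent $v_i, v_j$ the intersection $T_{v_i} \cap T_{v_j}$ is empty because the cycle is induced. Restricting attention to four consecutive vertices $v_1, v_2, v_3, v_4$, I would invoke the Helly-type property for subtrees of a tree: if four subtrees of a tree have pairwise nonempty intersections arranged cyclically, then at least one of the two diagonal pairs must also intersect. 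This forces either $T_{v_1} \cap T_{v_3}$ or $T_{v_2} \cap T_{v_4}$ to be nonempty, yielding an edge $v_1 v_3$ or $v_2 v_4$ of $G$ and contradicting the inducedness of the cycle.

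The main obstacle is establishing the Helly-type property used above in a self-contained way. I would prove it by picking representatives $c_i \in T_{v_i} \cap T_{v_{i+1}}$ and considering the unique path in $T$ from $c_i$ to $c_{i+1}$: because both endpoints lie in the connected subtree $T_{v_{i+1}}$, the entire path does too. Concatenating these paths yields a closed walk in $T$, and since $T$ is acyclic the walk must traverse some edge in both directions; tracking which subtrees contain the relevant segments of the walk then forces two of the $T_{v_i}$ with non-consecutive indices to share a node. An equivalent high-level viewpoint is that the subtree-intersection graph of any tree is chordal, but care must be taken to avoid using this fact in a circular manner.
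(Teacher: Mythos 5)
The paper offers no proof of this statement---it is imported verbatim from Blair--Peyton---so your attempt is measured against the standard literature argument rather than anything in the text. Your forward direction (induction on $|V|$ via a simplicial vertex, splitting into the two cases according to whether $N(v)$ is or is not already a maximal clique of $G-v$) is the classical proof and is correct as sketched; the only detail worth adding is that $G-v$ stays connected, which holds because any two neighbours of a simplicial vertex are adjacent.

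The backward direction has a genuine gap. You restrict an induced cycle $v_1,\dots,v_n$ ($n\ge4$) to the four consecutive vertices $v_1,v_2,v_3,v_4$ and invoke a Helly-type lemma for four subtrees whose pairwise intersections are ``arranged cyclically''. That hypothesis requires $T_{v_4}\cap T_{v_1}\neq\emptyset$, i.e.\ that $v_4v_1$ is an edge of $G$, which is true only when $n=4$; for $n\ge5$ the four subtrees form a path, not a cycle, in the intersection pattern, and the lemma gives nothing. The standard repair is to replace $T_{v_4}$ by the union $T_{v_4}\cup\cdots\cup T_{v_n}$, which is a subtree (consecutive members intersect, so the union is a connected subgraph of the host tree), is disjoint from $T_{v_2}$ because the cycle is induced, and meets both $T_{v_3}$ and $T_{v_1}$; this restores the cyclic configuration for every $n\ge4$. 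Separately, your proposed proof of the four-subtree lemma via the closed walk is itself incomplete: the acyclicity/parity argument only produces a tree edge traversed by two distinct paths $P_i$ and $P_j$, whence $v_{i+1}$ and $v_{j+1}$ lie in two common maximal cliques and are adjacent---but $j$ may equal $i\pm1$, in which case these vertices are consecutive on the cycle and no chord results. A clean proof goes through the bridge path $Q$ between the disjoint diagonal subtrees $T_{v_1}$ and $T_{v_3}$: in a tree, every path from a node of $T_{v_1}$ to a node of $T_{v_3}$ must contain $Q$, so both ``sides'' of the cyclic configuration contain $Q$ and therefore intersect, which is the required contradiction.
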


\begin{definition}\label{def: VL VR}
Let $G=(V,E)$ be a chordal graph with set of maximal cliques $\sC(G)=\{C_1,\ldots,C_t\}$.
Let $\sT$ be a tree with node set corresponding to the elements of $\sC(G)$. For every edge $(C_i,C_j)$ in $\sT$ define
\begin{align}
\begin{split}
V_L^{(i,j)} &\coloneqq \{v\in V \mid\text{$v$ appears in $\sT$ on the side of $C_i$}\}\setminus (C_i\cap C_j)\\
V_R^{(i,j)} &\coloneqq \{v\in V \mid\text{$v$ appears in $\sT$ on the side of $C_j$}\}\setminus(C_i\cap C_j)\,.
\end{split}
\end{align}
\end{definition}

\begin{lemma}\label{lem:CIJT}
Let $G$ as in Definition \ref{def: VL VR}. Suppose that $\sT$ is a junction tree of $G$. Then for all $i,j\in[t]$, $i<j$ we have
\begin{enumerate}
\item[(a)] $V_L^{(i,j)}\cap V_R^{(i,j)} = \emptyset$, and
\item[(b)] $V_L^{(i,j)}$ is separated from $V_R^{(i,j)}$ by $C_i\cap C_j$ in the graph $G$.
\end{enumerate}
\end{lemma}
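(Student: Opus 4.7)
The plan is to deduce both parts from a single structural observation: for every vertex $w \in V$, the set $T(w) := \{C \in \sC(G) : w \in C\}$ is a connected subtree of $\sT$. This will be immediate from the clique-intersection property, because if $w$ lies in two cliques $C_a, C_b$, then $w \in C_a \cap C_b$ must lie in every clique on the unique $C_a$–$C_b$ path in $\sT$, so the node set $T(w)$ is closed under taking paths in $\sT$.

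Next I would upgrade this to the key lemma: if $w \in V_L^{(i,j)}$ then $w \notin C_j$ (and symmetrically). Indeed, if $w$ simultaneously appeared in some clique on the $C_i$-side and in $C_j$, then by the clique-intersection property $w$ would lie in every clique on the $\sT$-path between them, in particular in $C_i$, giving $w \in C_i \cap C_j$ and contradicting the definition of $V_L^{(i,j)}$. Combined with the connectivity of $T(w)$ and the fact that removing the edge $(C_i, C_j)$ splits $\sT$ into two disjoint subtrees $\sT_L$ and $\sT_R$ (containing $C_i$ and $C_j$ respectively), this forces $T(w) \subseteq \sT_L$ for every $w \in V_L^{(i,j)}$, and symmetrically $T(w) \subseteq \sT_R$ for every $w \in V_R^{(i,j)}$.

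Part (a) will then follow at once: any $w \in V_L^{(i,j)} \cap V_R^{(i,j)}$ would force the nonempty subtree $T(w)$ to live inside both $\sT_L$ and $\sT_R$, which are disjoint. For part (b), I would take an arbitrary $u$–$v$ path $u = w_0, w_1, \ldots, w_k = v$ in $G$ with $u \in V_L^{(i,j)}$, $v \in V_R^{(i,j)}$, and for each edge $\{w_l, w_{l+1}\}$ choose a maximal clique $C_{m_l}$ containing it. By the previous step, $C_{m_0} \in T(u) \subseteq \sT_L$ and $C_{m_{k-1}} \in T(v) \subseteq \sT_R$, so in particular $k = 1$ is impossible (a single clique cannot lie in both $\sT_L$ and $\sT_R$). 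For $k \ge 2$ there must be an index $l$ at which the sequence crosses over, with $C_{m_l} \in \sT_L$ and $C_{m_{l+1}} \in \sT_R$. Both cliques contain the shared vertex $w_{l+1}$, so $T(w_{l+1})$ is a connected subtree of $\sT$ meeting both sides, and therefore contains the entire $\sT$-path between $C_{m_l}$ and $C_{m_{l+1}}$. This path must cross the edge $(C_i, C_j)$, so $C_i, C_j \in T(w_{l+1})$, i.e., $w_{l+1} \in C_i \cap C_j$, which is exactly what is needed to conclude that $C_i \cap C_j$ separates $V_L^{(i,j)}$ from $V_R^{(i,j)}$ in $G$.

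The main subtlety is the correct handling of the boundary case $k = 1$ (which rules out a direct edge across the separator) and being careful that the chosen cliques $C_{m_l}$ genuinely land in $\sT_L$ or $\sT_R$ rather than straddling the cut. Once the connectivity of $T(w)$ is established and used to force $T(w) \subseteq \sT_L$ or $T(w) \subseteq \sT_R$, the rest of the argument is a routine ``crossing a cut in a tree'' argument.
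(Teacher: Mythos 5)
Your proposal is correct and follows essentially the same route as the paper: both parts rest on the clique-intersection property forcing any vertex shared between the two sides of the cut edge $(C_i,C_j)$ to lie in $C_i\cap C_j$, and part (b) in both cases locates the crossing point of the path and shows that vertex lies in the separator. The only difference is presentational — you package the key fact as connectivity of the induced subtree $T(w)$ and track the cliques covering the path's edges, whereas the paper tracks the last path vertex lying in $V_L^{(i,j)}$ — but the substance is identical.
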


In \cite[Definition 4.17]{koller2009probabilistic}, clique trees are essentially defined using the properties of Lemma~\ref{lem:CIJT}.

\begin{proof} [Proof of Lemma \ref{lem:CIJT}] ~\
$(a)$ Let $v\in V_L^{(i,j)} \cap V_R^{(i,j)}$. Then there exist a clique $C_k$ appearing on the side of $C_i$ in $\sT$ and a clique $C_\ell$ appearing on the side of $C_j$ in $\sT$ such that $v\in C_k \cap C_\ell$. By the choice of $C_k$ and $C_\ell$, the unique path from $C_k$ to $C_\ell$ in $\sT$ contains the edge $(C_i,C_j)$. So, considering that $\sT$ is a junction tree, $C_k \cap C_\ell \subseteq C_i \cap C_j$. This means that $v\in C_i \cap C_j$, which is impossible.

$(b)$ Let $a \in V_L^{(i,j)}$, $b\in V_R^{(i,j)}$, and $P\colon a=v_1,v_2,\ldots,v_m=b$ be a path from $a$ to $b$ in $G$. Set
\begin{align*}
    p \coloneqq \max \{q \in [m] \mid v_q \in V_L^{(i,j)}\}\,.
\end{align*}
By part $(a)$ we have $p \le m-1$. We prove that $v_{p+1}\in C_i\cap C_j$. By contradiction, assume that $v_{p+1}\not \in C_i\cap C_j$. There exists a maximal clique $C_k\in \sC(G)$ such that $(v_{p},v_{p+1})$ is an edge in $C_k$. Now one of the following happens:
\begin{enumerate}
    \item $C_k$ is on the side of $C_i$ in $\sT$. In this case, considering that $v_{p+1} \not \in C_i \cap C_j$, $v_{p+1} \in V_L^{(i,j)}$, which contradicts with the choice of $p$.
    \item $C_k$ is on the side of $C_j$ in $\sT$. In this case, given that $v_p \in V_L^{(i,j)}$, and thus, $v_p \not \in C_i \cap C_j$, we have $v_p\in V_R^{(i,j)}$, which contradicts with $V_L^{(i,j)} \cap V_R^{(i,j)} = \emptyset$.
\end{enumerate}
This proves that $v_{p+1} \in C_i \cap C_j$. Hence $V_L^{(i,j)}$ and $V_R^{(i,j)}$ are separated by $C_i \cap C_j$ in $G$.
\end{proof}

\begin{definition}\label{def: up-closure, height, depth}
Let $\sT$ be a tree with vertices $v_1,\ldots,v_t$. Fix $r\in[t]$ and consider the vertex $v_r$ as the root in $\sT$, so $\sT$ becomes a {\em rooted tree}.
We denote by $\le$ the {\em tree-order} associated with the rooted tree $\sT$, namely $v_i\le v_j$ if and only if $v_i$ belongs to the unique path in $\sT$ between $v_r$ and $v_j$. For every vertex $v_i$ of $\sT$, the {\em up-closure} of $v_i$ is the subtree $\sT_i\coloneqq\{v\mid v_i\le v\}$ (see Figure~\ref{fig:junction_tree}). In particular, $\sT_r=\sT$.
The {\em height} of a vertex $v_i$ in $\sT$ is the length $n_i$ of the longest possible path between any leaf in $\sT_i$ and $v_i$. In particular, the {\em height} of $\sT$ is the height of its root $v_r$. The {\em depth} of $v_i$ in $\sT$ is the length of the unique path between $v_i$ and the root of $\sT$. The maximum depth of a leaf of $\sT$ coincides with the height of $\sT$. A vertex $v_j$ is a {\em child} of $v_i$ if it has depth one in $\sT_i$.
\begin{figure}[ht]
    \centering
    \includegraphics[width=0.3\paperwidth]{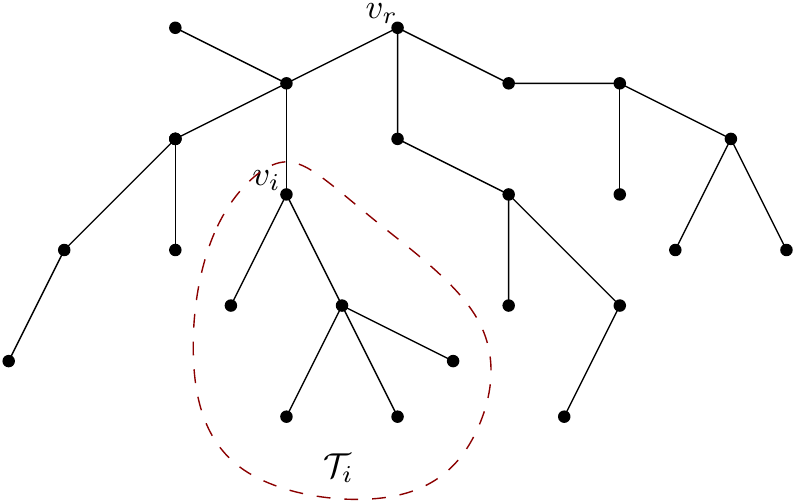}
    \caption{A tree $\sT$ with root $v_r$ and the up-closure $\sT_i$ of the vertex $v_i$.}
    \label{fig:junction_tree}
\end{figure}
\end{definition}

\section{Proofs of Sections~\ref{sec:support}, \ref{sec:existence}, and \ref{sec:location-of-tentpoles}}\label{appendix: proofs section 2,4,5}

\begin{proof}[Proof of Lemma~\ref{lemma: support of undirected is full dimensional}]
With probability 1, all sample points in $\sX$ are pairwise distinct.
Consider their convex hull $\conv(\sX)$.
Since $n\ge\max_{C\in \sC(G)}|C|+1$ and $f_0$ has full-dimensional support, it follows that the polytope $\pi_C(\conv(\sX))$ is a.s. full-dimensional in $\R^{C}$. 
Pick an arbitrary ordering $C_1,\ldots,C_t$ of the maximal cliques in $\sC(G)$. 
Pick a point $x$ in the relative interior of $\conv(\sX)$, that is the interior of $\conv(\sX)$ considered within the affine linear space spanned by $\conv(\sX)$. For each maximal clique $C_i$, the projection $x_{C_i}$ is in the relative interior of $\pi_{C_i}(\conv(\sX))$. Therefore, there exists a $|C_i|$-dimensional ball $B_{\varepsilon_i}(x_{C_i})$ of radius $\varepsilon_i$ centered at $x_{C_i}$ and contained in $\pi_{C_i}(\conv(\sX))$.
Take the ball $B_{\varepsilon}(x)$, where $\varepsilon\coloneqq\min_i\varepsilon_i$.
For all maximal cliques $C_i$ we have that $\pi_{C_i}(B_{\varepsilon}(x))\subseteq B_{\varepsilon_i}(x_{C_i})$, which means that $B_{\varepsilon}(x)$ is contained in the interior of $\sS_{G,\sX}$, proving that $\sS_{G,\sX}$ is a.s. full-dimensional.
\end{proof}

\begin{proof}[Proof of Proposition~\ref{prop: limit D_G i exists}]
It is easy to see that $(\sD_G^{(i)})_i$ is a nondecreasing sequence of convex polytopes.
Furthermore, every set $\sD_G^{(i)}$ is contained in the bounded set $\sS_{G, \sX}$, since the MLE $\hat{f}$ is positive on all the sets $\sD_{G}^{(i)}$ by construction.
Then, using the definitions of set-theoretic limit infimum and supremum, we get that
\begin{align*}
\mathrm{liminf}_{i\to\infty}\sD_G^{(i)} &\coloneqq \bigcup_{i\ge 0}\bigcap_{j\ge i}\sD_G^{(j)} = \bigcup_{i\ge 0}\sD_G^{(i)}\\
\mathrm{limsup}_{i\to\infty}\sD_G^{(i)} &\coloneqq \bigcap_{i\ge 0}\bigcup_{j\ge i}\sD_G^{(j)} = \bigcap_{i\ge 0}\bigcup_{j\ge 0}\sD_G^{(j)} = \bigcup_{j\ge 0}\sD_G^{(j)}\,.
\end{align*}
Hence the limit $\lim_{i\to\infty}\sD_G^{(i)}$ exists and is equal to the bounded convex set $\bigcup_{i\ge 0}\sD_G^{(i)}$.
\end{proof}

\begin{proof}[Proof of Proposition~\ref{prop: achieve support}]
By Theorem \ref{thm: chordal iff has junction tree} the graph $G$ admits a juction tree $\sT$ whose vertex set $\{v_1,\ldots,v_t\}$ corresponds to $\sC(G)=\{C_1,\ldots,C_t\}$. Each of the edges $(v_i,v_j)$ is labeled by the intersection $C_i\cap C_j$. Fix $r\in[t]$ and consider the vertex $v_r$ as the root in $\sT$.

Fix a point $x=(x_1,\ldots,x_d)\in\sS_{G,\sX}$. We want to show that $x\in\sD^{(t-1)}_{G}$.
The proof follows a two-level induction argument.
The first level of induction is with respect to the height parameter $\alpha\in\{0,\ldots,h(\sT)\}$, where $h(\sT)$ is the height of $\sT$. We claim the following:

\smallskip
\noindent{\bf Claim 1:}
For every $\alpha\in\{0,\ldots,h(\sT)\}$ and every vertex $v_i$ with height $h_i\le\alpha$ there exists a vector $a^{(\sT_i)}\in \sD^{\left(|\sT_i|-1\right)}_{G}$ such that
\begin{equation}\label{eq: a_j first induction}
a^{(\sT_i)}_j = x_j\quad\forall\,j\in\bigcup_{k\in\sT_i}C_k\,.
\end{equation}

Suppose that Claim 1 is true at the last step $\alpha=h(\sT)$.
Thus there exists $a^{(\sT_r)}\in \sD^{(t-1)}_{G}$ such that $a^{(\sT_r)}_j = x_j$ for all $j\in\bigcup_{k\in\sT}C_k=[d]$, namely $x=a^{(\sT_r)}\in\sD^{(t-1)}_{G}$. This concludes the proof.

\noindent$(1a)$ The base case of Claim 1 is $\alpha=0$. In this case, a vertex $v_i$ with height $h_i=0$ is a leaf in $\sT$. Equivalently $\sT_i$ consists only of $v_i$.
By construction of $\sS_{G,\sX}$, for each $i\in[t]$ there exists a point $a^{(i)}=(a^{(i)}_1,\ldots,a^{(i)}_d)\in\conv(\sX)=\sD^{(0)}_{G} \subseteq\R^d$ such that $a^{(i)}_j = x_j$ for all $j\in C_i$. Hence it suffices to set $a^{(\sT_i)}\coloneqq a^{(i)}$.

\noindent$(1b)$ Now suppose that Claim 1 is true for some $\alpha\in\{0,\ldots,h(\sT)-1\}$ and for all vertices with height at most $\alpha$. Consider the new height $\alpha+1$ and let $v_i$ be a vertex with height $h_i = \alpha+1$.
We want to show that there exists a vector $a^{(\sT_i)}\in \sD^{\left(|\sT_i|-1\right)}_{G}$ such that
\eqref{eq: a_j first induction} holds.
Let $v_{q_1},\ldots,v_{q_{m_i}}$ be the children of $v_i$ in $\sT_i$.
The second level of induction is with respect to the index $\ell\in[m_i]$. Figure \ref{fig: running example separation} explains on a concrete example the argument used in the following claim.

\smallskip
\noindent{\bf Claim 2:}
For all $\ell \in [m_i]$, there exists a vector $a^{(\sT_i,\ell)}\in \sD^{\left(\sum_{p=1}^\ell |\sT_{q_p}|\right)}_{G}$ such that 
\begin{equation}\label{eq: a_j second induction}
a^{(\sT_i,\ell)}_j = x_j\quad\forall\,j\in C_i\cup\left(\bigcup_{p=1}^\ell\bigcup_{k\in\sT_{q_p}}C_k\right)\,.
\end{equation}
\begin{figure}[ht]
    \centering
    \begin{subfigure}[t]{.31\textwidth}
    \centering
    \begin{overpic}[width=\textwidth, tics=10]{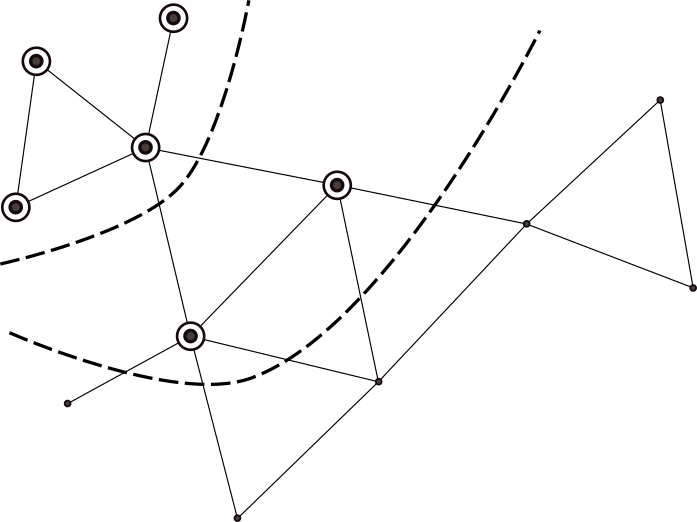}
    \put (5,70) {{\scriptsize $V_L^{(4,3)}$}}
    \put (77,60) {{\scriptsize $V_R^{(4,3)}$}}
    \put (39,31) {{\scriptsize $C_3$}}
    \put (30,40) {{\scriptsize $C_4$}}
    \end{overpic}
    \caption{}
    \label{fig: sep1}
    \end{subfigure}
    \hspace{9pt}
    \begin{subfigure}[t]{.31\textwidth}
    \centering
    \begin{overpic}[width=\textwidth, tics=10]{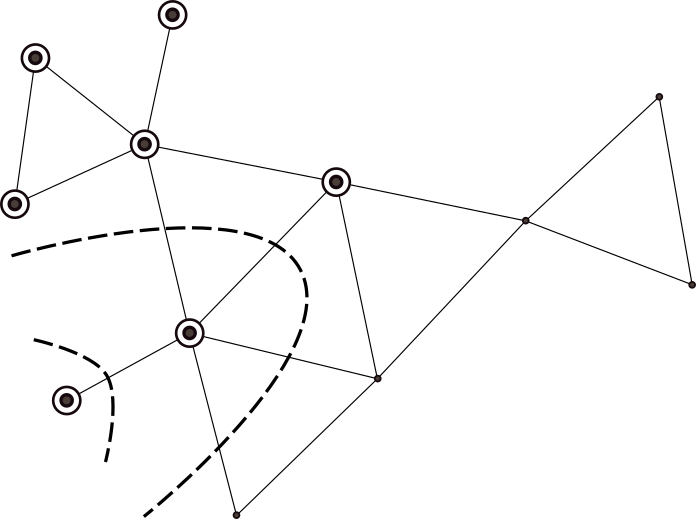}
    \put (-5,7) {{\scriptsize $V_L^{(7,3)}$}}
    \put (58,55) {{\scriptsize $V_R^{(7,3)}$}}
    \put (36,29) {{\scriptsize $C_3$}}
    \put (12,25) {{\scriptsize $C_7$}}
    \end{overpic}
    \caption{}
    \label{fig: sep2}
    \end{subfigure}
    \hspace{9pt}
    \begin{subfigure}[t]{.31\textwidth}
    \centering
    \begin{overpic}[width=\textwidth, tics=10]{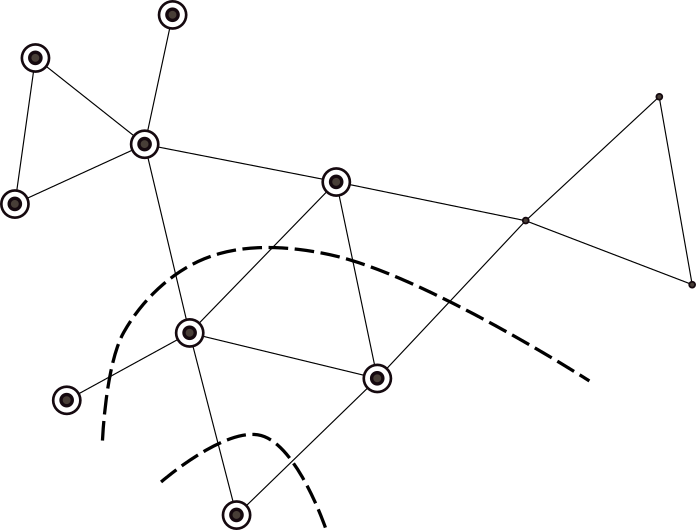}
    \put (17,0) {{\scriptsize $V_L^{(8,3)}$}}
    \put (58,55) {{\scriptsize $V_R^{(8,3)}$}}
    \put (39,31) {{\scriptsize $C_3$}}
    \put (34,15) {{\scriptsize $C_8$}}
    \end{overpic}
    \caption{}
    \label{fig: sep3}
    \end{subfigure}
    \caption{Induction proof of Claim 2 for the children $v_4,v_7,v_8$ of the node $v_3$ in the clique tree of Figure \ref{fig: cliquetree}, where the chosen root is $v_1$. At each step $j$, the circled vertices of $G$ correspond to coordinates where the vectors $a^{(\sT_3,j)}$ and $x$ coincide. The dashed lines indicate the current partition of $V$. After the induction, the last vector $a^{(\sT_3)}\coloneqq a^{(\sT_3,3)}$ coincides with $x$ on the union of cliques corresponding to the nodes of $\sT_3$.}
    \label{fig: running example separation}
\end{figure}
\noindent$(2a)$ The base case is $\ell=1$. Consider the vertex $v_{q_1}$.
We have
\begin{equation}\label{eq: separation1}
V_L^{(q_1,i)} = \left(\bigcup_{k\in\sT_{q_1}}C_k\right)\setminus(C_{q_1}\cap C_i)\,,\quad V_R^{(q_1,i)} = V \setminus\bigcup_{k\in\sT_{q_1}}C_k
\end{equation}
and by Lemma \ref{lem:CIJT} the set $V_L^{(q_1,i)}$ is separated from $V_R^{(q_1,i)}$ by $C_{q_1}\cap C_i$ in $G$.
Since $v_{q_1}$ has height $h_{q_1}\le\alpha$, by the induction hypothesis of Claim 1 there exists $a^{(\sT_{q_1})}\in \sD^{\left(|\sT_{q_1}|-1\right)}_{G}$ such that $a^{(\sT_{q_1})}_j = x_j$ for all $j\in\bigcup_{k\in\sT_{q_1}}C_k$.
Mimicking the argument of part $(1a)$, we know also that there exists a point $a^{(i)}\in\conv(\sX)=\sD^{(0)}_{G} \subseteq \sD^{\left(|\sT_{q_1}|-1\right)}_{G}$ , which satisfies $a^{(i)}_j=x_j$ for all $j\in C_i$. Summing up, we are considering the partition of $V$
\[
\{V_L^{(q_1,i)},V_R^{(q_1,i)},C_{q_1}\cap C_i\}\,,
\]
that corresponds to the global Markov statement $V_L^{(q_1,i)}\indep V_R^{(q_1,i)}|(C_{q_1}\cap C_i)\in\sI_{G}$, and we are picking two vectors $a^{(\sT_{q_1})}$ and $a^{(i)}$ of $\sD^{\left(|\sT_{q_1}|-1\right)}_{G}$ that coincide with $x$ on $C_{q_1}\cap C_i$.
Then, using the construction in Definition \ref{def: sets Di}, we define the new vector $a^{(\sT_i,1)}\in \sD^{\left(|\sT_{q_1}|\right)}_{G}$ as
\begin{equation}\label{eq: def aj1}
a^{(\sT_i,1)}_j \coloneqq
\begin{cases}
a^{(\sT_{q_1})}_j & \text{if $j\in V_L^{(q_1,i)}$}\\
a^{(i)}_j & \text{if $j\in V_R^{(q_1,i)}$}\\
x_j & \text{if $j\in C_{q_1}\cap C_i$}\,.
\end{cases}
\end{equation}
Note also that $a^{(\sT_i,1)}_j=a^{(i)}_j=x_j$ for all $j\in C_i\setminus C_{q_1}$, which implies that $a^{(\sT_i,1)}$ coincides with $x$ on the whole set $C_i\cup\bigcup_{k\in\sT_{q_1}}C_k$ as claimed in \eqref{eq: a_j second induction}.

\noindent$(2b)$ Suppose that Claim 2 holds for some $\ell\in [m_i-1]$, namely there exists a vector $a^{(\sT_i,\ell)}\in \sD^{\left(\sum_{p=1}^\ell |\sT_{q_p}|\right)}_{G} \subseteq \sD^{\left(\sum_{p=1}^{\ell+1}|\sT_{q_p}|-1\right)}_{G}$ such that \eqref{eq: a_j second induction} holds. Consider the vertex $v_{q_{\ell+1}}$. We have
\begin{equation}\label{eq: separation2}
V_L^{(q_{\ell+1},i)} = \left(\bigcup_{k\in\sT_{q_{\ell+1}}}C_k\right)\setminus(C_{q_{\ell+1}}\cap C_i)\,,\quad V_R^{(q_{\ell+1},i)} = V\setminus\bigcup_{k\in\sT_{q_{\ell+1}}}C_k
\end{equation}
and by Lemma \ref{lem:CIJT} the set $V_L^{(q_{\ell+1},i)}$ is separated from $V_R^{(q_{\ell+1},i)}$ by $C_{q_{\ell+1}}\cap C_i$ in $G$.
Since $v_{q_{\ell+1}}$ has height $h_{q_{\ell+1}}\le\alpha$, by the induction hypothesis of Claim 1 there exists $a^{(\sT_{q_{\ell+1}})}\in \sD^{\left(|\sT_{q_{\ell+1}}|-1\right)}_{G} \subseteq \sD^{\left(\sum_{p=1}^{\ell+1} |\sT_{q_p}|-1\right)}_{G}$ such that $a^{(\sT_{q_{\ell+1}})}_j = x_j$ for all $j\in \bigcup_{k\in\sT_{q_{\ell+1}}}C_k$.
Moreover by the induction hypothesis of Claim 2 we have $a^{(\sT_i,\ell)}_j = x_j$ for all $j\in C_i$. Hence we are considering the partition
\[
\{V_L^{(q_{\ell+1},i)},V_R^{(q_{\ell+1},i)},C_{q_{\ell+1}}\cap C_i\}\,,
\]
that corresponds to the global Markov statement $V_L^{(q_{\ell+1},i)}\indep V_R^{(q_{\ell+1},i)}|(C_{q_{\ell+1}}\cap C_i)\in\sI_{G}$, and we are picking two vectors $a^{(\sT_{q_{\ell+1}})}$ and $a^{(\sT_i,\ell)}$ of $\sD^{\left(\sum_{p=1}^{\ell+1}|\sT_{q_p}|-1\right)}_{G}$ that coincide with $x$ on $C_{q_{\ell+1}}\cap C_i$.
Again using the construction in Definition \ref{def: sets Di}, we define the new vector $a^{(\sT_i,\ell+1)}\in \sD^{\left(\sum_{p=1}^{\ell+1}|\sT_{q_p}|\right)}_{G}$ as
\begin{equation}\label{eq: def aj2}
a^{(\sT_i,\ell+1)}_j \coloneqq
\begin{cases}
a^{(\sT_{q_{\ell+1}})}_j & \text{if $j\in V_L^{(q_{\ell+1},i)}$}\\
a^{(\sT_i,\ell)}_j & \text{if $j\in V_R^{(q_{\ell+1},i)}$}\\
x_j & \text{if $j\in C_{q_{\ell+1}}\cap C_i$}\,.
\end{cases}.
\end{equation}
Note also that $a^{(\sT_i,\ell+1)}_j=a^{(\sT_i,\ell)}_j=x_j$ for all $j\in C_i\setminus C_{q_{\ell+1}}$, which implies that $a^{(\sT_i,\ell+1)}$ coincides with $x$ on the whole set $C_i\cup\bigcup_{p=1}^{\ell+1}\bigcup_{k\in\sT_{q_p}}C_k$ as claimed in \eqref{eq: a_j second induction}.

Having proved Claim 2, applying it for $l=m_i$ yields a vector $a^{(\sT_i,m_i)}\in \sD^{\left(\sum_{p=1}^{m_i}|\sT_{q_p}|\right)}_{G}=\sD^{\left(|\sT_i|-1\right)}_{G}$ such that $a^{(\sT_i,m_i)}_j = x_j$ for all $j\in C_i\cup\left(\bigcup_{p=1}^{m_i}\bigcup_{k\in\sT_{q_p}}C_k\right)=\bigcup_{k\in\sT_i}C_k$, similarly as in \eqref{eq: a_j first induction}. Hence the induction step of Claim 1 follows after setting $a^{(\sT_i)} \coloneqq a^{(\sT_i,m_i)}$.\qedhere
\end{proof}

\begin{proof}[Proof of Proposition~\ref{prop: equivalent def of X A ind B given S}] Let $D$ be any subset of $\R^d$, and consider a partition $\{A,B,S\}$ of $[d]$. Define
\begin{equation}\label{eq: def QABS}
\sQ_{A,B,S}(D) \coloneqq \pi_{A\cup S}^{-1}(\pi_{A\cup S}(D))\cap \pi_{B\cup S}^{-1}(\pi_{B\cup S}(D))\,.
\end{equation}
First, we show that $\sQ_{A,B,S}(D)\supseteq\sP_{A,B,S}(D)$. We have immediately that
\begin{equation}\label{eq: alternative X}
D=\bigcup_{\substack{x,y\in D\\x_S=y_S}}\{x,y\}\,.
\end{equation}
Now consider two points $x$ and $y$ in $D$. Then
\begin{equation}\label{eq: apply proj to x y}
\sQ_{A,B,S}(\{x,y\})=\{x_A,y_A\}\times\{x_B,y_B\}\times\{x_S\}\,.
\end{equation}
Consider the left-hand side of \eqref{eq: apply proj to x y} and take the union over all points $x$ and $y$ in $D$ such that $x_S=x_S$. Then
\begin{align*}
\bigcup_{\substack{x,y\in D\\x_S=y_S}}\sQ_{A,B,S}(\{x,y\}) &= \bigcup_{\substack{x,y\in D\\x_S=y_S}}\pi_{A\cup S}^{-1}(\pi_{A\cup S}(\{x,y\}))\cap \pi_{B\cup S}^{-1}(\pi_{B\cup S}(\{x,y\}))\\
&\subseteq \left(\bigcup_{\substack{x,y\in D\\x_S=y_S}}\pi_{A\cup S}^{-1}(\pi_{A\cup S}(\{x,y\}))\right)\cap\left(\bigcup_{\substack{x,y\in D\\x_S=y_S}}\pi_{B\cup S}^{-1}(\pi_{B\cup S}(\{x,y\}))\right)\\
&= \pi_{A\cup S}^{-1}\left(\pi_{A\cup S}\left(\bigcup_{\substack{x,y\in D\\x_S=y_S}}\{x,y\}\right)\right)\cap\pi_{B\cup S}^{-1}\left(\pi_{B\cup S}\left(\bigcup_{\substack{x,y\in D\\x_S=y_S}}\{x,y\}\right)\right)\\
&= \sQ_{A,B,S}(D)\,,
\end{align*}
where in the last equality we used \eqref{eq: alternative X}. On the other hand, considering the right-hand side of \eqref{eq: apply proj to x y} and taking the union over all points $x$ and $y$ in $D$ such that $x_S=x_S$, we get that
\[
\bigcup_{\substack{x,y\in D\\x_S=y_S}}\{x_A,y_A\}\times\{x_B,y_B\}\times\{x_S\}=\bigcup_{\substack{x,y\in D\\x_S=y_S}}\{(x_A,y_B,x_S),(y_A,x_B,x_S)\}=\sP_{A,B,S}(D)\,,
\]
therefore $\sQ_{A,B,S}(D)\supseteq\sP_{A,B,S}(D)$.
About the inclusion $\sQ_{A,B,S}(D)\subseteq\sP_{A,B,S}(D)$, consider a point $z\in\sQ_{A,B,S}(D)$. By definition of $\sQ_{A,B,S}(D)$ we have that $z_{A\cup S}\in\pi_{A\cup S}(D)$ and $z_{B\cup S}\in\pi_{B\cup S}(D)$, so there exist $x,y\in D$ such that
\[
z\in(\{x_A\}\times\R^B\times\{x_S\})\cap(\R^A\times\{y_B\}\times\{x_S\})=\{x_A\}\times\{y_B\}\times(\{x_S\}\cap\{y_S\})\,,
\]
hence necessarily there exist $x,y\in D$ such that $x_S=y_S$ and $z=(x_A,y_B,x_S)\in\sP_{A,B,S}(D)$.
\end{proof}

\begin{proof}[Proof of Corollary~\ref{corol: enough_samples}]
If $n\ge\max_{C\in \sC(\tilde{G})}|C|+1$, then by Lemma~\ref{lemma: support of undirected is full dimensional}, the polytope $\sS_{\tilde{G},\sX}$ of \eqref{eq: support of MLE for undirected} is full dimensional in $\R^d$ almost surely. In addition, since $\tilde{G}$ is a (chordal) cover of $G$, then, $\sS_{\tilde{G}, \sX}\subseteq \sS_{G, \sX}$.
The statement follows because $\sS_{\tilde{G},\sX}= \sD^{(t-1)}_{\tilde{G}}$ by Proposition~\ref{prop: achieve support}.
\end{proof}

\begingroup
\renewcommand{\arraystretch}{1.5}
\begin{table}
\centering
\resizebox{\textwidth}{!}{
\begin{tabular}{|c||c|c|c|c|c|c|c|c|c|c|c|c|c|c|c|c|c|c|}
\hline
$x_1$&7&7&3&7&8&$\frac{15}{2}$&$\frac{15}{2}$&4&$\frac{46}{7}$&4&3&$\frac{65}{9}$&$\frac{46}{7}$&$\frac{841}{127}$&$\frac{50}{11}$&$\frac{230}{31}$&$\frac{490}{67}$&$\frac{1297}{359}$\\
$x_2$&2&9&7&9&0&2&$\frac{9}{2}$&$\frac{15}{2}$&2&$\frac{28}{5}$&7&7&2&$\frac{245}{127}$&$\frac{171}{22}$&$\frac{162}{31}$&$\frac{414}{67}$&$\frac{2205}{359}$\\
$x_3$&8&8&9&8&1&8&8&8&8&8&$\frac{58}{9}$&9&8&$\frac{1969}{254}$&$\frac{155}{22}$&$\frac{268}{31}$&$\frac{592}{67}$&$\frac{2074}{359}$\\
$x_4$&0&0&3&4&8&4&4&4&$\frac{9}{28}$&4&3&3&4&$\frac{36}{127}$&$\frac{50}{11}$&$\frac{104}{31}$&$\frac{168}{67}$&$\frac{912}{359}$\\
\hline
\end{tabular}
}
\vspace*{1mm}
\caption{The 18 vertices of the polytope $\sS_{G,\sX}$ of Example \ref{ex: convergent sequence polytopes 4-cycle}.}\label{table: vertices support MLE 4-cycle}
\end{table}
\endgroup

\begin{proof}[Proof of Lemma \ref{lem: bound log-likelihood}]
Consider a density $f \in \sF_G$. Define
\begin{equation}\label{eq: def m M x^*}
m\coloneqq \min_i \log f(X^{(i)})\,,\quad M\coloneqq\max_{x \in\R^d} \log f(x)\,,\quad x^* \in \mathrm{argmax}_{x \in\R^d}\log f(x)\,.
\end{equation}
Let $X_A\indep X_B|X_C$ be a conditional independence statement.
We show by induction on $i$ that for all $i\ge 0$ and $x \in \sD^{(i)}_G$
\[
\log f(x) \ge 2^im-(2^i-1)M\,.
\]
Since $\sD^{(0)}_G \coloneqq \conv(\sX)$, then $\log f(x) \ge m$ by concavity of $\log f$. So assume the claim holds for $\sD^{(i)}_G$ and let $x \in \sD^{(i+1)}_G$. Then there exist $(x_A,x_B,x_C),(y_A,y_B,x_C) \in \sD^{(i)}_G$ s.t. $x=(x_A,y_B,x_C)$ and
\begin{align*}
    \log f(x_A,y_B,x_C)&= \log f(x_A,x_B,x_C)+\log f(y_A,y_B,x_C)- \log f(y_A,x_B,x_C)\\
    &\ge2(2^im-(2^i-1)M)-M=2^{i+1}m-(2^{i+1}-1)M\,.
\end{align*}
Define
\begin{equation}\label{eq: def m'}
m'\coloneqq 2^{t-1}m-(2^{t-1}-1)M\,.
\end{equation}
We know that $\log f(x)\ge m'$ for all $x\in \sD^{(t-1)}_G$.
Whenever $M$ is sufficiently large, $M-m'>1$ or, otherwise, $f$ is not a density. Take $x$ in $\sD^{(t-1)}_G$ and consider the convex linear combination $[1/(M-m')]x+[1-1/(M-m')]x^*$ between $x$ and the point $x^*$ defined in \eqref{eq: def m M x^*}.
By concavity of $\log f$ and the previous part we get
\begin{align*}
\log f\left[\frac{1}{M-m'}x+\left(1-\frac{1}{M-m'}\right)x^*\right] &\ge \frac{1}{M-m'}\log f(x)+\left(1-\frac{1}{M-m'}\right)\log f(x^*)\\
&\ge \frac{m'}{M-m'}+M-\frac{M}{M-m'}=M-1\,.
\end{align*}
We recall that $\mu$ denotes the Lebesgue measure on $\R^d$. Then
\[
\mu(\{x\mid \log f(x)\ge M-1\})\ge\mu\left[\frac{1}{M-m'}\sD^{(t-1)}_G+\left(1-\frac{1}{M-m'}\right)x^*\right]=\frac{\mu(\sD^{(t-1)}_G)}{(M-m')^d}\,.
\]
Therefore,
\[
1 = \int_{\R^d}f\,d\mu \ge \int_{\sD^{(t-1)}_G}f\,d\mu \ge e^{(M-1)}\frac{\mu(\sD^{(t-1)}_G)}{(M-m')^d}\,.
\]
In particular, we obtain the inequality $(M-m')^d\ge e^{(M-1)}\mu(\sD^{(t-1)}_G)$, which implies that \begin{equation}\label{eq: inequality m'}
m'\le M-e^{\frac{M-1}{d}}\mu(\sD^{(t-1)}_G)^{\frac{1}{d}}\,.
\end{equation}
For sufficiently large $M$, we have the inequality
\begin{equation}\label{eq: inequality M}
M \le \frac{1}{2}e^{\frac{M-1}{d}}\mu(\sD^{(t-1)}_G)^{\frac{1}{d}}\,.
\end{equation}
Plugging in \eqref{eq: inequality M} into \eqref{eq: inequality m'} we get $m'\le -\frac{1}{2}e^{(M-1)/d}\mu(\sD^{(t-1)}_G)^{1/d}$.
By \eqref{eq: def m'}, this is equivalent to
\[
m\le \frac{1}{2^{t-1}}\left[(2^{t-1}-1)M -\frac{1}{2}e^{\frac{M-1}{d}}\mu(\sD^{(t-1)}_G)^{\frac{1}{d}}\right]\,.
\]
Suppose that the minimum $m$ introduced in \eqref{eq: def m M x^*} is attained at the sample point $X^{(j)}$. Hence the log-likelihood function satisfies
\begin{align}\label{eq: bound on M}
\begin{split}
\ell(f,\sX) &= \sum_{i\neq j} w_i\log f(X^{(i)}) + w_j\log f(X^{(j)})\\ &\le (1-w_j)M+\frac{w_j}{2^{t-1}}\left[(2^{t-1}-1)M-\frac{1}{2}e^{\frac{M-1}{d}}\mu(\sD^{(t-1)}_G)^{\frac{1}{d}}\right]\,.
\end{split}
\end{align}
Since $n\ge\max_{C\in \sC(\tilde{G})}\{|C|\}+1$, by Corollary \ref{corol: enough_samples} we have $\mu(\sD^{(t-1)}_{\tilde{G}})>0$. Since the set of conditional independence statements associated with the Markov property for $\tilde{G}$ is contained in the set of conditional independence statements associated with the Markov property for $G$, we have $\sD^{(t-1)}_{\tilde{G}} \subseteq \sD^{(t-1)}_{G}$. Hence $\mu(\sD^{(t-1)}_{G})>0$ as well. Therefore, from \eqref{eq: bound on M} it follows that $\ell(f,\sX) \to -\infty$ as $M \to +\infty$. Hence, for every $R\in\R$, there exists a constant $M_R$ such that if $\log f(x)>M_R$ for some $x \in \sS_{G,\sX}$, then $\ell(f,\sX)\le R$.
\end{proof}

\begin{proof}[Proof of Lemma \ref{lem: lower_bound_on_function}]
Let $M$ and $m$ be as in \eqref{eq: def m M x^*}.
By assumption we have $M\le M_L$. Let $j = \mathrm{argmin}_i \log f(X^{(i)})$ be the index of a sample point $X^{(j)}$ at which $\log f$ is minimal, in particular $\log f(X^{(j)})=m$. Then
\[
L<\ell(f,\sX)=\sum_{i\neq j} w_i\log f(X^{(i)})+w_j\log f(X^{(j)})\le (1-w_j)M_L+w_j m\,,
\]
which implies that
\[
m \ge \frac{L}{w_j}-\left(\frac{1-w_j}{w_j}\right)M_L\,.
\]
As shown in the proof of Lemma~\ref{lem: bound log-likelihood} we have $\log f(x) \ge 2^{t-1}m - (2^{t-1}-1)M$ for all $x\in \sD^{(t-1)}_G$. Thus 
\begin{align*}
\min_{\sD^{(t-1)}_G}\log f &\ge 2^{t-1}m - (2^{t-1}-1)M \\
&\ge 2^{t-1}\left[\frac{L}{w_j}-\left(\frac{1-w_j}{w_j}\right)M_L\right] - (2^{t-1}-1)M_L\\
&=\frac{2^{t-1}}{w_j}(L-M_L)+M_L\\
&\ge\frac{2^{t-1}}{\max_i w_i}(L-M_L)+M_L\,,
\end{align*}
that is the inequality \eqref{eq: minimum log f S_n}.\qedhere
\end{proof}

\begin{proof}[Proof of Lemma \ref{lem:bounded_components}]
Let $\sC(G)= \{C_1,\ldots,C_t\}$ and let $D_1=C_2\cup\cdots\cup C_t$.
We assume $t\ge 2$, as in the case $t=1$ there is nothing to prove.
Then $\phi(x)=\phi_{C_1}(x_{C_1})+\Phi_{D_1}(x_{D_1})$, where $\Phi_{D_1}(x_{D_1}) = \sum_{i=2}^t\phi_{C_i}(x_{C_i})$.
Let $B = C_1\cap D_1$ be the set of indices of the variables shared by $\phi_{C_1}$ and $\Phi_{D_1}$. Note that $B\neq\emptyset$ as we are restricting to connected graphs.
What is more, since $t\ge 2$ we have that at least one of the containments $B\subseteq C_1$ and $B\subseteq D_1$ is strict.
With probability $1$, the projection $\pi_B(\sS_{G,\sX})$ of $\sS_{G,\sX}$ onto the coordinates of $B$ is full-dimensional, hence there exists a $|B|$-dimensional simplex $\sS_B = \conv\{b_0, b_1,\ldots, b_{|B|}\}$ contained in $\pi_B(\sS_{G,\sX})$.

\smallskip
\noindent{\bf Claim 1:} We can add and subtract a linear function in $x_B$ from $\phi_{C_1}$ and $\Phi_{D_1}$, respectively, so that
\begin{equation}\label{eq: def m_i}
m_i\coloneqq
\begin{cases}
\min_{\R^{C_1\setminus B}} \phi_{C_1}(\cdot, b_i) & \text{if $B\subsetneq C_1$}\\
\phi_{C_1}(b_i) & \text{if $B=C_1$}
\end{cases}
\end{equation}
does not depend on the index $i\in\{0,\ldots,|B|\}$.

To see this, consider $\alpha=(\alpha_i)_{i\in B}\in\R^{|B|}$, $\beta\in\R$ and let $\ell_{\alpha,\beta}(x_B)=\alpha\cdot x_B + \beta=\sum_{i\in B}\alpha_i x_i + \beta$ be a linear function in $x_B$. The linear system
\begin{equation}\label{eq: system l(x_B)}
m_i+\ell_{\alpha,\beta}(b_i) = \frac{1}{|B|+1}\sum_{j=0}^{|B|}m_j\quad\forall\,i\in\{0,\ldots,|B|\}
\end{equation}
has $|B|+1$ equations in the unknowns $(\alpha,\beta)\in\R^{|B|+1}$.
Since the vectors $b_0, \ldots, b_{|B|}$ are the vertices of a simplex, then, the vectors $(b_0, 1), \ldots, (b_{|B|}, 1)$ are linearly independent in $\R^{|B|+1}$. Hence the system \eqref{eq: system l(x_B)} has a unique solution $(\tilde{\alpha},\tilde{\beta})\in\R^{|B|+1}$. Furthermore, for all $i\in\{0,\ldots,|B|\}$ we have
\[
\frac{1}{|B|+1}\sum_{j=0}^{|B|}m_j = m_i + \ell_{\tilde{\alpha},\tilde{\beta}}(b_i) = 
\begin{cases}
\min_{\R^{C_1\setminus B}} \left[\phi_{C_1}(\cdot, b_i) + \ell_{\tilde{\alpha},\tilde{\beta}}(b_i)\right]  & \text{if $B\subsetneq C_1$}\\
\phi_{C_1}(b_i) + \ell_{\tilde{\alpha},\tilde{\beta}}(b_i) & \text{if $B=C_1$}\,.
\end{cases}
\]
Thus, the new functions $\tilde{\phi}_{C_1}(x_{C_1})\coloneqq\phi_{C_1}(x_{C_1}) + \ell_{\tilde{\alpha},\tilde{\beta}}(x_B)$ and $\tilde{\Phi}_{D_1}(x_{D_1})\coloneqq\Phi_{D_1}(x_{D_1})-\ell_{\tilde{\alpha},\tilde{\beta}}(x_B)$ sum up to $h(x)$, are still concave, and $\tilde{\phi}_{C_1}$ satisfies the condition of Claim 1.

\smallskip
\noindent{\bf Claim 2:} Let
\begin{equation}\label{eq: def V1}
V_1\coloneqq
\begin{cases}
(\R^{C_1\setminus B}\times \sS_B)\cap \pi_{C_1}(\sS_{G,\sX}) & \text{if $B\subsetneq C_1$}\\
\sS_B & \text{if $B=C_1$}\,.
\end{cases}
\end{equation}
Then
\begin{equation}\label{eq: max min 4K}
\max_{V_1}\phi_{C_1}-\min_{V_1}\phi_{C_1}\le 4K\,.
\end{equation}

Assume first that $B\subsetneq C_1$. Pick points $y_0, \ldots, y_{|B|}\in\R^{C_1\setminus B}$ such that $(y_i, b_i)\in (\R^{C_1\setminus B}\times \{ b_i\})\cap \pi_{C_1}(\sS_{G,\sX})$, and
\[
\phi_{C_1}(y_i, b_i) = \min_{z\mid (z, b_i)\in V_1}\phi_{C_1}(z, b_i)\,.
\]
In other words, for every $i\in\{0,\ldots,|B|\}$ the point $(y_i,b_i)$ achieves the minimum $m_i$ introduced in \eqref{eq: def m_i}.
Now choose any point $(x, b)\in V_1$.
In particular $b = \alpha_0 b_0 + \alpha_1 b_1 + \cdots + \alpha_{|B|} b_{|B|}$ for some vector $(\alpha_i)_{i\in B}\in\R^{|B|}$ of nonnegative entries with $\alpha_0+\cdots+\alpha_{|B|}=1$.
See the construction in Figure \ref{fig: example V_1 1} where $G$ and $\sS_{G,\sX}$ are the ones of Example \ref{ex: shape of S_n}.

\definecolor{dcrutc}{rgb}{0.8627450980392157,0.0784313725490196,0.23529411764705882}
\definecolor{ffqqqq}{rgb}{1.,0.,0.}
\definecolor{ffqqff}{rgb}{1.,0.,1.}
\definecolor{qqffff}{rgb}{0.,1.,1.}

\begin{figure}[ht]
\centering
\begin{subfigure}[t]{.5\textwidth}
\centering
\begin{tikzpicture}[line cap=round,line join=round,>=triangle 45,x=1.0cm,y=1.0cm]
\begin{axis}[
x=1.0cm,y=1.0cm,
axis lines=middle,
xmin=0.0,
xmax=2.5,
ymin=0.0,
ymax=5.5,
xtick={0,1,2},
ytick={0,1,2,3,4,5},]
\clip(0.,0.) rectangle (2.5,5.5);
\fill[line width=1.pt,color=qqffff,fill=qqffff,fill opacity=0.1] (0.,1.) -- (1.,5.) -- (2.,3.) -- cycle;
\fill[line width=1.pt,color=ffqqff,fill=ffqqff,fill opacity=0.3] (0.75,4.) -- (1.5,4.) -- (2.,3.) -- (1.,2.) -- (0.25,2.) -- cycle;
\draw [line width=2.pt,color=dcrutc] (0.,2.)-- (0.,4.);
\draw [line width=1.pt,color=qqffff] (0.,1.)-- (1.,5.);
\draw [line width=1.pt,color=qqffff] (1.,5.)-- (2.,3.);
\draw [line width=1.pt,color=qqffff] (2.,3.)-- (0.,1.);
\draw [line width=1.pt,color=ffqqff] (0.75,4.)-- (1.5,4.);
\draw [line width=1.pt,color=ffqqff] (1.5,4.)-- (2.,3.);
\draw [line width=1.pt,color=ffqqff] (2.,3.)-- (1.,2.);
\draw [line width=1.pt,color=ffqqff] (1.,2.)-- (0.25,2.);
\draw [line width=1.pt,color=ffqqff] (0.25,2.)-- (0.75,4.);
\draw [line width=1.pt] (1.313024846996962,4.)-- (0.5,2.);
\draw [line width=0.3pt,dash pattern=on 2pt off 2pt] (0.5,2.)-- (0.5,0.);
\draw [line width=0.3pt,dash pattern=on 2pt off 2pt] (1.313024846996962,4.)-- (1.313024846996962,0.);
\draw [line width=0.3pt,dash pattern=on 2pt off 2pt] (0.,4.)-- (0.75,4.);
\draw [line width=0.3pt,dash pattern=on 2pt off 2pt] (0.,2.)-- (0.25,2.);
\begin{scriptsize}
\draw [fill=qqffff] (0.91,3.) circle (0.75pt);
\draw [fill=qqffff] (0.,1.) circle (0.5pt);
\draw [fill=ffqqff] (2.,3.) circle (0.5pt);
\draw [fill=qqffff] (1.,5.) circle (0.5pt);
\draw [fill=ffqqqq] (0.,2.) circle (0.5pt);
\draw [fill=ffqqqq] (0.,4.) circle (0.5pt);
\draw [fill=ffqqff] (1.5,4.) circle (0.5pt);
\draw [fill=ffqqff] (0.75,4.) circle (0.5pt);
\draw [fill=ffqqff] (1.,2.) circle (0.5pt);
\draw [fill=ffqqff] (0.25,2.) circle (0.5pt);
\draw [fill=black] (0.5,0.) circle (0.5pt);
\draw [color=black] (0.5,2.)-- ++(-2.0pt,-2.0pt) -- ++(4.0pt,4.0pt) ++(-4.0pt,0) -- ++(4.0pt,-4.0pt);
\draw [fill=black] (1.313024846996962,0.) circle (0.5pt);
\draw [color=black] (1.313024846996962,4.)-- ++(-2.0pt,-2.0pt) -- ++(4.0pt,4.0pt) ++(-4.0pt,0) -- ++(4.0pt,-4.0pt);
\draw[color=black] (2.4,0.2) node {\tiny{$x_1$}};
\draw[color=black] (0.25,5.4) node {\tiny{$x_2$}};
\draw[color=ffqqff] (1.6,3) node {\scriptsize{$V_1$}};
\draw[color=dcrutc] (0.275,3.5) node {\scriptsize{$\sS_B$}};
\draw[color=black] (0.5,1.75) node {\tiny{$(y_0,\!b_0\!)$}};
\draw[color=black] (1.3,4.25) node {\tiny{$(y_1,\!b_1\!)$}};
\draw[color=black] (1.15,2.8) node {\tiny{$(y,\!b)$}};
\draw[color=black] (0.3,0.85) node {\tiny{$X_{12}^{(1)}$}};
\draw[color=black] (1,5.25) node {\tiny{$X_{12}^{(2)}$}};
\draw[color=black] (2.2,2.8) node {\tiny{$X_{12}^{(3)}$}};
\end{scriptsize}
\end{axis}
\end{tikzpicture}
\caption{}
\label{fig: example V_1 1}
\end{subfigure}%
\hspace{-3cm}
\begin{subfigure}[t]{.5\textwidth}
\centering
\begin{tikzpicture}[line cap=round,line join=round,>=triangle 45,x=1.0cm,y=1.0cm]
\begin{axis}[
x=1.0cm,y=1.0cm,
axis lines=middle,
xmin=0.0,
xmax=2.5,
ymin=0.0,
ymax=5.5,
xtick={0,1,2},
ytick={0,1,2,3,4,5},]
\clip(0.,0.) rectangle (2.5,5.5);
\fill[line width=2.pt,color=qqffff,fill=qqffff,fill opacity=0.1] (0.,1.) -- (1.,5.) -- (2.,3.) -- cycle;
\fill[line width=2.pt,color=ffqqff,fill=ffqqff,fill opacity=0.3] (0.75,4.) -- (1.5,4.) -- (2.,3.) -- (1.,2.) -- (0.25,2.) -- cycle;
\draw [line width=2.pt,color=dcrutc] (0.,2.)-- (0.,4.);
\draw [line width=1.pt,color=qqffff] (0.,1.)-- (1.,5.);
\draw [line width=1.pt,color=qqffff] (1.,5.)-- (2.,3.);
\draw [line width=1.pt,color=qqffff] (2.,3.)-- (0.,1.);
\draw [line width=1.pt,color=ffqqff] (0.75,4.)-- (1.5,4.);
\draw [line width=1.pt,color=ffqqff] (1.5,4.)-- (2.,3.);
\draw [line width=1.pt,color=ffqqff] (2.,3.)-- (1.,2.);
\draw [line width=1.pt,color=ffqqff] (1.,2.)-- (0.25,2.);
\draw [line width=1.pt,color=ffqqff] (0.25,2.)-- (0.75,4.);
\draw [line width=0.3pt,dash pattern=on 2pt off 2pt] (0.,4.)-- (0.75,4.);
\draw [line width=0.3pt,dash pattern=on 2pt off 2pt] (0.,2.)-- (0.25,2.);
\draw [line width=1.pt] (1.0247503486013,4.413743919851489)-- (1.3304960287179113,2.8500731558265353);
\draw [line width=1.pt,color=orange] (1.3304960287179113,2.8500731558265353)-- (0.,1.);
\draw [line width=1.pt,color=teal] (1.3304960287179113,2.8500731558265353)-- (1.5902845922722233,2.5902845922722233);
\begin{scriptsize}
\draw [fill=qqffff] (0.,1.) circle (0.5pt);
\draw [fill=ffqqff] (2.,3.) circle (0.5pt);
\draw [fill=qqffff] (1.,5.) circle (0.5pt);
\draw [fill=ffqqqq] (0.,2.) circle (0.5pt);
\draw [fill=ffqqqq] (0.,4.) circle (0.5pt);
\draw [fill=ffqqff] (1.5,4.) circle (0.5pt);
\draw [fill=ffqqff] (0.75,4.) circle (0.5pt);
\draw [fill=ffqqff] (1.,2.) circle (0.5pt);
\draw [fill=ffqqff] (0.25,2.) circle (0.5pt);
\draw [fill=black] (1.3304960287179113,2.8500731558265353) circle (0.5pt);
\draw [fill=black] (1.0247503486013,4.413743919851489) circle (0.5pt);
\draw [fill=black] (1.105649997734273,4.) circle (0.5pt);
\draw [color=blue] (1.3304960287179115,2.8500731558265353)-- ++(-1.5pt,-1.5pt) -- ++(3.0pt,3.0pt) ++(-3.0pt,0) -- ++(3.0pt,-3.0pt);
\draw [fill=black] (1.5902845922722233,2.5902845922722233) circle (0.5pt);
\draw[color=black] (2.4,0.2) node {\tiny{$x_1$}};
\draw[color=black] (0.25,5.4) node {\tiny{$x_2$}};
\draw[color=ffqqff] (1.5,3.4) node {\scriptsize{$V_1$}};
\draw[color=dcrutc] (0.275,3.5) node {\scriptsize{$\sS_B$}};
\draw[color=black] (0.3,0.85) node {\tiny{$X_{12}^{(1)}$}};
\draw[color=black] (1,5.25) node {\tiny{$X_{12}^{(2)}$}};
\draw[color=black] (2.2,2.8) node {\tiny{$X_{12}^{(3)}$}};
\draw[color=blue] (0.875,2.9) node {\tiny{$(x^*\!\!,\!b^*\!)$}};
\draw[color=blue] (1.45,4.175) node {\tiny{$(x'\!\!,\!b'\!)$}};
\draw[color=blue] (1.35,4.5) node {\tiny{$(x\!,\!b)$}};
\draw[color=teal] (1.55,2.8) node {\tiny{$\gamma$}};
\draw[color=orange] (0.7,2.2) node {\tiny{$\Gamma$}};
\end{scriptsize}
\end{axis}
\end{tikzpicture}
\caption{}
\label{fig: example V_1 2}
\end{subfigure}
\caption{The polytope $V_1$ for the graph $G$ of Example \ref{ex: shape of S_n} and choosing $\sS_B=[2,4]$.}
\end{figure}
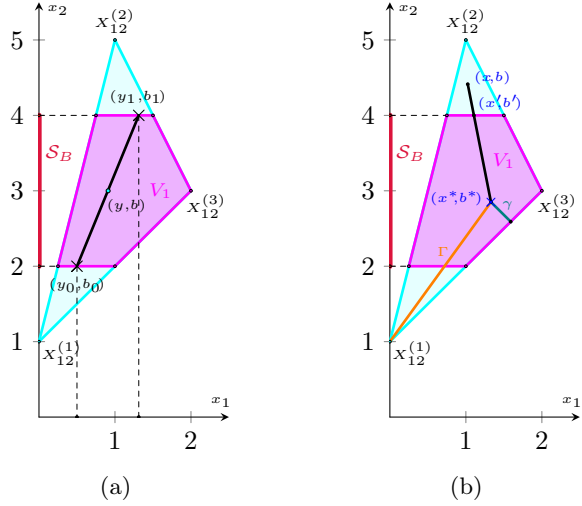

Define $y\coloneqq \alpha_0y_0+\cdots+\alpha_{|B|}y_{|B|}$, so that $(y, b) = \alpha_0(y_0, b_0) + \cdots + \alpha_{|B|}(y_{|B|}, b_{|B|})$.
We show that
\begin{itemize}
\item[$(i)$] $\phi_{C_1}(y, b) - \phi_{C_1}(y_i, b_i) \le 2K$ for all $i$, and
\item[$(ii)$] $|\phi_{C_1}(x, b) - \phi_{C_1}(y, b)|\le 2K$.
\end{itemize}
From $(i)$ and $(ii)$, it follows immediately that $|\phi_{C_1}(x,b)-\phi_{C_1}(y_i, b_i)|\le 4K$, proving Claim 2.

In order to prove $(i)$, define $T\coloneqq\phi_{C_1}(y, b)-\phi_{C_1}(y_i, b_i)$.
Note that $T$ does not depend on the index $i$ chosen because of the choice of $y_i$ and because of Claim 1.
If $B\subsetneq D_1$, then for every $i\in D_1\setminus B$, let $z_i\in\R^{D_1\setminus B}$ be such that $(y_i, b_i, z_i)\in \sS_{G,\sX}$ and define $z\coloneqq\alpha_0z_0+\cdots+\alpha_{|B|}z_{|B|}$. Otherwise $B=D_1$ and we proceed with $z=b$.
Then $(y, b, z)\in \sS_{G,\sX}$ as well. Note that, by concavity of $\Phi_{D_1}$, we have
\[
\Phi_{D_1}(b,z) - \sum_{i=0}^{|B|}\alpha_i\Phi_{D_1}(b_i, z_i) \ge 0\,.
\]
On the other hand, knowing that $\Phi_{D_1} = \phi - \phi_{C_1}$, we get
\begin{align*}
0 &\le \phi(y,b,z)-\phi_{C_1}(y,b) - \sum_{i=0}^{|B|}\alpha_i[\phi(y_i, b_i, z_i) - \phi_{C_1}(y_i, b_i)]\\
&= \phi(y,b,z) - \sum_{i=0}^{|B|}\alpha_i \phi(y_i, b_i, z_i) - \left(\phi_{C_1}(y,b) - \sum_{i=0}^{|B|}\alpha_i\phi_{C_1}(y_i, b_i)\right)\\
&\stackrel{\star}{=}\sum_{i=0}^{|B|}\alpha_i[\phi(y,b,z)-\phi(y_i, b_i, z_i)] - \sum_{i=0}^{|B|} \alpha_i[\phi_{C_1}(y,b) - \phi_{C_1}(y_i, b_i)] \le 2K - T\,,
\end{align*}
which shows $(i)$. Note that in the equality $\star$ we are using the property $\alpha_0+\cdots+\alpha_{|B|}=1$.
Now, to prove $(ii)$, if $B\subsetneq D_1$, then choose a point $z$ such that $(b, z)\in\pi_{D_1}(\sS_{G,\sX})$, otherwise $B=D_1$ and we proceed with $z=b$. First, we show that $(x, b, z) \in \sS_{G,\sX} = \bigcap_{i=1}^t \pi_{C_i}^{-1}\left(\pi_{C_i}(\conv(\sX))\right)$ or equivalently that $\pi_{C_i}(x,b,z) \in \pi_{C_i}(\conv(\sX))$ for every $i\in[t]$. We have $\pi_{C_1}(x,b,z) = (x,b) \in \pi_{C_1}(\sS_{G,\sX}) = \pi_{C_1}(\conv(\sX))$ because by choice $(x,b) \in \pi_{C_1}(\sS_{G,\sX})$. For $2 \le i \le t$, the containment $\pi_{C_i}(x,b,z) = \pi_{C_i}(b,z) \in \pi_{C_i}(\pi_{D_1}(\sS_{G,\sX})) = \pi_{C_i}(\sS_{G,\sX}) = \pi_{C_i}(\conv(\sX))$ holds because $(b, z)\in\pi_{D_1}(\sS_{G,\sX})$. By a similar argument $(y, b, z) \in \sS_{G,\sX}$.
Then,
\begin{align*}
|\phi_{C_1}(x,b) - \phi_{C_1}(y,b)| &= |\phi(x,b, z) - \Phi_{D_1}(b, z) - \phi(y,b,z) + \Phi_{D_1}(b, z)|\\
&= |\phi(x,b,z) - \phi(y,b,z)| \le 2K\,,
\end{align*}
which proves $(ii)$ and concludes the proof of Claim 2 for $B\subsetneq C_1$.

If instead $B=C_1$, then we repeat the proof with $y_i=b_i$ and choosing only $b\in V_1=\sS_B$. We recall that, under our assumption, if $B=C_1$ then necessarily $B\subsetneq D_1$.

\smallskip
\noindent{\bf Claim 3:} Let
\begin{equation}\label{eq: def W1}
W_1\coloneqq
\begin{cases}
(\sS_B\times \R^{D_1\setminus B})\cap\pi_{D_1}(\sS_{G,\sX}) & \text{if $B\subsetneq D_1$}\\
\sS_B & \text{if $B=D_1$}\,.
\end{cases}
\end{equation}
Then
\begin{equation}\label{eq: max min 6K}
\max_{W_1}\Phi_{D_1} - \min_{W_1}\Phi_{D_1} \le 6K\,.
\end{equation}
Assume $B\subsetneq D_1$. Let $(b', z'), (b'', z'')\in W_1$. If $B\subsetneq C_1$, then pick $x',x''\in\R^{C_1\setminus B}$ such that both $(x',b',z')$ and $(x'',b'',z'')$ are points of $\sS_{G,\sX}$. Otherwise $B=C_1$ and we proceed with $(b',z')$ and $(b'',z'')$.
Then $(x',b'), (x'', b'')\in V_1$, and combining the two previous claims we get
\[
\begin{gathered}
|\Phi_{D_1}(b', z') - \Phi_{D_1}(b'', z'')| = |\phi(x',b',z') - \phi(x'',b'',z'') - \phi_{C_1}(x',b') + \phi_{C_1}(x'',b'')| \le\\
\le |\phi(x',b',z') - \phi(x'',b'',z'')| + |\phi_{C_1}(x',b') - \phi_{C_1}(x'',b'')|\le 2K+4K=6K\,.
\end{gathered}
\]
If $B=D_1$, then repeat the argument with $b'=z'$ and $b''=z''$.

For the rest of the proof, we assume that $B\subsetneq C_1$ and $B\subsetneq D_1$, otherwise the proof simplifies as in the previous steps and we omit it for brevity.

\smallskip
\noindent{\bf Claim 4:} Over the set $\pi_{C_1}(\sS_{G,\sX})$,
\begin{equation}\label{eq: max min 2thetaK}
\max_{\pi_{C_1}(\sS_{G,\sX})}\phi_{C_1}-\min_{\pi_{C_1}(\sS_{G,\sX})}\phi_{C_1}\le \theta K\,,
\end{equation}
where $\theta$ only depends on the graph $G$ and the sample $\sX$.

The basic idea behind the proof of Claim 4 is to use concavity of $\phi_{C_1}$ and $\Phi_{D_1}$ and the fact that the two functions behave nicely (in the sense of Claims 2 and 3) over the simplex $\sS_B$. Recall the definition of $V_1$ in \eqref{eq: def V1}. Fix any point $(x^*,b^*)\in\mathrm{int}(V_1)$. We can do this because $\mathrm{int}(V_1)\neq 0$ with probability $1$.
Define the quantities
\begin{align}\label{eq: def gammas}
\begin{split}
\gamma &\coloneqq \dist((x^*,b^*), \partial(V_1))=\min_{(x,b)\in\partial(V_1)}\|(x,b)-(x^*,b^*)\|\,,\\
\Gamma &\coloneqq \max_{(x,b)\in\pi_{C_1}(\sS_{G,\sX})}\|(x,b)-(x^*,b^*)\|\,.
\end{split}
\end{align}
Pick a point $(x,b)\in\pi_{C_1}(\sS_{G,\sX})$. If $(x,b)\in V_1$, then we already know by Claim 2 that
\begin{equation}\label{eq:case1}
\phi_{C_1}(x,b)\le\phi_{C_1}(x^*,b^*) + 4K\,.
\end{equation}
Otherwise, $(x,b)\in\pi_{C_1}(\sS_{G,\sX})\setminus V_1$. Therefore, the line segment between $(x,b)$ and $(x^*,b^*)$ intersects the boundary $\partial(V_1)$ at a point, call it $(x',b')$, and $(x',b')$ is in between $(x,b)$ and $(x^*,b^*)$ on this line segment. See Figure \ref{fig: example V_1 2} for an example.
In particular, we have $(x',b')=\eta_1(x,b)+\eta_2(x^*,b^*)$, where the nonnegative numbers $\eta_1=\|(x^*,b^*)-(x',b')\|/\|(x,b)-(x^*,b^*)\|$ and $\eta_2=\|(x,b)-(x',b')\|/\|(x,b)-(x^*,b^*)\|$ sum to $1$.
What is more, observe that
\begin{align*}
\|(x^*, b^*) - (x',b')\| &\ge \gamma\\
\|(x,b) - (x', b')\| &= \|(x,b) - (x^*,b^*)\|-\|(x^*, b^*) - (x',b')\|\\
&\le \Gamma-\|(x^*, b^*) - (y',b')\|\\
&\le\Gamma-\gamma\,.
\end{align*}
Putting together all this information, and using concavity of $\phi_{C_1}$, we have that
\[
\phi_{C_1}(x',b') \ge \eta_1\,\phi_{C_1}(x,b)+\eta_2\,\phi_{C_1}(x^*,b^*)\,,
\]
or equivalently,
\begin{equation}\label{eq: inequality bound phi_C1}
\phi_{C_1}(x,b) - \phi_{C_1}(x',b')\le \frac{\|(x,b)-(x',b')\|}{\|(x^*,b^*)-(x',b')\|}[\phi_{C_1}(x',b') - \phi_{C_1}(x^*,b^*)]\le 4K\frac{\Gamma-\gamma}{\gamma}\,.
\end{equation}
Combining \eqref{eq: inequality bound phi_C1} with the inequality \eqref{eq:case1} for $(x',b')\in V_1$, we have that
\begin{equation}\label{eq:case3}
\phi_{C_1}(x,b) \le \phi_{C_1}(x',b') + 4K\frac{\Gamma-\gamma}{\gamma} \le \phi_{C_1}(x^*,b^*) + 4K\frac{\Gamma}{\gamma}\,.
\end{equation}
Summing up, in \eqref{eq:case1} and \eqref{eq:case3} we have computed upper bounds of $\phi_{C_1}$ over $V_1$ and $\pi_{C_1}(\sS_{G,\sX})\setminus V_1$, respectively. Since $\Gamma \ge \gamma$, we conclude that
\begin{equation}\label{eq:bound_above}
\max_{\pi_{C_1}(\sS_{G,\sX})}\phi_{C_1} \le \phi_{C_1}(x^*,b^*)+4K\frac{\Gamma}{\gamma}\,.
\end{equation}
Next, we show that $\phi_{C_1}(x,b)$ is also bounded below.
To do that, we perform the same argument as above but for the function $\Phi_{D_1}$, showing that $\Phi_{D_1}(b,z)$ is bounded above. This yields a lower bound on $\phi_{C_1}(x,b)$.
Consider the set $W_1$ introduced in \eqref{eq: def W1}.
Let $(b_*,z_*)\in\mathrm{int}(W_1)$.
We define the constants $\delta$ and $\Delta$ similarly as in \eqref{eq: def gammas}:
\begin{align}\label{eq: def deltas}
\begin{split}
\delta &\coloneqq \dist((b_*,z_*),\partial(W_1))\\
\Delta &\coloneqq \max_{(b,z)\in\pi_{D_1}(\sS_{G,\sX})}\|(b,z), (b_*,z_*)\|\,.
\end{split}
\end{align}
With a similar proof of inequality \eqref{eq:bound_above} and applying the upper bound \eqref{eq: max min 6K}, one verifies that
\begin{equation}\label{eq: upper_bound_Phi}
\max_{\pi_{D_1}(\sS_{G,\sX})}\Phi_{D_1} \le \Phi_{D_1}(b_*,z_*) +6K\frac\Delta\delta\,.
\end{equation}
Now, let $(x,b,z)\in \sS_{G,\sX}$ and consider the point $(b_*,z_*)$ used to define the constants $\Delta$ and $\delta$ in \eqref{eq: def deltas}. Let $x_*\in\R^{C_1\setminus B}$ be such that $(x_*,b_*,z_*)\in \sS_{G,\sX}$. Then
\begin{align*}
\phi_{C_1}(x,b) &= \phi(x,b,z) - \Phi_{D_1}(b,z)\\
&\ge \phi(x,b,z) - \Phi_{D_1}(b_*,z_*) -6K\frac\Delta\delta\quad\text{(by \eqref{eq: upper_bound_Phi})}\\
&= \phi(x,b,z) - \phi(x_*, b_*,z_*) + \phi_{C_1}(x_*,b_*)-6K\frac\Delta\delta\,.
\end{align*}
Knowing that $\mathrm{Im}(\phi)\subseteq[-K,K]$,
\[
\phi_{C_1}(x_*,b_*)- \phi_{C_1}(x,b)\le \phi(x_*, b_*,z_*) - \phi(x,b,z) +6K\frac\Delta\delta \le 2K + 6K\frac\Delta\delta\,.
\]
This yields the lower bound
\begin{equation}\label{eq:lower_bound}
\min_{\pi_{C_1}(\sS_{G,\sX})}\phi_{C_1}\ge \phi_{C_1}(x_*,b_*) - 2K - 6K\frac\Delta\delta\,.
\end{equation}
Therefore, combining \eqref{eq:bound_above} and \eqref{eq:lower_bound}, we get that
\begin{align*}
\max_{\pi_{C_1}(\sS_{G,\sX})}\phi_{C_1}-\min_{\pi_{C_1}(\sS_{G,\sX})}\phi_{C_1} &\le \phi_{C_1}(x^*,b^*) - \phi_{C_1}(x_*,b_*) + 2K\left(1 + 2\frac{\Gamma}{\gamma} + 3\frac\Delta\delta\right)\\
&\le \max_{V_1}\phi_{C_1}-\min_{V_1}\phi_{C_1} + 2K\left(1 + 2\frac{\Gamma}{\gamma} + 3\frac\Delta\delta\right)\\
&\le 2K\left(3 + 2\frac{\Gamma}{\gamma} + 3\frac\Delta\delta\right)\,,
\end{align*}
where in the last inequality we applied Claim 2 since both $(x^*,b^*)$ and $(x_*,b_*)$ belong to $V_1$.
This completes the proof of Claim 4 setting $\theta\coloneqq 2(3 + 2\Gamma/\gamma + 3\Delta/\delta)$. Note that $\theta$ depends only on the geometry of $\sS_{G,\sX}$, that is, only on the graph $G$ and the sample $\sX$.

\smallskip
\noindent{\bf Claim 5:} We have
\begin{equation}\label{eq: max min Phi}
\max_{\pi_{D_1}(\sS_{G,\sX})}\Phi_{D_1}-\min_{\pi_{D_1}(\sS_{G,\sX})}\Phi_{D_1}\le (2+\theta)K\,.
\end{equation}

Consider a point $(x,b,z)\in \sS_{G,\sX}$, where $(x,b)\in\pi_{C_1}(\sS_{G,\sX})$ and $(b,z)\in\pi_{D_1}(\sS_{G,\sX})$. Knowing that $\Phi_{D_1}(b,z)=\phi(x,b,z)-\phi_{C_1}(x,b)$, we have that
\[
-K-\max_{\pi_{C_1}(\sS_{G,\sX})}\phi_{C_1} \le \Phi_{D_1}(b,z) \le K-\min_{\pi_{C_1}(\sS_{G,\sX})}\phi_{C_1}\,.
\]
Therefore
\[
\max_{\pi_{D_1}(\sS_{G,\sX})}\Phi_{D_1}-\min_{\pi_{D_1}(\sS_{G,\sX})}\Phi_{D_1} \le 2K + \max_{\pi_{C_1}(\sS_{G,\sX})}\phi_{C_1}-\min_{\pi_{C_1}(\sS_{G,\sX})}\phi_{C_1} \le (2 + \theta)K\,,
\]
where in the last inequality we plugged in the estimate \eqref{eq: max min 2thetaK}.

\smallskip
\noindent{\bf Claim 6:} We can adjust $\phi_{C_1}$ and $\Phi_{D_1}$ by adding and subtracting constants so that they take values in $[-(4+\theta)K, (4+\theta)K]$ on their respective domains.

First, we add and subtract a constant $M$ so that, in addition to the requirements in Claims 4 and 5, we also have that $\max_{\pi_{C_1}(\sS_{G,\sX})}\phi_{C_1} = \max_{\pi_{D_1}(\sS_{G,\sX})}\Phi_{D_1}$.
More precisely, define $M\coloneqq(\max_{\pi_{C_1}(\sS_{G,\sX})}\phi_{C_1}-\max_{\pi_{D_1}(\sS_{G,\sX})}\Phi_{D_1})/2$. Then the functions $\tilde{\phi}_{C_1}\coloneqq\phi_{C_1}-M$ and $\tilde{\Phi}_{D_1}\coloneqq\Phi_{D_1}+M$ add up to $\phi$ and their common maximum value is $(\max_{\pi_{C_1}(\sS_{G,\sX})}\phi_{C_1}+\max_{\pi_{D_1}(\sS_{G,\sX})}\Phi_{D_1})/2$.
With a little abuse of notation, we redefine $\phi_{C_1}$ and $\Phi_{D_1}$ as the functions $\tilde{\phi}_{C_1}$ and $\tilde{\Phi}_{D_1}$ just computed. Note that this modification does not affect the parameter $\theta$ computed in Claim 4.

\noindent Now consider a point $(x,b,z)\in \sS_{G,\sX}$, where $(x,b)\in\pi_{C_1}(\sS_{G,\sX})$ and $(b,z)\in\pi_{D_1}(\sS_{G,\sX})$. We have
\[
\phi_{C_1}(x,b) = \phi(x,b,z)-\Phi_{D_1}(b,z) \le \phi(x,b,z)+\max_{\pi_{D_1}(\sS_{G,\sX})}\Phi_{D_1}-\min_{\pi_{D_1}(\sS_{G,\sX})}\Phi_{D_1} \le (3+\theta)K\,,
\]
where in the last inequality we applied \eqref{eq: max min Phi}. Hence
\begin{equation}\label{eq: common max phi Phi}
\max_{\pi_{C_1}(\sS_{G,\sX})}\phi_{C_1} = \max_{\pi_{D_1}(\sS_{G,\sX})}\Phi_{D_1} \le (3+\theta)K\,.
\end{equation}
Moreover,
\begin{align}\label{eq: common min phi Phi}
\begin{split}
\phi_{C_1}(x,b) &= \phi(x,b,z) - \Phi_{D_1}(b,z) \ge -K-\max_{\pi_{D_1}(\sS_{G,\sX})}\Phi_{D_1} \ge -(4+\theta)K\\
\Phi_{D_1}(b,z) &= \phi(x,b,z) - \phi_{C_1}(b,z) \ge -K-\max_{\pi_{C_1}(\sS_{G,\sX})}\phi_{C_1} \ge -(4+\theta)K\,.
\end{split}
\end{align}
Therefore, combining \eqref{eq: common max phi Phi} and \eqref{eq: common min phi Phi}, we get that both $\phi_{C_1}$ and $\Phi_{D_1}$ take values in the interval $[-(4+\theta)K, (4+\theta)K]$. This concludes the proof of Claim 6.

Next we proceed by replacing $\phi$ with $\Phi_{D_1}$ and $K$ with $K'\coloneqq(4+\theta)K$. We repeat Claim 1 up to Claim 6 after setting $D_2=C_3\cup\cdots\cup C_t$ and writing $\Phi_{D_1}(x_{D_1})=\phi_{C_2}(x_{C_2})+\Phi_{D_2}(x_{D_2})$, where $\Phi_{D_2}(x_{D_2})\coloneqq\sum_{i=3}^t\phi_{C_i}(x_{C_i})$. The new constant $\theta_2$ computed in Claim 4 depends only on the geometry of $\pi_{D_1}(\sS_{G,\sX})$. At the end of the second iteration, the possible modifications of $\phi_{C_2}(x_{C_2})$ and $\Phi_{D_2}(x_{D_2})$ take values in $[-(4+\theta_2)K', (4+\theta_2)K']=[-(4+\theta_2)(4+\theta_1)K, (4+\theta_2)(4+\theta_1)K]$, where $\theta_1\coloneqq\theta$. After the $(t-1)$-th step each modified summand $\phi_{C_i}$ takes values in $[-\rho K,\rho K]$, where $\rho\coloneqq\prod_{i=1}^{t-1}(4+\theta_i)$ and each constant $\theta_i$ was computed in Claim 4 at the $i$-th step of iteration.
This concludes the proof.
\end{proof}

\begin{proof}[Proof of Proposition~\ref{prop: subdivision_intersection}]
We start by showing that the right-hand side of \eqref{equation:intersection-of-tent-functions-on-cliques} is a subdivision of $\sS_{G,\sX}$. Assume that $\sC(G)=\{C_1,\ldots,C_t\}$ and that each $\sigma_i\coloneqq\sigma_{C_i}=\{Q_{i,1},\ldots,Q_{i,r_i}\}$ for all $i\in[t]$. For every $i\in[t]$ and $j_i\in[r_i]$, we call
\[
P_{j_1\cdots j_t}\coloneqq \bigcap_{i=1}^t\pi_{C_i}^{-1}(Q_{i,j_i})\,.
\]
First, we have that
\[
\begin{gathered}
\bigcup_{j_1,\ldots,j_t}P_{j_1\cdots j_t} = \bigcup_{j_1,\ldots,j_t}\bigcap_{i=1}^t\pi_{C_i}^{-1}(Q_{i,j_i}) = \bigcap_{i=1}^t\bigcup_{j_i=1}^{r_i}\pi_{C_i}^{-1}(Q_{i,j_i}) = \bigcap_{i=1}^t\pi_{C_i}^{-1}\left(\bigcup_{j_i=1}^{r_i}Q_{i,j_i}\right) = \\
=\bigcap_{i=1}^t\pi_{C_i}^{-1}(\pi_{C_i}(\sS_{G,\sX})) =
\bigcap_{i=1}^t\pi_{C_i}^{-1}(\pi_{C_i}(\conv(\sX))) =
\sS_{G,\sX}\,.
\end{gathered}
\]
Secondly, we need to show that, given two cells $P_{j_1\cdots j_t}$ and $P_{k_1\cdots k_t}$, then their intersection is a face of both of them. Note that each face of $P_{j_1\cdots j_t}$ can be written as
\begin{equation}\label{eq: face subdivision}
\text{face of $P_{j_1\cdots j_t}$}=\bigcap_{i=1}^t\pi_{C_i}^{-1}(\text{face of $Q_{i,j_i}$})\,.
\end{equation}
Then we have
\[
\begin{gathered}
P_{j_1\cdots j_t}\cap P_{k_1\cdots k_t} = \bigcap_{i=1}^t\pi_{C_i}^{-1}(Q_{i,j_i}) \cap \bigcap_{i=1}^t\pi_{C_i}^{-1}(Q_{i,k_i}) =\\ = \bigcap_{i=1}^t\left(\pi_{C_i}^{-1}(Q_{i,j_i})\cap \pi_{C_i}^{-1}(Q_{i,k_i})\right) = \bigcap_{i=1}^t\pi_{C_i}^{-1}(Q_{i,j_i}\cap Q_{i,k_i})\,.
\end{gathered}
\]
Since $\sigma_i$ is a subdivision, then $Q_{i,j_i}\cap Q_{i,k_i}$ is a face of both $Q_{i,j_i}$ and $Q_{i,k_i}$. The conclusion follows from applying \eqref{eq: face subdivision}.

Now fix a clique $C\in \sC(G)$. The tent function $h_{\sX_C,y_C}$ is a piecewise linear function supported on $\conv(\sX_C)$. In particular, for each $x_C\in\conv(\sX_C)$ the value $h_{\sX_C,y_C}(x_C)$ is defined by taking the minimum over a finite number of linear functions. The regions where we choose a specific linear function are exactly the maximal cells of the regular subdivision $\sigma_C$. This set of maximal cells is indexed by a finite set $I_C$. In short, 
\begin{equation} \label{eqn:tent-funtion-as-max}
h_{\sX_C,y_C}(x_C) = \min_{i_C \in I_C} \left\{a^{(C)}_{i_C} x_C + b^{(C)}_{i_C} \right\}\quad\forall\,x_C \in\conv(\sX_C)\,. 
\end{equation}
Hence the sum $h=\sum_{C \in \sC(G)} h_{\sX_C,y_C}$ is equal to 
\begin{equation} \label{eqn:sums-of-tent-funtions-as-max}
\sum_{C \in \sC(G)} \min_{i_C \in I_C} \left\{ a^{(C)}_{i_C} x_C + b^{(C)}_{i_C}  \right\} = \min_{i_C \in I_C \,\, \forall  C \in \sC(G)} \left\{ \sum_{C \in \sC(G)} \left(a^{(C)}_{i_C} x_C + b^{(C)}_{i_C} \right) \right\}.
\end{equation}
The minimum at the right-hand side of \eqref{eqn:sums-of-tent-funtions-as-max} evaluated at a point $z \in \sS_{G,\sX}$ is achieved by a particular $\sum_{C \in \sC(G)} \left(a^{(C)}_{i_C} z_C + b^{(C)}_{i_C} \right)$ if and only if each $a^{(C)}_{i_C} z + b^{(C)}_{i_C}$ achieves the minimum in~\eqref{eqn:tent-funtion-as-max} evaluated at $z$. 
A point $z \in \sS_{G,\sX}$ is in the interior of a $d$-dimensional polytope of the form~\eqref{equation:intersection-of-tent-functions-on-cliques} if and only if for all $C \in \sC(G)$, the projection $z_C$ is in the interior of $\sigma_C$. This is equivalent to the condition that for each $C \in \sC(G)$, the minimum in~\eqref{eqn:tent-funtion-as-max} evaluated at $z_C$ is achieved exactly once, which is again equivalent to the condition that the minimum in~\eqref{eqn:sums-of-tent-funtions-as-max} is achieved exactly once.  This holds if and only if $z$ is in the interior of the subdivision induced by the tent function $h$ in~\eqref{eqn:sums-of-tent-funtions-as-max}. Since the interior points of the two subdivisions are equal, the subdivisions must be equal as well.
\end{proof}

\begin{proof}[Proof of Corollary \ref{cor:faces_of_the_subdivision}]
Let $z \in \sS_{G,\sX}$ and let $S \in \sigma$ be the minimal face of $\sigma$ that contains $z$.
By Proposition~\ref{prop: subdivision_intersection}, we can write $S = \bigcap_{C\in\sC(G)} \pi^{-1}_C(S_C)$ where $S_C \in \sigma_C$. Then 
\begin{equation}\label{eq: chain ineq k_z}
k_z = \codim(S) \le \sum_{C \in \sC(G)} \codim (\pi^{-1}_C(S_C)) = \sum_{C \in \sC(G)} \codim (S_C) \le \sum_{C \in \sC(G)}k_{z,C}\,.
\end{equation}
The second equality in \eqref{eq: chain ineq k_z} holds since $\pi_C$ is a projection.
The second inequality in \eqref{eq: chain ineq k_z} follows since $\codim(S_C)\le k_{z,C}$ by definition of $k_{z,C}$.\qedhere
\end{proof}

\begin{proof}[Proof of Corollary~\ref{corollary:tent_poles}]
Being a tent pole is the same as being a vertex in the subdivision $\sigma$, or equivalently belonging to a face of codimension $d$.
\end{proof}

\section{Data samples used in Section~\ref{sec:optimization}}\label{app: sec optimization}

\begin{table}[H]
\centering
\resizebox{\textwidth}{!}{
\begin{tabular}{|rr|}
\hline
$x_1$ & $x_2$ \\
\hline
-3.458 & -0.490 \\ 
1.322 & -0.376 \\ 
-1.573 & 1.868 \\ 
2.192 & 0.152 \\ 
0.147 & -1.039 \\ 
-0.301 & -0.959 \\ 
-1.185 & 1.159 \\ 
1.124 & -0.239 \\ 
0.383 & -0.152 \\ 
-0.231 & 0.450 \\ 
\hline
\end{tabular}
\begin{tabular}{|rr|}
\hline
$x_1$ & $x_2$ \\
\hline
0.948 & 0.701 \\ 
0.095 & -1.323 \\ 
-0.654 & 0.171 \\ 
0.886 & 0.215 \\ 
1.187 & 1.055 \\ 
1.502 & -0.088 \\ 
0.139 & 1.945 \\ 
0.876 & 0.792 \\ 
-2.246 & 1.446 \\ 
-0.231 & 0.145 \\ 
\hline
\end{tabular}
\begin{tabular}{|rr|}
\hline
$x_1$ & $x_2$ \\
\hline
1.175 & 0.380 \\ 
1.702 & 0.736 \\ 
0.546 & 0.250 \\ 
-0.857 & -1.406 \\ 
0.093 & 0.048 \\ 
1.659 & -0.268 \\ 
-1.806 & 1.660 \\ 
-0.625 & -1.604 \\ 
0.281 & 1.461 \\ 
1.418 & 0.039 \\ 
\hline
\end{tabular}
\begin{tabular}{|rr|}
\hline
$x_1$ & $x_2$ \\
\hline
0.414 & -1.158 \\ 
-0.222 & -1.161 \\ 
-1.222 & -0.164 \\ 
-1.209 & 0.292 \\ 
0.130 & 0.465 \\ 
-0.076 & 0.210 \\ 
-0.592 & 0.864 \\ 
0.605 & -0.426 \\ 
-0.502 & 0.116 \\ 
0.363 & 0.629 \\ 
\hline
\end{tabular}
\begin{tabular}{|rr|}
\hline
$x_1$ & $x_2$ \\
\hline
0.528 & -1.890 \\ 
-0.368 & -0.396 \\ 
0.793 & 0.465 \\ 
0.062 & -0.517 \\ 
2.381 & 2.129 \\ 
0.036 & -0.106 \\ 
-0.912 & 1.159 \\ 
2.530 & 0.598 \\ 
-0.756 & 0.036 \\ 
0.036 & -0.585 \\ 
\hline
\end{tabular}
}
\vspace*{1mm}
\caption{The sample $\sX$ used for the ML estimation of Example \ref{ex: MLE indModel}.}\label{table: sample 50 points indModel}
\end{table}

\begingroup
\renewcommand{\arraystretch}{1.3}
\begin{table}[htbp]
\centering
\resizebox{\textwidth}{!}{
\begin{tabular}{|r|r|r|r|r|r|r|r|r|r|r|r|r|r|r|}
\hline
$n$ & 10 & 20 & 30 & 40 & 50 & 60 & 70 & 80 & 90 & 100 & 200 & 300 & 400 & 500\\ 
\hline
& 6.6$\times 10^{-2}$ & 1.6$\times 10^{-2}$ & 1.6$\times 10^{-2}$ & 9.8$\times 10^{-3}$ & 8.4$\times 10^{-3}$ & 6.0$\times 10^{-3}$ & 6.3$\times 10^{-3}$ & 4.4$\times 10^{-3}$ & 3.6$\times 10^{-3}$ & 4.6$\times 10^{-3}$ & 2.6$\times 10^{-3}$ & 2.0$\times 10^{-3}$ & 2.0$\times 10^{-3}$ & 1.4$\times 10^{-3}$ \\ 
& 4.2$\times 10^{-2}$ & 1.1$\times 10^{-2}$ & 1.1$\times 10^{-2}$ & 3.7$\times 10^{-3}$ & 4.3$\times 10^{-3}$ & 2.8$\times 10^{-3}$ & 4.2$\times 10^{-3}$ & 2.1$\times 10^{-3}$ & 8.5$\times 10^{-4}$ & 3.4$\times 10^{-3}$ & 9.3$\times 10^{-4}$ & 5.4$\times 10^{-4}$ & 1.1$\times 10^{-3}$ & 8.1$\times 10^{-4}$ \\
\hline
& 1.3$\times 10^{-1}$ & 3.4$\times 10^{-2}$ & 2.5$\times 10^{-2}$ & 1.5$\times 10^{-2}$ & 1.3$\times 10^{-2}$ & 1.0$\times 10^{-2}$ & 8.9$\times 10^{-3}$ & 6.7$\times 10^{-3}$ & 6.1$\times 10^{-3}$ & 6.5$\times 10^{-3}$ & 3.1$\times 10^{-3}$ & 2.7$\times 10^{-3}$ & 1.9$\times 10^{-3}$ & 1.5$\times 10^{-3}$ \\ 
& 6.9$\times 10^{-2}$ & 2.0$\times 10^{-2}$ & 1.3$\times 10^{-2}$ & 3.8$\times 10^{-3}$ & 5.0$\times 10^{-3}$ & 4.6$\times 10^{-3}$ & 4.6$\times 10^{-3}$ & 1.5$\times 10^{-3}$ & 2.1$\times 10^{-3}$ & 4.4$\times 10^{-3}$ & 1.2$\times 10^{-3}$ & 7.9$\times 10^{-4}$ & 8.6$\times 10^{-4}$ & 5.7$\times 10^{-4}$ \\ 
\hline
\end{tabular}
}
\vspace*{1mm}
\caption{The table refers to Example \ref{ex: MLE indModel}. The first row shows the mean $\ell_2$-distances between the log-concave graphical MLE and the true 2-dimensional Gaussian distribution for some values of $n$ using our method for $E=\emptyset$ over 10 trials. The second row displays the corresponding standard deviations. Similarly the last two rows show mean $\ell_2$-distances and standard deviations using \texttt{LogConcDEAD} for $E=\{\{1,2\}\}$.}
\label{table: mean variance distances}
\end{table}
\endgroup

\begin{table}[H]
\centering
\resizebox{\textwidth}{!}{
\begin{tabular}{|rrr|}
\hline
$x_1$ & $x_2$ & $x_3$\\
\hline
5.906 & 1.743 & -4.934 \\ 
-4.672 & -1.749 & -7.213 \\ 
8.622 & 1.490 & 8.641 \\ 
-1.097 & 4.886 & 6.906 \\ 
-0.322 & 0.615 & 4.244 \\ 
-0.641 & 0.411 & 6.558 \\ 
-2.226 & -2.570 & -2.310 \\ 
-4.278 & 9.503 & 7.090 \\ 
-1.950 & -2.976 & 6.474 \\ 
-0.405 & 5.036 & 3.457 \\ 
-7.097 & -7.087 & -4.342 \\ 
-3.970 & -3.370 & -4.219 \\ 
-1.833 & 8.053 & 1.566 \\ 
1.178 & 2.087 & -2.304 \\ 
2.366 & -7.479 & -1.998 \\ 
\hline
\end{tabular}
\begin{tabular}{|rrr|}
\hline
$x_1$ & $x_2$ & $x_3$\\
\hline
-2.638 & -3.376 & -2.721 \\ 
13.219 & 1.696 & -5.774 \\ 
-1.332 & 1.882 & 0.523 \\ 
-0.629 & 5.117 & 3.900 \\ 
4.604 & 13.679 & 2.010 \\ 
-3.660 & 6.569 & 3.188 \\ 
0.254 & 1.463 & 9.136 \\ 
-10.922 & -7.743 & -2.662 \\ 
7.746 & -4.914 & 4.842 \\ 
3.137 & -0.369 & 2.390 \\ 
-2.455 & -1.277 & 10.255 \\ 
0.209 & 2.817 & -0.091 \\ 
5.635 & 11.880 & -1.510 \\ 
-1.752 & -8.478 & -3.901 \\ 
-0.012 & 2.006 & -2.627 \\ 
\hline
\end{tabular}
\begin{tabular}{|rrr|}
\hline
$x_1$ & $x_2$ & $x_3$\\
\hline
3.914 & -1.609 & 2.219 \\ 
0.902 & 5.642 & -9.180 \\ 
-8.480 & -1.247 & -5.709 \\ 
-3.694 & -0.043 & 3.064 \\ 
0.359 & -1.992 & 0.673 \\ 
-2.979 & -1.343 & -0.596 \\ 
3.537 & -4.447 & -6.351 \\ 
-9.498 & 2.469 & 7.971 \\ 
-0.361 & 7.126 & 14.001 \\ 
-0.773 & -5.056 & 1.343 \\ 
0.156 & -1.064 & -0.113 \\ 
-4.042 & 5.800 & 8.677 \\ 
1.481 & 1.117 & -6.952 \\ 
-7.186 & -3.628 & 3.905 \\ 
0.701 & 8.741 & -4.089 \\ 
\hline
\end{tabular}
\begin{tabular}{|rrr|}
\hline
$x_1$ & $x_2$ & $x_3$\\
\hline
2.679 & -13.096 & 2.009 \\ 
-1.988 & -3.468 & 3.075 \\ 
3.725 & -4.299 & -5.100 \\ 
-9.492 & 3.577 & 1.564 \\ 
-0.188 & 2.285 & -6.891 \\ 
0.529 & -6.187 & 2.643 \\ 
3.704 & 0.905 & -1.815 \\ 
-2.811 & 7.392 & 5.978 \\ 
-2.865 & 6.445 & 1.921 \\ 
0.800 & -1.961 & -11.380 \\ 
3.947 & -0.985 & -0.963 \\ 
-1.644 & -1.156 & 2.944 \\ 
-8.564 & -1.749 & -11.601 \\ 
-5.433 & -2.632 & -7.001 \\ 
-6.590 & -2.610 & 5.949 \\ 
\hline
\end{tabular}
}
\vspace*{1mm}
\caption{The sample $\sX$ used for the ML estimation of Example \ref{ex: optimal-tent-function2}.}\label{table: sample 60 points graph 12-23}
\end{table}

\begin{table}[H]
\centering
\resizebox{\textwidth}{!}{
\begin{tabular}{|r||r|r|r|r|r|r|r|r|r|r|r|r|}
\hline
$x_1$ & -9.492 & -9.492 & 13.219 & 13.219 & 11.101 & -10.105 & -10.922 & -10.922 & -4.278 & -6.370 & 9.315 & 10.382 \\ 
$x_2$ & 3.577 & 3.577 & 1.696 & 1.696 & -1.277 & -1.277 & -7.743 & -7.743 & 9.503 & 7.126 & 7.126 & 5.642 \\ 
$x_3$ & 12.419 & -9.857 & 11.580 & -10.473 & 10.255 & 10.255 & 5.743 & -4.744 & 9.651 & 14.001 & 14.001 & -9.180 \\ 
\hline
$x_1$ & -4.278 & -7.675 & 10.764 & 10.613 & -10.165 & -10.192 & 5.970 & -9.055 & 2.679 & 0.778 & 5.897 & 4.604 \\
$x_2$ & 9.503 & 5.642 & -1.749 & -1.961 & -1.749 & -1.961 & -8.478 & -8.478 & -13.096 & 11.880 & 11.880 & 13.679 \\
$x_3$ & -4.433 & -9.180 & -11.601 & -11.380 & -11.601 & -11.380 & -3.901 & -3.901 & 2.009 & -1.510 & -1.510 & 2.010 \\
\hline
\end{tabular}
}
\vspace*{1mm}
\caption{The 24 vertices of the polytope $\sS_{G,\sX}$ of Example \ref{ex: optimal-tent-function2}.}\label{table: vertices support MLE graph 12-23}
\end{table}

\section{Proofs of Section~\ref{sec:decomposition-of-convex-functions}}\label{app: section decomposition convex}

\tikzstyle{thm} = [rectangle, rounded corners, minimum width=3cm, minimum height=1cm,text centered, draw=black, fill=red!30]
\tikzstyle{nothm} = [rectangle, rounded corners, minimum width=3cm, minimum height=1cm,text centered, draw=black]
\tikzstyle{arrow} = [thick,->,>=stealth]

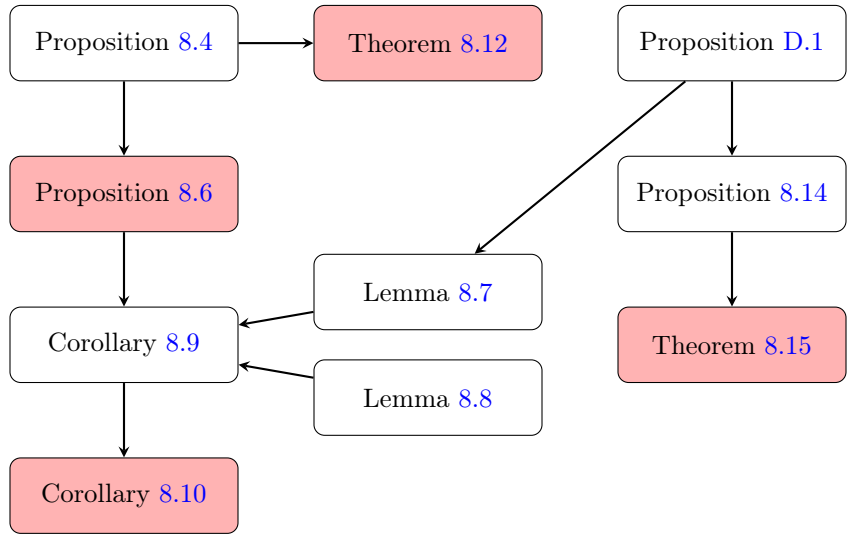
\begin{figure}[H]
\centering
\begin{tikzpicture}[node distance=2cm]
\node (prop chordal iff) [nothm] {Proposition~\ref{prop: chordal if and only if the running intersection property}};
\node (thm suff dec chordal) [thm, right of=prop chordal iff, xshift=2cm] {Theorem~\ref{thm: sufficient condition for convex decomposition on chordal graphs}};
\node (prop theta) [thm, below of=prop chordal iff] {Proposition~\ref{prop: convex decomposition iff theta_i}};
\node (cor Gaussian) [nothm, below of=prop theta] {Corollary~\ref{corollary: Gaussian convex decomposition}};
\node (lem sepmat) [nothm, right of=cor Gaussian, xshift=2cm, yshift=0.7cm] {Lemma~\ref{lem: sep matrix}};
\node (lem twicediff) [nothm, right of=cor Gaussian, xshift=2cm, yshift=-0.7cm] {Lemma~\ref{lem: assumption twice differentiable}};
\node (cor Gaussian fact) [thm, below of=cor Gaussian] {Corollary~\ref{corollary: Gaussian log-concave factorization}};
\node (prop Schur) [nothm, right of=thm suff dec chordal, xshift=2cm] {Proposition~\ref{prop: generalized Schur}};
\node (prop convdec) [nothm, below of=prop Schur] {Proposition~\ref{prop2}};
\node (thm convdec) [thm, below of=prop convdec] {Theorem~\ref{thm: convex decomposition}};
\draw [arrow] (prop chordal iff) -- (prop theta);
\draw [arrow] (prop chordal iff) -- (thm suff dec chordal);
\draw [arrow] (prop theta) -- (cor Gaussian);
\draw [arrow] (lem sepmat) -- (cor Gaussian);
\draw [arrow] (lem twicediff) -- (cor Gaussian);
\draw [arrow] (cor Gaussian) -- (cor Gaussian fact);
\draw [arrow] (prop Schur) -- (lem sepmat);
\draw [arrow] (prop Schur) -- (prop convdec);
\draw [arrow] (prop convdec) -- (thm convdec);
\end{tikzpicture}
\caption{A map of Section \ref{sec:decomposition-of-convex-functions} showing how the results in this section relate to each other.}
\label{fig: section 8 map}
\end{figure}

Let us recall some linear algebra background. Given a matrix $A\in\R^{m\times n}$ of rank $r$, let $A=U\Sigma V^T$ be the (compact) singular value decomposition of $A$, where the columns of $U\in\R^{m\times r}$ and $V\in\R^{n\times r}$ are orthonormal and $\Sigma\in\R^{r\times r}$ is the diagonal matrix $\Sigma=\mathrm{diag}(\sigma_1,\ldots,\sigma_r)$, where $\sigma_1\ge\cdots\ge\sigma_r>0$ are the positive singular values of $A$. The {\em (Moore-Penrose) pseudoinverse} of $A$ is $A^\dagger\coloneqq V\Sigma^{-1}U^T\in\R^{n\times m}$. If $\rank(A)=n$, then $A^\dagger=(A^TA)^{-1}A^T$. If $\rank(A)=m$, then $A^\dagger = A^T(AA^T)^{-1}$. If $m=n$ and $A$ is nonsingular, then $A^\dagger = A^{-1}$.

\noindent Now consider the symmetric matrix $M$ with the following block structure:
\begin{equation}\label{eq: block matrix M}
M = \begin{bmatrix}A & B \\ B^T & C\end{bmatrix}\,,
\end{equation}
If $A$ is nonsigular, then the {\em Schur complement} of $A$ in $M$ is defined to be $C-B^TA^{-1}B$. Similarly, if $C$ is nonsingular, the Schur complement of $C$ in $M$ is the matrix $A-B C^{-1} B^T$. Moreover, the {\em generalized Schur complements} of $A$ and $C$ in $M$ are the matrices $C-B^TA^\dagger B$ and $A-B C^{\dagger} B^T$ respectively. The following property involving generalized Schur complements, which is proved in \cite[Appendix A.5]{boyd2004convex}, is a key tool for deriving the subsequent results.

\begin{proposition}\label{prop: generalized Schur}
Consider the symmetric block matrix $M$ in \eqref{eq: block matrix M}. The following are equivalent:
\begin{itemize}
\item[$(i)$] $M\succcurlyeq 0$.
\item[$(ii)$] $A\succcurlyeq 0$, $(I-AA^\dagger)B=0$, and $C-B^T A^\dagger B\succcurlyeq 0$.
\item[$(iii)$] $C\succcurlyeq 0$, $(I-CC^\dagger)B^T=0$, and $A-BC^\dagger B^T\succcurlyeq 0$.
\end{itemize}
\end{proposition}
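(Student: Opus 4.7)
The plan is to prove (i) $\Leftrightarrow$ (ii), since (i) $\Leftrightarrow$ (iii) follows by an identical argument after swapping the roles of $A$ and $C$ (or, more formally, by conjugating $M$ with the block permutation that exchanges the two blocks). The whole proof is driven by a single block-congruence identity: if $(I-AA^\dagger)B=0$, then with $P=\begin{bmatrix} I & -A^\dagger B \\ 0 & I\end{bmatrix}$ one has
\begin{equation}\label{eq:congruence}
P^T M P = \begin{bmatrix} A & 0 \\ 0 & C-B^T A^\dagger B\end{bmatrix}.
\end{equation}
I would verify \eqref{eq:congruence} by direct block multiplication, using two facts about the pseudoinverse of a symmetric matrix: first, $AA^\dagger B=B$ (which is exactly the hypothesis $(I-AA^\dagger)B=0$), and second, that $A^\dagger$ is symmetric and $AA^\dagger=A^\dagger A$ is the orthogonal projection onto $\mathrm{range}(A)$, so that $B^T A^\dagger A=(AA^\dagger B)^T=B^T$. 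Since $P$ is upper triangular with unit diagonal, it is invertible, so \eqref{eq:congruence} makes $M$ and the block-diagonal right-hand side congruent and hence one is positive semidefinite iff the other is. This immediately yields $(ii)\Rightarrow(i)$: the hypotheses in (ii) are exactly what is needed to form \eqref{eq:congruence} and to conclude that the right-hand side is PSD, whence so is $M$.

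For the direction $(i)\Rightarrow(ii)$, the PSD-ness of $A$ follows instantly from testing $M$ against vectors of the form $(v,0)^T$. The nontrivial content is the range condition $(I-AA^\dagger)B=0$, equivalently $\mathrm{range}(B)\subseteq\mathrm{range}(A)$, equivalently $\ker(A)\subseteq\ker(B^T)$. I would prove this by a standard ``blow-up'' argument: for any $v\in\ker(A)$ and any $w$, evaluating the quadratic form of $M$ at $(v,tw)^T$ gives
\begin{equation*}
2t\,v^T B w + t^2\,w^T C w \ge 0 \quad\forall\,t\in\R,
\end{equation*}
which forces the linear-in-$t$ coefficient $v^T B w$ to vanish, and hence $B^T v=0$. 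Once the range condition is established, the congruence \eqref{eq:congruence} applies and positive semidefiniteness of the right-hand block-diagonal matrix gives $A\succcurlyeq 0$ (already known) together with $C-B^T A^\dagger B\succcurlyeq 0$, completing (ii).

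The main obstacle is purely bookkeeping around the pseudoinverse: making sure that for symmetric $A$ we really have $A^\dagger$ symmetric and $AA^\dagger=A^\dagger A$ equal to the orthogonal projector onto $\mathrm{range}(A)$, and that the identity $AA^\dagger B=B$ is fully symmetric in the sense needed to kill both the off-diagonal blocks in \eqref{eq:congruence} simultaneously. I would dispatch this in a brief lemma (or an inline remark) citing the singular value decomposition recalled just before the statement. Everything else is linear algebra that compiles routinely from \eqref{eq:congruence}.
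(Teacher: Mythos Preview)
Your argument is correct and is exactly the standard block-congruence proof; the paper does not supply its own proof of this proposition but simply cites \cite[Appendix A.5]{boyd2004convex}, and the argument there is essentially the one you outline. There is nothing to add.
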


\begin{proof}[Proof of Proposition~\ref{prop: convex decomposition iff theta_i}]
$(i)\Longrightarrow(ii)$ Suppose that there exist convex functions $\hat{g}_1,\ldots,\hat{g}_t$ as described in $(i)$.
We claim that for all $i\in[t-1]$, there exists a function $\theta_i\colon\R^{C_i \cap C_{[i+1,t]}}\to\R$ such that the function
\[
g_i(x_{C_i}) + \sum_{j\in A_i} \theta_j (x_{C_j \cap C_{[j+1,t]}}) - \theta_i(x_{C_i \cap C_{[i+1,t]}})
\]
is convex, and for all $x_{C_{[i+1,t]}}\in\R^{C_{[i+1,t]}}$,
\[
\sum_{j=i+1}^t g_j(x_{C_j}) + \sum_{j=i+1}^t \sum_{k\in A_j\cap[i]} \theta_k(x_{C_k \cap C_{[k+1,t]}}) = \sum_{j=i+1}^t \hat{g}_j(x_{C_j})\,.
\]
For $i=1$, assume $a \in \R^{C_1 \setminus C_{[2,t]}}$ and define the function $\theta_1\colon\R^{C_1 \cap C_{[2,t]}}\to\R$ as
\begin{align*}
\theta_1(x_{C_1 \cap C_{[2,t]}}) \coloneqq g_1(a,x_{C_1 \cap C_{[2,t]}})-\hat{g}_1(a,x_{C_1 \cap C_{[2,t]}})\,.
\end{align*}
Since $\sum_{j=1}^t g_j(x_{C_j}) = f(x) =  \sum_{j=1}^t \hat{g}_j(x_{C_j})$ for all $x\in \R^d$, then
\[
g_1(a,x_{C_1 \cap C_{[2,t]}}) + \sum_{j=2}^t g_j(x_{C_j}) = f(a, x_{C_{[2,t]}}) =  \hat{g}_1(a,x_{C_1 \cap C_{[2,t]}}) + \sum_{j=2}^t \hat{g}_j(x_{C_j})\,,
\]
and thus,
\begin{equation}\label{q3:2}
\theta_1(x_{C_1 \cap C_{[2,t]}}) =\sum_{j=2}^t \hat{g}_j(x_{C_j}) - \sum_{j=2}^t g_j(x_{C_j}) = g_1(x_{C_1})-\hat{g}_1(x_{C_1}).
\end{equation}
By the second equality in \eqref{q3:2}, $g_1(x_{C_1}) - \theta_1(x_{C_1 \cap C_{[2,t]}}) =\hat{g}_1(x_{C_1})$, and thus, is convex. Also, by the first equality in \eqref{q3:2}, 
$\sum_{j=2}^t g_j(x_{C_j}) + \theta_1 (x_{C_1 \cap C_{[2,t]}}) = \sum_{j=2}^t \hat{g}_j(x_{C_j})$ as desired.

\noindent
Now assume that the claim holds for all $i = k$, where $k\in[t-2]$. Then for $i=k+1$, let $a \in \R^{C_{k+1} \setminus C_{[k+2,t]}}$ and define the function $\theta_{k+1}\colon\R^{C_{k+1} \cap C_{[k+2,t]}} \to \R$ as
\begin{align*}
\theta_{k+1}(x_{C_{k+1} \cap C_{[k+2,t]}}) &\coloneqq g_{k+1}(a,x_{C_{k+1} \cap C_{[k+2,t]}}) -\hat{g}_{k+1}(a,x_{C_{k+1} \cap C_{[k+2,t]}})\\
&\quad+\sum_{j\in A_{k+1}} \theta_j (a_{(C_j \cap C_{k+1}) \setminus C_{[k+2,t]}},x_{(C_j \cap C_{k+1}) \cap C_{[k+2,t]}})\,.
\end{align*}
Since by the induction hypothesis, for all $x_{C_{[k+1,t]}}\in \R^{C_{[k+1,t]}}$,
\begin{equation}
\sum_{j=k+1}^t g_j(x_{C_j}) + \sum_{j=k+1}^t \sum_{l\in A_j \cap [k]} \theta_l (x_{C_l \cap C_{[l+1,t]}}) = \sum_{j=k+1}^t \hat{g}_j(x_{C_j})\,,
\end{equation}
then
\[
\begin{gathered}
g_{k+1}(a, x_{C_{k+1}\cap C_{[k+2,t]}}) +
\sum_{j \in A_{k+1}} \theta_j (a_{(C_j \cap C_{k+1}) \setminus C_{[k+2,t]}}, x_{(C_j \cap C_{k+1}) \cap C_{[k+2,t]}})+\\
+ \sum_{j=k+2}^t g_j(x_{C_j}) + \sum_{j=k+2}^t \sum_{l\in A_j \cap [k]} \theta_l (x_{C_l \cap C_{[l+1,t]}}) = \hat{g}_{k+1}(a, x_{C_{k+1} \cap C_{[k+2,t]}}) + \sum_{j=k+2}^t \hat{g}_j(x_{C_j})\,.
\end{gathered}
\]
Thus
\begin{align}
\theta_{k+1}(x_{C_{k+1} \cap C_{[k+2,t]}}) &= \sum_{j=k+2}^t \hat{g}_j(x_{C_j}) - \sum_{j=k+2}^t g_j(x_{C_j}) - \sum_{j=k+2}^t \sum_{l\in A_j \cap [k]} \theta_l (x_{C_l \cap C_{[l+1,t]}}) \label{q3:3}\\
&= g_{k+1}(x_{C_{k+1}}) - \hat{g}_{k+1}(x_{C_{k+1}}) + \sum_{j\in A_{k+1}} \theta_j(x_{C_j \cap C_{[j+1,t]}})\label{q3:4}\,.
\end{align}
By \eqref{q3:4}
\[
g_{k+1}(x_{C_{k+1}}) +  \sum_{j\in A_{k+1}} \theta_j(x_{C_j \cap C_{[j+1,t]}}) - \theta_{k+1}(x_{C_{k+1} \cap C_{[k+2,t]}})  = \hat{g}_{k+1}(x_{C_{k+1}})\,,
\]
and so, the function on the left hand side is convex. Also, by \eqref{q3:3},
\begin{align*}
\sum_{j=k+2}^t \hat{g}_j(x_{C_j}) &= \sum_{j=k+2}^t g_j(x_{C_j}) + \sum_{j=k+2}^t \sum_{l\in A_j \cap [k]} \theta_l (x_{C_l \cap C_{[l+1,t]}}) + \theta_{k+1}(x_{C_{k+1} \cap C_{[k+2,t]}})\\
&= \sum_{j=k+2}^t g_j(x_{C_j}) + \sum_{j=k+2}^t \sum_{l\in A_j \cap [k+1]} \theta_l (x_{C_l \cap C_{[l+1,t]}})
\end{align*}
as desired. This proves that the claim is correct, and therefore, one can conclude the proof of this side of the proposition by applying $i=1,\ldots,i=t-1$ to the first part of the claim as well as applying $i=t-1$ to the second part.

$(ii)\Longrightarrow(i)$ Suppose that there exist functions $\theta_1,\ldots,\theta_{t-1}$ as in $(ii)$. Define
\begin{equation}
\hat{g}_i(x_{C_i}) \coloneqq
\begin{cases}
g_i(x_{C_i}) + \sum_{j\in A_i} \theta_j (x_{C_j \cap C_{[j+1,t]}}) - \theta_i(x_{C_i \cap C_{[i+1,t]}}) & \text{if $i\in[t-1]$}\,,\\
g_t(x_{C_t}) + \sum_{j\in A_t} \theta_j (x_{C_j \cap C_{[j+1,t]}}) & \text{if $i=t$}\,.
\end{cases}
\end{equation}
It immediately follows that all $\hat{g}_i$'s are convex and $\sum_{i=1}^t \hat{g}_i(x_{C_i}) = \sum_{i=1}^t g_i (x_{C_i}) = f(x)$ for all $x\in\R^d$.
\end{proof}

\begin{proof}[Proof of Lemma~\ref{lem: sep matrix}]
Recall the block structure of $M$ in \eqref{eq: block structure of a psd matrix}. Since $M\succcurlyeq 0$, by Proposition \ref{prop: generalized Schur}, the generalized Schur complement of $M_{A,A}$ in $M$ is also positive semidefinite, i.e.
\begin{equation}\label{eq: block matrix gaussian}
\begin{gathered}
\begin{bmatrix}
M_{B,B} & M_{B,C} \\
M_{B,C}^T & M_{C,C}
\end{bmatrix}
-\begin{bmatrix}
M_{A,B}^T \\ 0
\end{bmatrix}
\cdot M_{AA}^\dagger\cdot
\begin{bmatrix}
M_{A,B} & 0
\end{bmatrix}
=\\
=\begin{bmatrix}
M_{B,B} - M_{A,B}^T \cdot M_{A,A}^\dagger \cdot M_{A,B} & M_{B,C} \\
M_{B,C}^T & M_{C,C} 
\end{bmatrix}\succcurlyeq 0\,.
\end{gathered}
\end{equation}
Again by Proposition \ref{prop: generalized Schur}, the generalized Schur complement of $M_{C,C}$ in \eqref{eq: block matrix gaussian} is positive semidefinite:
\begin{align} \label{eq: Schur positive semidefinite}
    M_{B,B} - M_{A,B}^T \cdot M_{A,A}^\dagger \cdot M_{A,B} - M_{B,C} \cdot M_{C,C} ^\dagger \cdot M_{B,C}^T\succcurlyeq 0.
\end{align}

Define $S_{M,N,\sP} \coloneqq -M_{B,B} + N + M_{B,C} \cdot M_{C,C} ^\dagger \cdot M_{B,C}^T$. Then \eqref{eq: Schur positive semidefinite} implies that the generalized Schur complement of $M_{A,A}$ in the first matrix in \eqref{eq: Gaussian separation matrices} is positive semidefinite. Also, applying Proposition \ref{prop: generalized Schur} twice to the first block structure of $M$ in \eqref{eq: block structure of a psd matrix}, we conclude that $M_{A,A}\succcurlyeq 0$ and $(I- M_{A,A} \cdot M_{A,A}^\dagger) M_{A,B}=0$. Thus, the first matrix in \eqref{eq: Gaussian separation matrices} is positive semidefinite by Proposition \ref{prop: generalized Schur}. 

Similarly, applying Proposition \ref{prop: generalized Schur} with respect to $M_{C,C}$, we have $M_{C,C}\succcurlyeq 0$ and $(I- M_{C,C} \cdot M_{C,C}^\dagger) M_{B,C}^T =0$. Moreover, the generalized Schur complement of $M_{C,C}$ in the second matrix in \eqref{eq: Gaussian separation matrices} is 0. Hence, the second matrix in \eqref{eq: Gaussian separation matrices} is also positive semidefinite by Proposition \ref{prop: generalized Schur}.
\end{proof}

\begin{proof}[Proof of Lemma \ref{lem: assumption twice differentiable}]
We first prove that, for all $\ell,m \in V$, if $(\ell,m)\notin E$, then $K_{\ell m}=0$. Assume $(\ell,m)\notin E$. Then there is no maximal clique in $G$ containing both $\ell$ and $m$. Without loss of generality, assume that the first $s$ cliques $C_1,\ldots,C_s$ are the maximal cliques in $\sC(G)$ containing the vertex $\ell$. Then
\begin{align*}
   2\sum_{j=1}^d K_{\ell j} (x_j-\mu_j)
   &= \frac{\partial f(x)}{\partial x_\ell}\\
   &= \lim_{h\to 0}\frac{f(x_\ell+h,x_{V \setminus \{\ell\}}) - f(x)}{h}\\
   &= \lim_{h\to 0}\frac{\sum_{i=1}^s g_i(x_\ell+h, x_{C_i \setminus \{\ell\}}) + \sum_{i=s+1}^t g_i(x_{C_i}) - \sum_{i=1}^t g_i(x_{C_{i}})}{h} \\
   &= \lim_{h\to 0}\frac{\sum_{i=1}^s(g_i(x_\ell+h, x_{C_i \setminus \{\ell\}})-g_i(x_{C_{i}}))}{h}\\
   &= \kappa (x_{C_{[1,s]}})
\end{align*}
for some function $\kappa\colon\R^{C_{[1,s]}}\to\R$. Then $2K_{\ell m}= \partial\kappa(x_{C_{[1,s]}})/\partial x_m = 0$ since $m \not\in C_{[1,s]}$. Thus, $K_{\ell m}=0$ as desired.
Now for all $i\in [t]$, define the matrix $K^{(i)}\in\R^{d\times d}$ as follows:
\begin{align*}
    K_{\ell m}^{(i)}=\begin{cases}
    K_{\ell m} & \text{if $\{\ell,m\}\subseteq C_i$ and $\{\ell,m\}\not\subset C_{[i+1,t]}$}\\
    0 & \text{otherwise.}
    \end{cases}
\end{align*}
For $\ell,m\in V$, if $(\ell,m)\in E$, let $i_0 \coloneqq \max \{i \in [t] \mid \{\ell,m\}\subseteq C_i\}$. Then $K_{\ell m}^{(i_0)} = K_{\ell m}$ and $K_{\ell m}^{(i)} = 0$ for all $i \in [t] \setminus \{i_0\}$. Furthermore, if $(\ell,m)\notin E$, on the one hand, by the claim previously proved, $K_{\ell m}=0$, and on the other hand, there is no maximal clique containing $\ell$ and $m$, and thus, $K_{\ell m}^{(i)}=0$ for all $i\in[t]$. So, in both cases, $K_{\ell m} = \sum_{i=1}^t K_{\ell m}^{(i)}$, which means that $f(x) = \sum_{i=1}^t (x-\mu)^T K^{(i)} (x-\mu)$. So, without loss of generality, we can assume that $g_i = (x-\mu)^T K^{(i)} (x-\mu)$, and thus, $g_i$ is twice differentiable for all $i\in[t]$.
\end{proof}

\begin{proof} [Proof of Corollary~\ref{corollary: Gaussian convex decomposition}]
By Lemma \ref{lem: assumption twice differentiable} we can assume that $g_i$ is twice differentiable for all $i \in [t]$.
Applying Lemma \ref{lem: sep matrix} to $M=K$, $N={H_{g_1}}|_{C_1 \cap C_{[2,t]}}$ and $\mathcal{P}=(C_1 \setminus C_{[2,t]},C_1 \cap C_{[2,t]}, C_{[2,t]}\setminus C_1)$, we define
\[
\theta_1 (x_{C_1 \cap C_{[2,t]}}) \coloneqq  x_{C_1 \cap C_{[2,t]}}^T\cdot S_{M,N,\sP}\cdot x_{C_1 \cap C_{[2,t]}}\,.
\]
Also, for all $i\in\{2,\ldots,t-1\}$, we apply Lemma \ref{lem: sep matrix} to $M=H_{\sum_{j=i}^t (g_j +\sum_{k \in A_{j} \cap [i-1]} \theta_k)}$, $N={H_{g_i + \sum_{j \in A_i} \theta_j}}|_{C_i \cap C_{[i+1,t]}}$ and $\mathcal{P}=(V \setminus C_{[i+1,t]},C_i \cap C_{[i+1,t]}, C_{[i+1,t]}\setminus C_i)$, and define
\[
\theta_i (x_{C_i \cap C_{[i+1,t]}}) \coloneqq x_{C_i \cap C_{[i+1,t]}}^T\cdot S_{M,N,\sP}\cdot x_{C_i \cap C_{[i+1,t]}}\,.
\]
The statement follows from Proposition \ref{prop: convex decomposition iff theta_i}.
\end{proof}

\begin{proof}[Proof of Corollary~\ref{corollary: Gaussian log-concave factorization}]
By Lemma \ref{lem: assumption twice differentiable} we can assume that $g_i$ is twice differentiable for all $i \in [t]$.
Applying Lemma \ref{lem: sep matrix} to $M=K$, $N={H_{g_1}}|_{C_1 \cap C_{[2,t]}}$ and $\mathcal{P}=(C_1 \setminus C_{[2,t]},C_1 \cap C_{[2,t]}, C_{[2,t]}\setminus C_1)$, we define
\[
\theta_1 (x_{C_1 \cap C_{[2,t]}}) \coloneqq  x_{C_1 \cap C_{[2,t]}}^T\cdot S_{M,N,\sP}\cdot x_{C_1 \cap C_{[2,t]}}\,.
\]
Also, for all $i\in\{2,\ldots,t-1\}$, we apply Lemma \ref{lem: sep matrix} to $M=H_{\sum_{j=i}^t (g_j +\sum_{k \in A_{j} \cap [i-1]} \theta_k)}$, $N={H_{g_i + \sum_{j \in A_i} \theta_j}}|_{C_i \cap C_{[i+1,t]}}$ and $\mathcal{P}=(V \setminus C_{[i+1,t]},C_i \cap C_{[i+1,t]}, C_{[i+1,t]}\setminus C_i)$, and define
\[
\theta_i (x_{C_i \cap C_{[i+1,t]}}) \coloneqq x_{C_i \cap C_{[i+1,t]}}^T\cdot S_{M,N,\sP}\cdot x_{C_i \cap C_{[i+1,t]}}\,.
\]
The statement follows from Proposition \ref{prop: convex decomposition iff theta_i}.
\end{proof}

\begin{proof}[Proof of Proposition~\ref{prop2}]
Given that $f(x_A, x_B, x_C)=g(x_A, x_B)+h(x_B,x_C)$ for all $(x_A, x_B, x_C)\in X$, the Hessian matrix of $f$ has the following block structure.
\begin{equation}\label{eq: block matrix 1}
H_f =
\begingroup
\left[\begin{array}{c|cc}
{H_g}|_{A} & {H_g}|_{A,B} & 0\\\hline
{{H_g}|_{A,B}}^T &  {H_g}|_{B}+ {H_h}|_{B} &  {H_h}|_{B,C}\\
0 & {{H_h}|_{B,C}}^T &  {H_h}|_{C}
\end{array}\right]=
\left[\begin{array}{cc|c}
{H_g}|_{A} & {H_g}|_{A,B} & 0\\
{{H_g}|_{A,B}}^T &  {H_g}|_{B}+ {H_h}|_{B} &  {H_h}|_{B,C}\\\hline
0 & {{H_h}|_{B,C}}^T &  {H_h}|_{C}
\end{array}\right]\,.
\endgroup
\end{equation}
Since $f$ is convex on $X$, we have $H_f(x)\succcurlyeq 0$ for all $x\in X$. Moreover, since ${H_f}|_{A}={H_g}|_{A}$ is invertible on $X$, we have ${{H_g}|_{A}}^\dagger={{H_g}|_{A}}^{-1}$ on $X$.
We apply Proposition \ref{prop: generalized Schur} to the first block arrangement of $H_f$ in \eqref{eq: block matrix 1}. In particular, on the set $X$ the Schur complement of ${H_g}|_{A}$ in $H_f$ is positive semidefinite:
\begin{equation}
\begin{bmatrix}
{H_g}|_{B}+ {H_h}|_{B} &  {H_h}|_{B,C}\\
{{H_h}|_{B,C}}^T &  {H_h}|_{C}
\end{bmatrix}-
\begin{bmatrix}{{H_g}|_{A,B}}^T \\ 0\end{bmatrix}
\cdot{{H_g}|_{A}}^{-1}\cdot
\begin{bmatrix}{H_g}|_{A,B}  & 0\end{bmatrix}\succcurlyeq 0\,.
\end{equation}
Equivalently, we have that
\begin{equation}\label{eq: block matrix 2}
\begin{bmatrix}
{H_g}|_{B}+ {H_h}|_{B}-{{H_g}|_{A,B}}^T\cdot{{H_g}|_{A}}^{-1}\cdot {H_g}|_{A,B} &  {H_h}|_{B,C}\\
{{H_h}|_{B,C}}^T &  {H_h}|_{C}
\end{bmatrix}
\succcurlyeq 0\,.
\end{equation}
Again by Proposition \ref{prop: generalized Schur}, the Schur complement of ${H_h}|_{C}$ in the block matrix in \eqref{eq: block matrix 2} is positive semidefinite:
\begin{equation}\label{eq: Schur complement HhCC}
{H_g}|_{B}+ {H_h}|_{B}-{{H_g}|_{A,B}}^T\cdot{{H_g}|_{A}}^{-1}\cdot{H_g}|_{A,B} -{H_h}|_{B,C}\cdot{{H_h}|_{C}}^\dagger\cdot{{H_h}|_{B,C}}^T\succcurlyeq 0\,.
\end{equation}
Since $|B|=1$, each of the terms in \eqref{eq: Schur complement HhCC} is a $1\times 1$ matrix.
Define
\[
\varphi_g\colon\pi_{A\cup B}(X)\to\R\,,\quad\varphi_g(x_A,x_B)\coloneqq\left(-{H_g}|_{B}+{{H_g}|_{A,B}}^T {{H_g}|_{A}}^{-1} {H_g}|_{A,B}\right)(x_A,x_B)\,.
\]
Note that $\varphi_g$ is a continuous function on $\pi_{A\cup B}(X)$.
Now assume that $x_C^0\in \pi_C(X)$. Then by \eqref{eq: Schur complement HhCC}, for all $(x_A,x_B)\in \pi_{A\cup B}(\pi^{-1}_C(\{x_C^0\}))$,
\begin{equation}
{H_h}|_{B}(x_B, x_C^0)-{H_h}|_{B,C}\cdot{{H_h}|_{C}}^\dagger\cdot{{H_h}|_{B,C}}^T(x_B, x_C^0) \succcurlyeq \varphi_g(x_A, x_B)\,.
\end{equation}
Consider the function $\tilde{\gamma}\colon\pi_{B}(X)\to\R$ defined by
\begin{equation}\label{eq: def function s}
\tilde{\gamma}(x_B)\coloneqq\sup_{x_A\in Y} \varphi_g(x_A,x_B)\,.
\end{equation}
We claim that $\tilde{\gamma}$ is continuous on its domain $\pi_{B}(X)$. Let $a\in \R$. Then $\tilde{\gamma}^{-1}(a,+\infty) = \pi_{B}\left(\varphi_g^{-1}(a,+\infty)\right)$, which means that $\tilde{\gamma}^{-1}(a,+\infty)$ is open in $\pi_B(X)$. Moreover, if $x_B^0\in \tilde{\gamma}^{-1}(-\infty, a)$, then for all $x_A\in Y$, $\varphi_g(x_A,x_B^0)<a$. So, for all $x_A\in Y$, there exist open sets $V_{x_A}\subseteq Y$ and $W_{x_A}\subseteq \pi_B(X)$ such that 
\[
(x_A,x_B^0)\in V_{x_A}\times W_{x_A} \subseteq \varphi_g^{-1}(-\infty,a).
\]
Given that $Y=\bigcup_{x_A\in Y} V_{x_A}$ and $Y$ is compact, there exist $x_A^{1},\ldots,x_A^{m}\in Y$ such that $Y=\bigcup_{i=1}^m V_{x_{A}^i}$. So, if $x_B\in \bigcap_{i=1}^m W_{x_A^i}$, then $\varphi_g(x_A, x_B) <a$ for all $x_A\in Y$, which implies that $x_B^0 \in \bigcap_{i=1}^m W_{x_A^i} \subseteq \tilde{\gamma}^{-1}(-\infty, a)$. So, $\tilde{\gamma}^{-1}(-\infty, a)$ is also open in $\pi_B(X)$. Hence, $\tilde{\gamma}$ is a continuous function. So, there exists a function $\gamma\in \sC^2(\pi_{B}(X))$ such that $\gamma^{\prime\prime}=\tilde{\gamma}$.

Now we show that $\hat{g}=g+\gamma$ and $\hat{h}=h-\gamma$ are two convex functions. Since $H_f\succcurlyeq 0$, applying Proposition \ref{prop: generalized Schur} to the second block arrangement of $H_f$ in \eqref{eq: block matrix 1}, we have that
\begin{align*}
\begin{bmatrix}
{H_g}|_{A} & {H_g}|_{A,B}\\
{{H_g}|_{A,B}}^T &  {H_g}|_{B}+ {H_h}|_{B}
\end{bmatrix}
\succcurlyeq 0\,.
\end{align*}
In turn, this implies that ${H_g}|_{A}\succcurlyeq 0$. Moreover, by the choice of $\gamma$, for all $(x_A,x_B)\in \pi_{A\cup B}(X),$
\[
\begin{gathered}
\left({H_g}|_{B}+H_\gamma-{{H_g}|_{A,B}}^T\cdot{{H_g}|_{A}}^\dagger\cdot{H_g}|_{A,B}\right)(x_A,x_B) = \\
= \left({H_g}|_{B}+\tilde{\gamma}-{{H_g}|_{A,B}}^T\cdot{{H_g}|_{A}}^\dagger\cdot{H_g}|_{A,B}\right)(x_A,x_B)\succcurlyeq 0\,. 
\end{gathered}
\]
So, by Proposition \ref{prop: generalized Schur}, on $\pi_{A\cup B}(X)$,
\[
H_{\hat{g}}=\begin{bmatrix}{H_g}|_{A} & {H_g}|_{A,B} \\ {{H_g}|_{A,B}}^T & {H_g}|_{B}+H_{\gamma}\end{bmatrix}\succcurlyeq 0\,.
\]
This proves that $\hat{g} = g+\gamma$ is convex on $\pi_{A\cup B}(X)$.
A similar argument shows that $\hat{h} = h-\gamma$ is convex on $\pi_{B\cup C}(X)$. 

Now assume that $D\subseteq C$ and ${H_f}|_{A\cup B\cup D}$ is invertible on $X$. For all $(x_B^0, x_D^0)\in \pi_{B\cup D}(X)$, given that $Y$ is compact, there exists $x_A^0 \in Y$ such that 
\begin{equation}\label{eq: gamma'' equals phi_g}
\gamma^{\prime\prime}(x_B^0) = \tilde{\gamma}(x_B^0)= \varphi_g(x_A^0,x_B^0)\,.
\end{equation}
Consider the Hessian matrix
\[
{H_f}|_{A\cup B\cup D} =
\begin{bmatrix}
{H_g}|_{A} & {H_g}|_{A,B}  & 0\\
{{H_g}|_{A,B}}^T &  {H_g}|_{B}+ {H_h}|_{B} &  {H_h}|_{B,D}\\
0 & {{H_h}|_{B,D}}^T &  {H_h}|_{D}
\end{bmatrix}\,.
\]
Since ${H_f}|_{A\cup B\cup D}$ is invertible on $X$, using the block matrix determinant formula,
\begin{equation}\label{eq: matrix determinant formula}
0 \neq \det\left({H_f}|_{A\cup B\cup D}(x_A^0, x_B^0, x_D^0)\right) = \det\left({H_g}|_{A}(x_A^0, x_B^0)\right)\cdot\det(M(x_A^0, x_B^0, x_D^0))\,,
\end{equation}
where
\begin{align*}
M &=
\begin{bmatrix}
{H_g}|_{B}+ {H_h}|_{B} & {H_h}|_{B,D}\\
{{H_h}|_{B,D}}^T &  {H_h}|_{D}
\end{bmatrix}-
\begin{bmatrix}{{H_g}|_{A,B}}^T \\ 0\end{bmatrix}
\cdot{{H_g}|_{A}}^{-1}\cdot
\begin{bmatrix}{H_g}|_{A,B} & 0\end{bmatrix}\\
&=
\begin{bmatrix}
{H_g}|_{B}+ {H_h}|_{B} & {H_h}|_{B,D}\\
{{H_h}|_{B,D}}^T & {H_h}|_{D}
\end{bmatrix}-
\begin{bmatrix}{{H_g}|_{A,B}}^T\cdot{{H_g}|_{A}}^{-1}\cdot{H_g}|_{A,B} & 0 \\ 0 & 0\end{bmatrix}\\
&=
\begin{bmatrix}
{H_h}|_{B}-\varphi_g &  {H_h}|_{B,D}\\
{{H_h}|_{B,D}}^T &  {H_h}|_{D}
\end{bmatrix}\,.
\end{align*}
Evaluating $M$ at $(x_A^0, x_B^0, x_D^0)$ and using \eqref{eq: gamma'' equals phi_g}, from the previous chain of identities we get
\begin{align*}
M(x_A^0, x_B^0, x_D^0) & = 
\begin{bmatrix}
{H_h}|_{B}(x_B^0)-\varphi_g(x_A^0, x_B^0) &  {H_h}|_{B,D}(x_B^0, x_D^0)\\
{{H_h}|_{B,D}}^T(x_B^0, x_D^0) &  {H_h}|_{D}(x_D^0)
\end{bmatrix}\\
& =
\begin{bmatrix}
{H_h}|_{B}(x_B^0)-\tilde{\gamma}(x_B^0) &  {H_h}|_{B,D}(x_B^0, x_D^0)\\
{{H_h}|_{B,D}}^T(x_B^0, x_D^0) &  {H_h}|_{D}(x_D^0)
\end{bmatrix}\\
& = {H_{\hat{h}}}|_{B\cup D}(x_B^0, x_D^0)\,,
\end{align*}
in turn ${H_{\hat{h}}}|_{B\cup D}(x_B^0, x_D^0)$ is invertible by \eqref{eq: matrix determinant formula}. This shows that ${H_{\hat{h}}}|_{B\cup D}$ is invertible on $X$.
\end{proof}

\end{document}